\documentclass[a4paper, 11pt, reqno]{amsart}

\usepackage{microtype}
\usepackage{fullpage}

\usepackage{amssymb}
\usepackage{mathtools}
\usepackage{enumitem}

\newcommand{\BB}{\mathbb{B}}
\newcommand{\CC}{\mathbb{C}}
\newcommand{\dd}{\,\mathrm{d}}
\newcommand{\dom}{\mathfrak{D}}
\newcommand{\ee}{\mathrm{e}}
\newcommand{\ii}{\mathrm{i}}
\newcommand{\Laplace}{{\operatorname{\Delta}}}
\newcommand{\NN}{\mathbb{N}}
\newcommand{\pd}{\partial}
\newcommand{\ran}{\operatorname{ran}}
\renewcommand{\restriction}{\mathord{\upharpoonright}}
\newcommand{\RR}{\mathbb{R}}
\renewcommand{\Re}{\operatorname{Re}}

\theoremstyle{definition}
\newtheorem{definition}{Definition}[section]

\theoremstyle{remark}
\newtheorem{remark}{Remark}[section]

\theoremstyle{plain}
\newtheorem{theorem}{Theorem}[section]

\theoremstyle{plain}
\newtheorem{proposition}{Proposition}[section]

\theoremstyle{plain}
\newtheorem{lemma}{Lemma}[section]

\theoremstyle{remark}
\newtheorem*{acknowledgment}{Acknowledgment}

\numberwithin{equation}{section}

\usepackage{hyperref}
    \hypersetup{
        bookmarksdepth = 3,
               linktoc = page,
            colorlinks = true,
             linkcolor = blue,
             citecolor = green,
    }

\usepackage{cleveref}
    \crefname{equation}{eq.}{eqs.}
    \Crefname{equation}{Eq.}{Eqs.}

\title{Stable blowup for focusing semilinear wave equations in all dimensions}

\author{Matthias Ostermann}
\address{University of Vienna, Faculty of Mathematics, Oskar-Morgenstern-Platz 1, 1090 Vienna, Austria}
\email{matthias.ostermann@univie.ac.at}
\thanks{This work has been supported by the Vienna School of Mathematics (VSM)}

\begin{document}
\begin{abstract}
We consider the wave equation with focusing power nonlinearity. The associated ODE in time gives rise to a self-similar solution known as the ODE blowup. We prove the nonlinear asymptotic stability of this blowup mechanism outside of radial symmetry in all space dimensions and for all superlinear powers. This result covers for the first time the whole energy-supercritical range without symmetry restrictions.
\end{abstract}
\maketitle
\tableofcontents
\section{Introduction}
This paper is concerned with the Cauchy problem for the focusing semilinear wave equation
\begin{equation}
\label{NLWIntro}
( - \pd_{t}^{2} + \Laplace_{x} ) \psi(t,x)  + \psi(t,x)|\psi(t,x)|^{p-1} = 0
\end{equation}
for $\psi:\RR^{1,d}\rightarrow\RR$, where $d\in\NN$ and $p\in\RR_{>1}$. This equation is invariant under the scaling transformation
\begin{equation}
\label{NLWScaling}
\psi \mapsto \psi^{\lambda}
\qquad\text{with}\qquad
\psi^{\lambda}(t,x) = \lambda^{-s_{p}} \psi\big(\tfrac{t}{\lambda},\tfrac{x}{\lambda}\big) \,, \quad
s_{p} = \frac{2}{p-1} \,,
\end{equation}
for any $\lambda>0$. The scaling \eqref{NLWScaling} determines the critical regularity $s_{\mathrm{c}} \coloneqq \frac{d}{2} - s_{p}$ for the scaling invariant homogeneous Sobolev space $\dot{H}^{s_{\mathrm{c}}}(\RR^{d}) \times \dot{H}^{s_{\mathrm{c}}-1}(\RR^{d})$. Moreover, \Cref{NLWIntro} also admits a conserved but indefinite energy
\begin{equation*}
E[\psi](t) =
\frac{1}{2}\big\| \big( \psi(t,\,.\,),\pd_{t}\psi(t,\,.\,) \big) \big\|_{\dot{H}^{1}(\RR^{d})\times L^{2}(\RR^{d})}^{2} - \frac{1}{p+1} \| \psi(t,\,.\,) \|_{L^{p+1}(\RR^{d})}^{p+1} \,,
\end{equation*}
which obeys the scaling law
\begin{equation*}
E[\psi^{\lambda}](t) = \lambda^{2\big(\frac{d}{2}-s_{p}-1\big)} E[\psi]\big(\tfrac{t}{\lambda}\big) \,.
\end{equation*}
According to the sign of the exponent $\frac{d}{2}-s_{p}-1$, \Cref{NLWIntro} is called
\begin{align*}
&\bullet\text{energy-subcritical} && \text{if } d=1,2 \text{ and } p>1 \quad\text{or}\quad d\geq 3 \text{ and } 1<p<1+\frac{4}{d-2}, & \\
&\bullet\text{energy-critical} && \text{if } d\geq 3 \text{ and } p=1+\frac{4}{d-2}, & \\
&\bullet\text{energy-supercritical} && \text{if } d\geq 3 \text{ and } p>1+\frac{4}{d-2}. &
\end{align*}
Classification into criticality classes provides a general and useful theme for capturing the exceedingly rich dynamical phenomena of scaling invariant nonlinear equations. For the basic well-posedness theory for semilinear wave equations we refer to \cite{MR1335386}, \cite{MR2233925}. In this context, it is noteworthy that the focusing character of the nonlinearity in \Cref{NLWIntro} may cause solutions to lose their regularity in finite time. This is demonstrated in the early work \cite{MR344697} by H.~Levine, where it is shown that initial data with negative energy lead to blowup in finite time. An important role in the description of singularity formation is played by solutions that are invariant under the scaling \eqref{NLWScaling}. Such solutions are called \emph{self-similar} and together with time translation symmetry it follows that they are of the form
\begin{equation*}
\psi(t,x) = (T-t)^{-s_{p}} \varphi\left( \tfrac{x}{T-t} \right)
\end{equation*}
for $\varphi:\RR^{d}\rightarrow\RR$ and a free constant $T>0$. Existence of a countable family of smooth radially symmetric self-similar solutions to \Cref{NLWIntro} has been proved by P.~Bizo\'{n} et al. \cite{MR2578477}, \cite{MR2351023} in three dimensions for odd powers. For greater than three space dimensions, this has been further investigated numerically in \cite{MR2804023}. A corresponding existence result of countably many self-similar blowup solutions in $d\geq 3$ and for powers $1+\frac{4}{d-2} < p \leq 1 + \frac{4}{d-3}$ has been showed by W.~Dai and T.~Duyckaerts \cite{MR4350584}. The ground state of these families exists throughout all space dimensions and for all $p>1$ and is known as the \emph{ODE blowup}
\begin{equation}
\label{ODEBlowup}
\psi_{T}^{*}(t,x) = c_{p}(T-t)^{-s_{p}} \,, \qquad c_{p} = (s_{p}(s_{p}+1))^{\frac{1}{p-1}} \,.
\end{equation}
This is an explicit example of a solution that evolves from smooth data and loses its regularity in finite time. By means of finite speed of propagation, one can even prepare compactly supported smooth initial data that develop exactly the same singularity as \eqref{ODEBlowup} in a past light cone. This raises the question of how universal the occurrence of such drastic breakdown of solutions is. In fact, numerical studies \cite{MR2097671} suggest that generic blowup in the large data evolution of \Cref{NLWIntro} is described by the ODE blowup profile. Towards demonstrating the universality of this blowup mechanism, the subject of this paper is to establish a robust and systematic stability theory for it.
\par\medskip
In one space dimension, F.~Merle and H.~Zaag \cite{MR2362418} proved indeed that ODE blowup describes the universal profile for any solution that blows up. In higher space dimensions and for powers $1 < p \leq 1+\frac{4}{d-1}$, their works \cite{MR2004432}, \cite{MR2115461} show that the rate of any blowup solution in the energy space is the same as the one exhibited by \eqref{ODEBlowup}. Later, they also gave a stability result for the ODE blowup family in higher space dimensions \cite{MR3302641}, \cite{MR3413856}. However, the underlying theory is based on the existence of a Lyapunov functional which restricts their results to subconformal powers $1<p<1+\frac{4}{d-1}$. A different approach was put forth by R.~Donninger \cite{MR2753616}. Together with B.~Sch\"{o}rkhuber, they studied in \cite{MR2909934} the stability of the blowup solution \eqref{ODEBlowup} for the radial three-dimensional energy-subcritical semilinear wave equation. Subsequently, they treated in \cite{MR3152726} the nonlinear stability problem in three dimensions for powers $p>3$ and were even able to remove the symmetry assumptions in \cite{MR3537340}. This programme could be implemented in \cite{MR3926085} also for the cubic wave equation outside of radial symmetry for $d\in\{5,7,9,11,13\}$. Within radial symmetry, stability of the ODE blowup has been established in \cite{MR3742520} for odd space dimensions and superconformal powers $p>1+\frac{4}{d-1}$. However, the underlying techniques in this work rely heavily on the radial structure. The first result for ODE blowup stability for the energy-critical radial wave equation at optimal regularity is due to R.~Donninger \cite{MR3662440} in three space dimensions. This has been achieved with Strichartz estimates in similarity coordinates for wave equations with self-similar potentials. There are generalizations to the critical radial wave equation in dimensions $d\in\{3,4,5,6\}$ in the works \cite{MR4101905} and \cite{MR4586813}. Lately, as an application of the functional setting in \cite{MR4713110}, E.~Csobo, I.~Glogi\'{c} and B.~Sch\"{o}rkhuber established non-radial stability of the ODE blowup for the quadratic wave equation in $d=7,9$.
\par\medskip
Still, this leaves open the stability of the ODE blowup, for instance, for almost the whole energy-supercritical range $p>1+\frac{4}{d-2}$ outside of radial symmetry. We will close the gaps in \Cref{MainTHM} below.
\subsection{Statement of the stability theorem}
By action of symmetries, one obtains from \eqref{ODEBlowup} a larger family of blowup solutions. Namely, the flow of \Cref{NLWIntro} is also preserved under the Poincar\'{e} symmetry of the underlying Minkowski spacetime $\RR^{1,d}$. This includes, in particular, time translations and Lorentz boosts $\Lambda(\beta)\in SO(1,d)$ whose components are given by
\begin{align*}
&&
\Lambda(\beta)^{0}{}_{0} &= \gamma(\beta) \,, &
\Lambda(\beta)^{0}{}_{i} &= -\gamma(\beta) \beta_{i} \,, && \\
&&
\Lambda(\beta)^{i}{}_{0} &= -\gamma(\beta) \beta^{i} \,, &
\Lambda(\beta)^{i}{}_{j} &= \delta^{i}_{j} + \frac{\gamma(\beta)^{2}}{1+\gamma(\beta)}\beta^{i}\beta_{j} \,, &&
\end{align*}
with Lorentz parameter $\beta\in\BB^{d}_{1}$ and Lorentz factor
\begin{equation*}
\gamma(\beta) = (1-\beta^{\top}\beta)^{-\frac{1}{2}} \,.
\end{equation*}
Precomposing the spatially homogeneous ODE blowup profile \eqref{ODEBlowup} with a Lorentz boost and a time translation, one obtains an explicit $(d+1)$-parameter family of blowup solutions to \Cref{NLWIntro} that is given by
\begin{equation}
\label{ODEBlowupFamily}
\psi_{\beta,T}^{*}(t,x) \coloneqq \psi_{0}^{*}\big(\Lambda(\beta)(t-T,x)\big) = c_{p}\gamma(\beta)^{-s_{p}}(T-t+\beta^{\top}x)^{-s_{p}} \,.
\end{equation}
Observe that this introduces solutions with spatial singularities. Nevertheless, by finite speed of propagation one can restrict the evolution to past light cones where these solutions remain smooth for small enough Lorentz parameters. This shows that a stability theory for the ODE blowup necessarily has to be formulated within light cones. With this, we come to the main result of this paper, where we show in \emph{any} space dimension that ODE blowup is nonlinearly stable under a large class of perturbations.
\begin{theorem}
\label{MainTHM}
Let $(d,p,k,R)\in\NN\times\RR_{>1}\times\NN\times\RR_{\geq 1}$ with $k>\frac{d}{2}$ and put
\begin{equation*}
\omega_{p} = \min\{1,s_{p}\}
\qquad\text{where}\qquad
s_{p} = \frac{2}{p-1} \,.
\end{equation*}
For all $ 0 < \varepsilon < \omega_{p}$ there are constants $0<\delta_{d,p,k,R,\varepsilon}<1$ and $C_{d,p,k,R,\varepsilon} > 1$ such that for all $0<\delta\leq\delta_{d,p,k,R,\varepsilon}$, $C\geq C_{d,p,k,R,\varepsilon}$ and all real-valued $(f,g)\in C^{\infty}(\RR^{d})\times C^{\infty}(\RR^{d})$ with
\begin{equation*}
\| (f,g) \|_{H^{k}(\RR^{d})\times H^{k-1}(\RR^{d})} \leq \frac{\delta}{C}
\end{equation*}
there exist parameters $\beta^{*}\in\overline{\BB^{d}_{\delta}}$ and $T^{*}\in \overline{\BB^{1}_{\delta}(1)}$ and a unique solution $\psi\in C^{\infty}\big(\Omega^{1,d}_{R}(T^{*})\big)$ in the extended past light cone
\begin{equation*}
\Omega^{1,d}_{R}(T^{*}) = \left\{ (t,x) \in \RR^{1,d} \mid 0\leq t < T^{*}, \, |x| \leq R(T^{*} - t) \right\}
\end{equation*}
to the Cauchy problem
\begin{equation*}
\renewcommand{\arraystretch}{1.2}
\left\{
\begin{array}{rcll}
(-\pd_{t}^{2}+\Laplace_{x})\psi(t,x) &=& \psi(t,x)|\psi(t,x)|^{p-1} \,, & \quad (t,x)\in \Omega^{1,d}_{R}(T^{*}) \,, \\
\psi(0,x) &=& \psi_{0,1}^{*}(0,x) + f(x) \,, & \quad x\in\RR^{d} \,, \\
(\pd_{0}\psi)(0,x) &=& (\pd_{0}\psi_{0,1}^{*})(0,x) + g(x) \,, & \quad x\in\RR^{d} \,,
\end{array}
\right.
\end{equation*}
such that the bounds
\begin{align*}
(T^{*}-t)^{-\frac{d}{2}+s_{p}+s}
\left\| \psi(t,\,.\,) - \psi_{\beta^{*},T^{*}}^{*}(t,\,.\,) \right\|_{\dot{H}^{s}(\BB^{d}_{R(T^{*}-t)})}
&\lesssim
(T^{*}-t)^{\omega_{p} - \varepsilon}
\intertext{for $s=0,1,\ldots,k$, and}
(T^{*}-t)^{-\frac{d}{2}+s_{p}+s}
\left\| \pd_{t}\psi(t,\,.\,) - \pd_{t}\psi_{\beta^{*},T^{*}}^{*}(t,\,.\,) \right\|_{\dot{H}^{s-1}(\BB^{d}_{R(T^{*}-t)})}
&\lesssim
(T^{*}-t)^{\omega_{p} - \varepsilon}
\intertext{for $s=1,\ldots,k$, hold for all $0\leq t < T^{*}$.}
\end{align*}
\end{theorem}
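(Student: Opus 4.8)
The plan is to follow the similarity-coordinate strategy for blowup stability pioneered by Donninger and Sch\"{o}rkhuber (see \cite{MR3537340}, \cite{MR3926085}), implemented in the general-dimensional functional setting of \cite{MR4713110}. First, fix a blowup time $T$ near $1$ and pass to similarity coordinates adapted to the light cone $\Omega^{1,d}_R(T)$, which trivialise the self-similar structure: in the rescaled unknown $\Phi=(T-t)^{s_{p}}\psi$ the family \eqref{ODEBlowupFamily} becomes the \emph{static} profiles $\phi^{*}_{\beta}(\xi)=c_{p}\gamma(\beta)^{-s_{p}}(1+\beta^{\top}\xi)^{-s_{p}}$ on $\overline{\BB^{d}_{R}}$, and the Cauchy problem turns into an autonomous first-order evolution equation $\pd_{\tau}\Phi=\mathbf{L}\Phi+\mathbf{N}(\Phi)$ on the Hilbert space $\mathcal{H}=H^{k}(\BB^{d}_{R})\times H^{k-1}(\BB^{d}_{R})$. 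The threshold $k>\tfrac{d}{2}$ is exactly what makes $\mathcal{H}$ a Banach algebra embedding into $L^{\infty}$, which is needed both for the semigroup bounds and for the Moser-type nonlinear estimates. Crucially, because the ODE blowup profile is spatially constant, the potential obtained by linearising $u\mapsto u|u|^{p-1}$ at $\phi^{*}_{0}\equiv c_{p}$ is the \emph{constant} $p\,c_{p}^{p-1}=p\,s_{p}(s_{p}+1)$; this is what lets the construction run uniformly in $d$ and $p$.

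Second, develop the linear theory. Write $\mathbf{L}=\mathbf{L}_{0}+\mathbf{L}'$, where $\mathbf{L}_{0}$ is the free similarity-coordinate wave generator, which generates a strongly continuous semigroup on $\mathcal{H}$ with strictly negative growth bound (tied to the self-similar weight $s_{p}$), and $\mathbf{L}'$, comprising the constant potential and lower-order commutator terms, is a relatively compact perturbation. By the Weyl theorem $\sigma(\mathbf{L})$ consists of the essential spectrum of $\mathbf{L}_{0}$ in a left half-plane together with at most finitely many eigenvalues of finite multiplicity farther to the right. The key spectral step is to show that in $\{\Re\lambda\ge -\omega_{p}+\varepsilon\}$ the only spectral points are $\lambda=1$, with one-dimensional eigenspace spanned by the constant (the time-translation mode $\pd_{T}\psi^{*}$), and $\lambda=0$, with $d$-dimensional eigenspace spanned by $\xi\mapsto\xi_{j}$, $j=1,\dots,d$ (the Lorentz-boost modes $\pd_{\beta_{j}}\psi^{*}$), and that no other, spurious, eigenvalues occur. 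With the constant potential the equation $\mathbf{L}\Phi=\lambda\Phi$ reduces, after a spherical-harmonic decomposition, to a family of radial ODEs with explicit (hypergeometric) solutions, and excluding spurious modes becomes an analyticity and boundary-matching argument that can be carried out in every angular sector $\ell=0,1,2,\dots$ simultaneously. Let $\mathcal{P}$ be the rank-$(d{+}1)$ Riesz projection onto these symmetry modes; resolvent estimates (or a Gearhart--Pr\"{u}ss argument) then upgrade the spectral bound on $\ran(1-\mathcal{P})$ to the semigroup decay $\|\ee^{\tau\mathbf{L}}(1-\mathcal{P})\|_{\mathcal{H}\to\mathcal{H}}\lesssim\ee^{(-\omega_{p}+\varepsilon)\tau}$, while $\ee^{\tau\mathbf{L}}\mathcal{P}$ is explicit ($\ee^{\tau}$ on the $\lambda=1$ eigenspace, bounded on the $\lambda=0$ eigenspace).

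Third, close the nonlinear problem by a Lyapunov--Perron / modulation argument. Since $u\mapsto u|u|^{p-1}$ is smooth near the nonzero constant $c_{p}$ and $H^{k}\hookrightarrow L^{\infty}$ is an algebra, $\mathbf{N}$ is locally Lipschitz on small balls of $\mathcal{H}$ with the quadratic bound $\|\mathbf{N}(\Phi)-\mathbf{N}(\Psi)\|_{\mathcal{H}}\lesssim(\|\Phi\|_{\mathcal{H}}+\|\Psi\|_{\mathcal{H}})\|\Phi-\Psi\|_{\mathcal{H}}$; one also checks that the boosted profile $\phi^{*}_{\beta}$ stays smooth and uniformly close to $\phi^{*}_{0}$ on $\overline{\BB^{d}_{R}}$ for $|\beta|$ small, so that linearising around $\phi^{*}_{\beta}$ perturbs $\mathbf{L}$ only by a small variable-coefficient term that does not move the spectrum across the gap. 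Given a guess $(\beta,T)\in\overline{\BB^{d}_{\delta}}\times\overline{\BB^{1}_{\delta}(1)}$, write the Duhamel formula for the genuinely decaying unknown $\Psi=(T-t)^{s_{p}}\psi-\phi^{*}_{\beta}$, subtract the standard growth-suppressing correction valued in the $(d{+}1)$-dimensional symmetry subspace $\ran\mathcal{P}$, and obtain by a contraction in $\{\Psi\in C([0,\infty),\mathcal{H}):\sup_{\tau}\ee^{(\omega_{p}-\varepsilon)\tau}\|\Psi(\tau)\|_{\mathcal{H}}\lesssim\delta/C\}$ a global decaying solution of the corrected equation. The correction vanishes precisely when $(\beta,T)$ solve a system of $d+1$ scalar equations; as the left-hand side depends continuously on $(\beta,T)$ and its linearisation at $(\beta,T)=(0,1)$ is non-degenerate to leading order, a Brouwer fixed-point / topological-degree argument produces admissible $(\beta^{*},T^{*})$. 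Undoing the change of variables converts the $\mathcal{H}$-decay of $\Psi$ into the asserted $\dot{H}^{s}$ bounds on the slices $\BB^{d}_{R(T^{*}-t)}$ for $s=0,\dots,k$ (and the corresponding ones for $\pd_{t}\psi$), uniqueness in the light cone being inherited from the contraction together with finite speed of propagation, and persistence of regularity giving $\psi\in C^{\infty}(\Omega^{1,d}_{R}(T^{*}))$.

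I expect the main obstacle to be the spectral analysis of $\mathbf{L}$: proving the \emph{absence} of eigenvalues with $\Re\lambda\ge -\omega_{p}+\varepsilon$ beyond the symmetry-induced $\lambda=1$ and $\lambda=0$, uniformly over all $d\in\NN$ and all $p>1$ and, in contrast to earlier works, \emph{without} the radial reduction, which forces one to control every spherical-harmonic sector at once. A secondary difficulty is the interaction between the $d$-dimensional neutral subspace at $\lambda=0$ and the modulation: the Lyapunov--Perron scheme must be arranged so that the one exponentially growing mode and the $d$ non-decaying modes are removed simultaneously by the $d+1$ parameters $(\beta^{*},T^{*})$, with the requisite non-degeneracy verified to run the topological argument. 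By comparison, the free semigroup bound in this functional setting, the compactness of $\mathbf{L}'$, the nonlinear estimates and the coordinate change are routine given the framework of \cite{MR4713110} and the precedents \cite{MR3537340}, \cite{MR3926085}.
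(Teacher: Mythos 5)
Your proposal follows essentially the same route as the paper: first-order formulation in similarity coordinates on $H^{k}(\BB^{d}_{R})\times H^{k-1}(\BB^{d}_{R})$, semigroup generation for the free part, compact perturbation by the potential (constant for $\beta=0$, treated perturbatively in $\beta$), spherical-harmonic/hypergeometric spectral analysis isolating exactly the symmetry eigenvalues $\{0,1\}$ with eigenfunctions $\xi_{j}$ and constants, uniform decay on the complement of the rank-$(d+1)$ Riesz projection via a Gearhart--Pr\"uss argument, and a Lyapunov--Perron scheme whose correction term is removed by a Brouwer fixed-point adjustment of $(\beta,T)$. The only caveat is that the free-semigroup step you label ``routine'' is precisely where the paper's novelty partly lies (the recursive construction of dissipative inner products valid for all $d$ and all integer $k$, plus the density of the range via a degenerate elliptic problem), but this does not change the architecture of the argument.
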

\begin{remark}
\label{remark1}
The solution converges to the ODE blowup in the following sense. Namely, if we consider $\beta^{*}\neq 0$ then
\begin{align*}
\left\| \psi_{\beta^{*},T^{*}}^{*}(t,\,.\,) \right\|_{\dot{H}^{s}(\BB^{d}_{R(T^{*}-t)})} &\simeq (T^{*} -t)^{\frac{d}{2}-s_{p}-s} \,, \\
\left\| \pd_{t}\psi_{\beta^{*},T^{*}}^{*}(t,\,.\,) \right\|_{\dot{H}^{s-1}(\BB^{d}_{R(T^{*}-t)})} &\simeq (T^{*} -t)^{\frac{d}{2}-s_{p}-s} \,,
\end{align*}
for all $0\leq t < T^{*}$, so the bounds in \Cref{MainTHM} imply
\begin{align*}
\frac{
\left\| \psi(t,\,.\,) - \psi_{\beta^{*},T^{*}}^{*}(t,\,.\,) \right\|_{\dot{H}^{s}(\BB^{d}_{R(T^{*}-t)})}
}{
\left\| \psi_{\beta^{*},T^{*}}^{*}(t,\,.\,) \right\|_{\dot{H}^{s}(\BB^{d}_{R(T^{*}-t)})}
} &\lesssim
(T^{*}-t)^{\omega_{p} - \varepsilon} \,, \\
\frac{
\left\| \pd_{t}\psi(t,\,.\,) - \pd_{t}\psi_{\beta^{*},T^{*}}^{*}(t,\,.\,) \right\|_{\dot{H}^{s-1}(\BB^{d}_{R(T^{*}-t)})}
}{
\left\| \pd_{t}\psi_{\beta^{*},T^{*}}^{*}(t,\,.\,) \right\|_{\dot{H}^{s-1}(\BB^{d}_{R(T^{*}-t)})}
} &\lesssim
(T^{*}-t)^{\omega_{p} - \varepsilon} \,,
\end{align*}
for all $0\leq t < T^{*}$. That is, convergence of $\psi(t,\,.\,)$ to $\psi_{\beta^{*},T^{*}}^{*}(t,\,.\,)$ as $t\nearrow T^{*}$ takes place along shrinking time slices $\{t\}\times\BB^{d}_{R(T^{*}-t)}$ and is quantified in Sobolev norms relative to the blowup behaviour of $\psi_{\beta^{*},T^{*}}^{*}(t,\,.\,)$.
\end{remark}
\begin{remark}
Even for non-radial perturbations far away from the center, the evolving solution remains stable in the sense of \Cref{remark1} within the extended past light cone $\Omega^{1,d}_{R}(T^{*})$. This improves the above mentioned stability results within light cones. Also, smoothness of the initial data persists for the corresponding solution so that the evolution can be interpreted classically.
\end{remark}
\begin{remark}
\Cref{MainTHM} is especially of interest in the much less explored energy-supercritical case, where it describes the evolution for large data near the ODE blowup without any symmetry restrictions. Since we do not distinguish for the stability problem any Sobolev space above critical regularity $s_{\mathrm{c}} = \frac{d}{2} - s_{p}$, we state and prove our stability result for \emph{any} integer regularities $k$ above $\frac{d}{2}$. In fact, if $p>5$ then $k>\frac{d}{2}$ is optimal among Sobolev spaces of integer order above critical regularity.
\end{remark}
\subsection{Overview of related research}
\label{SecRelRes}
A lot of progress has been achieved in the study of nonlinear wave equations, which is preceded by many impressive works and methods. Here, we only touch upon recent literature that is related to singularity formation in the focusing semilinear wave equation.
\par\medskip
For energy-critical focusing semilinear wave equations, a non-trivial smooth, radial, static solution is known explicitly, called the \emph{ground state}. In the work \cite{MR2461508}, C.~Kenig and F.~Merle showed that this solution is at the energy threshold between finite-time blowup and scattering, also see \cite{MR2470571} for the threshold case. This has led to an intensive study of \emph{type II} blowup, i.e., blowup solutions with bounded energy norm. J.~Krieger, W.~Schlag and D. Tataru \cite{MR2494455} gave the first construction of radial type II blowup in $d=3$ as a rescaled ground state plus a radiation term. Other solutions of this form were constructed by M.~Hillairet and P.~Rapha\"{e}l \cite{MR3006642} in $d=4$ and J.~Jendrej \cite{MR3579128} in $d=5$. We just remark that by now, many more constructions are available. Concerning the stability of type II blowup, the solutions constructed in \cite{MR2494455}, \cite{MR3205646} for the three dimensional quintic wave equation are stable along a co-dimension one manifold of radial initial data \cite{MR4194893}, \cite{MR3351183}. The global dynamics near the ground state in $d=3,5$ has been studied in \cite{MR3007693}, \cite{MR3086065}, \cite{MR3177940}, \cite{MR3302610} by J.~Krieger, K.~Nakanishi and W.~Schlag. A complete classification of radial type II blowup in $d=3$ has been established in a line of research by T.~Duyckaerts, C.~Kenig and F.~Merle leading to \cite{MR2966655}, \cite{MR3359522}, where they showed that any type II solution decouples asymptotically into a sum of travelling waves and a radiation term. In the non-radial setting for dimensions $d = 4,5$, weaker versions \cite{MR3678502}, \cite{MR3959860} are available. Such dynamical behaviour is posed under a \emph{soliton resolution conjecture}. Recently, several works have contributed to proofs of this conjecture for radial solutions, including the ones by T. Duyckaerts, C.~Kenig and F.~Merle together with C.~Collot, H.~Jia and Y.~Martel \cite{MR4163362}, \cite{MR4289254}, \cite{MR4567713}, \cite{MR4397184}, \cite{2022arXiv220101848C} in odd space dimensions and $d=4,6$, as well as by J.~Jendrej and A.~Lawrie \cite{2022arXiv220309614J} for all $d\geq 4$.
\par\medskip
Moreover, for the energy-supercritical semilinear wave equation, type II blowup is excluded for radial solutions by results of T.~Duyckaerts, C.~Kenig and F.~Merle \cite{MR3158532} for $d=3$ and by B. Dodson and A.~Lawrie \cite{MR3401013} for $d=5$. In particular, what T.~Duyckaerts and T.~Roy  \cite{MR3766119} showed is that radial solutions of \Cref{NLWIntro} in $d=3$ for $p>5$ either have unbounded critical norm or scatter. We also mention that among the class of self-similar solutions with finite energy, O.~Kavian and F.~Weissler \cite{MR1077471} showed that \Cref{NLWIntro} admits no non-trivial, real-valued, radially symmetric solution if $p\geq 1 + \frac{4}{d-2}$. In contrast to type II behaviour, solutions, whose critical norm becomes unbounded as the maximal time of existence is approached, are characterized as \emph{type I} blowup. We have already encountered the ODE blowup mechanism as such an example. C.~Collot \cite{MR3778126} used concentration of a soliton profile to describe another type of blowup mechanism for the radial energy-supercritical wave equation in dimensions $d\geq 11$ for odd powers above the Joseph-Lundgren exponent. Regarding the construction of solutions for the energy-supercritical wave equation, some results can be found in \cite{MR3736488}, \cite{MR3872322}, also see \cite{2022arXiv221113699K} for a countable family of finite co-dimensional stable self-similar blowup solutions. Recently in \cite{MR4292964} and \cite{MR4713110}, new examples of radial self-similar blowup solutions were discovered in closed form for the cubic and quadratic wave equation in all energy-supercritical space dimensions. For the cubic nonlinearity in $d=7$, I.~Glogi\'{c} and B.~Sch\"{o}rkhuber gave a proof of co-dimension one stability of this solution in a past light cone outside of radial symmetry. Together with E.~Csobo, they established the analogous stability result for the quadratic wave equation in $d=9$. The role of these explicit solutions as a threshold between ODE blowup and dispersion has been investigated numerically in \cite{MR4105355}. Furthermore, in the article \cite{MR4700297} on the radial quadratic wave equation in the lowest energy-supercritical space dimension $d=7$, the conditional blowup stability has been extended beyond the blowup time to a region approaching the Cauchy horizon of the singularity. This is based on a formulation in a novel coordinate system, see \cite{MR4338226} and \cite{MR4661000} for implementations in wave maps and Yang-Mills equations.
\subsection{Outline of the stability problem}
\label{SubSecOutline}
In order to study solutions of the nonlinear equation
\begin{equation*}
( - \pd_{t}^{2} + \Laplace_{x} ) \psi(t,x) + F(\psi(t,x)) = 0
\end{equation*}
that evolve from perturbations $(f,g)$ of the blowup $(\psi_{0,1}^{*}, \pd_{0} \psi_{0,1}^{*})$ at $t=0$, we introduce for parameters $T\in\RR_{>0}$ and $\beta\in\mathbb{B}^{d}_1$ a perturbation variable $u$ through a profile decomposition
\begin{equation*}
\psi = \psi_{\beta,T}^{*} + u \,.
\end{equation*}
This suggests to split the power nonlinearity $F(z) = z|z|^{p-1}$ according to
\begin{equation*}
F\big(\psi_{\beta,T}^{*}(t,x) + u(t,x)\big) = F\big(\psi_{\beta,T}^{*}(t,x)\big) + \mathcal{V}_{\beta,T}(t,x) u(t,x) + \mathcal{N}_{\beta,T}(u)(t,x)
\end{equation*}
into a linearized part with smooth potential
\begin{equation}
\label{PhysPot}
\mathcal{V}_{\beta,T}(t,x) \coloneqq F'\big(\psi_{\beta,T}^{*}(t,x)\big)
\end{equation}
and a nonlinear remainder given by
\begin{equation}
\label{PhysNonlin}
\mathcal{N}_{\beta,T}(u)(t,x) \coloneqq F\big(\psi_{\beta,T}^{*}(t,x)+u(t,x)\big) - F'\big(\psi_{\beta,T}^{*}(t,x)\big)u(t,x) - F\big(\psi_{\beta,T}^{*}(t,x)\big) \,.
\end{equation}
In terms of $u$, this leads to the Cauchy problem 
\begin{equation}
\label{CauchyProblem2}
\renewcommand{\arraystretch}{1.2}
\left\{
\begin{array}{rcl}
0 &=& \Big(-\pd_{t}^{2}+\Laplace_{x}+\mathcal{V}_{\beta,T}(t,x)\Big)u(t,x) + \mathcal{N}_{\beta,T}(u)(t,x) \,, \\
u(0,x) &=& f(x) + \psi_{0,1}^{*}(0,x) - \psi_{\beta,T}^{*}(0,x) \,, \\
(\pd_{0}u)(0,x) &=& g(x) + (\pd_{0}\psi_{0,1}^{*})(0,x) - (\pd_{0}\psi_{\beta,T}^{*})(0,x) \,.
\end{array}
\right.
\end{equation}
As noted above, due to the spatial singularities of $\psi_{\beta,T}^{*}$ and finite speed of propagation, this problem has to be reasonably posed within light cones. Employing classical similarity coordinates, defined via
\begin{equation}
\label{SimCo}
t = T-T\ee^{-\tau} \,, \quad x = T\ee^{-\tau}\xi \,, \qquad (\tau,\xi) \in [0,\infty)\times\BB^{d}_{R} \,,
\end{equation}
allows us to track the evolution of initial data actually in an extended past light cone
\begin{equation*}
\Omega^{1,d}_{R}(T) \coloneqq \left\{ (t,x) \in \RR^{1,d} \mid 0\leq t < T, \, |x| \leq R(T - t) \right\} \,, \qquad T>0 \,, \quad R\geq 1 \,.
\end{equation*}
The underlying analysis is split into three sections and implemented as follows.
\begin{itemize}[itemsep = 1em, topsep = 1em]
\item\textbf{\emph{The free part}} associated to problem \eqref{CauchyProblem2} is determined by the wave operator $-\pd_{t}^{2} + \Laplace_{x}$. We introduce with \Cref{TransitionMechanism} a convenient relation for the transition between the classical formulation of the wave operator and its rescaled incarnation within a first-order formalism in similarity coordinates \eqref{SimCo} as a densely defined linear operator $\mathbf{L}_{d,p,k,R}$ in the Sobolev space $H^{k}(\BB^{d}_{R})\times H^{k-1}(\BB^{d}_{R})$. With this operator, the aim is to generate a semigroup with good growth properties. We will employ the Lumer-Phillips theorem which requires two essential ingredients. First, we give a novel and systematic construction of dissipative inner products on the Sobolev spaces. This delicately exploits structural features of the wave equation outside of radial symmetry. Unlike previous constructions, we can encompass all space dimensions and all non-negative integer regularities. Secondly, we need a density result for the range of $\lambda\mathbf{I}-\mathbf{L}_{d,p,k,R}$. This can be reduced to the construction of a smooth approximate solution to a degenerate elliptic problem, which can be solved by means of a decomposition into spherical harmonics. This was first noted in \cite{MR3218816} in the context of a cubic wave equation in three space dimensions. We finish the section with \Cref{FreeSemigroupTHM}, where strongly continuous semigroups for the free wave flow in similarity coordinates are presented together with exponential growth bounds.\par
\item\emph{\textbf{The linearized part}} is composed of the wave operator with a smooth self-similar potential, $-\pd_{t}^{2}+\Laplace_{x}+\mathcal{V}_{\beta,T}$. This operator fits immediately into the functional analytic setting as a compact perturbation $\mathbf{L}_{\beta} = \overline{\mathbf{L}_{d,p,k,R}} + \mathbf{L}_{\beta}'$ which is the generator of the semigroup $\mathbf{S}_{\beta}(\tau)$ for the linearized wave flow in similarity coordinates. Since $d,p,k,R$ are fixed and do not vary, we omit them in our notation from here on. The Lorentz symmetry and time translation symmetry of \Cref{NLWIntro} induce the unstable eigenvalues $0$ and $1$ of $\mathbf{L}_{\beta}$. To deal with those instabilities in the wave evolution, we perform a spectral analysis for $\mathbf{L}_{\beta}$. In the spectral theorem \ref{SpecThmLbeta} it is proved that $\sigma(\mathbf{L}_{\beta}) \cap \mathbb{H}_{\omega_{0}} = \{0,1\}$ for appropriate right half-planes and the geometric eigenspaces and ranges of the associated spectral projections are computed explicitly. In case $\beta=0$, this follows from analysing solutions of hypergeometric differential equations. Then, we exploit compactness of the perturbation $\mathbf{L}_{\beta}'$ and Lipschitz continuous dependence with respect to the Lorentz parameter $\beta$ to infer the spectral theorem for small parameter values from abstract perturbation theory. Having this spectral information at hand, we conclude this section with \Cref{stableEvo} which gives a complete description of the linearized dynamics near the ODE blowup. Namely, we identify a family of finite-dimensional unstable subspaces on whose complement the semigroup $\mathbf{S}_{\beta}(\tau)$ is \emph{uniformly exponentially stable} with respect to the Lorentz parameter. The onset of the Lorentz parameter $\beta$ is a non-radial effect and the uniformity in the growth bounds with respect to it is crucial for establishing estimates in the ensuing nonlinear analysis. To accomplish this, we have provided in \Cref{AppendixA} a generalization of the Gearhart-Pr\"{u}ss-Greiner Theorem for semigroups that depend on additional parameters.\par
\item\emph{\textbf{The nonlinear part}} treats the full Cauchy problem \eqref{CauchyProblem2} in similarity coordinates. Using Duhamel's formula and the semigroups $\mathbf{S}_{\beta}(\tau)$, this problem turns into a fixed-point problem
\begin{equation*}
\mathbf{u}(\tau) = \mathbf{S}_{\beta}(\tau) \mathbf{U}_{\beta,T}(\mathbf{f}) + \int_{0}^{\tau} \mathbf{S}_{\beta}(\tau-\tau')\mathbf{N}_{\beta}(\mathbf{u}(\tau')) \dd \tau'\,, \qquad \tau \geq 0 \,,
\end{equation*}
with a corresponding initial data operator $\mathbf{U}_{\beta,T}(\mathbf{f})$ and a nonlinearity $\mathbf{N}_{\beta}$ which is locally Lipschitz continuous in $H^{k}(\BB^{d}_{R})\times H^{k-1}(\BB^{d}_{R})$. Now the task is to find parameters $\beta$ and $T$ so that this fixed-point equation has a global \emph{mild} solution $\mathbf{u}\in C\big( [0,\infty), H^{k}(\BB^{d}_{R})\times H^{k-1}(\BB^{d}_{R}) \big)$. In general, the linear instabilities obstruct the existence of such a global solution. Therefore, we stabilize the above equation by projecting it onto the stable subspace which produces in the Duhamel formula a natural correction term $\mathbf{C}_{\beta,T}\big( \mathbf{U}_{\beta,T}(\mathbf{f}),\mathbf{u}\big)$ that is subtracted from the initial data operator. Via this modification, the stabilized fixed-point problem becomes well-posed for all parameters $\beta$ and $T$ in some open neighbourhood of $0$ and $1$, respectively. To extract among those solutions $\mathbf{u}_{\beta,T}$ the one to the original fixed-point equation, the idea is to adjust the parameters $\beta$ and $T$ \emph{simultaneously} so that the correction term vanishes. This is equivalent to showing that the linear functional, which is obtained from dual pairing with the correction term, is the zero functional. Using Brouwer's fixed-point theorem, the existence of such $\beta^{*}$ and $T^{*}$ are verified in the proof of \Cref{StableNonlinFlowMild}. Finally, restriction properties of the semigroup and the structure of the nonlinearity enable us to upgrade the mild solution to a jointly smooth classical solution $\mathbf{u}\in C^{\infty}\big( \overline{(0,\infty)\times\BB^{d}_{R}} \big)^{2}$. By means of a transition relation to the classical formulation, this translates effortlessly into a proof of \Cref{MainTHM}.
\end{itemize}
Since we can deal with all linear instabilities in the nonlinear wave evolution at once, our adaptation of the Lyapunov-Perron method is strikingly simpler than previous approaches via modulation theory. Also, it generalizes easily to any finite number of linear instabilities. Lastly, we would like to point out that our functional setting for the free wave equation in similarity coordinates provides the analytical basis for extensions to other self-similar coordinate systems. This proposes new and exciting directions for the study of self-similar blowup in other nonlinear wave equations, e.g., the wave maps equation and Yang-Mills equations.
\subsection{Notation}
In this paper, Einstein summation convention is imposed.
\par\medskip
The set of natural numbers and real numbers is denoted by $\mathbb{N}$, $\mathbb{R}$, respectively. In $d$-dimensional Euclidean space $\RR^{d}$, the set $\mathbb{B}^{d}_{R}\subset\RR^{d}$ denotes an open ball of radius $R>0$ centred about $0\in\RR^{d}$. The respective closed ball is given by $\overline{\mathbb{B}^{d}_{R}}\subset\RR^{d}$. The $(d-1)$-dimensional sphere of radius $R>0$ is denoted by $\mathbb{S}^{d-1}_{R}$ with spherical measure $\mathrm{d}\sigma^{d-1}_{R}(\omega)$. The unit sphere is denoted by $\mathbb{S}^{d-1}$ with spherical measure $\mathrm{d}\sigma(\omega)$.\par
\medskip
In the complex plane $\mathbb{C}$, given $\omega\in\RR$ we reserve the notation $\mathbb{H}_{\omega} = \{ z\in \mathbb{C} \mid \Re(z) > \omega \}$ for a right half-plane and $\overline{\mathbb{H}_{\omega}} = \{ z\in \mathbb{C} \mid \Re(z) \geq \omega \}$ a closed right half-plane. For $a\in\CC$ we denote by $\mathbb{D}_{r}(a) = \{ z\in \CC \mid |a-z| < r \}$ the open disk of radius $r>0$ and similarly the closed disk $\overline{\mathbb{D}_{r}(a)} = \{ z\in \CC \mid |a-z| \leq r \}$.\par
\medskip
Let $A$ be a set and $a_{\alpha},b_{\alpha}\in \RR_{\geq 0}$ for $\alpha\in A$. We define that the relation $a_{\alpha} \lesssim b_{\alpha}$ holds if there exists a uniform constant $C>0$ such that the inequality $a_{\alpha} \leq C b_{\alpha}$ holds for all $\alpha\in A$. As usual, the relation $a_{\alpha} \gtrsim b_{\alpha}$ is defined as $a_{\alpha} \lesssim b_{\alpha}$ and $a_{\alpha}\simeq b_{\alpha}$ is defined as $a_{\alpha} \lesssim b_{\alpha}$ and $a_{\alpha} \gtrsim b_{\alpha}$, respectively.\par
\medskip
If $\Omega\subset\RR^{d}$ is open and $f:\Omega\rightarrow\RR$ is differentiable, we denote by $\pd_{i} f: \Omega\rightarrow\RR$ the partial derivative with respect to the $i$-th slot. For a multi-index $\alpha\in\NN_{0}^{d}$ we set $\pd^{\alpha} f = \pd_{1}^{\alpha_{1}}\ldots\pd_{d}^{\alpha_{d}} f$. The gradient of $f$ is given by $(\pd f) = (\pd_{1} f, \ldots, \pd_{d} f)$. For functions $f$ in one variable we write $f'$ for the derivative. For functions $u$ on Minkowski space $\RR^{1,d}$, we denote by $\pd_{0} u$ the partial derivative with respect to the time slot, i.e., $(\pd_{0} u)(t,x) = \pd_{t} u (t,x)$. For facts about spherical harmonics $Y^{\ell}_{m}$ on $\mathbb{S}^{d-1}$ we refer to \cite[Appendix A]{MR3537340} and the textbook \cite{MR2934227}. For hypergeometric functions ${}_{2}F_{1}$ we adopt the conventions from \cite[Chapter 15]{MR2723248}.\par
\medskip
For a domain $\Omega\subset\RR^{d}$ we introduce with $C^{\infty}(\overline{\Omega})$ the set of all smooth functions on $\Omega$ all whose derivatives are continuous up to the boundary of $\Omega$. If $\Omega$ is bounded, we define for $k\in\mathbb{N}\cup\{0\}$ the classical inhomogeneous Sobolev norm and homogeneous Sobolev seminorm
\begin{equation*}
\| f \|_{H^{k}(\Omega)} = \left( \sum_{0\leq|\alpha|\leq k} \| \pd^{\alpha} f \|_{L^{2}(\Omega)}^{2} \right)^{\frac{1}{2}}
\quad\text{and}\quad
\| f \|_{\dot{H}^{k}(\Omega)} = \left( \sum_{|\alpha|=k} \| \pd^{\alpha} f \|_{L^{2}(\Omega)}^{2} \right)^{\frac{1}{2}}
\end{equation*}
for $f\in C^{\infty}(\overline{\Omega})$, respectively. The Sobolev space $H^{k}(\Omega)$ is then defined as the completion of $C^{\infty}(\overline{\Omega})$ with respect to the inhomogeneous Sobolev norm.
\par
\medskip
We use boldface notation for tuples of functions, e.g.,
\begin{equation*}
\mathbf{f} \equiv (f_{1}, f_{2}) \equiv
\begin{bmatrix}
f_{1} \\
f_{2}
\end{bmatrix}
\qquad\text{or}\qquad
\mathbf{u}(t,\,.\,) \equiv \big( u_{1}(t,\,.\,),
u_{2}(t,\,.\,) \big) \equiv
\begin{bmatrix}
u_{1}(t,\,.\,) \\
u_{2}(t,\,.\,)
\end{bmatrix}
\,.
\end{equation*}
Linear operators that act on tuples of functions are also displayed in boldface notation. For instance, when $\mathfrak{H} = H_{1} \times H_{2}$ is a product of Hilbert spaces then $\mathbf{L}: \dom(\mathbf{L}) \subset \mathfrak{H} \rightarrow \mathfrak{H}$ denotes a linear operator where its domain $\dom(\mathbf{L}) \subset \mathfrak{H}$ is a linear subspace of $\mathfrak{H}$. The set $\mathfrak{L}(\mathfrak{H})$ consists of all bounded linear operators on $\mathfrak{H}$ equipped with the operator norm. The resolvent set and spectrum of a closed linear operator $\mathbf{L}$ are denoted by $\varrho(\mathbf{L})$ and $\sigma(\mathbf{L})$, respectively. The resolvent map is given by $\mathbf{R}_{\mathbf{L}}: \varrho(\mathbf{L}) \rightarrow \mathfrak{L}(\mathfrak{H})$, $z\mapsto (z\mathbf{I}-\mathbf{L})^{-1}$. For spectral theory of linear operators and underlying concepts we refer to \cite{MR1335452}, in particular Chapter 3 thereof for results about the separation of the isolated point spectrum. Theory for strongly continuous operator semigroups is treated in the textbook \cite{MR1721989}.
\section{The free wave evolution in similarity coordinates}
In the following, a functional analytic framework is implemented for the flow associated to the wave operator
\begin{equation*}
(\Box u)(t,x) \coloneqq (-\pd_t^{2} + \Laplace_{x}) u(t,x) \,, \quad u\in C^{\infty}\big([0,T)\times\RR^{d}\big) \,,
\end{equation*}
in classical similarity coordinates
\begin{equation*}
{\chi\mathstrut}_{T}: [0,\infty)\times\RR^{d}\rightarrow\RR^{1,d}\,,\quad(\tau,\xi)\mapsto(T-T\ee^{-\tau},T\ee^{-\tau}\xi) \,.
\end{equation*}
Note that the map ${\chi\mathstrut}_{T}$ defines a diffeomorphism onto its image $[0,T)\times\RR^{d}$. If $v\in C^{\infty}\big((0,\infty)\times\RR^{d}\big)$ is related to $u\in C^{\infty}\big((0,T)\times\RR^{d}\big)$ via
\begin{equation*}
v = u \circ{\chi\mathstrut}_{T} \,,
\qquad\text{then}\qquad
\Box u\circ{\chi\mathstrut}_{T} = \Box_{{\chi\mathstrut}_{T}} v \,,
\end{equation*}
where
\begin{equation}
\label{LaplaceBeltrami}
(\Box_{{\chi\mathstrut}_{T}}v)(\tau,\xi) \coloneqq
\Big( \frac{\ee^\tau}{T} \Big)^{2} \Big( - \pd_\tau^{2} -2\xi^{i}\pd_{\xi^{i}}\pd_\tau + (\delta^{ij}-\xi^{i}\xi^{j})\pd_{\xi^{i}}\pd_{\xi^{j}} - \pd_\tau - 2\xi^{i}\pd_{\xi^{i}} \Big) v(\tau,\xi) \,.
\end{equation}
This can be cast in a first-order formalism by defining rescaled evolution variables
\begin{equation}
\label{EvoVar}
u_{1}(\tau,\xi) = (T\ee^{-\tau})^{s_{p}} (u\circ{\chi\mathstrut}_{T})(\tau,\xi) \,, \qquad
u_{2}(\tau,\xi) = (T\ee^{-\tau})^{s_{p}+1} (\pd_{0}u\circ{\chi\mathstrut}_{T})(\tau,\xi) \,,
\end{equation}
where for $p>1$ the scaling
\begin{equation*}
s_{p} = \frac{2}{p-1}
\end{equation*}
is motivated by the aim to detect self-similar blowup solutions of a scaling invariant equation. Utilising the transformations for spacetime derivatives,
\begin{align*}
&&
(\pd_{0} u\circ{\chi\mathstrut}_{T})(\tau,\xi) &= \frac{\ee^\tau}{T} (\pd_{\tau} + \xi^{i}\pd_{\xi^{i}}) (u\circ{\chi\mathstrut}_{T})(\tau,\xi) \,, & \\
&&
(\pd_{i} u\circ{\chi\mathstrut}_{T})(\tau,\xi) &= \frac{\ee^\tau}{T}\pd_{\xi^{i}} (u\circ{\chi\mathstrut}_{T})(\tau,\xi) \,, & i = 1,\ldots,d,
\end{align*}
we obtain
\begin{equation*}
\pd_{\tau} u_{1}(\tau,\xi) = -s_{p} u_{1}(\tau,\xi) - \xi^{i}\pd_{\xi^{i}}u_{1}(\tau,\xi) + u_{2}(\tau,\xi)
\end{equation*}
and for the wave operator
\begin{align*}
(\Box u\circ{\chi\mathstrut}_{T})(\tau,\xi) = \Big( \frac{\ee^\tau}{T} \Big)^{s_{p}+2} \Big(&
- \pd_{\tau} u_{2}(\tau,\xi)
- \xi^{i} \pd_{\xi^{i}} u_{2}(\tau,\xi)
- (s_{p}+1) u_{2}(\tau,\xi) \\&
+ \Laplace_{\xi} u_{1}(\tau,\xi) \Big) \,.
\end{align*}
This leads to the following operation.
\begin{definition}
\label{WaveFlowOperation}
Let $(d,p,R)\in\NN\times\RR_{>1}\times\RR_{\geq 1}$. For $\mathbf{f}\in C^{\infty}(\overline{\BB^{d}_{R}})^{2}$ we define
\begin{align*}
\mathbf{L}_{d,p}\mathbf{f} \in C^{\infty}(\overline{\BB^{d}_{R}})^{2}
\quad\text{by}\quad
(\mathbf{L}_{d,p} \mathbf{f})(\xi) =
\begin{bmatrix}
- s_{p} f_{1}(\xi)
- \xi^{i} (\pd_{i} f_{1})(\xi)
+ f_{2}(\xi) \\
(\Laplace f_{1})(\xi) -
(s_{p} + 1) f_{2}(\xi) -
\xi^{i} (\pd_{i} f_{2})(\xi)
\end{bmatrix}
\,.
\end{align*}
\end{definition}
As a result, the relation
\begin{equation}
\label{TransitionMechanism}
\pd_\tau
\begin{bmatrix}
u_{1}(\tau,\,.\,) \\
u_{2}(\tau,\,.\,)
\end{bmatrix}
=
\mathbf{L}_{d,p}
\begin{bmatrix}
u_{1}(\tau,\,.\,) \\
u_{2}(\tau,\,.\,)
\end{bmatrix}
-
\begin{bmatrix}
0 \\
(T\ee^{-\tau})^{s_{p}+2} (\Box u \circ {\chi\mathstrut}_{T})(\tau,\,.\,)
\end{bmatrix}
\end{equation}
holds between the above defined evolution variables.
\subsection{Inner products and dissipative estimates}
We show that the operation in \Cref{WaveFlowOperation} can be realized as the generator of a strongly continuous semigroup for the free wave flow in classical similarity coordinates. This requires a dissipative estimate.
\subsubsection{Standard energy}
The dissipativity will be derived from standard energy estimates for the free wave equation. On bounded domains, however, the standard energy only provides a seminorm and will not give rise to a normed space, let alone a Banach space. This can be fixed by a suitably chosen boundary term.
\begin{definition}
\label{StandardEnergy}
Let $(d,R)\in\NN\times\RR_{>0}$. Fix $\varepsilon_{1}>0$. We define a sesquilinear form on $C^{\infty}(\overline{\BB^{d}_{R}})^{2}$ by
\begin{equation*}
\Big( \mathbf{f} \,\Big|\, \mathbf{g} \Big)_{\mathfrak{E}^{1}(\BB^{d}_{R})} =
\int_{\BB^{d}_{R}} \overline{\pd^{i} f_{1}}\pd_{i} g_{1} + \int_{\BB^{d}_{R}} \overline{f_{2}} g_{2} + 2 \varepsilon_{1} R^{-1} \int_{\mathbb{S}^{d-1}_{R}} \overline{f_{1}} g_{1}
\end{equation*}
and set
\begin{equation*}
\| \mathbf{f} \|_{{\mathfrak{E}^{1}(\BB^{d}_{R})}} \coloneqq \sqrt{\Big( \mathbf{f} \,\Big|\, \mathbf{f} \Big)_{\mathfrak{E}^{1}(\BB^{d}_{R})}} \,.
\end{equation*}
\end{definition}
To see that the boundary term ensures positive definiteness, we give an elementary proof of the following trace inequality.
\begin{lemma}
\label{TraceInequality}
Let $(d,R)\in\NN\times\RR_{>0}$. Then
\begin{equation*}
\| f \|_{H^{1}(\BB^{d}_{R})} \simeq \| \pd f \|_{L^{2}(\BB^{d}_{R})} + \| f \|_{L^{2}(\mathbb{S}^{d-1}_R)}
\end{equation*}
for all $f\in C^{\infty}(\overline{\BB^{d}_{R}})$.
\end{lemma}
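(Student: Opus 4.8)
The plan is to establish both inequalities separately. The direction $\|\pd f\|_{L^{2}(\BB^{d}_{R})} + \|f\|_{L^{2}(\mathbb{S}^{d-1}_{R})} \lesssim \|f\|_{H^{1}(\BB^{d}_{R})}$ is the standard trace theorem: the first summand is trivially controlled by the $H^{1}$-norm, and the bound on the boundary term is exactly the classical trace estimate $\|f\|_{L^{2}(\mathbb{S}^{d-1}_{R})} \lesssim \|f\|_{H^{1}(\BB^{d}_{R})}$, which I would prove by an elementary divergence-theorem argument (see below) rather than quoting a black box, since the point of the lemma is to be self-contained.

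For the harder direction, $\|f\|_{L^{2}(\BB^{d}_{R})} \lesssim \|\pd f\|_{L^{2}(\BB^{d}_{R})} + \|f\|_{L^{2}(\mathbb{S}^{d-1}_{R})}$, the idea is a fundamental-theorem-of-calculus computation in the radial variable. For $\xi \in \BB^{d}_{R}$, writing $\xi = r\omega$ with $r = |\xi|$ and $\omega \in \mathbb{S}^{d-1}$, one has
\begin{equation*}
r^{d-1} |f(r\omega)|^{2} = R^{d-1}|f(R\omega)|^{2} - \int_{r}^{R} \pd_{\rho}\big( \rho^{d-1}|f(\rho\omega)|^{2} \big) \dd\rho,
\end{equation*}
and expanding the derivative gives $\pd_{\rho}(\rho^{d-1}|f|^{2}) = (d-1)\rho^{d-2}|f|^{2} + 2\rho^{d-1}\Re(\overline{f}\,\pd_{\rho}f)$, where $\pd_{\rho}f(\rho\omega) = \omega\cdot(\pd f)(\rho\omega)$. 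The first term on the right is nonnegative, so it can simply be dropped to obtain an upper bound; the second is estimated by $2\rho^{d-1}|f(\rho\omega)|\,|(\pd f)(\rho\omega)|$. One then divides by $r^{d-1}$, multiplies by $r^{d-1}$ again and integrates $\dd r\,\dd\sigma(\omega)$ over $(0,R)\times\mathbb{S}^{d-1}$ — equivalently, one uses that for a nonnegative function $h$ on $(0,R)$ the bound $r^{d-1}h(r) \le R^{d-1}h(R) + \int_{r}^{R}|\pd_{\rho}(\rho^{d-1}h(\rho))|\dd\rho$ integrates against $\dd r$ to yield $\int_0^R r^{d-1} h(r)\dd r \lesssim R^{d} h(R) + R\int_0^R \rho^{d-1}|\ldots|\dd\rho$. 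Carrying this through and applying Cauchy--Schwarz to the cross term $\int \rho^{d-1}|f||\pd f| \le \|f\|_{L^2(\BB^d_R)}\|\pd f\|_{L^2(\BB^d_R)}$ produces an inequality of the shape $\|f\|_{L^2}^2 \lesssim \|f\|_{L^2(\mathbb{S}^{d-1}_R)}^2 + \|f\|_{L^2}\|\pd f\|_{L^2}$, from which Young's inequality absorbs the $\|f\|_{L^2}$ factor on the right into the left-hand side.

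The same radial identity, read the other way (bounding $R^{d-1}|f(R\omega)|^2$ by $r^{d-1}|f(r\omega)|^2$ plus the integral and then integrating in $r$ over, say, $(R/2,R)$, or integrating over all of $(0,R)$ after multiplying by a suitable weight), gives the trace bound $\|f\|_{L^2(\mathbb{S}^{d-1}_R)} \lesssim \|f\|_{H^1(\BB^d_R)}$ needed for the easy direction, so in fact one computation serves both purposes. The main thing to be careful about — really the only subtlety — is the behaviour of the weights $r^{d-1}$ near $r=0$: dropping the nonnegative term $(d-1)\rho^{d-2}|f|^2$ rather than trying to control it is what keeps the argument clean and dimension-uniform, and since all implied constants are allowed to depend on $d$ and $R$ there is no issue with the $R$-dependent factors generated along the way. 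I expect no genuine obstacle here; it is an elementary one-dimensional estimate dressed in polar coordinates.
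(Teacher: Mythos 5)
Your proposal is correct and is essentially the paper's argument in disguise: the paper applies the divergence theorem to $\xi^{i}|f(\xi)|^{2}$ over the ball, which is exactly the integrated form of your radial fundamental-theorem-of-calculus identity, and both proofs conclude by estimating the same cross term $\int \Re\big(\overline{\xi^{i}\pd_{i}f}\,f\big)$ and absorbing $\|f\|_{L^{2}}$ via Young's inequality. The only cosmetic differences are that the paper keeps the exact identity and rearranges it for the reverse direction rather than dropping the nonnegative term and restricting to an annulus, but this does not change the substance.
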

\begin{proof}
\begin{enumerate}[wide,itemsep=1em,topsep=1em]
\item[``$\gtrsim$'':] The divergence theorem gives
\begin{align*}
\int_{\pd \BB^{d}_{R}} |f(\xi)|^{2} \dd\sigma^{d-1}_{R}(\xi) &= \frac{1}{R} \int_{\mathbb{B}^{d}_{R}} \pd_{\xi^{i}} \big( \xi^{i} |f(\xi)|^{2} \big) \dd\xi \\
&= \frac{d}{R} \int_{\mathbb{B}^{d}_{R}} |f(\xi)|^{2} \dd\xi + \frac{2}{R} \int_{\mathbb{B}^{d}_{R}} \Re\big( \overline{\xi^{i} \pd_{\xi^{i}} f(\xi)} f(\xi) \big) \dd\xi \,.
\end{align*}
Thus
\begin{equation*}
\int_{\pd \BB^{d}_{R}} |f(\xi)|^{2} \dd\sigma^{d-1}_{R}(\xi)  \leq \frac{d+1}{R} \int_{\mathbb{B}^{d}_{R}} |f(\xi)|^{2} \dd\xi + R \int_{\mathbb{B}^{d}_{R}} |(\pd f)(\xi)|^{2} \dd\xi \,.
\end{equation*}
\item[``$\lesssim$'':] On the other hand, rearranging the equality above yields for any $\varepsilon>0$
\begin{align*}
\int_{\mathbb{B}^{d}_{R}} |f(\xi)|^{2} \dd\xi &= \frac{R}{d} \int_{\pd \BB^{d}_{R}} |f(\xi)|^{2} \dd\sigma^{d-1}_{R}(\xi)  - \frac{2}{d} \int_{\mathbb{B}^{d}_{R}} \Re\big( \overline{\xi^{i} \pd_{\xi^{i}} f(\xi)} f(\xi) \big) \dd\xi \\&\leq
\frac{R}{d} \int_{\pd \BB^{d}_{R}} |f(\xi)|^{2} \dd\sigma^{d-1}_{R}(\xi) + \frac{2\varepsilon}{d} \int_{\mathbb{B}^{d}_{R}} |f(\xi)|^{2} \dd\xi \\&\indent+ \frac{R^{2}}{2\varepsilon d} \int_{\mathbb{B}^{d}_{R}} |(\pd f)(\xi)|^{2} \dd\xi \,.
\end{align*}
We can choose $\varepsilon=\frac{d}{4}>0$ to conclude
\begin{equation*}
\int_{\mathbb{B}^{d}_{R}} |f(\xi)|^{2} \dd\xi \leq \frac{2R}{d} \int_{\pd \BB^{d}_{R}} |f(\xi)|^{2} \dd\xi + \Big( \frac{2R}{d} \Big)^{2} \int_{\mathbb{B}^{d}_{R}} |(\pd f)(\xi)|^{2} \dd\xi \,.
\qedhere
\end{equation*}
\end{enumerate}
\end{proof}
It follows that the sesquilinear form in \Cref{StandardEnergy} is an inner product with induced norm equivalent to a Sobolev norm.
\begin{lemma}
\label{EnergyNorm}
Let $(d,R)\in\NN\times\RR_{>0}$. Consider the form $\| \,.\, \|_{\mathfrak{E}^{1}(\BB^{d}_{R})}$ in \Cref{StandardEnergy} for a fixed $\varepsilon_{1}>0$. Then
\begin{equation*}
\| \mathbf{f} \|_{\mathfrak{E}^{1}(\BB^{d}_{R})}
\simeq
\big\| (f_{1},f_{2}) \big\|_{H^{1}(\BB^{d}_{R})\times L^{2}(\BB^{d}_{R})}
\end{equation*}
for all $\mathbf{f} = (f_{1},f_{2}) \in C^{\infty}(\overline{\BB^{d}_{R}})^{2}$.
\end{lemma}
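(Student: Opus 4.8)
The plan is to reduce the claimed equivalence entirely to the trace inequality of \Cref{TraceInequality}. First I would write out the square of the candidate norm explicitly: by \Cref{StandardEnergy},
\[
\| \mathbf{f} \|_{\mathfrak{E}^{1}(\BB^{d}_{R})}^{2} = \| \pd f_{1} \|_{L^{2}(\BB^{d}_{R})}^{2} + \| f_{2} \|_{L^{2}(\BB^{d}_{R})}^{2} + 2\varepsilon_{1} R^{-1} \| f_{1} \|_{L^{2}(\mathbb{S}^{d-1}_{R})}^{2} \,.
\]
Since $d$, $R$ and $\varepsilon_{1}$ are fixed, $2\varepsilon_{1}R^{-1}$ is a fixed positive constant, so the right-hand side is comparable, with implicit constants depending only on $d$, $R$ and $\varepsilon_{1}$, to $\| \pd f_{1} \|_{L^{2}(\BB^{d}_{R})}^{2} + \| f_{1} \|_{L^{2}(\mathbb{S}^{d-1}_{R})}^{2} + \| f_{2} \|_{L^{2}(\BB^{d}_{R})}^{2}$.

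Next I would invoke the elementary equivalence $(a+b)^{2} \simeq a^{2}+b^{2}$ for $a,b\geq 0$ together with \Cref{TraceInequality} to get $\| \pd f_{1} \|_{L^{2}(\BB^{d}_{R})}^{2} + \| f_{1} \|_{L^{2}(\mathbb{S}^{d-1}_{R})}^{2} \simeq \| f_{1} \|_{H^{1}(\BB^{d}_{R})}^{2}$. Adding $\| f_{2} \|_{L^{2}(\BB^{d}_{R})}^{2}$ and chaining with the previous step then yields $\| \mathbf{f} \|_{\mathfrak{E}^{1}(\BB^{d}_{R})}^{2} \simeq \| f_{1} \|_{H^{1}(\BB^{d}_{R})}^{2} + \| f_{2} \|_{L^{2}(\BB^{d}_{R})}^{2}$, and taking square roots gives the assertion.

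Finally I would record the byproduct that $\big(\,\cdot\mid\cdot\,\big)_{\mathfrak{E}^{1}(\BB^{d}_{R})}$ is indeed an inner product: it is plainly a nonnegative sesquilinear form, and if $\| \mathbf{f} \|_{\mathfrak{E}^{1}(\BB^{d}_{R})} = 0$ then $\pd f_{1}\equiv 0$ on $\BB^{d}_{R}$, hence $f_{1}$ is constant, while $f_{1}|_{\mathbb{S}^{d-1}_{R}} = 0$ forces $f_{1}\equiv 0$, and $f_{2}\equiv 0$. There is no genuine obstacle in this argument: the only substantive ingredient is the boundary control of the interior $L^{2}$ norm, which is exactly \Cref{TraceInequality} and has already been established; everything else is bookkeeping of the fixed constants $d$, $R$, $\varepsilon_{1}$, together with the observation that the implicit constants in $\simeq$ are uniform over $\mathbf{f}\in C^{\infty}(\overline{\BB^{d}_{R}})^{2}$, which they manifestly are.
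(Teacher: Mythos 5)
Your proposal is correct and follows essentially the same route as the paper: expand $\|\mathbf{f}\|_{\mathfrak{E}^{1}(\BB^{d}_{R})}^{2}$ into its three terms and apply \Cref{TraceInequality} to identify $\|\pd f_{1}\|_{L^{2}(\BB^{d}_{R})}+\|f_{1}\|_{L^{2}(\mathbb{S}^{d-1}_{R})}$ with $\|f_{1}\|_{H^{1}(\BB^{d}_{R})}$ up to constants. The added remark on positive definiteness is a harmless byproduct not spelled out in the paper's proof.
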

\begin{proof}
The trace inequality in \Cref{TraceInequality} yields
\begin{equation*}
\| \mathbf{f} \|_{\mathfrak{E}^{1}(\BB^{d}_{R})} \simeq \| \pd f_{1} \|_{L^{2}(\BB^{d}_{R})} + \| f_{2} \|_{L^{2}(\BB^{d}_{R})} + \| f_{1} \|_{L^{2}(\mathbb{S}^{d-1}_R)} \simeq \big\| (f_{1},f_{2}) \big\|_{H^{1}(\BB^{d}_{R})\times L^{2}(\BB^{d}_{R})}
\end{equation*}
for all $\mathbf{f} = (f_{1},f_{2}) \in C^{\infty}(\overline{\BB^{d}_{R}})^{2}$.
\end{proof}
The next crucial feature of the inner product is that the standard energy estimate for the free wave equation is incarnated as a dissipative estimate for the wave evolution operation.
\begin{lemma}
\label{EnergyDissipativity}
Let $(d,p,R)\in\NN\times\RR_{>1}\times\RR_{\geq 1}$. Fix $\varepsilon_{1}>0$ in \Cref{StandardEnergy}. We have
\begin{equation*}
\Re\Big( \mathbf{L}_{d,p} \mathbf{f} \,\Big|\, \mathbf{f}\Big)_{\mathfrak{E}^{1}(\BB^{d}_{R})} \leq
\Big( \frac{d}{2}-s_{p}-1\Big) \| \mathbf{f} \|_{\mathfrak{E}^{1}(\BB^{d}_{R})}^{2} +
\Big(-\frac{d}{2}+1+\varepsilon_{1}\Big) 2 \varepsilon_{1} R^{-1} \int_{\mathbb{S}^{d-1}_R} |f_{1}|^{2}
\end{equation*}
for all $\mathbf{f}\in C^{\infty}(\overline{\BB^{d}_{R}})^{2}$.
\end{lemma}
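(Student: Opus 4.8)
\emph{Proof strategy.} The plan is to expand $\big(\mathbf{L}_{d,p}\mathbf{f}\,\big|\,\mathbf{f}\big)_{\mathfrak{E}^{1}(\BB^{d}_{R})}$ into the three contributions coming from the definition of the form in \Cref{StandardEnergy}, and then to integrate by parts so as to move all derivatives off the scaling generators $\xi^{i}\pd_{i}$. Writing $(\mathbf{L}_{d,p}\mathbf{f})_{1} = -s_{p}f_{1} - \xi^{i}\pd_{i}f_{1} + f_{2}$ and $(\mathbf{L}_{d,p}\mathbf{f})_{2} = \Laplace f_{1} - (s_{p}+1)f_{2} - \xi^{i}\pd_{i}f_{2}$, the gradient part of the inner product contributes $\Re\int_{\BB^{d}_{R}}\overline{\pd^{i}(\mathbf{L}_{d,p}\mathbf{f})_{1}}\,\pd_{i}f_{1}$ and the $L^{2}$-part contributes $\Re\int_{\BB^{d}_{R}}\overline{(\mathbf{L}_{d,p}\mathbf{f})_{2}}\,f_{2}$. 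For the transport terms one uses $\Re\big(\overline{\xi^{j}\pd_{j}\pd^{i}f_{1}}\,\pd_{i}f_{1}\big) = \tfrac{1}{2}\xi^{j}\pd_{j}|\pd f_{1}|^{2}$ and its analogue with $|f_{2}|^{2}$, together with the divergence identity $\int_{\BB^{d}_{R}}\xi^{j}\pd_{j}g = R\int_{\mathbb{S}^{d-1}_{R}}g\dd\sigma^{d-1}_{R} - d\int_{\BB^{d}_{R}}g$ used already in \Cref{TraceInequality}. Each bulk integral then produces a copy of the factor $\big(\tfrac{d}{2}-s_{p}-1\big)$ multiplying $\|\pd f_{1}\|_{L^{2}(\BB^{d}_{R})}^{2}$, respectively $\|f_{2}\|_{L^{2}(\BB^{d}_{R})}^{2}$, plus explicit boundary integrals over $\mathbb{S}^{d-1}_{R}$.

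The decisive algebraic point is a cancellation of cross-gradient terms: the summand $f_{2}$ inside $(\mathbf{L}_{d,p}\mathbf{f})_{1}$ produces the bulk term $\Re\int_{\BB^{d}_{R}}\overline{\pd^{i}f_{2}}\,\pd_{i}f_{1}$, while Green's identity applied to $\Re\int_{\BB^{d}_{R}}\overline{\Laplace f_{1}}\,f_{2}$ produces $-\Re\int_{\BB^{d}_{R}}\overline{\pd^{i}f_{1}}\,\pd_{i}f_{2}$ plus the boundary integral $\Re\int_{\mathbb{S}^{d-1}_{R}}\overline{\pd_{r}f_{1}}\,f_{2}\dd\sigma^{d-1}_{R}$, where $\pd_{r}$ denotes the radial derivative. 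Since $\Re\int\overline{\pd^{i}f_{2}}\,\pd_{i}f_{1} = \Re\int\overline{\pd^{i}f_{1}}\,\pd_{i}f_{2}$, these two bulk terms cancel, and only the radial boundary term remains from the Laplacian. Collecting everything, $\Re\big(\mathbf{L}_{d,p}\mathbf{f}\,\big|\,\mathbf{f}\big)_{\mathfrak{E}^{1}(\BB^{d}_{R})}$ equals $\big(\tfrac{d}{2}-s_{p}-1\big)\big(\|\pd f_{1}\|_{L^{2}(\BB^{d}_{R})}^{2} + \|f_{2}\|_{L^{2}(\BB^{d}_{R})}^{2}\big)$ plus a finite sum of boundary integrals over $\mathbb{S}^{d-1}_{R}$ of $|\pd f_{1}|^{2}$, $|f_{2}|^{2}$, $|f_{1}|^{2}$ and the real parts of $\overline{\pd_{r}f_{1}}f_{2}$, $\overline{\pd_{r}f_{1}}f_{1}$, $\overline{f_{2}}f_{1}$, the last two with coefficients proportional to $\varepsilon_{1}$ stemming from the boundary term of $\mathfrak{E}^{1}$. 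Adding and subtracting $\big(\tfrac{d}{2}-s_{p}-1\big)2\varepsilon_{1}R^{-1}\int_{\mathbb{S}^{d-1}_{R}}|f_{1}|^{2}$ promotes the bulk factor to $\big(\tfrac{d}{2}-s_{p}-1\big)\|\mathbf{f}\|_{\mathfrak{E}^{1}(\BB^{d}_{R})}^{2}$.

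It then remains to dominate the leftover boundary integrals by $\big(-\tfrac{d}{2}+1+\varepsilon_{1}\big)2\varepsilon_{1}R^{-1}\int_{\mathbb{S}^{d-1}_{R}}|f_{1}|^{2}$, and this is precisely where the boundary term built into the inner product in \Cref{StandardEnergy} pays off. After cancelling the common term $-2s_{p}\varepsilon_{1}R^{-1}\int_{\mathbb{S}^{d-1}_{R}}|f_{1}|^{2}$, the claim reduces to the pointwise inequality on $\mathbb{S}^{d-1}_{R}$
\begin{equation*}
-\tfrac{R}{2}|\pd f_{1}|^{2} - \tfrac{R}{2}|f_{2}|^{2} + \Re\big(\overline{\pd_{r}f_{1}}f_{2}\big) - 2\varepsilon_{1}\Re\big(\overline{\pd_{r}f_{1}}f_{1}\big) + 2\varepsilon_{1}R^{-1}\Re\big(\overline{f_{2}}f_{1}\big) - 2\varepsilon_{1}^{2}R^{-1}|f_{1}|^{2} \leq 0 \,.
\end{equation*}
I would first discard the nonnegative tangential contribution using $|\pd f_{1}|^{2} \geq |\pd_{r}f_{1}|^{2}$ and then complete the square twice. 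With $w \coloneqq \pd_{r}f_{1} - \tfrac{1}{R}f_{2}$ one has $-\tfrac{R}{2}|\pd_{r}f_{1}|^{2} - \tfrac{R}{2}|f_{2}|^{2} + \Re\big(\overline{\pd_{r}f_{1}}f_{2}\big) = -\tfrac{R}{2}|w|^{2} + \big(\tfrac{1}{2R} - \tfrac{R}{2}\big)|f_{2}|^{2}$, while the two $\varepsilon_{1}$-cross terms collapse to $-2\varepsilon_{1}\Re(\overline{w}f_{1})$; a second completion $-\tfrac{R}{2}|w|^{2} - 2\varepsilon_{1}\Re(\overline{w}f_{1}) = -\tfrac{R}{2}\big|w + \tfrac{2\varepsilon_{1}}{R}f_{1}\big|^{2} + \tfrac{2\varepsilon_{1}^{2}}{R}|f_{1}|^{2}$ absorbs the $-2\varepsilon_{1}^{2}R^{-1}|f_{1}|^{2}$ term exactly. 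What survives is $-\tfrac{R}{2}\big|\pd_{r}f_{1} - \tfrac{1}{R}f_{2} + \tfrac{2\varepsilon_{1}}{R}f_{1}\big|^{2} + \big(\tfrac{1}{2R} - \tfrac{R}{2}\big)|f_{2}|^{2}$, which is $\leq 0$ precisely because $R\geq 1$. I expect the only genuine obstacle to be the bookkeeping: carrying the conjugations and signs correctly through the integrations by parts so that the cross-gradient terms really cancel and the leftover boundary integrals assemble into exactly this double square; once the right grouping is identified, the roles of the hypothesis $R\geq 1$ and of the parameter $\varepsilon_{1}$ are transparent.
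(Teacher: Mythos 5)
Your proposal is correct and follows essentially the same route as the paper: expand the form, use the transport identity and the divergence theorem to extract the bulk factor $\tfrac{d}{2}-s_p-1$, and control the surviving boundary integrals using the $\varepsilon_1$-boundary term and $R\geq 1$. Your two-step completion of squares on $\mathbb{S}^{d-1}_R$ is, after noting $\pd_r f_1=R^{-1}\xi^i\pd_i f_1$ there, algebraically identical to the paper's single elementary inequality $\Re\big(\overline{f_2}\,\xi^i\pd_i f_1-\overline{\xi^i\pd_i f_1}\,2\varepsilon_1 f_1+\overline{f_2}\,2\varepsilon_1 f_1\big)\leq\tfrac12\big(|\xi|^2|\pd f_1|^2+|f_2|^2+4\varepsilon_1^2|f_1|^2\big)$, which is just $-\tfrac12|\xi^i\pd_i f_1-f_2+2\varepsilon_1 f_1|^2\leq 0$ in disguise.
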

\begin{proof}
The integrands in the inner product are
\begin{align*}
\overline{(\pd_{i}[\mathbf{L}_{d,p}\mathbf{f}]_1)(\xi)} (\pd^{i} f_{1})(\xi) &=
-(s_{p}+1) |(\pd f_{1})(\xi)|^{2}
-\overline{\xi^{j} (\pd_{i} \pd_{j} f_{1})(\xi)} (\pd^{i} f_{1})(\xi) \\&\indent
- (\Laplace f_{1})(\xi) \overline{f_{2}(\xi)}
+\pd_{\xi^{i}} \Big( (\pd^{i} f_{1})(\xi) \overline{f_{2}(\xi)} \Big)
\end{align*}
and
\begin{equation*}
\overline{[\mathbf{L}_{d,p}\mathbf{f}]_2(\xi)} f_{2}(\xi) =
\overline{(\Laplace f_{1})(\xi)} f_{2}(\xi) - (s_{p}+1)|f_{2}(\xi)|^{2} - \overline{\xi^{j}(\pd_{j} f_{2})(\xi)} f_{2}(\xi) \,.
\end{equation*}
Using the fact
\begin{equation*}
\Re \Big( \overline{\xi^{j} (\pd_{j} \pd_{i} f_{1})(\xi)} (\pd^{i} f_{1})(\xi) \Big) = \frac{1}{2} \pd_{\xi^{j}} \Big( |(\pd f_{1})(\xi)|^{2} \xi^{j} \Big) - \frac{d}{2} |(\pd f_{1})(\xi)|^{2} \,,
\end{equation*}
the real part of the sum of the above integrands equates to
\begin{align*}
&\Re\Big(
\overline{(\pd_{i}[\mathbf{L}_{d,p}\mathbf{f}]_1)(\xi)} (\pd^{i} f_{1})(\xi) +
\overline{[\mathbf{L}_{d,p}\mathbf{f}]_2(\xi)}f_{2}(\xi)
\Big) \\&\indent=
\Big( \frac{d}{2} - s_{p} - 1 \Big) \Big( |(\pd f_{1})(\xi)|^{2} + |f_{2}(\xi)|^{2} \Big) + \pd_{\xi^{i}} \Re \Big( \overline{ f_{2}(\xi)} (\pd^{i} f_{1})(\xi) \Big) \\&\indent\indent-
\frac{1}{2} \pd_{\xi^{i}} \Big( |(\pd f_{1})(\xi)|^{2} \xi^{i} + |f_{2}(\xi)|^{2} \xi^{i}\Big) \,.
\end{align*}
The real part of the integrand in the boundary term is given by
\begin{equation*}
\Re\Big( \overline{[\mathbf{L}_{d,p}\mathbf{f}]_1(\xi)} f_{1}(\xi) \Big) =
- s_{p} |f_{1}(\xi)|^{2} + \Re\Big( \overline{f_{2}(\xi)} f_{1}(\xi) - \overline{\xi^{i}(\pd_{i} f_{1})(\xi)} f_{1}(\xi) \Big) \,.
\end{equation*}
Consequently,
\begin{align*}
&\Re\Big( \mathbf{L}_{d,p} \mathbf{f} \,\Big|\, \mathbf{f} \Big)_{\mathfrak{E}^{1}(\BB^{d}_{R})} =
\Big( \frac{d}{2} - s_{p} - 1 \Big) \int_{\BB^{d}_{R}} \Big( |(\pd f_{1})(\xi)|^{2} + |f_{2}(\xi)|^{2} \Big) \dd\xi
\\&\,+
\Big( -s_{p} + \varepsilon_{1} \Big) 2\varepsilon_{1} R^{-1} \int_{\mathbb{S}^{d-1}_R} |f_{1}(\xi)|^{2} \dd\sigma^{d-1}_{R}(\xi)
\\&\,+
R^{-1} \int_{\mathbb{S}^{d-1}_{R}} \Re \Big(
\overline{f_{2}(\xi)} \xi^{i} (\pd_{i} f_{1})(\xi) -
\overline{\xi^{i}  (\pd_{i} f_{1})(\xi)} 2\varepsilon_{1} f_{1}(\xi) +
\overline{f_{2}(\xi)} 2\varepsilon_{1} f_{1}(\xi)
\Big) \dd\sigma^{d-1}_{R}(\xi)
\\&\,-
R^{-1} \int_{\mathbb{S}^{d-1}_R} \frac{1}{2} \Big(
|\xi|^{2} |(\pd f_{1})(\xi)|^{2} +
|f_{2}(\xi)|^{2} +
4\varepsilon_{1}^{2} |f_{1}(\xi)|^{2} \Big) \dd\sigma^{d-1}_{R}(\xi)
\\&\,+
\frac{1-R^{2}}{2R} \int_{\mathbb{S}^{d-1}_R} |f_{2}(\xi)|^{2} \dd\sigma^{d-1}_{R}(\xi) \,.
\end{align*}
Together with the elementary inequality
\begin{align*}
&
\Re \Big(
\overline{f_{2}(\xi)} \xi^{i} (\pd_{i} f_{1})(\xi) -
\overline{\xi^{i}  (\pd_{i} f_{1})(\xi)} 2\varepsilon_{1} f_{1}(\xi) +
\overline{f_{2}(\xi)} 2\varepsilon_{1} f_{1}(\xi)
\Big)
\\&\indent\indent\leq
\frac{1}{2} \Big( |\xi|^{2} |(\pd f_{1})(\xi)|^{2} + |f_{2}(\xi)|^{2} + 4\varepsilon_{1}^{2} |f_{1}(\xi)|^{2} \Big) \,,
\end{align*}
and since $R\geq 1$, the estimate
\begin{align*}
&
\Re\Big( \mathbf{L}_{d,p} \mathbf{f} \,\Big|\, \mathbf{f} \Big)_{\mathfrak{E}^1(\BB^{d}_{R})} \leq
\Big( \frac{d}{2} - s_{p} - 1 \Big) \int_{\BB^{d}_{R}} \Big( |(\pd f_{1})(\xi)|^{2} + |f_{2}(\xi)|^{2} \Big) \dd\xi \\&\indent\indent+
\Big( - s_{p} + \varepsilon_{1} \Big) 2\varepsilon_{1} R^{-1} \int_{\mathbb{S}^{d-1}_R} |f_{1}(\xi)|^{2} \dd\sigma^{d-1}_{R}(\xi) \\&\indent=
\Big( \frac{d}{2} - s_{p} - 1 \Big) \| \mathbf{f} \|_{\mathfrak{E}^1(\BB^{d}_{R})}^{2} + \Big( - \frac{d}{2} + 1 + \varepsilon_{1} \Big) 2\varepsilon_{1} R^{-1} \int_{\mathbb{S}^{d-1}_R} |f_{1}(\xi)|^{2} \dd\sigma^{d-1}_{R}(\xi)
\end{align*}
follows.
\end{proof}
\subsubsection{Higher energies}
The standard energy can be upgraded to higher energies with the following differential operators.
\begin{definition}
\label{DmuOperations}
Let $(d,R)\in\NN\times\RR_{>0}$. For $\mathbf{f}\in C^{\infty}(\overline{\BB^{d}_{R}})^{2}$ and $\mu=0,1,\ldots,d$ we define
\begin{align*}
\mathbf{D}_{\mu}\mathbf{f} \in C^{\infty}(\overline{\BB^{d}_{R}})^{2}
\qquad\text{by}\qquad
\mathbf{D}_{\mu} \mathbf{f} =
\left[
\begin{array}{l}
\delta_{\mu}{}^{i}\pd_{i} f_{1} +
\delta_{\mu}{}^{0} f_{2} \\
\delta_{\mu}{}^{i} \pd_{i} f_{2} +
\delta_{\mu}{}^{0} \Laplace f_{1}
\end{array}
\right]
\,.
\end{align*}
\end{definition}
These operators satisfy an essential commutation relation.
\begin{lemma}
\label{LdpDmu}
Let $(d,p,R)\in\NN\times\RR_{>1}\times\RR_{\geq 1}$. We have
\begin{equation*}
\mathbf{D}_{\mu} \mathbf{L}_{d,p} \mathbf{f} = \mathbf{L}_{d,p} \mathbf{D}_{\mu} \mathbf{f} - \mathbf{D}_{\mu} \mathbf{f}
\end{equation*}
for all $\mathbf{f}\in C^{\infty}(\overline{\BB^{d}_{R}})^{2}$ and $\mu = 0,1,\ldots,d$.
\end{lemma}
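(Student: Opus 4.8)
The statement is purely algebraic, so the plan is a direct verification that splits naturally into the spatial cases $\mu=i\in\{1,\ldots,d\}$ and the temporal case $\mu=0$. The only computational input I need is the behaviour of the Euler (dilation) vector field $\xi^{j}\pd_{j}$ under commutation: for every $g\in C^{\infty}(\overline{\BB^{d}_{R}})$ one has $\pd_{i}(\xi^{j}\pd_{j}g)=\pd_{i}g+\xi^{j}\pd_{j}\pd_{i}g$, and iterating this identity yields $\Laplace(\xi^{j}\pd_{j}g)=2\Laplace g+\xi^{j}\pd_{j}\Laplace g$. Everything else appearing in $\mathbf{L}_{d,p}$ is either a constant-coefficient multiplier ($-s_{p}$, $-(s_{p}+1)$) or the Laplacian, all of which commute with $\pd_{i}$.

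For $\mu=i$, the operator $\mathbf{D}_{i}$ acts componentwise as $\pd_{i}$, so its commutator with the multipliers and with the Laplacian in $\mathbf{L}_{d,p}$ vanishes, and the only surviving contribution comes from the term $-\xi^{j}\pd_{j}$. The first identity above gives $[\pd_{i},-\xi^{j}\pd_{j}]g=-\pd_{i}g$ in each slot, hence $\mathbf{D}_{i}\mathbf{L}_{d,p}\mathbf{f}-\mathbf{L}_{d,p}\mathbf{D}_{i}\mathbf{f}=-\mathbf{D}_{i}\mathbf{f}$, as claimed.

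For $\mu=0$ I would simply compute both components, using that $\mathbf{D}_{0}\mathbf{f}=(f_{2},\Laplace f_{1})$. The first component of $\mathbf{D}_{0}\mathbf{L}_{d,p}\mathbf{f}$ is the second component of $\mathbf{L}_{d,p}\mathbf{f}$, namely $\Laplace f_{1}-(s_{p}+1)f_{2}-\xi^{i}\pd_{i}f_{2}$, which is exactly $-s_{p}f_{2}-\xi^{i}\pd_{i}f_{2}+\Laplace f_{1}-f_{2}=[\mathbf{L}_{d,p}\mathbf{D}_{0}\mathbf{f}]_{1}-[\mathbf{D}_{0}\mathbf{f}]_{1}$. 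For the second component, $[\mathbf{D}_{0}\mathbf{L}_{d,p}\mathbf{f}]_{2}=\Laplace\big(-s_{p}f_{1}-\xi^{i}\pd_{i}f_{1}+f_{2}\big)$, and feeding in the second scaling identity turns the middle term into $-2\Laplace f_{1}-\xi^{i}\pd_{i}\Laplace f_{1}$; the result matches $[\mathbf{L}_{d,p}\mathbf{D}_{0}\mathbf{f}]_{2}-[\mathbf{D}_{0}\mathbf{f}]_{2}=\Laplace f_{2}-(s_{p}+1)\Laplace f_{1}-\xi^{i}\pd_{i}\Laplace f_{1}-\Laplace f_{1}$ precisely because $s_{p}+2=(s_{p}+1)+1$.

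I do not expect a genuine obstacle here; the identity is elementary. The one point that must be handled with care is the factor $2$ in the commutator of $\Laplace$ with the dilation field, which encodes that $\Laplace$ is homogeneous of degree $-2$ under the scaling \eqref{NLWScaling}; in the case $\mu=0$ this factor, together with the weight $s_{p}+1$ in the second slot and the additional $-1$ produced by $\mathbf{D}_{0}$ itself, is exactly what collapses the right-hand side to the clean form $-\mathbf{D}_{0}\mathbf{f}$.
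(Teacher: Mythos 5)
Your proposal is correct and follows essentially the same route as the paper, namely a direct componentwise computation split into the cases $\mu=i$ and $\mu=0$, with the commutator of the dilation field $\xi^{j}\pd_{j}$ against $\pd_{i}$ and $\Laplace$ supplying the extra $-\mathbf{D}_{\mu}\mathbf{f}$ term. All the identities you invoke check out, including the factor $2$ in $\Laplace(\xi^{j}\pd_{j}g)=2\Laplace g+\xi^{j}\pd_{j}\Laplace g$ and the bookkeeping $s_{p}+2=(s_{p}+1)+1$ in the second component of the $\mu=0$ case.
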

\begin{proof}
We prove this with a direct computation, namely
\begin{align*}
(\mathbf{D}_{i} \mathbf{L}_{d,p} \mathbf{f})(\xi) &=
\begin{bmatrix}
\pd_{\xi^{i}} \big( - s_{p} f_{1}(\xi) - \xi^{j} \pd_{\xi^{j}} f_{1}(\xi) + f_{2}(\xi) \big) \\
\pd_{\xi^{i}} \big( \Laplace_{\xi} f_{1}(\xi) - ( s_{p} + 1 ) f_{2}(\xi) - \xi^{j} \pd_{\xi^{j}} f_{2}(\xi) \big)
\end{bmatrix}
\\&=
\begin{bmatrix}
- s_{p} \pd_{\xi^{i}} f_{1}(\xi) - \xi^{j} \pd_{\xi^{j}} \pd_{\xi^{i}} f_{1}(\xi) + \pd_{\xi^{i}} f_{2}(\xi) \\
\Laplace_{\xi} \pd_{\xi^{i}} f_{1}(\xi) - ( s_{p} + 1 ) \pd_{\xi^{i}} f_{2}(\xi) - \xi^{j} \pd_{\xi^{j}} \pd_{\xi^{i}} f_{2}(\xi)
\end{bmatrix}
-
\begin{bmatrix}
\pd_{\xi^{i}} f_{1}(\xi) \\
\pd_{\xi^{i}} f_{2}(\xi)
\end{bmatrix}
\\&=
(\mathbf{L}_{d,p} \mathbf{D}_{i} \mathbf{f})(\xi) - (\mathbf{D}_{i}\mathbf{f})(\xi)
\end{align*}
and
\begin{align*}
(\mathbf{D}_{0} \mathbf{L}_{d,p} \mathbf{f})(\xi) &=
\begin{bmatrix}
\Laplace_{\xi} f_{1}(\xi) - ( s_{p} + 1 ) f_{2}(\xi) - \xi^{j} \pd_{\xi^{j}} f_{2}(\xi) \\
\Delta_{\xi} \big( - s_{p} f_{1}(\xi) - \xi^{j} \pd_{\xi^{j}} f_{1}(\xi) + f_{2}(\xi) \big)
\end{bmatrix}
\\&=
\begin{bmatrix}
 - s_{p} f_{2}(\xi) - \xi^{j} \pd_{\xi^{j}} f_{2}(\xi) + \Laplace_{\xi} f_{1}(\xi) \\
\Delta_{\xi} f_{2}(\xi) - (s_{p} + 1) \Delta_{\xi} f_{1}(\xi) - \xi^{j} \pd_{\xi^{j}} \Delta_{\xi} f_{1}(\xi) \big)
\end{bmatrix}
-
\begin{bmatrix}
f_{2}(\xi) \\
\Delta_{\xi} f_{1}(\xi)
\end{bmatrix}
\\&=
(\mathbf{L}_{d,p} \mathbf{D}_{0} \mathbf{f})(\xi) - (\mathbf{D}_{0}\mathbf{f})(\xi) \,.
\qedhere
\end{align*}
\end{proof}
Higher inner products are composed recursively by inserting these differential operators in the inner product from \Cref{StandardEnergy} and augmenting the result once again with a boundary term.
\begin{definition}
\label{InnerProducts}
Let $(d,R)\in\NN\times\RR_{>0}$. Fix $\varepsilon_{j}>0$ for $1 \leq j < \frac{d}{2}+1$. Based on \Cref{StandardEnergy}, we define sesquilinear forms on $C^{\infty}(\overline{\BB^{d}_{R}})^{2}$ recursively by
\begin{equation*}
\Big( \mathbf{f} \,\Big|\, \mathbf{g} \Big)_{\mathfrak{E}^{k}(\BB^{d}_{R})} =
\renewcommand{\arraystretch}{1.5}
\left\{
\begin{array}{ll}
\displaystyle{
\sum_{\mu=0}^{d} \Big( \mathbf{D}_{\mu} \mathbf{f} \,\Big|\, \mathbf{D}_{\mu} \mathbf{g} \Big)_{\mathfrak{E}^{k-1}(\BB^{d}_{R})} +
\frac{2 \varepsilon_{k}}{R} \int_{\mathbb{S}^{d-1}_R} \overline{f_{1}} g_1
} & \text{if } 2\leq k < \frac{d}{2} + 1 \,,
\\ \displaystyle{
\sum_{\mu=0}^{d} \Big( \mathbf{D}_{\mu} \mathbf{f} \,\Big|\, \mathbf{D}_{\mu} \mathbf{g} \Big)_{\mathfrak{E}^{k-1}(\BB^{d}_{R})} +
\Big( \mathbf{f} \,\Big|\, \mathbf{g} \Big)_{\mathfrak{E}^{k-1}(\BB^{d}_{R})}
} & \text{if } \frac{d}{2} + 1 \leq k \,,
\end{array}
\right.
\end{equation*}
and set
\begin{equation*}
\| \mathbf{f} \|_{{\mathfrak{E}^{k}(\BB^{d}_{R})}} \coloneqq \sqrt{\Big( \mathbf{f} \,\Big|\, \mathbf{f} \Big)_{\mathfrak{E}^{k}(\BB^{d}_{R})}} \,.
\end{equation*}
\end{definition}
These sesquilinear forms are positive definite with induced norms equivalent to higher Sobolev norms.
\begin{lemma}
\label{HigherEnergyNorm}
Let $(d,k,R)\in\NN\times\NN\times\RR_{>0}$. Consider the form $\| \,.\, \|_{\mathfrak{E}^{k}(\BB^{d}_{R})}$ in \Cref{InnerProducts} for fixed $\varepsilon_{j}>0$, where $1 \leq j < \frac{d}{2} + 1$. Then, the equivalence
\begin{equation*}
\| \mathbf{f} \|_{\mathfrak{E}^{k}(\BB^{d}_{R})}
\simeq
\big\| (f_{1},f_{2}) \big\|_{H^{k}(\BB^{d}_{R})\times H^{k-1}(\BB^{d}_{R})}
\end{equation*}
holds for all $\mathbf{f} = (f_{1},f_{2}) \in C^{\infty}(\overline{\BB^{d}_{R}})^{2}$.
\end{lemma}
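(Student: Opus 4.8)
The plan is to argue by induction on $k$, the base case $k=1$ being exactly \Cref{EnergyNorm}. Since $d$, $R$ and $k$ are fixed, the induction terminates after finitely many steps, each introducing only a multiplicative constant depending on $d,R,k$ and the fixed parameters $\varepsilon_{j}$; in particular these constants do not affect the claimed equivalence, and the non-negativity $( \mathbf{f} \,|\, \mathbf{f})_{\mathfrak{E}^{k}(\BB^{d}_{R})}\geq 0$ needed to make sense of the norm follows along the way (the defining expression is a sum of a squared $\mathfrak{E}^{k-1}$-norm and a manifestly non-negative term). The inductive hypothesis at step $k\geq 2$ is that
\[
\| \mathbf{g} \|_{\mathfrak{E}^{k-1}(\BB^{d}_{R})} \simeq \big\| (g_{1},g_{2}) \big\|_{H^{k-1}(\BB^{d}_{R})\times H^{k-2}(\BB^{d}_{R})}
\]
for all $\mathbf{g}\in C^{\infty}(\overline{\BB^{d}_{R}})^{2}$, where $H^{0}=L^{2}$.

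First I would record from \Cref{DmuOperations} that $\mathbf{D}_{i}\mathbf{f} = (\pd_{i} f_{1},\pd_{i} f_{2})$ for $i=1,\ldots,d$ and $\mathbf{D}_{0}\mathbf{f} = (f_{2},\Laplace f_{1})$. Substituting these into the recursion of \Cref{InnerProducts} and applying the inductive hypothesis to each summand $\| \mathbf{D}_{\mu}\mathbf{f} \|_{\mathfrak{E}^{k-1}(\BB^{d}_{R})}^{2}$, and also to the trailing term $( \mathbf{f} \,|\, \mathbf{f})_{\mathfrak{E}^{k-1}(\BB^{d}_{R})}$ in the regime $k\geq\frac{d}{2}+1$, gives
\[
\| \mathbf{f} \|_{\mathfrak{E}^{k}(\BB^{d}_{R})}^{2} \simeq \sum_{i=1}^{d}\Big( \| \pd_{i} f_{1} \|_{H^{k-1}(\BB^{d}_{R})}^{2} + \| \pd_{i} f_{2} \|_{H^{k-2}(\BB^{d}_{R})}^{2} \Big) + \| f_{2} \|_{H^{k-1}(\BB^{d}_{R})}^{2} + \| \Laplace f_{1} \|_{H^{k-2}(\BB^{d}_{R})}^{2} + \mathcal{R} \,,
\]
where $\mathcal{R} = 2\varepsilon_{k}R^{-1}\| f_{1} \|_{L^{2}(\mathbb{S}^{d-1}_{R})}^{2}$ if $2\leq k<\frac{d}{2}+1$, and $\mathcal{R}\simeq \| f_{1} \|_{H^{k-1}(\BB^{d}_{R})}^{2} + \| f_{2} \|_{H^{k-2}(\BB^{d}_{R})}^{2}$ if $k\geq\frac{d}{2}+1$.

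Next I would invoke two elementary observations about Sobolev norms on the ball, both with constants depending only on $d$ and $k$: firstly $\sum_{i=1}^{d}\| \pd_{i} h \|_{H^{k-1}(\BB^{d}_{R})}^{2} \simeq \sum_{1\leq|\beta|\leq k}\| \pd^{\beta} h \|_{L^{2}(\BB^{d}_{R})}^{2}$, since every derivative of order between $1$ and $k$ is of the form $\pd^{\alpha}\pd_{i} h$; secondly $\| \Laplace h \|_{H^{k-2}(\BB^{d}_{R})}\lesssim\| h \|_{H^{k}(\BB^{d}_{R})}$. For the bound ``$\lesssim$'' every term in the display above is then dominated by $\| f_{1} \|_{H^{k}(\BB^{d}_{R})}^{2} + \| f_{2} \|_{H^{k-1}(\BB^{d}_{R})}^{2}$; for the boundary contribution when $2\leq k<\frac{d}{2}+1$ this uses $\| f_{1} \|_{L^{2}(\mathbb{S}^{d-1}_{R})}\lesssim\| f_{1} \|_{H^{1}(\BB^{d}_{R})}$, which follows from \Cref{TraceInequality}. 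For the bound ``$\gtrsim$'' the $f_{2}$-terms already dominate $\| f_{2} \|_{H^{k-1}(\BB^{d}_{R})}^{2}$ and the sum over $i$ dominates $\sum_{1\leq|\beta|\leq k}\| \pd^{\beta} f_{1} \|_{L^{2}(\BB^{d}_{R})}^{2}$, so it only remains to recover $\| f_{1} \|_{L^{2}(\BB^{d}_{R})}$. When $k\geq\frac{d}{2}+1$ this is immediate from $\mathcal{R}\gtrsim\| f_{1} \|_{H^{k-1}(\BB^{d}_{R})}^{2}\geq\| f_{1} \|_{L^{2}(\BB^{d}_{R})}^{2}$; when $2\leq k<\frac{d}{2}+1$ the trace inequality \Cref{TraceInequality} gives $\| f_{1} \|_{L^{2}(\BB^{d}_{R})}^{2}\lesssim\| \pd f_{1} \|_{L^{2}(\BB^{d}_{R})}^{2} + \| f_{1} \|_{L^{2}(\mathbb{S}^{d-1}_{R})}^{2}$, and both summands on the right are controlled by $\| \mathbf{f} \|_{\mathfrak{E}^{k}(\BB^{d}_{R})}^{2}$. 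Combining the two bounds closes the induction.

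I do not anticipate a genuine obstacle: the argument is bookkeeping built on the recursive definition and \Cref{EnergyNorm}. The one point requiring care is the recovery of the bulk $L^{2}$ norm of $f_{1}$ in the low-regularity regime $k<\frac{d}{2}+1$, where the boundary term in \Cref{InnerProducts} is the sole source of zeroth-order control on $f_{1}$ and one is forced to invoke \Cref{TraceInequality} exactly as in the base case; one should also keep the two definitional branches of $\mathfrak{E}^{k}$ straight so that the inductive hypothesis is applied to the correct trailing term in each.
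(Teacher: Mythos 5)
Your proposal is correct and follows essentially the same route as the paper: induction on $k$ with base case \Cref{EnergyNorm}, identification of the $\mathbf{D}_{\mu}$ with first-order derivatives and $(f_2,\Laplace f_1)$, and recovery of the bulk $L^{2}$ norm of $f_1$ in the regime $k<\frac{d}{2}+1$ via the trace inequality of \Cref{TraceInequality}. The only cosmetic difference is that you apply the inductive hypothesis once as a two-sided equivalence and then estimate both directions from a single display, whereas the paper runs the two inequalities as separate inductions.
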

\begin{proof}
\begin{enumerate}[wide, itemsep=1em, topsep=1em]
\item[``$\lesssim$'':] The inequality is valid in the base case $k=1$ by \Cref{EnergyNorm}. To proceed, note that for each $k\in\NN$ and $\mu=0,1,\ldots,d$
\begin{align*}
\| \mathbf{D}_{\mu} \mathbf{f}\|_{H^{k}(\BB^{d}_{R}) \times H^{k-1}(\BB^{d}_{R})} &\lesssim
\| \pd f_{1} \|_{H^{k}(\BB^{d}_{R})} + \| f_{2} \|_{H^{k}(\BB^{d}_{R})} \\&\indent+
\| \Laplace f_{1} \|_{H^{k-1}(\BB^{d}_{R})} +
\| f_{2} \|_{H^{k-1}(\BB^{d}_{R})} +
\| \pd f_{2} \|_{H^{k-1}(\BB^{d}_{R})} \\&\lesssim
\big\| (f_{1},f_{2}) \big\|_{H^{k+1}(\BB^{d}_{R})\times H^{k}(\BB^{d}_{R})}
\end{align*}
for all $\mathbf{f} \in C^{\infty}(\overline{\BB^{d}_{R}})^{2}$. Now, assume that for an arbitrary but fixed $k\in\NN$ the bound
\begin{equation*}
\| \mathbf{f} \|_{\mathfrak{E}^{k}(\BB^{d}_{R})} \lesssim \big\| (f_{1},f_{2}) \big\|_{H^{k}(\BB^{d}_{R})\times H^{k-1}(\BB^{d}_{R})}
\end{equation*}
holds for all $\mathbf{f}\in C^{\infty}(\overline{\BB^{d}_{R}})^{2}$. If $1\leq k< \frac{d}{2}$ we conclude from this with the trace inequality from \Cref{TraceInequality}
\begin{align*}
\| \mathbf{f} \|_{\mathfrak{E}^{k+1}(\BB^{d}_{R})} &\simeq \sum_{\mu=0}^{d} \| \mathbf{D}_{\mu} \mathbf{f} \|_{\mathfrak{E}^{k}(\BB^{d}_{R})} + \| f_{1} \|_{L^{2}(\mathbb{S}^{d-1}_R)} \\&\lesssim
\sum_{\mu=0}^{d} \| \mathbf{D}_{\mu} \mathbf{f} \|_{H^{k}(\BB^{d}_{R})\times H^{k-1}(\BB^{d}_{R})} + \| f_{1} \|_{L^{2}(\mathbb{S}^{d-1}_R)} \\&\lesssim
\big\| (f_{1},f_{2}) \big\|_{H^{k+1}(\BB^{d}_{R})\times H^{k}(\BB^{d}_{R})}
\end{align*}
for all $\mathbf{f}\in C^{\infty}(\overline{\BB^{d}_{R}})^{2}$. In case $k \geq \frac{d}{2} + 1$ the induction step follows directly from the above estimate and the induction hypothesis.
\item[``$\gtrsim$'':] Again, the inequality is valid in the base case $k=1$ by \Cref{EnergyNorm}. By induction, assume that for an arbitrary but fixed $k\in\NN$ the bound
\begin{equation*}
\| \mathbf{f} \|_{\mathfrak{E}^{k}(\BB^{d}_{R})} \gtrsim \big\| (f_{1},f_{2}) \big\|_{H^{k}(\BB^{d}_{R})\times H^{k-1}(\BB^{d}_{R})}
\end{equation*}
holds for all $\mathbf{f}\in C^{\infty}(\overline{\BB^{d}_{R}})^{2}$. Then, if $1\leq k< \frac{d}{2}$, we get with $\pd_{i} \mathbf{f} = \mathbf{D}_{i} \mathbf{f}$ the estimate
\begin{align*}
\big\| (f_{1},f_{2}) & \big\|_{H^{k+1}(\BB^{d}_{R})\times H^{k}(\BB^{d}_{R})} \\&\simeq \sum_{i=1}^{d} \big\| (\pd_{i} f_{1}, \pd_{i} f_{2}) \big\|_{H^{k}(\BB^{d}_{R})\times H^{k-1}(\BB^{d}_{R})} + \| f_{1} \|_{L^{2}(\BB^{d}_{R})} + \| f_{2} \|_{L^{2}(\BB^{d}_{R})} \\&\simeq
\sum_{i=1}^{d} \big\| (\pd_{i} f_{1}, \pd_{i} f_{2}) \big\|_{H^{k}(\BB^{d}_{R})\times H^{k-1}(\BB^{d}_{R})} + \| f_{1} \|_{L^{2}(\mathbb{S}^{d-1}_R)} + \| [\mathbf{D}_{0} \mathbf{f}]_1 \|_{L^{2}(\BB^{d}_{R})} \\&\lesssim
\sum_{\mu=0}^{d} \| \mathbf{D}_{\mu} \mathbf{f} \|_{H^{k}(\BB^{d}_{R})\times H^{k-1}(\BB^{d}_{R})} + \| f_{1} \|_{L^{2}(\mathbb{S}^{d-1}_R)}  \\&\lesssim
\sum_{\mu=0}^{d} \| \mathbf{D}_{\mu} \mathbf{f} \|_{\mathfrak{E}^{k}(\BB^{d}_{R})} + \| f_{1} \|_{L^{2}(\mathbb{S}^{d-1}_R)} \\&\simeq
\| \mathbf{f} \|_{\mathfrak{E}^{k+1}(\BB^{d}_{R})}
\end{align*}
for all $\mathbf{f}\in C^{\infty}(\overline{\BB^{d}_{R}})^{2}$. In case $k \geq \frac{d}{2} + 1$ the induction step is again immediate and this proves the other direction.
\qedhere
\end{enumerate}
\end{proof}
The inner products are by design compatible with the dissipative properties of the wave evolution.
\begin{proposition}
\label{HigherEnergyDissipativity}
Let $(d,p,k,R)\in\NN\times\RR_{>1}\times\NN\times\RR_{\geq 1}$. Fix $0<\varepsilon_{1}<\frac{1}{2}$ in \Cref{StandardEnergy} and
\begin{equation*}
\varepsilon_{j} = \Big( \frac{\varepsilon_{1}}{16 R^{2}} \Big)^{j-1} \varepsilon_{1} \prod_{i=1}^{j-1} \Big( \frac{d}{2} - i - \varepsilon_{1} \Big) \quad \text{for } 2 \leq j < \frac{d}{2} + 1
\end{equation*}
in \Cref{InnerProducts}. We have
\begin{align*}
&
\Re\Big( \mathbf{L}_{d,p} \mathbf{f} \,\Big|\, \mathbf{f} \Big)_{\mathfrak{E}^{k}(\BB^{d}_{R})} \\&\hspace{1em}\leq
\renewcommand{\arraystretch}{1.5}
\left\{
\begin{array}{ll}
\displaystyle{
\Big( \frac{d}{2}-s_{p}-k\Big) \| \mathbf{f} \|_{\mathfrak{E}^{k}(\BB^{d}_{R})}^{2} +
\Big(-\frac{d}{2}+k+\varepsilon_{1}\Big) \frac{2 \varepsilon_{k}}{R} \int_{\mathbb{S}^{d-1}_R} |f_{1}|^{2}
} & \text{ if } k < \frac{d}{2} + 1 \,, \\
\displaystyle{
( -s_{p}+\varepsilon_{1}) \| \mathbf{f} \|_{\mathfrak{E}^{k}(\BB^{d}_{R})}^{2}
} & \text{ if } \frac{d}{2} + 1 \leq k \,,
\end{array}
\right.
\end{align*}
for all $\mathbf{f}\in C^{\infty}(\overline{\BB^{d}_{R}})^{2}$.
\end{proposition}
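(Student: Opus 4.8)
The plan is to induct on the order $k$, using the recursive definition of the inner products in \Cref{InnerProducts}, the commutation relation of \Cref{LdpDmu}, and the explicit form of $\mathbf{L}_{d,p}$ from \Cref{WaveFlowOperation}. The base case $k=1$ is exactly \Cref{EnergyDissipativity} (note $\varepsilon_{k}=\varepsilon_{1}$ there). For the inductive step I would assume the bound at level $k\geq 1$, expand $\big(\mathbf{L}_{d,p}\mathbf{f}\,\big|\,\mathbf{f}\big)_{\mathfrak{E}^{k+1}(\BB^{d}_{R})}$ by the recursion, and substitute $\mathbf{D}_{\mu}\mathbf{L}_{d,p}\mathbf{f}=\mathbf{L}_{d,p}\mathbf{D}_{\mu}\mathbf{f}-\mathbf{D}_{\mu}\mathbf{f}$; this produces the decisive gain $-\sum_{\mu=0}^{d}\|\mathbf{D}_{\mu}\mathbf{f}\|_{\mathfrak{E}^{k}(\BB^{d}_{R})}^{2}$, and applying the induction hypothesis to each $\mathbf{D}_{\mu}\mathbf{f}\in C^{\infty}(\overline{\BB^{d}_{R}})^{2}$ (and, once $k+1\geq\frac{d}{2}+1$, also to $\mathbf{f}$ itself) reduces everything to controlling surface integrals on $\mathbb{S}^{d-1}_{R}$. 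It will be convenient that $[\mathbf{D}_{\mu}\mathbf{f}]_{1}$ equals $\pd_{i}f_{1}$ for $\mu=i$ and $f_{2}$ for $\mu=0$, so $\sum_{\mu=0}^{d}\int_{\mathbb{S}^{d-1}_{R}}|[\mathbf{D}_{\mu}\mathbf{f}]_{1}|^{2}=\int_{\mathbb{S}^{d-1}_{R}}(|\pd f_{1}|^{2}+|f_{2}|^{2})$. Three regimes then have to be separated.

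\emph{Within the first regime}, $k+1<\frac{d}{2}+1$, one has $k\leq\lceil\frac{d}{2}\rceil-1$, hence $\frac{d}{2}-k-\varepsilon_{1}\geq\frac{1}{2}-\varepsilon_{1}>0$. The recursion now introduces a new boundary term $\frac{2\varepsilon_{k+1}}{R}\int_{\mathbb{S}^{d-1}_{R}}\overline{[\mathbf{L}_{d,p}\mathbf{f}]_{1}}f_{1}$, and since $[\mathbf{L}_{d,p}\mathbf{f}]_{1}=-s_{p}f_{1}-\xi^{i}\pd_{i}f_{1}+f_{2}$ its real part contributes $-s_{p}|f_{1}|^{2}$ together with the cross terms $\Re(\overline{f_{2}}f_{1})$ and $-\Re(\overline{\xi^{i}\pd_{i}f_{1}}f_{1})$. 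I would estimate the cross terms by Young's inequality, using $|\xi^{i}\pd_{i}f_{1}|\leq R|\pd f_{1}|$ on $\mathbb{S}^{d-1}_{R}$, and absorb the resulting surface $|f_{2}|^{2}$ and $|\pd f_{1}|^{2}$ contributions into the now-negative boundary term $(-\frac{d}{2}+k+\varepsilon_{1})\frac{2\varepsilon_{k}}{R}\int_{\mathbb{S}^{d-1}_{R}}(|\pd f_{1}|^{2}+|f_{2}|^{2})$ furnished by the induction hypothesis. After rewriting $\sum_{\mu}\|\mathbf{D}_{\mu}\mathbf{f}\|_{\mathfrak{E}^{k}(\BB^{d}_{R})}^{2}=\|\mathbf{f}\|_{\mathfrak{E}^{k+1}(\BB^{d}_{R})}^{2}-\frac{2\varepsilon_{k+1}}{R}\int_{\mathbb{S}^{d-1}_{R}}|f_{1}|^{2}$, the coefficient $\frac{d}{2}-s_{p}-k-1$ appears on the full norm, the diagonal $-s_{p}|f_{1}|^{2}$ cancels the $+s_{p}$ hidden in the surface coefficient $-\frac{d}{2}+s_{p}+(k+1)$ produced by that rewriting — leaving $-\frac{d}{2}+(k+1)$ — and the leftover $|f_{1}|^{2}$ from Young's inequality stays below $\varepsilon_{1}\frac{2\varepsilon_{k+1}}{R}\int_{\mathbb{S}^{d-1}_{R}}|f_{1}|^{2}$ precisely when $\varepsilon_{k+1}\leq\frac{4\varepsilon_{1}(\frac{d}{2}-k-\varepsilon_{1})}{1+R^{2}}\varepsilon_{k}$. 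Since $\varepsilon_{k+1}=\frac{\varepsilon_{1}(\frac{d}{2}-k-\varepsilon_{1})}{16R^{2}}\varepsilon_{k}$ for the prescribed weights and $\frac{1}{16R^{2}}\leq\frac{4}{1+R^{2}}$ whenever $R\geq 1$, this closes and gives the level-$(k+1)$ estimate with coefficients $\frac{d}{2}-s_{p}-(k+1)$ and $-\frac{d}{2}+(k+1)+\varepsilon_{1}$.

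\emph{At the transition}, $k=\lceil\frac{d}{2}\rceil$ and $k+1\geq\frac{d}{2}+1$ (this includes $k=1$ for $d\in\{1,2\}$), the induction-hypothesis boundary coefficient $\theta+\varepsilon_{1}$, with $\theta:=-\frac{d}{2}+k\in\{0,\tfrac{1}{2}\}$, has turned positive; but now the recursion for $\mathfrak{E}^{k+1}(\BB^{d}_{R})$ adds no boundary term — it adds $\big(\cdot\,\big|\,\cdot\big)_{\mathfrak{E}^{k}(\BB^{d}_{R})}$, so one also applies the hypothesis to $\mathbf{f}$. The key point here is that the positive surface integrals $\frac{2\varepsilon_{k}}{R}\int_{\mathbb{S}^{d-1}_{R}}|[\mathbf{D}_{\mu}\mathbf{f}]_{1}|^{2}$ and $\frac{2\varepsilon_{k}}{R}\int_{\mathbb{S}^{d-1}_{R}}|f_{1}|^{2}$ appear verbatim as summands of $\|\mathbf{D}_{\mu}\mathbf{f}\|_{\mathfrak{E}^{k}(\BB^{d}_{R})}^{2}$ and $\|\mathbf{f}\|_{\mathfrak{E}^{k}(\BB^{d}_{R})}^{2}$, so they are reabsorbed at no cost using the gain $-\sum_{\mu}\|\mathbf{D}_{\mu}\mathbf{f}\|_{\mathfrak{E}^{k}(\BB^{d}_{R})}^{2}$ (valid since $\theta+\varepsilon_{1}<1$) and into $\|\mathbf{f}\|_{\mathfrak{E}^{k+1}(\BB^{d}_{R})}^{2}$, while the exponents combine as $(\frac{d}{2}-s_{p}-k)+(\theta+\varepsilon_{1})=-s_{p}+\varepsilon_{1}$, the claimed second-regime rate. \emph{Within the second regime}, $k\geq\frac{d}{2}+1$, the hypothesis already supplies $-s_{p}+\varepsilon_{1}$ for both $\mathbf{D}_{\mu}\mathbf{f}$ and $\mathbf{f}$, and since $-\sum_{\mu}\|\mathbf{D}_{\mu}\mathbf{f}\|_{\mathfrak{E}^{k}(\BB^{d}_{R})}^{2}\leq 0$ the estimate is immediate from $\|\mathbf{f}\|_{\mathfrak{E}^{k+1}(\BB^{d}_{R})}^{2}=\sum_{\mu}\|\mathbf{D}_{\mu}\mathbf{f}\|_{\mathfrak{E}^{k}(\BB^{d}_{R})}^{2}+\|\mathbf{f}\|_{\mathfrak{E}^{k}(\BB^{d}_{R})}^{2}$.

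The hard part will not be any single estimate but the bookkeeping of surface terms through the whole recursion, and in particular verifying that the \emph{one} prescribed family $\varepsilon_{j}$ simultaneously (a) decays fast enough for the Young absorption in the first regime to close uniformly in $k$ and $d$ — which is exactly the purpose of the factor $16R^{2}$ and of the product $\prod_{i}(\frac{d}{2}-i-\varepsilon_{1})$ — and (b) causes no loss at the transition into the second regime, where the saving grace is that the boundary integrals in the induction hypothesis are literally summands of the higher-order energy and so can be swallowed by the quadratic form and the commutator gain at unit cost.
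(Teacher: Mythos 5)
Your proposal is correct and follows essentially the same route as the paper: induction on $k$ via the recursive definition of $(\cdot\,|\,\cdot)_{\mathfrak{E}^{k}(\BB^{d}_{R})}$, the commutator identity $\mathbf{D}_{\mu}\mathbf{L}_{d,p}=\mathbf{L}_{d,p}\mathbf{D}_{\mu}-\mathbf{D}_{\mu}$ supplying the gain $-\sum_{\mu}\|\mathbf{D}_{\mu}\mathbf{f}\|_{\mathfrak{E}^{k}(\BB^{d}_{R})}^{2}$, Young's inequality on the new boundary term, absorption of the surface excess into the negative boundary terms produced by the induction hypothesis (which is exactly what the prescribed weights $\varepsilon_{j}$ are calibrated for), and the same three-regime case split including the observation that at $k=\lceil d/2\rceil$ the first-regime bound already yields the rate $-s_{p}+\varepsilon_{1}$. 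Your Young constants differ slightly from the paper's $8R^{2}/\varepsilon_{1}$, but the prescribed factor $\varepsilon_{1}/(16R^{2})$ has enough slack that either bookkeeping closes.
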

\begin{proof}
\Cref{LdpDmu} implies for each $k\in\NN$
\begin{equation*}
\sum_{\mu=0}^{d} \Re\Big( \mathbf{D}_{\mu} \mathbf{L}_{d,p} \mathbf{f} \,\Big|\, \mathbf{D}_{\mu} \mathbf{f} \Big)_{\mathfrak{E}^{k}(\BB^{d}_{R})} = \sum_{\mu=0}^{d} \Re\Big( \mathbf{L}_{d,p} \mathbf{D}_{\mu} \mathbf{f} \,\Big|\, \mathbf{D}_{\mu} \mathbf{f} \Big)_{\mathfrak{E}^{k}(\BB^{d}_{R})} - \sum_{\mu=0}^{d} \| \mathbf{D}_{\mu}\mathbf{f} \|_{\mathfrak{E}^{k}(\BB^{d}_{R})}^{2}
\end{equation*}
and Young's inequality yields for the boundary term
\begin{align*}
&\Re\int_{\mathbb{S}^{d-1}_R}\overline{[\mathbf{L}_{d,p}\mathbf{f}]_1(\xi)} f_{1}(\xi) \dd\sigma^{d-1}_{R}(\xi) \\&\indent=
-s_{p} \int_{\mathbb{S}^{d-1}_R} |f_{1}(\xi)|^{2} \dd\sigma^{d-1}_{R}(\xi) \\&\indent\indent+ \Re \int_{\mathbb{S}^{d-1}_R} \Big( \overline{[\mathbf{D}_0\mathbf{f}]_1} f_{1}(\xi) - \overline{\xi^{i} [\mathbf{D}_i\mathbf{f}]_1} f_{1}(\xi) \Big) \dd \sigma^{d-1}_{R}(\xi) \\&\indent\leq
(-s_{p}+\varepsilon_{1}) \int_{\mathbb{S}^{d-1}_R} |f_{1}(\xi)|^{2} \dd\sigma^{d-1}_{R}(\xi) +
\frac{8R^{2}}{\varepsilon_{1}} \int_{\mathbb{S}^{d-1}_R} \sum_{\mu=0}^{d} \Big| [\mathbf{D}_{\mu}\mathbf{f}]_1(\xi)\Big|^{2} \dd\sigma^{d-1}_{R}(\xi) \,.
\end{align*}
We have proved in \Cref{EnergyDissipativity} that the estimate holds in case $k=1$. Let us proceed by induction and assume that the dissipative estimate holds for an arbitrary but fixed $k\in\NN$.\par
\medskip If $1 < k < \frac{d}{2} + 1$, we infer from this and the induction hypothesis
\begin{align*}
&\Re\Big( \mathbf{L}_{d,p} \mathbf{f} \,\Big|\, \mathbf{f} \Big)_{\mathfrak{E}^{k}(\BB^{d}_{R})} \leq
\Big( \frac{d}{2}-s_{p}-k\Big) \sum_{\mu=0}^{d} \| \mathbf{D}_{\mu}\mathbf{f} \|_{\mathfrak{E}^{k-1}(\BB^{d}_{R})}^{2} \\&\indent\indent+
(-s_{p}+\varepsilon_{1})\frac{2 \varepsilon_{k}}{R} \int_{\mathbb{S}^{d-1}_R} |f_{1}(\xi)|^{2} \dd\sigma^{d-1}_{R}(\xi) \\&\indent\indent+
\Big( \varepsilon_{k} \frac{8R^{2}}{\varepsilon_{1}\varepsilon_{k-1}} - \frac{d}{2} + k-1 + \varepsilon_{1} \Big) \frac{2 \varepsilon_{k-1}}{R} \int_{\mathbb{S}^{d-1}_R} \sum_{\mu=0}^{d} \Big|[\mathbf{D}_{\mu}\mathbf{f}]_1(\xi)\Big|^{2} \dd\sigma^{d-1}_{R}(\xi) \\&\indent\leq
\Big( \frac{d}{2}-s_{p}-k\Big) \sum_{\mu=0}^{d} \| \mathbf{D}_{\mu}\mathbf{f} \|_{\mathfrak{E}^{k-1}(\BB^{d}_{R})}^{2} \\&\indent\indent+
(-s_{p}+\varepsilon_{1})\frac{2 \varepsilon_{k}}{R} \int_{\mathbb{S}^{d-1}_R} |f_{1}(\xi)|^{2} \dd\sigma^{d-1}_{R}(\xi) \\&\indent=
\Big( \frac{d}{2}-s_{p}-k\Big) \| \mathbf{f} \|_{\mathfrak{E}^{k}(\BB^{d}_{R})}^{2} + \Big(-\frac{d}{2} + k + \varepsilon_{1} \Big) \frac{2 \varepsilon_{k}}{R} \int_{\mathbb{S}^{d-1}_R} |f_{1}(\xi)|^{2} \dd\sigma^{d-1}_{R}(\xi) \,,
\end{align*}
since
\begin{equation*}
\varepsilon_{k} = \frac{\varepsilon_{1}}{16 R^{2}}\varepsilon_{k-1} \Big( \frac{d}{2}-(k-1)-\varepsilon_{1} \Big) > 0 \,,
\end{equation*}
and the bound is proved. Notice that if $k = \Big\lceil\frac{d}{2}\Big\rceil$ we have $k < \frac{d}{2} + 1$ but $k + 1 \geq \frac{d}{2}$ and the bound reads
\begin{align*}
\Re\Big( \mathbf{L}_{d,p} \mathbf{f} \,\Big|\, \mathbf{f} \Big)_{\mathfrak{E}^{k}(\BB^{d}_{R})} &\leq
\Big( \frac{d}{2}-s_{p}-\Big\lceil\frac{d}{2}\Big\rceil\Big) \| \mathbf{f} \|_{\mathfrak{E}^{k}(\BB^{d}_{R})}^{2} \\&\indent+
\Big(-\frac{d}{2}+\Big\lceil\frac{d}{2}\Big\rceil+\varepsilon_{1}\Big) \frac{2 \varepsilon_{k}}{R} \int_{\mathbb{S}^{d-1}_R} |f_{1}|^{2} \\&\leq
(-s_{p}+\varepsilon_{1}) \| \mathbf{f} \|_{\mathfrak{E}^{k}(\BB^{d}_{R})}^{2} \,.
\end{align*}
\par
\medskip
Finally, if $k\geq \frac{d}{2}+1$ we conclude from the induction hypothesis
\begin{align*}
&
\Re\Big( \mathbf{L}_{d,p} \mathbf{f} \,\Big|\, \mathbf{f} \Big)_{\mathfrak{E}^{k}(\BB^{d}_{R})} = \sum_{\mu=0}^{d} \Re\Big( \mathbf{D}_{\mu}\mathbf{L}_{d,p} \mathbf{f} \,\Big|\,\mathbf{D}_{\mu}\mathbf{f} \Big)_{\mathfrak{E}^{k-1}(\BB^{d}_{R})} + \Re\Big( \mathbf{L}_{d,p} \mathbf{f} \,\Big|\, \mathbf{f} \Big)_{\mathfrak{E}^{k-1}(\BB^{d}_{R})} \\&\indent\leq
(-s_{p}+\varepsilon_{1}) \Big( \sum_{\mu=0}^{d} \| \mathbf{D}_{\mu}\mathbf{f} \|_{\mathfrak{E}^{k-1}(\BB^{d}_{R})}^{2} + \| \mathbf{f} \|_{\mathfrak{E}^{k-1}(\BB^{d}_{R})}^{2} \Big) - \sum_{\mu=0}^{d} \| \mathbf{D}_{\mu}\mathbf{f} \|_{\mathfrak{E}^{k-1}(\BB^{d}_{R})}^{2} \\&\indent\leq
(-s_{p}+\varepsilon_{1}) \| \mathbf{f} \|_{\mathfrak{E}^{k}(\BB^{d}_{R})}^{2}
\end{align*}
and the estimates are proved.
\end{proof}
\subsection{Properties of the range}
Besides dissipative estimates, we need a density property of the range of the wave evolution operation. This will be established via the following technical lemma.
\begin{lemma}
\label{EllipticApproximation} 
Let $(d,k,R)\in\NN\times\NN\times\RR_{\geq 1}$. For all $\varepsilon>0$ and all $F\in C^{\infty}(\overline{\BB^{d}_{R}})$ there is an $f\in C^{\infty}(\overline{\BB^{d}_{R}})$ such that
\begin{equation*}
\big\| (\Box_{{\chi\mathstrut}_{T}} v)(\log T, \,.\,) - F \big\|_{H^{k-1}(\BB^{d}_{R})} < \varepsilon \,, \qquad v(\tau,\,.\,) \coloneqq \Big( \frac{\ee^{\tau}}{T} \Big)^{\frac{d}{2}} f \,.
\end{equation*}
\end{lemma}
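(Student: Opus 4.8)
The plan is to reduce the statement to an algebraic fact about polynomials. \emph{Step 1.} I would first substitute the ansatz $v(\tau,\xi)=(\ee^{\tau}/T)^{d/2}f(\xi)=\ee^{\frac{d}{2}\tau}T^{-\frac{d}{2}}f(\xi)$ into \eqref{LaplaceBeltrami}. Since $v$ equals $\ee^{\frac{d}{2}\tau}T^{-\frac{d}{2}}$ times a $\tau$-independent profile, each $\pd_{\tau}$ hitting $v$ merely pulls out a factor $\frac{d}{2}$, so collecting the zeroth-, first-, and second-order terms in $\xi$ yields
\begin{equation*}
(\Box_{{\chi\mathstrut}_{T}}v)(\tau,\xi)=\Big(\tfrac{\ee^{\tau}}{T}\Big)^{\frac{d}{2}+2}(\mathcal{L}f)(\xi),\qquad \mathcal{L}f\coloneqq\Laplace f-\xi^{i}\xi^{j}\pd_{i}\pd_{j}f-(d+2)\,\xi^{i}\pd_{i}f-\tfrac{d}{2}\big(\tfrac{d}{2}+1\big)f.
\end{equation*}
At $\tau=\log T$ the prefactor is $1$, so the claim reduces to showing that $\{\mathcal{L}f:f\in C^{\infty}(\overline{\BB^{d}_{R}})\}$ is dense in $H^{k-1}(\BB^{d}_{R})$.

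\emph{Step 2.} The degenerate-elliptic operator $\mathcal{L}$ acts transparently on homogeneous polynomials. If $P$ is homogeneous of degree $j$, then by Euler's identity $\xi^{i}\pd_{i}P=jP$, hence $\xi^{i}\xi^{j}\pd_{i}\pd_{j}P=(\xi^{i}\pd_{i})^{2}P-\xi^{i}\pd_{i}P=j(j-1)P$, and therefore
\begin{equation*}
\mathcal{L}P=\Laplace P-\big(j+\tfrac{d}{2}\big)\big(j+\tfrac{d}{2}+1\big)P,
\end{equation*}
the factorisation being valid because the quadratic $j^{2}+(d+1)j+\tfrac{d}{2}(\tfrac{d}{2}+1)$ in $j$ has discriminant $1$. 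Since $\Laplace$ lowers the polynomial degree by $2$, on the space $\mathcal{P}_{n}=\bigoplus_{j=0}^{n}\mathcal{H}_{j}$ of polynomials of degree $\leq n$, graded by homogeneous parts $\mathcal{H}_{j}$, the operator $\mathcal{L}$ is block-triangular with diagonal blocks $-(j+\tfrac{d}{2})(j+\tfrac{d}{2}+1)$ times the identity on $\mathcal{H}_{j}$. As $d\geq 1$ forces $j+\tfrac{d}{2}>0$ for all $j\geq 0$, these diagonal blocks are invertible, so $\mathcal{L}\colon\mathcal{P}_{n}\to\mathcal{P}_{n}$ is a bijection for every integer $n\geq 0$; in particular every polynomial lies in $\mathcal{L}\big(C^{\infty}(\overline{\BB^{d}_{R}})\big)$.

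\emph{Step 3.} Given $F\in C^{\infty}(\overline{\BB^{d}_{R}})$ and $\varepsilon>0$, polynomials are dense in $C^{k-1}(\overline{\BB^{d}_{R}})$ — e.g.\ convolve $F$ with a Gaussian, which makes the result entire, and truncate its Taylor series — and since $\BB^{d}_{R}$ is bounded, $C^{k-1}$-convergence implies $H^{k-1}$-convergence; so there is a polynomial $P$ with $\|P-F\|_{H^{k-1}(\BB^{d}_{R})}<\varepsilon$. Put $f\coloneqq(\mathcal{L}|_{\mathcal{P}_{\deg P}})^{-1}P$, which is a polynomial and hence lies in $C^{\infty}(\overline{\BB^{d}_{R}})$, and $v(\tau,\cdot)=(\ee^{\tau}/T)^{d/2}f$. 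By Step 1, $(\Box_{{\chi\mathstrut}_{T}}v)(\log T,\cdot)=\mathcal{L}f=P$, so $\|(\Box_{{\chi\mathstrut}_{T}}v)(\log T,\cdot)-F\|_{H^{k-1}(\BB^{d}_{R})}=\|P-F\|_{H^{k-1}(\BB^{d}_{R})}<\varepsilon$.

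I do not anticipate a genuine obstacle: Step 1 is a mechanical substitution and Step 3 is standard, so the only point needing some care is the bookkeeping in Step 2 — ordering the homogeneous summands so that $\Laplace$ feeds only into strictly lower blocks, and observing that the diagonal eigenvalue $(j+\tfrac{d}{2})(j+\tfrac{d}{2}+1)$ is nonzero precisely because $d\geq 1$. I would also note that the spherical-harmonics route mentioned in the introduction genuinely needs the word \emph{approximate}: projecting $\mathcal{L}$ onto the $Y^{\ell}_{m}$-sectors produces a Fuchsian radial ODE with a regular singular point at $|\xi|=1$ whose indicial exponents are $0$ and $-\tfrac12$, so the solution regular at the origin is in general singular on the light cone — a difficulty the polynomial argument above sidesteps entirely.
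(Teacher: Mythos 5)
Your proof is correct, and it takes a genuinely different and substantially more elementary route than the paper's. The paper puts the $\varepsilon$ into a truncation of the spherical harmonics expansion of $F$, then solves the resulting radial mode equations \emph{exactly}: it builds hypergeometric fundamental systems, applies Duhamel's principle, verifies smoothness at the two regular singular points $\rho=0$ and $\rho=1$ by asymptotic analysis and a connection formula, invokes interior elliptic regularity at the origin, and treats $d=1$ separately with closed-form solutions. You instead put the $\varepsilon$ into a polynomial approximation of $F$ and solve $\mathcal{L}f=P$ exactly within polynomials: your computation that $\mathcal{L}$ acts on the homogeneous component $\mathcal{H}_{j}$ as $-(j+\tfrac{d}{2})(j+\tfrac{d}{2}+1)\,\mathrm{id}$ modulo the degree-lowering $\Laplace$ is right (the factorisation and the Euler-identity step $\xi^{i}\xi^{j}\pd_{i}\pd_{j}P=j(j-1)P$ both check out), and since $\lambda=\tfrac{d}{2}$ keeps both roots $-\tfrac{d}{2},-\tfrac{d}{2}-1$ away from $\NN_{0}$, the triangular system is invertible on every $\mathcal{P}_{n}$. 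What your approach buys is uniformity in $d$ (no case distinction for $d=1$ or for the parity of $d$ in the Frobenius analysis), no special functions, and no regularity bootstrap; what it gives up is the explicit mode-by-mode solution formula, which the paper reuses structurally in the later spectral analysis, though not logically for this lemma. Two minor points: before convolving with a Gaussian you should extend $F$ past $\pd\BB^{d}_{R}$ (or simply cite density of polynomials in $H^{k-1}$ of a ball, which is all you need); and your closing aside slightly misidentifies where the paper's approximation enters --- the indicial exponent $-\tfrac{1}{2}$ at $\rho=1$ afflicts the \emph{homogeneous} solution regular at the origin, but the paper's Duhamel construction does produce a particular solution smooth on the light cone, and the $\varepsilon$ there comes solely from truncating to finitely many spherical moments.
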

\begin{proof}
Let $f,F\in C^{\infty}(\overline{\BB^{d}_{R}})$ and for $\lambda\in\CC$ define $v_{\lambda}(\tau,\,.\,) \coloneqq \Big( \frac{\ee^{\tau}}{T} \Big)^{\lambda} f$. Using \Cref{LaplaceBeltrami},
\begin{equation}
\label{EllipticProblem}
(\Box_{{\chi\mathstrut}_{T}} v_{\lambda})(\log T,\,.\,) - F =
\Big( (\delta^{ij}-\xi^{i}\xi^{j}) \pd_{i}\pd_{j} - 2 (\lambda+1) \xi^{i} \pd_{i} - \lambda(\lambda+1) \Big) f - F \,.
\end{equation}
So, the task in this lemma is the construction of an approximate solution to a degenerate elliptic equation.\par
\medskip
In the multidimensional case $d\geq 2$ we can employ a spherical harmonics decomposition. Indeed, for $n\in\NN$ and $\xi \in \overline{\BB^{d}_{R}}\setminus\{0\}$ consider
\begin{equation*}
F_{n}(\xi) = \sum_{\ell=0}^n\sum_{m\in\Omega_{d,\ell}} F_{\ell,m}(|\xi|) Y_{\ell,m}(\tfrac{\xi}{|\xi|}) \,, \qquad
F_{\ell,m}(\rho) \coloneqq \int_{\mathbb{S}^{d-1}} \overline{Y_{\ell,m}(\omega)} F(\rho\omega) \dd\sigma(\omega) \,,
\end{equation*}
which, by \cite[Lemma A.2]{MR3537340}, defines a function $F_{n}\in C^{\infty}(\overline{\BB^{d}_{R}})$ with
\begin{equation*}
\lim_{n\to\infty} \| F_{n} - F \|_{H^{k-1}(\BB^{d}_{R})} = 0 \,.
\end{equation*}
Hence, given $\varepsilon>0$, there is an $N\in\NN$ such that
\begin{equation}
\label{DensityEstimate}
\big\| (\Box_{{\chi\mathstrut}_{T}} v_{\lambda})(\log T, \,.\,) - F \big\|_{H^{k-1}(\BB^{d}_{R})} < \big\| (\Box_{{\chi\mathstrut}_{T}} v_{\lambda})(\log T, \,.\,) - F_{N} \big\|_{H^{k-1}(\BB^{d}_{R})} + \varepsilon \,.
\end{equation}
Moreover, consider smooth functions of the form
\begin{equation}
\label{DecoSpherHarm}
f(\xi) = \sum_{\ell=0}^N \sum_{m\in\Omega_{d,\ell}} f_{\ell,m}(|\xi|) Y_{\ell,m}(\tfrac{\xi}{|\xi|}) \,,
\end{equation}
notice for $(\rho,\omega)\in(0,\infty)\times\mathbb{S}^{d-1}$ the relations
\begin{align*}
\pd_{\rho} f(\rho\omega) &= \omega^i(\pd_{i} f)(\rho\omega) \,, \\
\pd_{\rho}^{2} f(\rho\omega) &= \omega^i\omega^j(\pd_{i}\pd_{j} f)(\rho\omega) \,, \\
(\Laplace f)(\rho\omega) &= \Big( \pd_{\rho}^{2} + \frac{d-1}{\rho} \pd_{\rho} + \frac{1}{\rho^{2}} \Laplace^{\mathbb{S}^{d-1}}_{\omega} \Big) f(\rho\omega) \,,
\end{align*}
and compute for the wave operator
\begin{align*}
&(\Box_{{\chi\mathstrut}_{T}} v_{\lambda})(\log T,\rho\omega) - F_{N}(\rho\omega) \\&\indent\indent=
\sum_{\ell=0}^N \sum_{m\in\Omega_{d,\ell}} \Big( a_{2}(\rho) f_{\ell,m}''(\rho) + a_{1}(\rho) f_{\ell,m}'(\rho) + a_{0}(\rho) f_{\ell,m}(\rho) - F_{\ell,m} \Big) Y_{\ell,m}(\omega)
\end{align*}
where the coefficients are given by
\begin{align*}
a_{2}(\rho) &= (1-\rho^{2}) \,, \\
a_{1}(\rho) &= 2 \Big( \frac{d-3}{2} - \lambda \Big) \rho + (d-1) \frac{1-\rho^{2}}{\rho} \,, \\
a_{0}(\rho) &= - \lambda (\lambda+1) - \frac{\ell(\ell+d-2)}{\rho^{2}} \,.
\end{align*}
Given the spherical harmonics approximation $F_{N}$ as above, it follows that there is an $f\in C^{\infty}(\overline{\BB^{d}_{R})}$ such that
\begin{equation}
\label{ModeEquation}
\renewcommand{\arraystretch}{1.2}
\left\{
\begin{array}{rcll}
(\Box_{{\chi\mathstrut}_{T}} v_{\lambda})(\log T, \,.\,) &=& F_{N} & \text{in } \BB^{d}_{R} \,, \\
v_{\lambda}(\tau,\,.\,) &=& \displaystyle{\Big( \frac{\ee^{\tau}}{T} \Big)^{\lambda} f} \,, &
\end{array}
\right.
\end{equation}
if and only if for $\ell=0,\ldots,N$ and $m\in\Omega_{d,\ell}$ there are $f_{\ell,m}\in C^{\infty}([0,R])$ such that
\begin{equation}
\label{DecoupledODE}
\begin{array}{rcll}
a_2 f_{\ell,m}'' + a_1 f_{\ell,m}' + a_0 f_{\ell,m} &=& F_{\ell,m} & \text{in } (0,R) \,.
\end{array}
\end{equation}
Each of the latter equations has finite regular singular points $\{-1,0,1\}$ with Frobenius indices
\begin{equation}
\{\ell,-(d+\ell-2)\} \text{ at } \rho = 0
\qquad\text{and}\qquad
\Big\{0,\frac{d}{2}-\lambda-\frac{1}{2} \Big\} \text{ at } \rho = 1 \,.
\end{equation}
By means of the transformation
\begin{equation*}
f_{\ell,m}(\rho) = \rho^{\ell} g_{\ell,m}(\rho^{2})
\end{equation*}
we obtain
\begin{align*}
&
a_{2}(\rho) f_{\ell,m}''(\rho) + a_{1}(\rho) f_{\ell,m}'(\rho) + a_{0}(\rho) f_{\ell,m}(\rho) \\&\indent=
4\rho^{\ell}\Big( \rho^{2}(1-\rho^{2}) g_{\ell,m}''(\rho^{2}) + (c-(a+b+1)\rho^{2}) g_{\ell,m}'(\rho^{2}) - ab g_{\ell,m}(\rho^{2}) \Big) \,,
\end{align*}
with parameters
\begin{equation*}
a = \frac{1}{2} \Big( \lambda+\ell \Big) \,,
\quad
b = \frac{1}{2} \Big( \lambda+\ell+1 \Big) \,,
\quad
c = \frac{d}{2} + \ell \,.
\end{equation*}
In order to construct a smooth solution to \Cref{DecoupledODE} we shall employ Duhamel's principle, for which we need a fundamental system of solutions to the hypergeometric differential equation
\begin{equation}
\label{HypergEq}
z(1-z) g''(z) + (c-(a+b+1)z) g'(z) - ab g(z) = 0 \,.
\end{equation}
From now on, let us fix
\begin{equation*}
\lambda = \frac{d}{2} \,,
\end{equation*}
implying
\begin{equation*}
a = \frac{1}{2} \Big( \frac{d}{2}+\ell \Big) \,,
\quad
b=a+\frac{1}{2} \,,
\quad
c = 2a \,.
\end{equation*}
Then the Frobenius indices of \Cref{HypergEq} are
\begin{equation*}
\Big\{0,1-2a\Big\} \text { at } z=0
\qquad\text{and}\qquad
\Big\{0, - \frac{1}{2} \Big\} \text { at } z=1
\end{equation*}
and a fundamental system to \Cref{HypergEq} in $\RR$ near $z=1$ is given in terms of hypergeometric functions
\begin{equation*}
\psi_{a,1}(z) = {}_{2}F{}_{1}(a,a+\tfrac{1}{2},\tfrac{3}{2};1-z) \,, \quad
\widetilde{\psi}_{a,1}(z) = |1-z|^{-\frac{1}{2}} {}_{2}F{}_{1}(a,a-\tfrac{1}{2},\tfrac{1}{2};1-z) \,,
\end{equation*}
with Wronskian
\begin{equation*}
W(\psi_{a,1},\widetilde{\psi}_{a,1})(z) = \frac{1}{2} z^{-2a} (1-z)^{-1} |1-z|^{-\frac{1}{2}} \,.
\end{equation*}
The explicit form of the Wronskian can be computed from Abel's identity. We also remark that both $\psi_{a,1}$, $\widetilde{\psi}_{a,1}$ extend analytically to solutions in $(0,\infty)$ away from $z=1$. The Frobenius indices at $z=0$ are possibly separated by a positive integer and the analytic solution at $z=0$ is given by
\begin{equation*}
\psi_{a,0}(z) = {}_{2}F{}_{1}(a,a+\tfrac{1}{2},2a;z)
\end{equation*}
and related to $\psi_{a,1},\widetilde{\psi}_{a,1}$ through the connection formula
\begin{align}
\begin{split}
\label{ConnectionFormula}
\psi_{a,0} &= \frac{\Gamma(2a)\Gamma(-\tfrac{1}{2})}{\Gamma(a)\Gamma(a-\tfrac{1}{2})} \psi_{a,1} + \frac{\Gamma(2a)\Gamma(\tfrac{1}{2})}{\Gamma(a)\Gamma(a+\tfrac{1}{2})} \widetilde{\psi}_{a,1} \\&=
-2^{2a-1}(2a-1) \psi_{a,1} + 2^{2a-1} \widetilde{\psi}_{a,1}
\end{split}
\end{align}
on $(0,1)$ and by extension also on $(1,\infty)$. This formula implies
\begin{equation*}
W(\psi_{a,0},\psi_{a,1})(z) = -2^{2a-1} W(\psi_{a,1},\widetilde{\psi}_{a,1})(z) = - 2^{2a-2} z^{-2a} (1-z)^{-1} |1-z|^{-\frac{1}{2}}
\end{equation*}
and, in particular, that $\psi_{a,0},\psi_{a,1}$ are linearly independent and therefore form a fundamental system of solutions in $(0,1)\cup(1,\infty)$. From this we obtain a fundamental system of solutions
\begin{equation}
\label{FunSys}
\phi_{d,\ell,0}(\rho) \coloneqq 2^{-\frac{1}{2}(\frac{d}{2}+\ell-1)} \rho^{\ell} \psi_{a,0}(\rho^{2}) \,, \qquad
\phi_{d,\ell,1}(\rho) \coloneqq 2^{-\frac{1}{2}(\frac{d}{2}+\ell-1)} \rho^{\ell} \psi_{a,1}(\rho^{2}) \,,
\end{equation}
to the homogeneous problem associated to \Cref{DecoupledODE} with Wronskian
\begin{equation*}
W_{d,\ell}(\rho) \coloneqq W(\phi_{d,\ell,0},\phi_{d,\ell,1})(\rho) = - \rho^{-(d-1)} (1-\rho^{2})^{-1} |1-\rho^{2}|^{-\frac{1}{2}} \,.
\end{equation*}
According to Duhamel's principle,
\begin{align}
\label{SolutionDuhamel}
f_{\ell,m}(\rho) &=
+ \phi_{d,\ell,0}(\rho) \int_{\rho}^{1} \frac{\phi_{d,\ell,1}(z)}{W_{d,\ell}(z)} \frac{F_{\ell,m}(z)}{1-z^{2}} \dd z +
\phi_{d,\ell,1}(\rho) \int_{0}^{\rho} \frac{\phi_{d,\ell,0}(z)}{W_{d,\ell}(z)} \frac{F_{\ell,m}(z)}{1-z^{2}} \dd z \\\nonumber&=-\phi_{d,\ell,0}(\rho) \int_{\rho}^{1} z^{d-1} |1-z^{2}|^{\frac{1}{2}} \phi_{d,\ell,1}(z) F_{\ell,m}(z) \dd z \\\nonumber&\indent-\phi_{d,\ell,1}(\rho) \int_{0}^{\rho} z^{d-1} |1-z^{2}|^{\frac{1}{2}} \phi_{d,\ell,0}(z) F_{\ell,m}(z) \dd z
\end{align}
is a smooth solution to the inhomogeneous problem \eqref{DecoupledODE} in $(0,1)\cup(1,R]$. Our aim is to prove that $f_{\ell,m}$ belongs to $C^{\infty}([0,R])$ which yields via \Cref{DecoSpherHarm} a function $f\in C^{\infty}(\overline{\BB^{d}_{R}})$. Note that the connection formula implies that $|1-z^{2}|^{\frac{1}{2}}\phi_{d,\ell,0}(z)$ is bounded on $(0,1)$ and thus
\begin{equation*}
c_{d,\ell,m} \coloneqq \int_{0}^{1} z^{d-1} (1-z^{2})^{\frac{1}{2}} \phi_{d,\ell,0}(z) F_{\ell,m}(z) \dd z
\end{equation*}
exists and is finite. Additionally to $\phi_{d,\ell,0}$, $\phi_{d,\ell,1}$, let
\begin{equation*}
\widetilde{\phi}_{d,\ell,1}(\rho) \coloneqq 2^{+\frac{1}{2} (\frac{d}{2}+\ell-1)} \rho^{\ell} \widetilde{\psi}_{d,\ell,1}(\rho^{2})
\end{equation*}
and note that by construction
\begin{equation*}
\widetilde{\varphi}_{d,\ell,1}(\rho) \coloneqq |1-\rho|^{\frac{1}{2}}\widetilde{\phi}_{d,\ell,1}(\rho)
\end{equation*}
defines an analytic function in a neighbourhood of $\rho = 1$. By employing the connection formula and factorizing a zero, we get
\begin{align*}
f_{\ell,m}(\rho) &= - c_{d,\ell,m} \phi_{d,\ell,1}(\rho) + \phi_{d,\ell,1}(\rho) \int_{\rho}^{1} z^{d-1} |1-z^{2}|^{\frac{1}{2}} \widetilde{\phi}_{d,\ell,1}(z) F_{\ell,m}(z) \dd z
\\&\indent -\widetilde{\phi}_{d,\ell,1}(\rho) \int_{\rho}^{1} z^{d-1} |1-z^{2}|^{\frac{1}{2}} \phi_{d,\ell,1}(z) F_{\ell,m}(z) \dd z \\&=
-c_{d,\ell,m} \phi_{d,\ell,1}(\rho) + \phi_{d,\ell,1}(\rho) \int_{\rho}^{1} z^{d-1} \sqrt{1+z} \widetilde{\varphi}_{d,\ell,1}(z) F_{\ell,m}(z) \dd z \\&\indent-
(1-\rho) \widetilde{\varphi}_{d,\ell,1}(\rho) \int_{0}^{1} \Big( w^{d-1} \sqrt{1+w} \phi_{d,\ell,1}(w) F_{\ell,m}(w) \Big)\Big|_{w=1+z(\rho-1)} \sqrt{z} \dd z \,,
\end{align*}
which shows that $f_{\ell,m}$ is smooth at $\rho = 1$ and thus $f_{\ell,m} \in C^{\infty}((0,R])$. The other regular singular point at $\rho=0$ stems from the change of variables in the spherical harmonics decomposition. Incidentally, the fundamental system \eqref{FunSys} is known in closed form for $\rho \in (0,1)$, see \cite[eqs. 15.4.9 and 15.4.18]{MR2723248}. For convenience, we make use of the explicit expression for $\phi_{d,\ell,1}$ to find near $\rho = 0$ the behaviour
\begin{align*}
\phi_{d,\ell,0}(\rho) &= \rho^{\ell} \varphi_{d,\ell,0}(\rho) \,, \\
\phi_{d,\ell,1}(\rho) &=
\renewcommand{\arraystretch}{1.2}
\left\{
\begin{array}{ll}
\rho^{-(d+\ell-2)} \varphi_{d,\ell,1}(\rho) & \text{if } (d,\ell) \neq (2,0) \,, \\
-\log(\rho) \phi_{2,0,0}(\rho) + \varphi_{2,0,1}(\rho) & \text{if } (d,\ell) = (2,0) \,,
\end{array}
\right.
\end{align*}
where $\varphi_{d,\ell,0}$ and $\varphi_{d,\ell,1}$ are analytic near $\rho = 0$ with $\varphi_{d,\ell,0}(0),\varphi_{d,\ell,1}(0)\neq 0$. Using this, we find the coarse bounds
\begin{align*}
\Big| \phi_{d,\ell,0}(\rho)\int_{\rho}^{1} z^{d-1} (1-z^{2})^{\frac{1}{2}} \phi_{d,\ell,1}(z) F_{\ell,m}(z) \dd z \Big| &\lesssim 
\renewcommand{\arraystretch}{1.2}
\left\{
\begin{array}{ll}
1 \,, & \ell = 0 \,, \\
\rho \,, & \ell = 1 \,, \\
-\rho^{2}\log(\rho) \,, & \ell = 2 \,, \\
\rho^{2} \,, & \ell \geq 3 \,,
\end{array}
\right.
\\
\Big| \phi_{d,\ell,1}(\rho)\int_{0}^{\rho} z^{d-1} (1-z^{2})^{\frac{1}{2}} \phi_{d,\ell,0}(z) F_{\ell,m}(z) \dd z \Big| &\lesssim
\renewcommand{\arraystretch}{1.2}
\left\{
\begin{array}{ll}
\rho^{2} \,, & (d,\ell) \neq (2,0) \,, \\
\rho \,, & (d,\ell) = (2,0) \,, \\
\end{array}
\right.
\\
\Big| \phi_{d,\ell,0}'(\rho)\int_{\rho}^{1} z^{d-1} (1-z^{2})^{\frac{1}{2}} \phi_{d,\ell,1}(z) F_{\ell,m}(z) \dd z \Big| &\lesssim
\renewcommand{\arraystretch}{1.2}
\left\{
\begin{array}{ll}
1 \,, & \ell = 0 \,, \\
1 \,, & \ell=1 \,, \\
-\rho\log(\rho) \,, & \ell = 2 \,, \\
\rho \,, & \ell \geq 3 \,,
\end{array}
\right.
\\
\Big| \phi_{d,\ell,1}'(\rho)\int_{0}^{\rho} z^{d-1} (1-z^{2})^{\frac{1}{2}} \phi_{d,\ell,0}(z) F_{\ell,m}(z) \dd z \Big| &\lesssim \rho \,,
\end{align*}
near $\rho=0$. This implies for the solution \eqref{DecoSpherHarm}
\begin{align*}
|f(\xi)| &\lesssim \sum_{\ell=0}^N \sum_{m\in\Omega_{d,\ell}} |f_{\ell,m}(|\xi|)| \lesssim 1 \,, \\
|(\pd f)(\xi)| &\lesssim \sum_{m\in\Omega_{d,0}} |f_{0,m}'(|\xi|)| + \sum_{\ell=1}^N \sum_{m\in\Omega_{d,\ell}} \Big( |\xi|^{-1} |f_{\ell,m}(|\xi|)| + |f_{\ell,m}'(|\xi|)| \Big) \lesssim 1 \,,
\end{align*}
near $\xi=0$. It follows that $f\in C^{\infty}(\overline{\BB^{d}_{R}}\setminus\{0\}) \cap H^1(\BB^{d}_{R})$. In particular, from \Cref{ModeEquation,EllipticProblem} we infer that $f$ is a weak solution to
\begin{equation*}
(\delta^{ij} - \xi^{i} \xi^{j}) (\pd_{i} \pd_{j} f)(\xi) - (d+2)\xi^{i} (\pd_{i} f)(\xi) - \tfrac{d(d+2)}{4} f(\xi) = F_{N}(\xi)
\end{equation*}
in $\BB^{d}_{R}$. Note that the left-hand side defines a uniformly elliptic operator on $\BB^{d}_{\frac{1}{2}}$ so that we conclude $f\in C^{\infty}(\overline{\BB^{d}_{R}})$ from elliptic regularity \cite[p. 334, Theorem 3]{MR2597943}. Thus, we have obtained a smooth solution to problem \eqref{ModeEquation} which implies with \eqref{DensityEstimate} the result.\par
\medskip
In dimension $d=1$, \Cref{EllipticProblem} reads
\begin{equation*}
(\Box_{{\chi\mathstrut}_{T}} v_{\lambda})(\log T,\xi) - F(\xi) =
(1-\xi^{2}) f''(\xi) - 2 (\lambda+1) \xi f'(\xi) - \lambda(\lambda+1) f(\xi) - F(\xi)
\end{equation*}
and it remains to construct a solution $f\in C^{\infty}([-R,R])$ to
\begin{equation}
\label{d=1ODE}
(1-\xi^{2}) f''(\xi) - 2 (\lambda+1) \xi f'(\xi) - \lambda(\lambda+1) f(\xi) = F(\xi)
\quad\text{in } (-R,R) \,.
\end{equation}
Note that this equation has Frobenius indices $\{0,-\lambda\}$ at $\xi=-1,1$. Via the transformation
\begin{equation*}
f(\xi) = g(z) \,, \qquad z = \frac{1-\xi}{2} \,,
\end{equation*}
we see that
\begin{align*}
(1-\xi^{2}) f''(\xi) &- 2 (\lambda+1) \xi f'(\xi) - \lambda(\lambda+1) f(\xi) \\&\indent=
z(1-z) g''(z) - (c - (a+b+1)z) g'(z) - ab g(z)
\end{align*}
with parameters
\begin{equation*}
a = \lambda \,, \qquad
b = \lambda + 1 \,, \qquad
c = \lambda+1 \,,
\end{equation*}
i.e., \Cref{d=1ODE} is of hypergeometric type. From this, we infer for the choice
\begin{equation*}
\lambda = \frac{1}{2}
\end{equation*}
a fundamental system to the homogeneous problem associated to \Cref{d=1ODE} given in closed form by
\begin{equation*}
\phi_{-1}(\xi) = \frac{1}{\sqrt{|1-\xi|}} \,, \qquad
\phi_{1}(\xi) = \frac{1}{\sqrt{|1+\xi|}} \,,
\end{equation*}
with Wronskian
\begin{equation*}
W(\xi) = W(\phi_{-1},\phi_{1})(\xi) = - \frac{1}{(1-\xi^{2}) \sqrt{|1-\xi^{2}|}} \,.
\end{equation*}
Consequently, a particular solution to \Cref{d=1ODE} is given by
\begin{align*}
f(\xi) &= \phi_{-1}(\xi) \int_{\xi}^{1} \frac{\phi_{1}(z)}{W(z)} \frac{F(z)}{1-z^{2}} \dd z +
\phi_{1}(\xi) \int_{-1}^{\xi} \frac{\phi_{-1}(z)}{W(z)} \frac{F(z)}{1-z^{2}} \dd z \\&=
-\frac{1}{\sqrt{|1-\xi|}} \int_{\xi}^{1} \sqrt{|1-z|} F(z) \dd z - \frac{1}{\sqrt{|1+\xi|}} \int_{-1}^{\xi} \sqrt{|1+z|} F(z) \dd z \\&=
-(1-\xi) \int_{0}^{1} F(1+z(\xi-1)) \sqrt{z} \dd z - (1+\xi) \int_{0}^{1} F(-1+z(\xi+1)) \sqrt{z} \dd z
\end{align*}
and clearly $f\in C^{\infty}([-R,R])$.
\end{proof}
This technical lemma furnishes the link to the following density property.
\begin{proposition}
\label{DenseRange}
Let $(d,p,k,R)\in\NN\times\RR_{>1}\times\NN\times\RR_{\geq 1}$. For all $\varepsilon>0$ and all $\mathbf{F}\in C^{\infty}(\overline{\BB^{d}_{R}})^{2}$ there is an $\mathbf{f}\in C^{\infty}(\overline{\BB^{d}_{R}})^{2}$ such that
\begin{equation*}
\big\| \big( (\tfrac{d}{2}-s_{p}) \mathbf{I} - \mathbf{L}_{d,p} \big)\mathbf{f} - \mathbf{F} \big\|_{H^{k}(\BB^{d}_{R})\times H^{k-1}(\BB^{d}_{R})} < \varepsilon \,.
\end{equation*}
\end{proposition}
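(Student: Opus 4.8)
The plan is to reduce the first-order system to a single scalar equation for the first component $f_{1}$ and then invoke \Cref{EllipticApproximation}. Writing $\big( (\tfrac{d}{2}-s_{p})\mathbf{I} - \mathbf{L}_{d,p} \big)\mathbf{f} = \mathbf{F}$ componentwise and abbreviating the Euler operator by $D \coloneqq \xi^{i}\pd_{i}$, the $s_{p}$-terms cancel and the two lines become
\begin{equation*}
\big( \tfrac{d}{2} + D \big) f_{1} - f_{2} = F_{1} \,, \qquad \big( \tfrac{d}{2} + 1 + D \big) f_{2} - \Laplace f_{1} = F_{2} \,.
\end{equation*}
The first line lets me eliminate $f_{2} = \big( \tfrac{d}{2} + D \big) f_{1} - F_{1}$, and inserting this into the second line produces the scalar equation
\begin{equation*}
\big( D^{2} + (d+1) D + \tfrac{d}{2}\big(\tfrac{d}{2}+1\big) - \Laplace \big) f_{1} = F_{2} + \big( \tfrac{d}{2}+1+D \big) F_{1} \eqqcolon \widetilde{F} \in C^{\infty}(\overline{\BB^{d}_{R}}) \,.
\end{equation*}

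Next I would recognise this scalar problem as exactly the degenerate elliptic equation treated in \Cref{EllipticApproximation}. Using the elementary identity $\xi^{i}\xi^{j}\pd_{i}\pd_{j} = D^{2} - D$, the operator on the left equals $-\big( (\delta^{ij}-\xi^{i}\xi^{j})\pd_{i}\pd_{j} - (d+2)\xi^{i}\pd_{i} - \tfrac{d}{2}\big(\tfrac{d}{2}+1\big) \big)$, which by \eqref{EllipticProblem} is precisely $-(\Box_{{\chi\mathstrut}_{T}} v)(\log T,\,.\,)$ for $v(\tau,\,.\,) = \big( \tfrac{\ee^{\tau}}{T} \big)^{d/2} f_{1}$ and the distinguished value $\lambda = \tfrac{d}{2}$ fixed in \Cref{EllipticApproximation}. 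Hence the scalar equation above is equivalent to $(\Box_{{\chi\mathstrut}_{T}} v)(\log T,\,.\,) = -\widetilde{F}$, and since $\widetilde{F}\in C^{\infty}(\overline{\BB^{d}_{R}})$, \Cref{EllipticApproximation} applied with $-\widetilde{F}$ in place of $F$ and the prescribed $\varepsilon$ furnishes an $f_{1}\in C^{\infty}(\overline{\BB^{d}_{R}})$ with $\big\| (\Box_{{\chi\mathstrut}_{T}} v)(\log T,\,.\,) + \widetilde{F} \big\|_{H^{k-1}(\BB^{d}_{R})} < \varepsilon$.

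I would then set $f_{2} \coloneqq \big( \tfrac{d}{2} + D \big) f_{1} - F_{1}$, which again lies in $C^{\infty}(\overline{\BB^{d}_{R}})$ since $f_{1}, F_{1}$ do and $D$ has smooth coefficients, so that $\mathbf{f} = (f_{1},f_{2}) \in C^{\infty}(\overline{\BB^{d}_{R}})^{2}$. By construction the first of the two lines holds \emph{exactly}, so the first component of $\big( (\tfrac{d}{2}-s_{p})\mathbf{I} - \mathbf{L}_{d,p} \big)\mathbf{f} - \mathbf{F}$ vanishes identically, while substituting $f_{2}$ into the second line shows that its second component equals $-(\Box_{{\chi\mathstrut}_{T}} v)(\log T,\,.\,) - \widetilde{F}$. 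Therefore
\begin{equation*}
\big\| \big( (\tfrac{d}{2}-s_{p})\mathbf{I} - \mathbf{L}_{d,p} \big)\mathbf{f} - \mathbf{F} \big\|_{H^{k}(\BB^{d}_{R})\times H^{k-1}(\BB^{d}_{R})} = \big\| (\Box_{{\chi\mathstrut}_{T}} v)(\log T,\,.\,) + \widetilde{F} \big\|_{H^{k-1}(\BB^{d}_{R})} < \varepsilon \,,
\end{equation*}
which is the assertion.

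The genuine analytic difficulty---smoothness of the Duhamel solution at the degenerate boundary $\rho=1$ and at the spurious singularity $\rho=0$ coming from the spherical-harmonics change of variables---has already been absorbed into \Cref{EllipticApproximation}; what remains here is the bookkeeping verifying that the decoupled scalar problem coincides with that lemma's elliptic equation for the specific exponent $\lambda = \tfrac{d}{2}$. The point worth double-checking is that no error is produced in the $H^{k}$-slot: one solves the \emph{first-order} relation tying $f_{2}$ to $f_{1}$ exactly, and thereby confines the entire approximation error to the second component, where only $H^{k-1}$-control is demanded.
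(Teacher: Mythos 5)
Your proposal is correct and is essentially the paper's own argument: the paper likewise sets $f_{2}=(\lambda+s_{p})f_{1}+\xi^{i}\pd_{i}f_{1}-F_{1}$ with $\lambda=\tfrac{d}{2}-s_{p}$ (so the first component of the residual vanishes exactly), identifies the remaining second component with $-(\Box_{{\chi\mathstrut}_{T}}v)(\log T,\,.\,)-F_{\lambda,p}$ where $F_{\lambda,p}=(\tfrac{d}{2}+1)F_{1}+\xi^{i}\pd_{i}F_{1}+F_{2}$ coincides with your $\widetilde{F}$, and then invokes \Cref{EllipticApproximation}. The only cosmetic difference is that the paper routes the identification through the transition relation \eqref{TransitionMechanism} rather than expanding $\xi^{i}\xi^{j}\pd_{i}\pd_{j}=D^{2}-D$ by hand.
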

\begin{proof}
Let $\mathbf{F}\in C^{\infty}(\overline{\BB^{d}_{R}})^{2}$. Let $\lambda\in\CC$, $f_{1}\in C^{\infty}(\overline{\BB^{d}_{R}})$ and define $\mathbf{f}_{\lambda,p}\in C^{\infty}(\overline{\BB^{d}_{R}})^{2}$ by
\begin{equation*}
\mathbf{f}_{\lambda,p}(\xi)=
\begin{bmatrix}
f_{1}(\xi)\\
(\lambda+s_{p})f_{1}(\xi) + \xi^{i} (\pd_{i} f_{1})(\xi) - F_1(\xi)
\end{bmatrix}
\,.
\end{equation*}
A computation in terms of the variable
\begin{align*}
\mathbf{v}_{\lambda+s_{p}}(\tau,\xi) &\coloneqq \Big(\frac{\ee^\tau}{T} \Big)^{\lambda+s_{p}} \mathbf{f}_{\lambda,p}(\xi) \\&=
\begin{bmatrix}
v_{\lambda+s_{p},1}(\tau,\xi) \\
\pd_{\tau} v_{\lambda+s_{p},1}(\tau,\xi) + \xi^{i} \pd_{\xi^{i}} v_{\lambda+s_{p},1}(\tau,\xi) - \Big(\frac{\ee^\tau}{T} \Big)^{\lambda+s_{p}} F_1(\xi)
\end{bmatrix}
\end{align*}
reveals
\begin{align}
\label{SpectralWave}
(\lambda\mathbf{I} - \mathbf{L}_{d,p})\mathbf{f}_{\lambda,p}-\mathbf{F} &=
\left.\Big( \pd_{\tau} - s_{p} \mathbf{I} - \mathbf{L}_{d,p} \Big) \mathbf{v}_{\lambda+s_{p}}(\tau,\,.\,)\right|_{\tau=\log T} - \mathbf{F} \\
\label{SpectralBox}
&=-
\begin{bmatrix}
0 \\
( \Box_{{\chi\mathstrut}_{T}} v_{\lambda+s_{p},1})(\log T,\,.\,) + F_{\lambda,p}
\end{bmatrix}
\,,
\end{align}
where $F_{\lambda,p}\in C^{\infty}(\overline{\BB^{d}_{R}})$ is given by
\begin{align*}
F_{\lambda,p}(\xi) &= (\lambda+s_{p}+1) F_{1}(\xi) + \xi^{i} (\pd_{i} F_{1})(\xi) + F_{2}(\xi) \,.
\end{align*}
Now by \Cref{EllipticApproximation} we can choose $f_{1}\in C^{\infty}(\overline{\BB^{d}_{R}})$ so that we get via \Cref{SpectralBox} the assertion.
\end{proof}
\subsection{Generation of semigroups}
\label{SecFreeSG}
The map from \Cref{WaveFlowOperation} is a densely defined operator in Sobolev spaces.
\begin{definition}
\label{WaveFlowOperator}
Let $(d,p,k,R)\in\NN\times\RR_{>1}\times\NN\times\RR_{\geq 1}$. The operator $\mathbf{L}_{d,p,k,R}: \dom(\mathbf{L}_{d,p,k}) \subset H^{k}(\BB^{d}_{R})\times H^{k-1}(\BB^{d}_{R}) \rightarrow H^{k}(\BB^{d}_{R})\times H^{k-1}(\BB^{d}_{R})$ is densely defined by
\begin{equation*}
\mathbf{L}_{d,p,k,R}\mathbf{f} = \mathbf{L}_{d,p}\mathbf{f} \,,
\qquad
\dom(\mathbf{L}_{d,p,k,R}) = C^{\infty}(\overline{\BB^{d}_{R}})^{2} \,.
\end{equation*}
\end{definition}
We obtain in terms of strongly continuous semigroups a precise notion for the free wave flow in extended past light cones in each space dimension and for all scalings.
\begin{theorem}
\label{FreeSemigroupTHM}
Let $(d,p,k,R)\in\NN\times\RR_{>1}\times\NN\times\RR_{\geq 1}$. The operator $\mathbf{L}_{d,p,k,R}$ is closable and its closure $\overline{\mathbf{L}_{d,p,k,R}}$ is the generator of a strongly continuous operator semigroup
\begin{equation*}
\mathbf{S}_{d,p,k,R}: \RR_{\geq 0} \rightarrow \mathfrak{L} \left( H^{k}(\BB^{d}_{R})\times H^{k-1}(\BB^{d}_{R}) \right)
\end{equation*}
which satisfies that for any $0<\varepsilon<\frac{1}{2}$ there is a constant $M_{d,p,k,R,\varepsilon}\geq 1$ such that
\begin{equation*}
\| \mathbf{S}_{d,p,k,R}(\tau) \mathbf{f} \|_{H^{k}(\BB^{d}_{R})\times H^{k-1}(\BB^{d}_{R})} \leq M_{d,p,k,R,\varepsilon} \ee^{\omega_{d,p,k,\varepsilon} \tau} \| \mathbf{f} \|_{H^{k}(\BB^{d}_{R})\times H^{k-1}(\BB^{d}_{R})}
\end{equation*}
for all $\mathbf{f}\in H^{k}(\BB^{d}_{R})\times H^{k-1}(\BB^{d}_{R})$ and all $\tau\geq 0$, where
\begin{equation*}
\omega_{d,p,k,\varepsilon} = -s_{p} + \max\Big\{ \frac{d}{2}-k,\varepsilon \Big\}
\qquad\text{and}\qquad
s_{p} = \frac{2}{p-1} \,.
\end{equation*}
\end{theorem}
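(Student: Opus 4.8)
The plan is to obtain the semigroup from the Lumer--Phillips theorem, working with the energy inner product $(\,\cdot\,|\,\cdot\,)_{\mathfrak{E}^{k}(\BB^{d}_{R})}$ in place of the standard one, and then to pass the resulting estimate back to the Sobolev norm. Fix $0<\varepsilon<\tfrac12$ and take $\varepsilon_{1}=\varepsilon$ together with the $\varepsilon_{j}$, $2\leq j<\tfrac{d}{2}+1$, as prescribed in \Cref{HigherEnergyDissipativity}. By \Cref{HigherEnergyNorm} the form $(\,\cdot\,|\,\cdot\,)_{\mathfrak{E}^{k}(\BB^{d}_{R})}$ extends from $C^{\infty}(\overline{\BB^{d}_{R}})^{2}$ to an inner product on $\mathfrak{H}\coloneqq H^{k}(\BB^{d}_{R})\times H^{k-1}(\BB^{d}_{R})$ whose induced norm is equivalent to $\|\,\cdot\,\|_{H^{k}(\BB^{d}_{R})\times H^{k-1}(\BB^{d}_{R})}$, and I would regard $\mathfrak{H}$ as a Hilbert space with this inner product; on it, the operator $\mathbf{L}_{d,p,k,R}$ of \Cref{WaveFlowOperator} is densely defined.

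The one step needing care is to distill from \Cref{HigherEnergyDissipativity} the single dissipative bound
\[
\Re\big(\mathbf{L}_{d,p}\mathbf{f}\,\big|\,\mathbf{f}\big)_{\mathfrak{E}^{k}(\BB^{d}_{R})}\leq\omega_{d,p,k,\varepsilon}\,\|\mathbf{f}\|_{\mathfrak{E}^{k}(\BB^{d}_{R})}^{2}\qquad\text{for all }\mathbf{f}\in C^{\infty}(\overline{\BB^{d}_{R}})^{2},
\]
with $\omega_{d,p,k,\varepsilon}=-s_{p}+\max\{\tfrac{d}{2}-k,\varepsilon\}$. If $\tfrac{d}{2}-k\geq\varepsilon$ then $k<\tfrac{d}{2}$, the boundary coefficient $-\tfrac{d}{2}+k+\varepsilon$ in \Cref{HigherEnergyDissipativity} is nonpositive, and the proposition already gives the bound with constant $\tfrac{d}{2}-s_{p}-k=\omega_{d,p,k,\varepsilon}$. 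If $\tfrac{d}{2}-k<\varepsilon$, then for $k\geq\tfrac{d}{2}+1$ \Cref{HigherEnergyDissipativity} yields the bound with constant $-s_{p}+\varepsilon=\omega_{d,p,k,\varepsilon}$ outright, while for the remaining $k$ one writes $\|\mathbf{f}\|_{\mathfrak{E}^{k}(\BB^{d}_{R})}^{2}=A+B$ with $B=\tfrac{2\varepsilon_{k}}{R}\int_{\mathbb{S}^{d-1}_{R}}|f_{1}|^{2}\geq0$ and $A\geq0$, so that the estimate of \Cref{HigherEnergyDissipativity} rearranges to $(\tfrac{d}{2}-s_{p}-k)A+(-s_{p}+\varepsilon)B\leq(-s_{p}+\varepsilon)(A+B)$, using $\tfrac{d}{2}-k\leq\varepsilon$ and $A\geq0$. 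Hence $\mathbf{L}_{d,p,k,R}-\omega_{d,p,k,\varepsilon}\mathbf{I}$ is dissipative on $\mathfrak{H}$.

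Since a densely defined dissipative operator on a Hilbert space is closable with dissipative closure, $\mathbf{L}_{d,p,k,R}$ is closable and $\overline{\mathbf{L}_{d,p,k,R}}-\omega_{d,p,k,\varepsilon}\mathbf{I}$ remains dissipative. For the range condition I would note $\tfrac{d}{2}-s_{p}>\omega_{d,p,k,\varepsilon}$, because $k\geq1$ and $\varepsilon<\tfrac12\leq\tfrac{d}{2}$, so that \Cref{DenseRange} furnishes density of $\ran\big((\tfrac{d}{2}-s_{p})\mathbf{I}-\mathbf{L}_{d,p,k,R}\big)$ in $\mathfrak{H}$ at a point strictly to the right of $\omega_{d,p,k,\varepsilon}$. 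The Lumer--Phillips theorem, applied to $\mathbf{L}_{d,p,k,R}-\omega_{d,p,k,\varepsilon}\mathbf{I}$, then gives that $\overline{\mathbf{L}_{d,p,k,R}}-\omega_{d,p,k,\varepsilon}\mathbf{I}$ generates a strongly continuous contraction semigroup on $\mathfrak{H}$; equivalently $\overline{\mathbf{L}_{d,p,k,R}}$ generates a strongly continuous semigroup $\mathbf{S}_{d,p,k,R}$ with $\|\mathbf{S}_{d,p,k,R}(\tau)\mathbf{f}\|_{\mathfrak{E}^{k}(\BB^{d}_{R})}\leq\ee^{\omega_{d,p,k,\varepsilon}\tau}\|\mathbf{f}\|_{\mathfrak{E}^{k}(\BB^{d}_{R})}$ for all $\tau\geq0$. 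The closure $\overline{\mathbf{L}_{d,p,k,R}}$, hence also $\mathbf{S}_{d,p,k,R}$, is independent of $\varepsilon$ (equivalent norms give the same graph closure), so the argument produces, for each admissible $\varepsilon$, a growth bound for one and the same semigroup. Finally, one more use of the equivalence in \Cref{HigherEnergyNorm} upgrades this to $\|\mathbf{S}_{d,p,k,R}(\tau)\mathbf{f}\|_{H^{k}(\BB^{d}_{R})\times H^{k-1}(\BB^{d}_{R})}\leq M_{d,p,k,R,\varepsilon}\,\ee^{\omega_{d,p,k,\varepsilon}\tau}\|\mathbf{f}\|_{H^{k}(\BB^{d}_{R})\times H^{k-1}(\BB^{d}_{R})}$ with $M_{d,p,k,R,\varepsilon}\geq1$ the ratio of the two equivalence constants. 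I expect the case bookkeeping in the second paragraph to be the only genuinely fiddly point; the rest is standard Lumer--Phillips theory, with all the analytic substance already carried out in \Cref{HigherEnergyDissipativity} and \Cref{DenseRange}.
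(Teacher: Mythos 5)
Your proposal is correct and follows the same route as the paper: fix $\varepsilon_{1}=\varepsilon$, invoke \Cref{HigherEnergyDissipativity} for dissipativity of $\mathbf{L}_{d,p,k,R}-\omega_{d,p,k,\varepsilon}\mathbf{I}$ in the $\mathfrak{E}^{k}$-inner product, use \Cref{DenseRange} for the range condition at $\lambda=\frac{d}{2}-s_{p}$, apply Lumer--Phillips, and transfer the bound via the norm equivalence of \Cref{HigherEnergyNorm}. Your explicit case bookkeeping reducing the two-case estimate of \Cref{HigherEnergyDissipativity} to the single constant $\omega_{d,p,k,\varepsilon}$ (in particular the $A+B$ rearrangement for the transitional $k=\lceil d/2\rceil$) is a detail the paper leaves largely implicit, and it checks out.
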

\begin{proof}
The operator $\mathbf{L}_{d,p,k,R}$ is linear and densely defined in $H^{k}(\BB^{d}_{R})\times H^{k-1}(\BB^{d}_{R})$. We fix $\varepsilon_{1} = \varepsilon$ in \Cref{HigherEnergyDissipativity} and get that $\mathbf{L}_{d,p,k,R}-\omega_{d,p,k,\varepsilon}\mathbf{I}$ is dissipative with respect to the inner product given in \Cref{InnerProducts}. The range of $\lambda\mathbf{I} - ( \mathbf{L}_{d,p,k,R}-\omega_{d,p,k,\varepsilon}\mathbf{I} )$ is dense in $H^{k}(\BB^{d}_{R})\times H^{k-1}(\BB^{d}_{R})$ for $\lambda=\frac{d}{2}-\max\left\{\frac{d}{2}-k,\varepsilon\right\} > 0$ by \Cref{DenseRange}. Hence, we infer from the Lumer-Phillips Theorem \cite[p. 83, Theorem 3.15]{MR1721989} the semigroup. Since by \Cref{HigherEnergyNorm} the induced norm from the inner product is equivalent to the classical Sobolev norm on $H^{k}(\BB^{d}_{R})\times H^{k-1}(\BB^{d}_{R})$, the exponential growth bound follows.
\end{proof}
\begin{remark}
If $k\geq \frac{d}{2}$ is an integer, the growth estimates for the semigroup $\mathbf{S}_{d,p,k,R}$ from \Cref{FreeSemigroupTHM} yield the growth bound $\omega(\mathbf{S}_{d,p,k,R}) = -s_{p}$, see \cite[p. 40]{MR1721989} for a definition. Indeed, just note that $\lambda = -s_{p}$ is an eigenvalue of $\mathbf{L}_{d,p,k,R}$ to the constant function $\mathbf{f} = [1,0] \in\dom(\mathbf{L}_{d,p,k,R})$ for each $(d,p,k,R)\in\NN\times\RR_{>1}\times\NN\times\RR_{\geq 1}$.
\end{remark}
\section{The linearized wave evolution}
\label{IntroLinEVO}
In this section, we treat the linearized wave operator $-\pd_{t}^{2} + \Laplace_{x} + \mathcal{V}_{\beta,T}(t,x)$ with potential
\begin{equation*}
\mathcal{V}_{\beta,T}(t,x) = (s_{p}+1) (s_{p}+2) \left( 1-\beta^{\top}\beta \right) \left( T-t + \beta^{\top}x \right)^{-2} \,,
\end{equation*}
as it arises in problem \eqref{CauchyProblem2}. The transition from the classical formulation of this equation to our first order formalism in similarity coordinates ${\chi\mathstrut}_{T}:[0,\infty)\times\RR^{d}\rightarrow\RR^{1,d}$ is based on relation \eqref{TransitionMechanism} and given in terms of the variable
\begin{equation}
\label{CauchyVariable}
\mathbf{u}(\tau,\,.\,)=
\begin{bmatrix}
u_{1}(\tau,\,.\,) \\
u_{2}(\tau,\,.\,)
\end{bmatrix}
=
\begin{bmatrix}
\hfill(T\ee^{-\tau})^{s_{p}} (u\circ{\chi\mathstrut}_{T})(\tau,\,.\,) \\
(T\ee^{-\tau})^{s_{p}+1} (\pd_{0}u\circ{\chi\mathstrut}_{T})(\tau,\,.\,)
\end{bmatrix}
\,,
\quad
u\in C^{\infty}\big((0,T)\times\RR^{d}\big) \,,
\end{equation}
by the relation
\begin{equation}
\label{LinWaveRel}
\pd_\tau
\mathbf{u}(\tau,\,.\,)
=
\mathbf{L}_{d,p}
\mathbf{u}(\tau,\,.\,)
+\mathbf{L}_{\beta}'\mathbf{u}(\tau,\,.\,)
-
\begin{bmatrix}
0 \\
(T\ee^{-\tau})^{s_{p}+2}\Big(\big( \Box u + \mathcal{V}_{\beta,T}u \big) \circ {\chi\mathstrut}_{T} \Big)(\tau,\,.\,)
\end{bmatrix}
\,,
\end{equation}
where
\begin{equation}
\label{PotentialIntro}
\mathbf{L}_{\beta}'\mathbf{u}(\tau,\,.\,)(\xi) = \begin{bmatrix}
0 \\
(s_{p}+1) (s_{p}+2) \left( 1-\beta^{\top}\beta \right) \left( 1 + \beta^{\top}\xi \right)^{-2} u_{1}(\tau,\xi)
\end{bmatrix}
\,.
\end{equation}
\Cref{LinWaveRel} yields a description of the linearized wave flow near the ODE blowup. We are interested in the stability of the underlying evolution which is determined by spectral properties of the perturbed generator. In the following subsections, we specify this within the functional analytic setting outlined in \cref{SecFreeSG}.
\subsection{Functional setting}
\Cref{FreeSemigroupTHM} provides a natural choice for function spaces to frame the linear theory.
\begin{definition}
\label{SobolevSpaces}
Let $(d,k,R)\in\NN\times\NN\times\RR_{>0}$. We define a Hilbert space by
\begin{equation*}
\mathfrak{H}^{k}(\BB^{d}_{R}) \coloneqq H^{k}(\BB^{d}_{R}) \times H^{k-1}(\BB^{d}_{R}) \,, \qquad
\| \mathbf{f} \|_{\mathfrak{H}^{k}(\BB^{d}_{R})} \coloneqq \| (f_{1},f_{2}) \|_{H^{k}(\BB^{d}_{R}) \times H^{k-1}(\BB^{d}_{R})} \,.
\end{equation*}
We also define for $\delta>0$ a closed ball
\begin{equation*}
\mathfrak{H}^{k}_{\delta}(\BB^{d}_{R}) \coloneqq \{ \mathbf{f} \in \mathfrak{H}^{k}(\BB^{d}_{R}) \mid \| \mathbf{f} \|_{\mathfrak{H}^{k}(\BB^{d}_{R})} \leq \delta \} \,.
\end{equation*}
\end{definition}
In these spaces, we employ a unique bounded linear extension to introduce the potential \eqref{PotentialIntro} that arises from the linearization around the blowup.
\begin{definition}
\label{Potential}
Let $(d,p,k,R)\in \NN\times\RR_{>1}\times\NN\times\RR_{\geq 1}$. Let $R_{0} > R$ and $\beta\in\overline{\BB^{d}_{R_{0}^{-1}}}$. Let $V_{\beta}\in C^{\infty}(\overline{\BB^{d}_{R}})$ be given by
\begin{equation*}
V_{\beta}(\xi) = (s_{p}+1) (s_{p}+2) \left( 1-\beta^{\top}\beta \right) \left( 1 + \beta^{\top}\xi \right)^{-2} \,.
\end{equation*}
We define the bounded linear operator
\begin{equation*}
\mathbf{L}_{\beta}' \in \mathfrak{L} \left( \mathfrak{H}^{k}(\BB^{d}_{R}) \right)
\qquad\text{by}\qquad
\mathbf{L}_{\beta}'\mathbf{f} =
\begin{bmatrix}
0 \\
V_{\beta} f_{1}
\end{bmatrix}
\,.
\end{equation*}
\end{definition}
Now, the linearized equation is composed of a perturbation of the free equation.
\begin{definition}
\label{LinearWaveEvolutionOperator}
Let $(d,p,k,R)\in \NN\times\RR_{>1}\times\NN\times\RR_{\geq 1}$. Let $R_{0} > R$ and $\beta\in\overline{\BB^{d}_{R_{0}^{-1}}}$. The closed linear operator $\mathbf{L}_{\beta}: \dom(\mathbf{L}_{\beta})\subset\mathfrak{H}^{k}(\BB^{d}_{R})\rightarrow\mathfrak{H}^{k}(\BB^{d}_{R})$ is densely defined by
\begin{equation*}
\mathbf{L}_{\beta} = \overline{\mathbf{L}_{d,p,k,R}} + \mathbf{L}_{\beta}' \,, \qquad \dom(\mathbf{L}_{\beta}) = \dom(\overline{\mathbf{L}_{d,p,k,R}}) \,.
\end{equation*}
\end{definition}
Perturbation theory for semigroups yields that this operator is indeed the generator of the linearized wave flow.
\begin{proposition}
\label{SGbeta}
Let $(d,p,k,R)\in \NN\times\RR_{>1}\times\NN\times\RR_{\geq 1}$. Let $R_{0} > R$ and $\beta\in\overline{\BB^{d}_{R_{0}^{-1}}}$. The closed linear operator $\mathbf{L}_{\beta} :\dom(\mathbf{L}_{\beta}) \subset \mathfrak{H}^{k}(\BB^{d}_{R}) \rightarrow \mathfrak{H}^{k}(\BB^{d}_{R})$ is the generator of a strongly continuous operator semigroup
\begin{equation*}
\mathbf{S}_{\beta}: \RR_{\geq 0} \rightarrow \mathfrak{L}\left(\mathfrak{H}^{k}(\BB^{d}_{R})\right) \,.
\end{equation*}
Moreover, let $0<\varepsilon<\frac{1}{2}$ and $M_{d,p,k,R,\varepsilon}\geq 1$, $\omega_{d,p,k,\varepsilon}\in\RR$ be the constants from \Cref{FreeSemigroupTHM}. Then the bound
\begin{equation*}
\| \mathbf{S}_{\beta}(\tau) \|_{\mathfrak{L}\left(\mathfrak{H}^{k}(\BB^{d}_{R})\right)} \leq M_{d,p,k,R,\varepsilon} \ee^{ \Big( \omega_{d,p,k,\varepsilon} + M_{d,p,k,R,\varepsilon} \| \mathbf{L}_{\beta}' \|_{\mathfrak{L}\left(\mathfrak{H}^{k}(\BB^{d}_{R})\right)} \Big) \tau}
\end{equation*}
holds for all $\tau\geq 0$ and all $\beta\in\overline{\BB^{d}_{R_{0}^{-1}}}$.
\end{proposition}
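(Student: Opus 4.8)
The plan is to realise \Cref{SGbeta} as a routine bounded (compact) perturbation of the free semigroup from \Cref{FreeSemigroupTHM}. Fix $\varepsilon\in(0,\tfrac12)$ and abbreviate $M\coloneqq M_{d,p,k,R,\varepsilon}$ and $\omega\coloneqq\omega_{d,p,k,\varepsilon}$. By \Cref{FreeSemigroupTHM} the operator $\overline{\mathbf{L}_{d,p,k,R}}$ generates a strongly continuous semigroup $\mathbf{S}\coloneqq\mathbf{S}_{d,p,k,R}$ on $\mathfrak{H}^{k}(\BB^{d}_{R})$ obeying $\|\mathbf{S}(\tau)\|_{\mathfrak{L}(\mathfrak{H}^{k}(\BB^{d}_{R}))}\leq M\ee^{\omega\tau}$ for all $\tau\geq 0$. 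On the other hand, since $R_{0}>R$, for every $\beta\in\overline{\BB^{d}_{R_{0}^{-1}}}$ and every $\xi\in\overline{\BB^{d}_{R}}$ one has $|\beta^{\top}\xi|\leq R/R_{0}<1$, so that $V_{\beta}\in C^{\infty}(\overline{\BB^{d}_{R}})$ is well defined; as multiplication by a smooth function is bounded on $H^{k}(\BB^{d}_{R})$ and $H^{k-1}(\BB^{d}_{R})$, \Cref{Potential} genuinely furnishes $\mathbf{L}_{\beta}'\in\mathfrak{L}(\mathfrak{H}^{k}(\BB^{d}_{R}))$.

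Next I would invoke the bounded perturbation theorem \cite[p.~158, Theorem~1.3]{MR1721989}: since $\overline{\mathbf{L}_{d,p,k,R}}$ is a generator and $\mathbf{L}_{\beta}'$ is a bounded operator, the operator $\mathbf{L}_{\beta}=\overline{\mathbf{L}_{d,p,k,R}}+\mathbf{L}_{\beta}'$ with $\dom(\mathbf{L}_{\beta})=\dom(\overline{\mathbf{L}_{d,p,k,R}})$ is automatically closed and generates a strongly continuous semigroup $\mathbf{S}_{\beta}$ on $\mathfrak{H}^{k}(\BB^{d}_{R})$. Moreover $\mathbf{S}_{\beta}$ is the unique solution of the associated variation-of-parameters equation (equivalently, it is given by the Dyson--Phillips series)
\begin{equation*}
\mathbf{S}_{\beta}(\tau)\mathbf{f} = \mathbf{S}(\tau)\mathbf{f} + \int_{0}^{\tau} \mathbf{S}(\tau-\tau')\,\mathbf{L}_{\beta}'\,\mathbf{S}_{\beta}(\tau')\mathbf{f}\dd\tau'
\end{equation*}
for all $\mathbf{f}\in\mathfrak{H}^{k}(\BB^{d}_{R})$ and all $\tau\geq 0$; the integrand is continuous in $\tau'$ because $\mathbf{L}_{\beta}'$ is bounded and $\mathbf{S}_{\beta}$ is strongly continuous, so the integral makes sense.

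Finally, the growth bound follows by a Gronwall argument applied to this identity. Taking operator norms and using $\|\mathbf{S}(\tau)\|\leq M\ee^{\omega\tau}$ gives
\begin{equation*}
\ee^{-\omega\tau}\|\mathbf{S}_{\beta}(\tau)\|_{\mathfrak{L}(\mathfrak{H}^{k}(\BB^{d}_{R}))} \leq M + M\|\mathbf{L}_{\beta}'\|_{\mathfrak{L}(\mathfrak{H}^{k}(\BB^{d}_{R}))}\int_{0}^{\tau} \ee^{-\omega\tau'}\|\mathbf{S}_{\beta}(\tau')\|_{\mathfrak{L}(\mathfrak{H}^{k}(\BB^{d}_{R}))}\dd\tau' \,,
\end{equation*}
and Gronwall's inequality for $\tau'\mapsto\ee^{-\omega\tau'}\|\mathbf{S}_{\beta}(\tau')\|$ yields exactly $\|\mathbf{S}_{\beta}(\tau)\|\leq M\ee^{(\omega+M\|\mathbf{L}_{\beta}'\|)\tau}$, which is the asserted estimate with the constants $M_{d,p,k,R,\varepsilon}$, $\omega_{d,p,k,\varepsilon}$ of \Cref{FreeSemigroupTHM}. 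The uniformity in $\beta$ is then automatic, as the entire $\beta$-dependence of the bound is carried by the single scalar $\|\mathbf{L}_{\beta}'\|_{\mathfrak{L}(\mathfrak{H}^{k}(\BB^{d}_{R}))}$; if a genuinely $\beta$-independent constant is wanted, one simply notes that $\beta\mapsto\|\mathbf{L}_{\beta}'\|$ is continuous on the compact ball $\overline{\BB^{d}_{R_{0}^{-1}}}$, hence bounded. I do not expect any real obstacle here: the substantive work was already done in \Cref{FreeSemigroupTHM}, and the only point requiring a moment's attention is the boundedness of $\mathbf{L}_{\beta}'$, which rests entirely on the hypothesis $R_{0}>R$ keeping $1+\beta^{\top}\xi$ bounded away from $0$ on $\overline{\BB^{d}_{R}}$.
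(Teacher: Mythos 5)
Your proposal is correct and follows the same route as the paper: the paper's proof is a one-line application of the bounded perturbation theorem \cite[p.~158]{MR1721989} to the free semigroup from \Cref{FreeSemigroupTHM}, which already yields the stated growth estimate $\|\mathbf{S}_{\beta}(\tau)\|\leq M\ee^{(\omega+M\|\mathbf{L}_{\beta}'\|)\tau}$. Your additional Dyson--Phillips/Gronwall derivation simply unpacks the standard proof of that estimate, and your preliminary check that $V_{\beta}$ is smooth (hence $\mathbf{L}_{\beta}'$ bounded) because $R_{0}>R$ keeps $1+\beta^{\top}\xi$ away from $0$ is a correct, if implicit, ingredient of \Cref{Potential}.
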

\begin{proof}
The existence of a semigroup with the asserted generator and growth estimate is a direct consequence of the bounded perturbation theorem \cite[p. 158]{MR1721989} applied to the semigroup from \Cref{FreeSemigroupTHM}.
\end{proof}
However, this growth estimate does not account for a desired stable evolution. It is therefore indispensable to understand the stable and unstable components of the linearized wave flow. This will be achieved by characterizing the spectrum of the generator. We approach this perturbatively and, in a first step, investigate the nature of the perturbation.
\subsection{Properties of the potential}
Let us record the main features of the potential, i.e., its compactness and Lipschitz continuous dependence on the Lorentz parameter.
\begin{lemma}
\label{PotentialProperties}
Let $(d,p,k,R)\in \NN\times\RR_{>1}\times\NN\times\RR_{\geq 1}$. Let $R_{0} > R$ and $\beta\in\overline{\BB^{d}_{R_{0}^{-1}}}$. The operator $\mathbf{L}_{\beta}'\in\mathfrak{L}\left(\mathfrak{H}^{k}(\BB^{d}_{R})\right)$ is compact. Furthermore, there is a constant $L>0$ such that the bound
\begin{equation*}
\| \mathbf{L}_{\beta_{1}}' - \mathbf{L}_{\beta_{2}}' \|_{\mathfrak{L}\left(\mathfrak{H}^{k}(\BB^{d}_{R})\right)} \leq L |\beta_{1}-\beta_{2}|
\end{equation*}
holds for all $\beta_{1},\beta_{2}\in\overline{\BB^{d}_{R_{0}^{-1}}}$.
\end{lemma}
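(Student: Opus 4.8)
The plan is to reduce both assertions to mapping properties of the scalar multiplication operator $f_{1}\mapsto V_{\beta}f_{1}$, exploiting the block structure of $\mathbf{L}_{\beta}'$. The key preliminary observation is a uniform control of the potential: since $\beta\in\overline{\BB^{d}_{R_{0}^{-1}}}$ and $\xi\in\overline{\BB^{d}_{R}}$ with $R_{0}>R$, one has $1+\beta^{\top}\xi\geq 1-R/R_{0}>0$, so the denominator in $V_{\beta}(\xi)$ stays bounded away from zero uniformly in $(\beta,\xi)$. Hence $(\beta,\xi)\mapsto V_{\beta}(\xi)$ is smooth on the compact set $\overline{\BB^{d}_{R_{0}^{-1}}}\times\overline{\BB^{d}_{R}}$, and in particular all of its derivatives $\pd_{\beta}^{j}\pd_{\xi}^{\alpha}V_{\beta}(\xi)$ are uniformly bounded there. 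This is the only place the hypothesis $R_{0}>R$ enters, and it is what makes every estimate below uniform in $\beta$.

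For compactness, I would factor the multiplication operator as
\[
H^{k}(\BB^{d}_{R}) \xrightarrow{f_{1}\,\mapsto\, V_{\beta}f_{1}} H^{k}(\BB^{d}_{R}) \hookrightarrow H^{k-1}(\BB^{d}_{R}).
\]
The first arrow is bounded: by the Leibniz rule $\|V_{\beta}f_{1}\|_{H^{k}(\BB^{d}_{R})}\lesssim\|V_{\beta}\|_{C^{k}(\overline{\BB^{d}_{R}})}\|f_{1}\|_{H^{k}(\BB^{d}_{R})}$, and $\|V_{\beta}\|_{C^{k}(\overline{\BB^{d}_{R}})}$ is finite (uniformly in $\beta$) by the previous paragraph. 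The inclusion $H^{k}(\BB^{d}_{R})\hookrightarrow H^{k-1}(\BB^{d}_{R})$ is compact by the Rellich–Kondrachov theorem, since $\BB^{d}_{R}$ is a bounded domain with smooth boundary and $k\geq 1$. A bounded operator followed by a compact operator is compact; since $\mathbf{L}_{\beta}'$ acts as $(f_{1},f_{2})\mapsto(0,V_{\beta}f_{1})$ — trivially on the first slot and via the above composition on the second — it is compact on $\mathfrak{H}^{k}(\BB^{d}_{R})$.

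For the Lipschitz estimate, note $\mathbf{L}_{\beta_{1}}'-\mathbf{L}_{\beta_{2}}'$ sends $(f_{1},f_{2})\mapsto(0,(V_{\beta_{1}}-V_{\beta_{2}})f_{1})$, so it suffices to show $\|(V_{\beta_{1}}-V_{\beta_{2}})f_{1}\|_{H^{k-1}(\BB^{d}_{R})}\lesssim|\beta_{1}-\beta_{2}|\,\|f_{1}\|_{H^{k}(\BB^{d}_{R})}$. Again by the Leibniz rule, $\|(V_{\beta_{1}}-V_{\beta_{2}})f_{1}\|_{H^{k-1}(\BB^{d}_{R})}\lesssim\|V_{\beta_{1}}-V_{\beta_{2}}\|_{C^{k-1}(\overline{\BB^{d}_{R}})}\|f_{1}\|_{H^{k-1}(\BB^{d}_{R})}\leq\|V_{\beta_{1}}-V_{\beta_{2}}\|_{C^{k-1}(\overline{\BB^{d}_{R}})}\|f_{1}\|_{H^{k}(\BB^{d}_{R})}$. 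Since $\overline{\BB^{d}_{R_{0}^{-1}}}$ is convex, integrating $\pd_{\beta}\pd_{\xi}^{\alpha}V_{\beta}(\xi)$ along the segment from $\beta_{2}$ to $\beta_{1}$ and invoking the uniform bound from the first paragraph gives $|\pd_{\xi}^{\alpha}V_{\beta_{1}}(\xi)-\pd_{\xi}^{\alpha}V_{\beta_{2}}(\xi)|\lesssim|\beta_{1}-\beta_{2}|$ for all $|\alpha|\leq k-1$ and $\xi\in\overline{\BB^{d}_{R}}$, i.e. $\|V_{\beta_{1}}-V_{\beta_{2}}\|_{C^{k-1}(\overline{\BB^{d}_{R}})}\lesssim|\beta_{1}-\beta_{2}|$. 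Collecting the constants, all depending only on $d,p,k,R,R_{0}$, yields the claimed $L$.

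I do not expect a serious obstacle: the lemma is a routine consequence of the smoothness of $V_{\beta}$, the product rule for Sobolev multipliers, and Rellich compactness. The one point requiring (minor) care is the uniformity in $\beta$ of the multiplier norms, which is precisely why the functional setting was arranged with the strict separation $R_{0}>R$; once that is in force, every constant above is genuinely uniform over the parameter ball.
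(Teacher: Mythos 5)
Your proof is correct and follows essentially the same route as the paper: compactness via factoring $\mathbf{L}_{\beta}'$ through the compact embedding $H^{k}(\BB^{d}_{R})\hookrightarrow H^{k-1}(\BB^{d}_{R})$ composed with bounded multiplication by the smooth potential, and the Lipschitz bound via the fundamental theorem of calculus in $\beta$ together with smoothness of $(\xi,\beta)\mapsto V_{\beta}(\xi)$ on the compact set $\overline{\BB^{d}_{R}}\times\overline{\BB^{d}_{R_{0}^{-1}}}$. The only cosmetic difference is the order of composition (you multiply first and then embed, the paper embeds first and then multiplies), which is immaterial.
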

\begin{proof}
The operator $\mathbf{L}_{\beta}'$ is given by the composition of bounded linear operators
\begin{equation*}
\renewcommand{\arraystretch}{1.2}
\begin{array}{rcccccl}
\mathfrak{H}^{k}(\BB^{d}_{R}) & \rightarrow & H^{k}(\BB^{d}_{R}) & \hookrightarrow & H^{k-1}(\BB^{d}_{R}) & \rightarrow & \mathfrak{H}^{k}(\BB^{d}_{R}) \\
\begin{bmatrix}
f_{1} \\
f_{2}
\end{bmatrix}
&\mapsto& f_{1}  &\mapsto& f_{1}  &\mapsto&
\begin{bmatrix}
0 \\
V_{\beta} f_{1}
\end{bmatrix}
\,.
\end{array}
\end{equation*}
Since the embedding $H^{k}(\BB^{d}_{R})  \hookrightarrow H^{k-1}(\BB^{d}_{R})$ is compact for each $d,k\in\NN$, the operator $\mathbf{L}_{\beta}'$ is also compact.\par
\medskip
The fundamental theorem of calculus yields
\begin{equation*}
V_{\beta_{1}}(\xi) - V_{\beta_{2}}(\xi) = ( \beta_{1}^{i} - \beta_{2}^{i} ) \int_{0}^{1} \left.\pd_{{\beta'}^{i}} V_{\beta'}(\xi,\beta) \right|_{\beta'= \beta_{2} + z(\beta_{1} - \beta_{2})} \dd z
\end{equation*}
and since the map
\begin{equation*}
\BB^{d}_{R}\times\BB^{d}_{R_{0}^{-1}} \rightarrow \RR \,, \qquad (\xi,\beta) \mapsto V_{\beta}(\xi) \,,
\end{equation*}
belongs to $C^{\infty}(\overline{\BB^{d}_{R}\times\BB^{d}_{R_{0}^{-1}}})$, the bound follows.
\end{proof}
\subsection{Spectral analysis of the generator}
In fact, the evolution generated in \Cref{SGbeta} cannot be stable in the whole Hilbert space due to two unstable eigenvalues of the generator which are induced by the time translation symmetry and Lorentz symmetry of \Cref{NLWIntro}.
\begin{lemma}
\label{EV}
Let $(d,p,k,R)\in \NN\times\RR_{>1}\times\NN\times\RR_{\geq 1}$. Let $R_{0} > R$ and $\beta\in\overline{\BB^{d}_{R_{0}^{-1}}}$. Let $\mathbf{f}_{0,\beta,i},\mathbf{f}_{1,\beta}\in C^{\infty}(\overline{\BB^{d}_{R}})^{2}$ be given by
\begin{align}
\label{EV0beta}
\mathbf{f}_{0,\beta,i}(\xi) &=
\begin{bmatrix}
\hfill s_{p} c_{p} \gamma(\beta)^{-s_{p}} (1+\beta^{\top}\xi)^{-s_{p}-1} \xi_{i} \\
s_{p} (s_{p}+1) c_{p} \gamma(\beta)^{-s_{p}} (1+\beta^{\top}\xi)^{-s_{p}-2} \xi_{i}
\end{bmatrix}
\\&\nonumber\,+
\begin{bmatrix}
\hfill s_{p} c_{p} \gamma(\beta)^{-s_{p}+2} (1+\beta^{\top}\xi)^{-s_{p}} \beta_{i} \\
s_{p}^{2} c_{p} \gamma(\beta)^{-s_{p}} (1+\beta^{\top}\xi)^{-s_{p}-1} \beta_{i}
\end{bmatrix}
\,, \\
\label{EV1beta}
\mathbf{f}_{1,\beta}(\xi) &=
\begin{bmatrix}
\hfill s_{p} c_{p} \gamma(\beta)^{-s_{p}} (1+\beta^{\top}\xi)^{-s_{p}-1} \\
s_{p} (s_{p}+1) c_{p} \gamma(\beta)^{-s_{p}} (1+\beta^{\top}\xi)^{-s_{p}-2}
\end{bmatrix}
\,,
\end{align}
for $i=1,\ldots,d$. Then
\begin{equation*}
\mathbf{L}_{\beta} \mathbf{f}_{0,\beta,i} = \mathbf{0}
\qquad\text{and}\qquad
(\mathbf{I}-\mathbf{L}_{\beta}) \mathbf{f}_{1,\beta} = \mathbf{0} \,.
\end{equation*}
\end{lemma}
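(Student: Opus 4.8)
The eigenfunctions $\mathbf{f}_{1,\beta}$ and $\mathbf{f}_{0,\beta,i}$ encode, respectively, the time translation and the Lorentz boost symmetries of \eqref{NLWIntro} acting on the ODE blowup family \eqref{ODEBlowupFamily}, and the plan is to read off the eigenvalue relations from this structure; a direct substitution of \eqref{EV0beta}--\eqref{EV1beta} into $\mathbf{L}_{\beta} = \overline{\mathbf{L}_{d,p,k,R}} + \mathbf{L}_{\beta}'$ is an equally valid but more laborious alternative. The crucial preliminary remark is that, in classical similarity coordinates ${\chi\mathstrut}_{T}$, the potential $\mathcal{V}_{\beta,T}$ of \eqref{PhysPot} becomes the $\tau$-independent multiplication function $V_{\beta}$ of \Cref{Potential}. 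Consequently, for every $T'>0$ the solution $\psi_{\beta,T'}^{*}$ of the nonlinear equation, which is smooth on the cone $\Omega^{1,d}_{R}(T')$ since $\beta$ is small, gives, upon differentiating $\Box\psi_{\beta,T'}^{*} + F(\psi_{\beta,T'}^{*}) = 0$ with respect to $T'$ at $T'=T$, a solution $v \coloneqq \pd_{T'}\psi_{\beta,T'}^{*}\big|_{T'=T} = -s_{p}c_{p}\gamma(\beta)^{-s_{p}}(T-t+\beta^{\top}x)^{-s_{p}-1}$ of the linearized equation $(\Box + \mathcal{V}_{\beta,T})v = 0$ on $\Omega^{1,d}_{R}(T)$; likewise $w_{i} \coloneqq \pd_{\beta_{i}'}\psi_{\beta',T}^{*}\big|_{\beta'=\beta}$ solves the same linearized equation.

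Next I would pass to the evolution variable \eqref{CauchyVariable} associated with $v$ and invoke relation \eqref{LinWaveRel}, whose inhomogeneous term vanishes because $v$ solves the linearized equation; hence the similarity-coordinate representative $\mathbf{v}(\tau,\,.\,) \in C^{\infty}(\overline{\BB^{d}_{R}})^{2}$ of $v$ satisfies $\pd_{\tau}\mathbf{v}(\tau,\,.\,) = \mathbf{L}_{\beta}\mathbf{v}(\tau,\,.\,)$. Because $T-t+\beta^{\top}x = T\ee^{-\tau}(1+\beta^{\top}\xi)$ and $x = T\ee^{-\tau}\xi$ along ${\chi\mathstrut}_{T}$, a short computation from \eqref{EvoVar} gives $\mathbf{v}(\tau,\,.\,) = -T^{-1}\ee^{\tau}\mathbf{f}_{1,\beta}$, so that comparing $\pd_{\tau}$-derivatives at $\tau=0$ yields $\mathbf{L}_{\beta}\mathbf{f}_{1,\beta} = \mathbf{f}_{1,\beta}$, i.e. $(\mathbf{I}-\mathbf{L}_{\beta})\mathbf{f}_{1,\beta} = \mathbf{0}$; this is admissible because $\mathbf{f}_{1,\beta}\in C^{\infty}(\overline{\BB^{d}_{R}})^{2} = \dom(\mathbf{L}_{d,p,k,R}) \subset \dom(\mathbf{L}_{\beta})$, where $\overline{\mathbf{L}_{d,p,k,R}}$ acts as $\mathbf{L}_{d,p}$. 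The same argument applied to $w_{i}$, using $\pd_{\beta_{i}'}\gamma(\beta')^{-s_{p}} = -s_{p}\beta_{i}'\gamma(\beta')^{-s_{p}+2}$, produces a $\tau$-independent representative matching $-\mathbf{f}_{0,\beta,i}$ after the change of variables \eqref{EvoVar}, and then $\pd_{\tau}\mathbf{w}_{i} = \mathbf{L}_{\beta}\mathbf{w}_{i}$ gives $\mathbf{L}_{\beta}\mathbf{f}_{0,\beta,i} = \mathbf{0}$.

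If instead one prefers a purely computational route, one substitutes \eqref{EV0beta}--\eqref{EV1beta} into $\mathbf{L}_{\beta}$ directly. The first component of the eigenvalue relation is then the algebraic identity imposed by the top row of $\mathbf{L}_{d,p}$, namely $f_{2} = (1+s_{p})f_{1} + \xi^{i}\pd_{i}f_{1}$ for $\mathbf{f}_{1,\beta}$ and $f_{2} = s_{p}f_{1} + \xi^{i}\pd_{i}f_{1}$ for $\mathbf{f}_{0,\beta,i}$, both of which collapse at once via the cancellation $(1+\beta^{\top}\xi) - \beta^{\top}\xi = 1$. The second component is the only step with genuine bookkeeping: one uses $\Laplace_{\xi}(1+\beta^{\top}\xi)^{-m} = m(m+1)(\beta^{\top}\beta)(1+\beta^{\top}\xi)^{-m-2}$, the explicit form $V_{\beta}(\xi) = (s_{p}+1)(s_{p}+2)(1-\beta^{\top}\beta)(1+\beta^{\top}\xi)^{-2}$, and the identity $\gamma(\beta)^{-2} = 1-\beta^{\top}\beta$, after which the Laplacian, transport, and potential contributions telescope, by the same cancellation, into exactly $[\mathbf{f}_{1,\beta}]_{2}$, respectively into $\mathbf{0}$. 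The main obstacle, such as it is, lies precisely in this second component for $\mathbf{f}_{0,\beta,i}$: one must keep the $\xi_{i}$-part and the $\beta_{i}$-part separate and carry the correct powers of the Lorentz factor $\gamma(\beta)$ through the computation. The symmetry viewpoint of the first two paragraphs renders these cancellations automatic, as they merely re-express the fact that $\psi_{\beta,T'}^{*}$ and $\psi_{\beta',T}^{*}$ are bona fide solutions of \eqref{NLWIntro} for all admissible parameters.
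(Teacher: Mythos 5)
Your proof is correct and follows essentially the same route as the paper: differentiate the blowup family $\psi_{\beta,T}^{*}$ with respect to $T$ and $\beta^{i}$ to obtain solutions of the linearized equation $(\Box+\mathcal{V}_{\beta,T})v=0$, and then read off the eigenvalue relations from \eqref{LinWaveRel}, the eigenvalues $1$ and $0$ coming from the $\ee^{\tau}$-growth, respectively $\tau$-independence, of the corresponding rescaled evolution variables. Your added observation that $\mathbf{v}(\tau,\,.\,)=-T^{-1}\ee^{\tau}\mathbf{f}_{1,\beta}$ and $\mathbf{w}_{i}(\tau,\,.\,)=-\mathbf{f}_{0,\beta,i}$ makes this last step slightly more explicit than the paper's, but the argument is the same.
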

\begin{proof}
Recall that
\begin{equation*}
\psi_{\beta,T}^{*}(t,x) = c_{p}\gamma(\beta)^{-s_{p}}(T-t+\beta^{\top}x)^{-s_{p}}
\end{equation*}
defines a $(d+1)$-parameter family of blowup solutions satisfying
\begin{equation*}
\Box \psi_{\beta,T}^{*} = - \psi_{\beta,T}^{*} |\psi_{\beta,T}^{*}|^{p-1} \,.
\end{equation*}
Differentiating this equation with respect to the parameters $\beta^{i},T$ for $i=1,\ldots,d$ implies
\begin{equation*}
\Box \pd_{\beta^{i}} \psi_{\beta,T}^{*} = - \mathcal{V}_{\beta,T} \pd_{\beta^{i}} \psi_{\beta,T}^{*}
\qquad\text{and}\qquad
\Box \pd_{T} \psi_{\beta,T}^{*} = - \mathcal{V}_{\beta,T} \pd_{T} \psi_{\beta,T}^{*} \,.
\end{equation*}
Here,
\begin{align*}
\pd_{\beta^{i}} \psi_{\beta,T}^{*}(t,x) &= - c_{p}s_{p} \gamma(\beta)^{-s_{p}}(T-t+\beta^{\top}x)^{-s_{p}-1} x_{i} \\&\indent
- c_{p}s_{p} \gamma(\beta)^{-s_{p}+2}(T-t+\beta^{\top}x)^{-s_{p}} \beta_{i} \,, \\
\pd_{t} \pd_{\beta^{i}} \psi_{\beta,T}^{*}(t,x) &= - c_{p}s_{p}(s_{p}+1) \gamma(\beta)^{-s_{p}}(T-t+\beta^{\top}x)^{-s_{p}-2} x_{i} \\&\indent
- c_{p}s_{p}^{2} \gamma(\beta)^{-s_{p}+2}(T-t+\beta^{\top}x)^{-s_{p}-1} \beta_{i} \,,
\end{align*}
and
\begin{align*}
\pd_{T} \psi_{\beta,T}^{*}(t,x) &= - s_{p}c_{p}\gamma(\beta)^{-s_{p}}(T-t+\beta^{\top}x)^{-s_{p}-1} \,, \\
\pd_{t} \pd_{T} \psi_{\beta,T}^{*}(t,x) &= - s_{p}(s_{p}+1)c_{p}\gamma(\beta)^{-s_{p}}(T-t+\beta^{\top}x)^{-s_{p}-2} \,,
\end{align*}
and an application of relation \eqref{LinWaveRel} yields the above stated eigenfunctions.
\end{proof}
In the following, we prove that the two symmetry-induced eigenvalues are the only unstable spectral points of the generator. As a result, we will formulate a spectral theorem for the generator that allows us to identify the stable and unstable components of the evolution.
\subsubsection{Spectral analysis with parameter value zero}
If $\beta=0$, a description of the unstable spectrum of the generator follows from ODE analysis.
\begin{lemma}
\label{SpecL0}
Let $(d,p,k,R)\in \NN \times\RR_{>1}\times\NN\times\RR_{\geq 1}$ such that $\frac{d}{2}-s_{p}-k < 0$. Put
\begin{equation*}
\omega_{d,p,k} \coloneqq \max\left\{-1,\frac{d}{2}-s_{p}-k,-s_{p} \right\}
\end{equation*}
and let $\omega_{d,p,k}<\omega_{0}<0$ be arbitrary but fixed. We have
\begin{equation*}
\sigma(\mathbf{L}_{0}) \cap \overline{\mathbb{H}_{\omega_{0}}} = \{0,1\} \,.
\end{equation*}
Both points $0,1\in\sigma(\mathbf{L}_{0})$ belong to the point spectrum of $\mathbf{L}_{0}$ with eigenspaces
\begin{equation*}
\ker(\mathbf{L}_{0}) = \langle \mathbf{f}_{0,0,i} \rangle_{i=1}^{d}
\qquad\text{and}\qquad
\ker(\mathbf{I}-\mathbf{L}_{0}) = \langle \mathbf{f}_{1,0} \rangle
\end{equation*}
spanned by the symmetry modes $\mathbf{f}_{0,0,i},\mathbf{f}_{1,0}\in C^{\infty}(\overline{\BB^{d}_{R}})^{2}$ given by
\begin{equation*}
\mathbf{f}_{0,0,i}(\xi) =
\begin{bmatrix}
s_{p}c_{p}\xi_{i} \\
s_{p}(s_{p}+1)c_{p} \xi_{i}
\end{bmatrix}
\qquad\text{and}\qquad
\mathbf{f}_{1,0}(\xi) =
\begin{bmatrix}
s_{p}c_{p} \\
s_{p}(s_{p}+1)c_{p}
\end{bmatrix}
\,,\qquad
i=1,\ldots,d.
\end{equation*}
\end{lemma}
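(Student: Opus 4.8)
The plan is to localise the spectrum of $\mathbf{L}_{0}$ in $\overline{\mathbb{H}_{\omega_{0}}}$ to a discrete set of eigenvalues and then to determine those eigenvalues and the associated eigenspaces by reducing the eigenvalue equation to a hypergeometric ODE.

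\emph{Localisation of the spectrum.} By \Cref{FreeSemigroupTHM} the growth bound of the free semigroup does not exceed $\max\{\tfrac{d}{2}-s_{p}-k,-s_{p}\}$, so $\sigma(\overline{\mathbf{L}_{d,p,k,R}})\subseteq\{z\in\CC:\Re z\leq\max\{\tfrac{d}{2}-s_{p}-k,-s_{p}\}\}$; since this maximum is $\leq\omega_{d,p,k}<\omega_{0}$, we get $\overline{\mathbb{H}_{\omega_{0}}}\subseteq\varrho(\overline{\mathbf{L}_{d,p,k,R}})$. As $\mathbf{L}_{0}=\overline{\mathbf{L}_{d,p,k,R}}+\mathbf{L}_{0}'$ with $\mathbf{L}_{0}'$ compact (\Cref{PotentialProperties}), I would fix $\omega_{d,p,k}<\omega_{1}<\omega_{0}$, factor
\begin{equation*}
z\mathbf{I}-\mathbf{L}_{0}=\big(z\mathbf{I}-\overline{\mathbf{L}_{d,p,k,R}}\big)\big(\mathbf{I}-\mathbf{R}_{\overline{\mathbf{L}_{d,p,k,R}}}(z)\mathbf{L}_{0}'\big)\qquad(z\in\mathbb{H}_{\omega_{1}})\,,
\end{equation*}
and apply the analytic Fredholm theorem to the holomorphic, compact-operator-valued map $z\mapsto\mathbf{R}_{\overline{\mathbf{L}_{d,p,k,R}}}(z)\mathbf{L}_{0}'$ on $\mathbb{H}_{\omega_{1}}$. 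Since $\mathbf{L}_{0}$ generates a strongly continuous semigroup (\Cref{SGbeta}), its resolvent set contains a right half-plane, which rules out the degenerate alternative; hence $\sigma(\mathbf{L}_{0})\cap\overline{\mathbb{H}_{\omega_{0}}}$ consists of isolated eigenvalues of finite algebraic multiplicity, and it remains to identify them.

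\emph{Reduction to a hypergeometric equation.} That $\{0,1\}\subseteq\sigma(\mathbf{L}_{0})$, with $\mathbf{f}_{0,0,i}\in\ker\mathbf{L}_{0}$ and $\mathbf{f}_{1,0}\in\ker(\mathbf{I}-\mathbf{L}_{0})$, is \Cref{EV} specialised to $\beta=0$, since these nonzero functions lie in $C^{\infty}(\overline{\BB^{d}_{R}})^{2}\subseteq\dom(\mathbf{L}_{0})$. Conversely, suppose $\lambda\in\overline{\mathbb{H}_{\omega_{0}}}$ and $\mathbf{f}=(f_{1},f_{2})\in\dom(\mathbf{L}_{0})$ with $\mathbf{L}_{0}\mathbf{f}=\lambda\mathbf{f}$ and $\mathbf{f}\neq\mathbf{0}$. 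The first component forces $f_{2}=(\lambda+s_{p})f_{1}+\xi^{i}\pd_{i}f_{1}$, and eliminating $f_{2}$ yields, distributionally,
\begin{equation*}
(\delta^{ij}-\xi^{i}\xi^{j})\pd_{i}\pd_{j}f_{1}-2(\lambda+s_{p}+1)\xi^{i}\pd_{i}f_{1}+\big((s_{p}+1)(s_{p}+2)-(\lambda+s_{p})(\lambda+s_{p}+1)\big)f_{1}=0
\end{equation*}
in $\BB^{d}_{R}$; interior elliptic regularity on $\BB^{d}_{1}$ gives $f_{1}\in C^{\infty}(\BB^{d}_{1})$. Decomposing $f_{1}(\xi)=\sum_{\ell,m}f_{\ell,m}(|\xi|)Y_{\ell,m}(\tfrac{\xi}{|\xi|})$ (the case $d=1$ being handled directly as an ODE on $(-R,R)$ with regular singular points at $\pm1$), each nonvanishing radial mode becomes, after $f_{\ell,m}(\rho)=\rho^{\ell}g(\rho^{2})$ and $z=\rho^{2}$, a solution of the hypergeometric equation \eqref{HypergEq} with
\begin{equation*}
a=\tfrac{1}{2}(\lambda+2s_{p}+\ell+2)\,,\qquad b=\tfrac{1}{2}(\lambda+\ell-1)\,,\qquad c=\tfrac{d}{2}+\ell\,,
\end{equation*}
and smoothness at $\rho=0$ forces $g=\mathrm{const}\cdot{}_{2}F_{1}(a,b,c;\,\cdot\,)$, the unique solution analytic at the origin.

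\emph{The regularity threshold and conclusion.} Since $\Re\lambda>\omega_{d,p,k}\geq\tfrac{d}{2}-s_{p}-k$, the Frobenius exponent $c-a-b=\tfrac{d-1}{2}-s_{p}-\lambda$ at $z=1$ satisfies $\Re(c-a-b)<k-\tfrac{1}{2}$. Using the connection formulae for ${}_{2}F_{1}$ near $z=1$ and the elementary facts that, near $z=1$, $(1-z)^{\alpha}\notin H^{k}$ when $\alpha\notin\NN_{0}$ with $\Re\alpha<k-\tfrac{1}{2}$, and $(1-z)^{j}\log(1-z)\notin H^{k}$ for $j\in\NN_{0}$ with $j<k$ (recall $k\geq1$), one sees that $f_{\ell,m}\in H^{k}$ near $\rho=1$ — the only interior point where smoothness can fail, the radial equation being regular on $(0,1)\cup(1,R]$ — forces ${}_{2}F_{1}(a,b,c;\,\cdot\,)$ to extend analytically past $z=1$; a solution of \eqref{HypergEq} analytic at both $z=0$ and $z=1$ is entire with at most polynomial growth at $z=\infty$, hence a polynomial, which occurs precisely when $-a\in\NN_{0}$ or $-b\in\NN_{0}$. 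Now $-a\in\NN_{0}$ is impossible, forcing $\Re\lambda\leq-2s_{p}-2<-s_{p}\leq\omega_{d,p,k}<\omega_{0}$; and $-b=n\in\NN_{0}$ gives $\lambda=1-\ell-2n$, where $\lambda>\omega_{d,p,k}\geq-1$ leaves only $(\ell,n)=(0,0)$ with $\lambda=1$ and $(\ell,n)=(1,0)$ with $\lambda=0$. Hence $\sigma(\mathbf{L}_{0})\cap\overline{\mathbb{H}_{\omega_{0}}}=\{0,1\}$. For $\lambda=1$ necessarily $\ell=0$ and $g\equiv1$, so $f_{1}$ is constant and $f_{2}=(s_{p}+1)f_{1}$, giving $\ker(\mathbf{I}-\mathbf{L}_{0})=\langle\mathbf{f}_{1,0}\rangle$; for $\lambda=0$ necessarily $\ell=1$ and $g\equiv1$, so $f_{1}\in\langle\xi_{1},\dots,\xi_{d}\rangle$ and $f_{2}=(s_{p}+1)f_{1}$, giving $\ker(\mathbf{L}_{0})=\langle\mathbf{f}_{0,0,i}\rangle_{i=1}^{d}$.

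The main obstacle is the regularity analysis at the interior regular singular point $\rho=1$: one must match the solution analytic at $\rho=0$ against the two-dimensional local solution space at $\rho=1$ — including the resonant cases where $c-a-b$ is a non-positive or a non-negative integer and logarithmic terms appear — and exploit the sharp condition $\tfrac{d}{2}-s_{p}-k<0$ to conclude that membership in $H^{k}$ is possible only for polynomial solutions. Making precise the implication ``analytic at $z=0$ and $z=1$ $\Rightarrow$ polynomial'' additionally requires the standard control of hypergeometric solutions at $z=\infty$.
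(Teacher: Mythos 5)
Your proposal is correct and follows essentially the same strategy as the paper: localisation of $\sigma(\mathbf{L}_{0})\cap\overline{\mathbb{H}_{\omega_{0}}}$ to isolated eigenvalues via the compact perturbation, reduction of the eigenvalue equation through spherical harmonics and the substitution $f_{\ell,m}(\rho)=\rho^{\ell}g(\rho^{2})$ to a hypergeometric equation with the same parameters (your $a$ and $b$ are the paper's swapped), selection of the solution analytic at the origin, and exclusion of non-terminating solutions by the $H^{k}$-threshold at $z=1$ coming from $\Re(c-a-b)<k-\tfrac{1}{2}$. The one sub-step you do genuinely differently is the exclusion itself: the paper applies the derivative identity $\pd_{z}^{k}{}_{2}F_{1}(a,b,c;z)=\tfrac{(a)_{k}(b)_{k}}{(c)_{k}}{}_{2}F_{1}(a+k,b+k,c+k;z)$ together with the DLMF asymptotics $|1-z|^{\Re(c-a-b-k)}$ near $z=1$ to see directly that the $k$-th derivative fails to be square integrable unless the series terminates, whereas you match against the local Frobenius basis at $z=1$ and then invoke a monodromy/Liouville argument (analytic at $0$ and $1$ implies entire of polynomial growth, hence polynomial, hence $-a$ or $-b\in\NN_{0}$). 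Both are sound; the paper's route avoids discussing the resonant logarithmic cases at $z=1$ explicitly, while yours avoids the connection-coefficient asymptotics but must (as you note) treat those resonant cases, which your bound $c-a-b\leq k-1$ indeed covers. Two items you leave implicit that the paper spells out: the bound $\|f_{\ell,m}^{(j)}\|_{L^{2}((1/2,1))}\lesssim\|f_{1}\|_{H^{k}(\BB^{d}_{R})}$ justifying that each mode lies in $H^{k}$ near $\rho=1$, and the full $d=1$ argument, where both singular points $\xi=\pm1$ lie in the interior of the domain and the paper argues via linear dependence of the two locally analytic hypergeometric solutions and the connection formula; your framework extends to that case but the parenthetical dismissal undersells it.
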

\begin{proof}
Due to \Cref{FreeSemigroupTHM} and \Cref{PotentialProperties}, we can employ \cite[Theorem B.1]{MR4469070} to get that the set $\sigma(\mathbf{L}_{0}) \cap \overline{\mathbb{H}_{\omega_{0}}}$ consists of finitely many isolated eigenvalues of $\mathbf{L}_{0}$. Thus, if $\lambda\in\sigma(\mathbf{L}_{0}) \cap \overline{\mathbb{H}_{\omega_{0}}}$ then there exists an $\mathbf{f}_{\lambda}\in\dom(\mathbf{L}_{0})\setminus\{\mathbf{0}\}\subset \mathfrak{H}^{k}(\BB^{d}_{R})$ such that $(\lambda\mathbf{I}-\mathbf{L}_{0})\mathbf{f}_{\lambda} = \mathbf{0}$. From this equation we conclude that the first component $f_{\lambda,1}\in H^{k}(\BB^{d}_{R})$ is a non-zero weak solution to
\begin{align}
\nonumber
0 &= \Big( (\delta^{ij}-\xi^{i}\xi^{j})\pd_{\xi^{i}}\pd_{\xi^{j}} - 2(\lambda+s_{p}+1)\xi^{i}\pd_{\xi^{i}} - (\lambda+s_{p})(\lambda+s_{p}+1) \Big) f_{\lambda,1}(\xi) \\\nonumber&\indent+ V_{0}(\xi) f_{\lambda,1}(\xi) \\\label{SpectralEquation}&=
\Big( (\delta^{ij}-\xi^{i}\xi^{j})\pd_{\xi^{i}}\pd_{\xi^{j}} - 2(\lambda+s_{p}+1)\xi^{i}\pd_{\xi^{i}} - (\lambda-1)(\lambda+2s_{p}+2) \Big) f_{\lambda,1}(\xi)
\end{align}
in $\BB^{d}_{R}$, where $V_{0}(\xi)=(s_{p}+1)(s_{p}+2)$ is the constant potential. In what follows, we treat the multidimensional cases separately from the one-dimensional case.\par
\medskip
In the multidimensional case $d\geq 2$, elliptic regularity implies $f_{\lambda,1}\in C^{\infty}(\BB^{d}_{R}\setminus\mathbb{S}^{d-1})\cap H^{k}(\BB^{d}_{R})$. Next, we consider the spherical harmonics expansion
\begin{equation*}
f_{\lambda,1}(\rho\omega) = \sum_{\ell=0}^{\infty} \sum_{m\in\Omega_{d,\ell}} f_{\lambda,\ell,m}(\rho) Y_{\ell,m}(\omega) \,, \quad f_{\lambda,\ell,m}(\rho) = \int_{\mathbb{S}^{d-1}} \overline{Y_{\ell,m}(\omega)} f_{\lambda,1}(\rho\omega) \dd\sigma(\omega) \,.
\end{equation*}
From this, the bounds
\begin{align*}
\| f_{\lambda,\ell,m}^{(j)} \|_{L^{2}((\frac{1}{2},1))}^{2} &= \int_{\frac{1}{2}}^{1} \left| \int_{\mathbb{S}^{d-1}} \overline{Y_{\ell,m}(\omega)} \omega^{i_{1}} \ldots \omega^{i_{j}} (\pd_{i_{1}} \ldots \pd_{i_{j}} f_{\lambda,1})(\rho\omega) \dd\sigma(\omega) \right|^{2} \dd\rho \\&\lesssim
\int_{\frac{1}{2}}^{1} \int_{\mathbb{S}^{d-1}} \sum_{|\alpha|=j} |(\pd^{\alpha} f_{\lambda,1})(\rho\omega)|^{2} \dd\sigma(\omega) \dd\rho \\&\simeq
\sum_{|\alpha|=j} \int_{\frac{1}{2}}^{1} \int_{\mathbb{S}^{d-1}} |(\pd^{\alpha} f_{\lambda,1})(\rho\omega)|^{2} \dd\sigma(\omega) \rho^{d-1} \dd\rho \\&\lesssim
\| f_{\lambda,1} \|_{H^{k}(\BB^{d}_{R})}^{2}
\end{align*}
follow for $j=0,1,\ldots,k$ and so $f_{\lambda,\ell,m} \in C^{\infty}([0,R)\setminus\{1\})\cap H^{k}((\frac{1}{2},R))$. Moreover, inserting the spherical harmonics expansion into \Cref{SpectralEquation} yields that $f_{\lambda,\ell,m}$ is a solution to the differential equation
\begin{align*}
0 =
(1-\rho^{2}) f''(\rho) &- \Big( 2(\lambda+s_{p}+1)\rho - \frac{d-1}{\rho} \Big) f'(\rho) \\&- \Big( (\lambda-1)(\lambda+2(s_{p}+1)) + \frac{\ell(\ell+d-2)}{\rho^{2}} \Big) f(\rho)
\end{align*}
in $(0,R)$ for each $\ell\in\NN_{0}$, $m\in\Omega_{d,\ell}$. This equation has Frobenius indices
\begin{equation*}
\{\ell,-(d+\ell-2)\} \text{ at } \rho = 0
\qquad\text{and}\qquad
\Big\{0,\frac{d}{2}-s_{p}-\lambda-\frac{1}{2} \Big\} \text{ at } \rho = 1 \,.
\end{equation*}
To proceed, let $g_{\lambda,\ell,m}\in C^{\infty}((0,R^{2})\setminus\{1\})\cap H^{k}((\frac{1}{4},R^{2}))$ be defined through the transformation
\begin{equation}
\label{SpectralTransformation}
f_{\lambda,\ell,m}(\rho) = \rho^{\ell} g_{\lambda,\ell,m}(\rho^{2}) \,.
\end{equation}
Then $g_{\lambda,\ell,m}$ is a solution to the hypergeometric differential equation
\begin{equation}
\label{HypGeomEq}
z(1-z) g''(z) + ( c - (a+b+1)z) g'(z) - ab g(z) = 0
\end{equation}
with parameters
\begin{equation*}
a = \frac{1}{2}(\ell+\lambda-1) \,, \quad
b = \frac{1}{2}(\ell+\lambda + 2(s_{p}+1)) \,, \quad
c = \frac{d}{2}+\ell \,.
\end{equation*}
The Frobenius indices of \Cref{HypGeomEq} are
\begin{equation*}
\Big\{0,-\frac{d-2}{2}-\ell\Big\} \quad \text{at } z = 0
\qquad\text{and}\qquad
\Big\{0,-\frac{1}{2} + \frac{d}{2}-s_{p}-\lambda\Big\} \text{ at } z = 1
\end{equation*}
and a fundamental system near $z=0$ is known in terms of hypergeometric functions. Namely, for each $d\geq 2$ and $\ell\geq 0$ the analytic fundamental solution is given by ${}_{2}F_{1}(a,b,c;z)$ and the second fundamental solution by
\begin{equation*}
\renewcommand{\arraystretch}{1.2}
\begin{array}{ll}
z^{-\frac{d-2}{2}-\ell} {}_{2}F_{1}(a-c+1,b-c+1,2-c;z) & \text{if } d \text{ is odd,} \\
{}_{2}F_{1}(a,b,c;z) \log z + z^{-\frac{d-2}{2}-\ell} \varphi_{a,b,c}(z) & \text{if } d \text{ is even,}
\end{array}
\end{equation*}
see \cite[p. 395]{MR2723248} for detailed formulas. This forces together with \Cref{SpectralTransformation} and smoothness of $f_{\lambda,\ell,m}$ at the origin that $g_{\lambda,\ell,m}$ is a non-zero multiple of ${}_{2}F_{1}(a,b,c;z)$ and it remains to investigate its behaviour near $z=1$.
\begin{enumerate}[wide,itemsep=1em,topsep=1em]
\item[\textit{Case $-a\notin\NN_{0}$.}] We use the identity
\begin{equation*}
\pd_{z}^{k} {}_{2}F_{1}(a,b,c;z) = \frac{(a)_{k} (b)_{k}}{(c)_{k}} {}_{2}F_{1}(a+k,b+k,c+k;z) \,,
\end{equation*}
see \cite[p. 387, Eq. (15.5.4)]{MR2723248}. Since $-a\notin\NN_{0}$ and $\Re(b)>0$, we have for the rising factorials $(a)_{k}, (b)_{k} \neq 0$. Now, as $\Re(\lambda) \geq \omega_{0} > \frac{d}{2} - s_{p} - k$,
\begin{equation*}
\Re(c-a-b-k) = - \frac{1}{2} + \Big( \frac{d}{2} - s_{p} - k \Big) - \Re(\lambda) < - \frac{1}{2} \,,
\end{equation*}
so \cite[p. 387, Eq. (15.4.23)]{MR2723248} implies
\begin{equation*}
| \pd_{z}^{k} {}_{2}F_{1}(a,b,c;z)| \simeq |1-z|^{\Re(c-a-b-k)}
\end{equation*}
near $z=1$. Hence $\pd_{z}^{k} {}_{2}F_{1}(a,b,c;z)$ fails to be square integrable near $z=1$ which excludes all possible $\lambda\in\overline{\mathbb{H}_{\omega_{0}}}$ in the present case as an eigenvalue of $\mathbf{L}_{0}$.
\item[\textit{Case $-a\in\NN_{0}$.}] Then there is some $n\in\NN_{0}$ such that $a=-n$, i.e.,
\begin{equation*}
\lambda = -2n-\ell+1 \,.
\end{equation*}
As $\Re(\lambda) \geq \omega_{0} > -1$ and $\ell\geq 0$, we conclude $2(1-n) > \ell \geq 0$. However, this can only hold for $n=0$, which leads to the only admissible pairs $(\lambda,\ell) \in \{ (1,0),(0,1) \}$. By \Cref{EV}, the symmetry modes $\mathbf{f}_{0,0,i}$ and $\mathbf{f}_{1,0}$ are eigenvectors of $\mathbf{L}_{0}$ to the eigenvalues $\lambda = 0,1$, respectively, which occupy the spherical moments $\ell = 1$, $m\in \Omega_{d,1}$ and $\ell = 0$, $m\in \Omega_{d,0}$, respectively. Since $\dim\Omega_{d,0} = 1$ and $\dim\Omega_{d,1} = d$, see \cite[p. 19]{MR2934227}, there are no further linearly independent eigenvectors.
\end{enumerate}
\par
In case $d=1$, we have that $f_{\lambda,1} \in H^{k}((-R,R))$ solves the equation
\begin{equation*}
(1-\xi^{2})f_{\lambda,1}''(\xi) - 2(\lambda+s_{p}+1)\xi f_{\lambda,1}'(\xi) - (\lambda-1)(\lambda+2s_{p}+2) f_{\lambda,1}(\xi) = 0
\end{equation*}
in $(-R,R)$, which has regular singular points at $\xi = \pm 1$ with Frobenius indices $\{0,-\lambda-s_{p}\}$, respectively. Upon performing the transformation
\begin{equation*}
f_{\lambda,1}(\xi) = g_{\lambda,1}\big( \tfrac{1-\xi}{2} \big) \,, 
\end{equation*}
we find that $g_{\lambda,1}$ solves the hypergeometric differential equation
\begin{equation}
\label{d=1HypGeomEq}
z(1-z) g''(z) + (c - (a+b+1)z) g'(z) - ab g(z) = 0
\end{equation}
with parameters
\begin{equation*}
a = \lambda - 1 \,, \qquad
b = \lambda + 2s_{p} + 2 \,, \qquad
c = \lambda + s_{p} + 1 \,.
\end{equation*}
A fundamental system to \Cref{d=1HypGeomEq} in $(0,1)$ is given by the hypergeometric function ${}_{2}F_{1}(a,b,c;z)$, which is analytic at $z=0$, and
\begin{equation*}
\renewcommand{\arraystretch}{1.2}
\begin{array}{ll}
z^{-\lambda-s_{p}} {}_{2}F_{1}(a-c+1,b-c+1,2-c;z) & \text{if } c \notin \NN, \\
{}_{2}F_{1}(a,b,c;z) \log z + z^{-\lambda-s_{p}} \varphi_{a,b,c}(z) & \text{if } c\in\NN.
\end{array}
\end{equation*}
However, any of the latter solutions fails to be in $H^{1}((0,1))$ for any value of $\lambda\in\overline{\mathbb{H}_{\omega_{0}}}$ and thus $g_{\lambda,1}$ is a multiple of ${}_{2}F_{1}(a,b,c;z)$ in $(0,1)$. Similarly, another fundamental system in $(0,1)$ is given by ${}_{2}F_{1}(a,b,a+b+1-c;1-z)$, which is analytic at $z=1$, and a second function which fails to be in $H^{1}((0,1))$. Thus, $g_{\lambda,1}$ is also a multiple of ${}_{2}F_{1}(a,b,a+b+1-c;1-z)$ in $(0,1)$ and therefore the analytic solutions at $z=0$ and $z=1$ have to be linearly dependent. According to the connection formula relating both hypergeometric functions, this is only possible if $-a\in\NN_{0}$. As above, this yields the eigenvalues $\lambda\in\{0,1\}$ which correspond to the eigenmodes from \Cref{EV}.
\end{proof}
In order to identify a stable subspace for the linearized wave evolution, we also need information about the algebraic multiplicity of the unstable eigenvalues.
\begin{lemma}
\label{MAMG}
Let $(d,p,k,R)\in \NN\times\RR_{>1}\times\NN\times\RR_{\geq 1}$ such that $\frac{d}{2}-s_{p}-k < 0$. The algebraic multiplicity of both eigenvalues $\lambda\in\{0,1\}$ of $\mathbf{L}_{0}$ is finite and equal to the respective geometric multiplicity. In particular,
\begin{equation*}
\ran(\mathbf{P}_{\lambda,0}) = \ker(\lambda\mathbf{I}-\mathbf{L}_{0}) \,,
\end{equation*}
where $\mathbf{P}_{\lambda,0}\in \mathfrak{L}\left( \mathfrak{H}^{k}(\BB^{d}_{R})\right)$, defined by
\begin{equation*}
\mathbf{P}_{\lambda,0} \coloneqq \frac{1}{2\pi\ii}\int_{\pd\mathbb{D}_{r}(\lambda)} \mathbf{R}_{\mathbf{L}_{0}}(z) \dd z \,,
\end{equation*}
is the \emph{Riesz projection} associated to the isolated eigenvalue $\lambda\in\{0,1\}$ of $\mathbf{L}_{0}$, respectively.
\end{lemma}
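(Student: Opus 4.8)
The plan is to first reduce the claim to the non-existence of Jordan chains of length two, and then to turn that into a solvability obstruction for an inhomogeneous hypergeometric ordinary differential equation, to be dealt with by the same machinery as in the proof of \Cref{SpecL0}.

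First I would record that, by \Cref{SpecL0} together with the perturbation result invoked in its proof, the points $0$ and $1$ are isolated in $\sigma(\mathbf{L}_{0})$ and are poles of the resolvent of finite order; consequently the contour integrals $\mathbf{P}_{\lambda,0}$ are well defined bounded projections of finite rank which commute with $\mathbf{L}_{0}$, and $\ran(\mathbf{P}_{\lambda,0})$ coincides with the algebraic eigenspace $\bigcup_{n\geq 1}\ker\big((\lambda\mathbf{I}-\mathbf{L}_{0})^{n}\big)$, a finite dimensional $\mathbf{L}_{0}$-invariant subspace on which $\lambda\mathbf{I}-\mathbf{L}_{0}$ is nilpotent. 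Hence $\ran(\mathbf{P}_{\lambda,0})=\ker(\lambda\mathbf{I}-\mathbf{L}_{0})$ is equivalent to $\ker\big((\lambda\mathbf{I}-\mathbf{L}_{0})^{2}\big)=\ker(\lambda\mathbf{I}-\mathbf{L}_{0})$, i.e.\ to the absence of a $\mathbf{g}\in\dom(\mathbf{L}_{0})$ with $\mathbf{L}_{0}\mathbf{g}\in\dom(\mathbf{L}_{0})$ and $(\lambda\mathbf{I}-\mathbf{L}_{0})\mathbf{g}=\mathbf{h}$ for some $\mathbf{0}\neq\mathbf{h}\in\ker(\lambda\mathbf{I}-\mathbf{L}_{0})$.

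Assuming such a chain exists, I would eliminate $g_{2}$ from the two component equations exactly as in \Cref{SpecL0}; this shows that the first component $g_{1}\in H^{k}(\BB^{d}_{R})$ is a weak solution of the inhomogeneous version of \Cref{SpectralEquation},
\begin{equation*}
A_{\lambda}g_{1} = -(2\lambda+2s_{p}+1)\,h_{1} - 2\,\xi^{i}(\pd_{i}h_{1}) \,,
\end{equation*}
where $A_{\lambda}$ is the second order operator on the right-hand side of \Cref{SpectralEquation}, and the forcing is, by \Cref{EV} and \Cref{SpecL0}, supported in the single spherical harmonic block $\ell=1$ when $\lambda=0$ and $\ell=0$ when $\lambda=1$. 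Elliptic regularity makes $g_{1}$ smooth away from $\mathbb{S}^{d-1}$; the remaining blocks of its spherical harmonics expansion solve the homogeneous mode equations and vanish by the argument of \Cref{SpecL0}; and the distinguished block, after the substitution from \Cref{SpectralTransformation}, satisfies an inhomogeneous equation of hypergeometric type \Cref{HypGeomEq} with an analytic forcing proportional to the (analytic, globally regular) radial profile of the eigenfunction.

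The decisive point is then to show that the solution of this inhomogeneous ODE which is regular at the origin — forced by $g_{1}\in H^{k}(\BB^{d}_{R})\subset C^{0}(\overline{\BB^{d}_{R}})$ being smooth at $\xi=0$ — necessarily fails to lie in $H^{k}$ near $\rho=1$. Writing this solution by variation of parameters with respect to the Frobenius basis at $\rho=0$, its component along the Frobenius solution with exponent $\tfrac{d}{2}-s_{p}-\lambda-\tfrac{1}{2}$ at $\rho=1$ equals a nonzero constant times $\int_{0}^{1}$ of an integrand of fixed sign — a square of the regular profile divided by the Abel Wronskian and by the leading coefficient $1-\rho^{2}$ — hence is nonzero; since $\tfrac{d}{2}-s_{p}-\lambda-\tfrac{1}{2}<k-\tfrac{1}{2}$ by the hypothesis $\tfrac{d}{2}-s_{p}-k<0$ and $\lambda\geq 0$, this component obstructs membership in $H^{k}$, contradicting the existence of the chain. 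For $\lambda=1$ this is especially transparent because the mode operator at $\ell=0$ has no zeroth order term, so the chain equation collapses to a first order linear ODE whose unique origin-regular solution is given by an integrating factor with strictly positive integrand. The case $d=1$ is handled in the same way using the explicit hypergeometric solutions already exhibited in the proof of \Cref{SpecL0}. I expect the main work to be precisely this last step: carrying out the connection-coefficient computation cleanly for all admissible $(d,p,k)$, in particular at the exceptional values where $\tfrac{d}{2}-s_{p}-\lambda-\tfrac{1}{2}$ is a nonnegative integer and a logarithmic term appears at $\rho=1$, which still lies outside $H^{k}$ by the same regularity bookkeeping.
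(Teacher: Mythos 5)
Your proposal follows essentially the same route as the paper: reduce the statement to the absence of Jordan chains of length two, derive the inhomogeneous degenerate-elliptic equation for the first component, localize to the single spherical-harmonic block carrying the symmetry mode, and rule out an $H^{k}$-solution by the singular behaviour at $\rho=1$ of the unique origin-regular solution. The paper executes this last step by factoring out the eigenfunction profile (which removes the zeroth-order term), integrating once, and identifying $g'$ with an explicit ${}_{2}F_{1}$ whose endpoint asymptotics are uniform in the parameters --- this is precisely your variation-of-parameters integral in closed form, and it also disposes automatically of the exceptional integer-exponent cases you flag at the end.
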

\begin{proof}
Let $\lambda\in\{0,1\}$. Assume $\mathbf{f}\in\ker(\lambda\mathbf{I}-\mathbf{L}_{0})$. Then
\begin{equation*}
(\lambda\mathbf{I}-\mathbf{L}_{0})(\mathbf{I}-\mathbf{P}_{\lambda,0})\mathbf{f} = (\mathbf{I}-\mathbf{P}_{\lambda,0})(\lambda\mathbf{I}-\mathbf{L}_{0})\mathbf{f} = \mathbf{0} \,.
\end{equation*}
So $(\mathbf{I}-\mathbf{P}_{\lambda,0})\mathbf{f} \in \ker(\lambda\mathbf{I} - \mathbf{L}_{0} \restriction_{\ran(\mathbf{I}-\mathbf{P}_{\lambda,0})})$. But by construction $\sigma(\mathbf{L}_{0} \restriction_{\ran(\mathbf{I}-\mathbf{P}_{\lambda,0})}) = \sigma(\mathbf{L}_{0})\setminus\{\lambda\}$, hence $(\mathbf{I}-\mathbf{P}_{\lambda,0}) \mathbf{f} = \mathbf{0}$. Thus $\mathbf{f} = \mathbf{P}_{\lambda,0} \mathbf{f} \in \ran(\mathbf{P}_{\lambda,0})$ and we have
\begin{equation*}
\ker(\lambda\mathbf{I}-\mathbf{L}_{0}) \subseteq \ran(\mathbf{P}_{\lambda,0}) \,.
\end{equation*}
\par
To prove the other inclusion, we start noting that the algebraic multiplicity of both eigenvalues $\lambda\in\{0,1\}$ is finite, i.e., $\ran(\mathbf{P}_{\lambda,0}) \subset \mathfrak{H}^{k}(\BB^{d}_{R})$ is a finite-dimensional subspace, see \cite[Theorem B.1]{MR4469070}. By construction, $\sigma(\mathbf{L}_{0} \restriction_{\ran(\mathbf{P}_{\lambda,0})})=\{\lambda\}$ which implies that the operator $\lambda\mathbf{I}-\mathbf{L}_{0} \restriction_{\ran(\mathbf{P}_{\lambda,0})}$ is nilpotent, i.e., there is a minimal $n_{\lambda}\in\NN$ such that $(\lambda\mathbf{I}-\mathbf{L}_{0} \restriction_{\ran(\mathbf{P}_{\lambda,0})})^{n_{\lambda}} = \mathbf{0}$.
\par
\medskip
First, assume that $n_{\lambda}>1$. Then $\ran(\lambda\mathbf{I}-\mathbf{L}_{0} \restriction_{\ran(\mathbf{P}_{\lambda,0})}) \cap \ker(\lambda\mathbf{I}-\mathbf{L}_{0} \restriction_{\ran(\mathbf{P}_{\lambda,0})}) \neq \{\mathbf{0}\}$ and it follows that there exist $\mathbf{F}_{\lambda}\in\ker(\lambda\mathbf{I}-\mathbf{L}_{0}\restriction_{\ran(\mathbf{P}_{\lambda,0})}) \setminus \{\mathbf{0}\}$ and $\mathbf{f}_{\lambda}\in\ran(\mathbf{P}_{\lambda,0})$ such that
\begin{equation*}
(\lambda\mathbf{I}-\mathbf{L}_{0} \restriction_{\ran(\mathbf{P}_{\lambda,0})})\mathbf{f}_{\lambda} = \mathbf{F}_{\lambda} \,.
\end{equation*}
This yields the equation
\begin{equation}
\label{lambdaell}
\Big( -(\delta^{ij}-\xi^{i}\xi^{j})\pd_{\xi^{i}}\pd_{\xi^{j}} + 2(\lambda+s_{p})\xi^{i}\pd_{\xi^{i}} + (\lambda-1)(\lambda+2(s_{p}+1)) \Big) f_{\lambda,1}(\xi) = F_{\lambda}(\xi) \,,
\end{equation}
where $F_{\lambda}(\xi) = (\lambda+s_{p}+1)F_{\lambda,1}(\xi) + \xi^{i}\pd_{\xi^{i}} F_{\lambda,1}(\xi) + F_{\lambda,2}(\xi)$ defines a smooth inhomogeneity. Elliptic regularity implies $f_{\lambda,1}\in C^{\infty}(\overline{\BB^{d}_{R}}\setminus\mathbb{S}^{d-1})\cap H^{k}(\BB^{d}_{R})$. To continue, we treat both eigenvalues $\lambda\in\{0,1\}$ separately.
\begin{enumerate}[wide,topsep=1em,itemsep=1em]
\item[\textit{Eigenvalue $\lambda=0$.}] There is an $\alpha\in\CC^{d}\setminus\{0\}$ such that the inhomogeneity reads
\begin{equation*}
\mathbf{F}_{0} = \sum_{i=1}^{d} \alpha_{i}\mathbf{f}_{0,0,i}
\qquad\text{and thus}\qquad
F_{0}(\xi) = (2s_{p}+3) \sum_{i=1}^{d} \alpha_{i}\xi^{i} \,,
\end{equation*}
see \Cref{SpecL0}. From the same lemma we see that only the first spherical moment contributes in the spherical harmonics expansion of $f_{0,1}$, so
\begin{equation*}
f_{0,1}(\rho\omega) = \sum_{m\in\Omega_{d,1}} f_{0,1,m}(\rho) Y_{1,m}(\omega) \,.
\end{equation*}
Plugging this relation into \Cref{lambdaell} leaves us with inspecting the solution $f$ of the equation
\begin{equation*}
\Big( (1-\rho^{2}) \pd_{\rho}^{2} - \Big( 2(s_{p}+1)\rho - \frac{d-1}{\rho} \Big) \pd_{\rho} + \Big( 2(s_{p}+1) - \frac{d-1}{\rho^{2}} \Big) \Big) f(\rho) = (2s_{p}+3) \rho \,.
\end{equation*}
By letting
\begin{equation*}
f(\rho) = \frac{2s_{p}+3}{2(d+2)} \rho g(\rho^{2})
\end{equation*}
we obtain the equation
\begin{equation*}
z(1-z)g''(z) + \Big( \frac{d+2}{2} - \Big(s_{p}+\frac{5}{2}\Big)z \Big) g'(z) = \frac{d+2}{2} \,.
\end{equation*}
Thus, there is a constant $c\in\CC$ such that
\begin{equation*}
g'(z) = c z^{-\frac{d+2}{2}} (1-z)^{\frac{d}{2}-s_{p}-\frac{3}{2}} + \tfrac{d+2}{2} z^{-\frac{d+2}{2}} (1-z)^{\frac{d}{2}-s_{p}-\frac{3}{2}} \int_{0}^{z} \zeta^{\frac{d}{2}} (1-\zeta)^{-\left( \frac{d}{2}-s_{p}-\frac{1}{2} \right)} \dd\zeta \,.
\end{equation*}
As $f_{0,1}\in C^{\infty}([0,R]\setminus\{1\}) \cap H^{k}((\frac{1}{2},R))$, it follows that $c=0$, so
\begin{align*}
g'(z) &= \tfrac{d+2}{2} z^{-\frac{d+2}{2}} (1-z)^{\frac{d}{2}-s_{p}-\frac{3}{2}} \int_{0}^{z} \zeta^{\frac{d}{2}} (1-\zeta)^{-\left( \frac{d}{2}-s_{p}-\frac{1}{2} \right)} \dd\zeta \\&= {}_{2}F_{1}(s_{p}+\tfrac{5}{2},1,\tfrac{d+4}{2};z) \,,
\end{align*}
where we used \cite[p. 183, Eq. 8.17.8]{MR2723248}. This implies
\begin{equation*}
\Big| (1-z)^{-(\frac{d}{2}-s_{p}-k-\frac{1}{2})} g^{(k)}(z) \Big| \simeq 1
\end{equation*}
near $z=1$, which contradicts $g^{(k)}\in L^{2}(\frac{1}{4},R^{2})$.
\item[\textit{Eigenvalue $\lambda=1$.}] There is an $\alpha\in\CC\setminus\{0\}$ such that the inhomogeneity reads $\mathbf{F}_{1} = \alpha \mathbf{f}_{1,0}$ and thus $F_{1}(\xi) = \alpha(2s_{p} + 3)$ is constant, see \Cref{SpecL0}. The spherical harmonics expansion in the proof of the same lemma shows that $f_{1,1}$ is actually radial, i.e.
\begin{equation*}
f_{1,1}(\rho\omega) = f_{1,0,0}(\rho) Y_{0,0}(\omega) \,.
\end{equation*}
With this, we infer from \Cref{lambdaell} that a multiple of $f_{1,0,0}$ solves the equation
\begin{equation*}
\Big( (1-\rho^{2}) \pd_{\rho}^{2} - \Big( 2(s_{p}+2)\rho - \frac{d-1}{\rho} \Big) \pd_{\rho} \Big) f(\rho) = \alpha (2s_{p}+3) \,.
\end{equation*}
For convenience, we consider the transformation
\begin{equation*}
f(\rho) = \frac{\alpha (2s_{p}+3)}{2d} g(\rho^{2}) \,,
\end{equation*}
which leads to the equation
\begin{equation*}
z(1-z)g''(z) + \Big( \frac{d}{2} - \Big(s_{p}+\frac{5}{2}\Big) \Big) g'(z) = \frac{d}{2} \,.
\end{equation*}
Again, this equation can be integrated and it follows that there exists a constant $c\in\CC$ such that
\begin{equation*}
g'(z) = c z^{-\frac{d}{2}}(1-z)^{\frac{d}{2}-s_{p}-\frac{5}{2}} + \tfrac{d}{2} z^{-\frac{d}{2}}(1-z)^{\frac{d}{2}-s_{p}-\frac{5}{2}} \int_{0}^{z} \zeta^{\frac{d-2}{2}} (1-\zeta)^{\frac{d}{2}-s_{p}-\frac{3}{2}} \dd\zeta \,.
\end{equation*}
As $f_{1,0,0}$ is smooth near $z=0$ we conclude $c=0$ and so
\begin{align*}
g'(z) &= \tfrac{d}{2} z^{-\frac{d}{2}}(1-z)^{\frac{d}{2}-s_{p}-\frac{5}{2}} \int_{0}^{z} \zeta^{\frac{d-2}{2}} (1-\zeta)^{\frac{d}{2}-s_{p}-\frac{3}{2}} \dd\zeta \\&=
{}_{2}F_{1}(s_{p}+\tfrac{5}{2},1,\tfrac{d}{2};z) \,.
\end{align*}
Thus,
\begin{equation*}
\Big| (1-z)^{-(\frac{d}{2}-s_{p}-k-\frac{5}{2})} g^{(k)}(z) \Big| \simeq 1
\end{equation*}
near $z=1$, which contradicts $g^{(k)}\in L^{2}(\frac{1}{4},R^{2})$.
\end{enumerate}
\par
It follows that the assumption $n_{\lambda}>1$ is wrong.
\par
\medskip
Thus $n_{\lambda}=1$. So $\lambda\mathbf{I}-\mathbf{L}_{0} \restriction_{\ran(\mathbf{P}_{\lambda,0})} = \mathbf{0}$, i.e. $(\lambda\mathbf{I}-\mathbf{L}_{0})\mathbf{P}_{\lambda,0}\mathbf{f} = \mathbf{0}$ for all $\mathbf{f}\in\mathfrak{H}^{k}(\BB^{d}_{R})$ and thus we infer the other inclusion $ \ran(\mathbf{P}_{\lambda,0})\subseteq\ker(\lambda\mathbf{I}-\mathbf{L}_{0})$.
\end{proof}
\subsubsection{Spectral analysis with small parameter values}
We show that the spectrum of the generator in a right half-plane is stable under small variations of the Lorentz parameter.
\begin{proposition}
\label{SpectralImplication}
Let $(d,p,k,R)\in \NN\times\RR_{>1}\times\NN\times\RR_{\geq 1}$ such that $\frac{d}{2}-s_{p}-k < 0$. Put
\begin{equation*}
\omega_{d,p,k} \coloneqq \max\left\{-1,\frac{d}{2}-s_{p}-k,-s_{p} \right\}
\end{equation*}
and let $\omega_{d,p,k}<\omega_{0}<0$ be arbitrary but fixed. There is an $R_{0} > R$ such that for each $\lambda\in\overline{\mathbb{H}_{\omega_{0}}}\setminus \Big( \mathbb{D}_{\frac{|\omega_{0}|}{2}}(0) \cup \mathbb{D}_{\frac{|\omega_{0}|}{2}}(1) \Big)$ and $\beta\in\overline{\BB^{d}_{R_{0}^{-1}}}$
\begin{equation*}
\lambda \in \sigma(\mathbf{L}_{\beta})
\qquad\text{implies}\qquad
\lambda \in \sigma(\mathbf{L}_{0}) \,.
\end{equation*}
In particular, $\overline{\mathbb{H}_{\omega_{0}}}\setminus \big( \mathbb{D}_{\frac{|\omega_{0}|}{2}}(0) \cup \mathbb{D}_{\frac{|\omega_{0}|}{2}}(1) \big) \subset \varrho(\mathbf{L}_{\beta})$ and there is a constant $C>0$ such that
\begin{equation*}
\| \mathbf{R}_{\mathbf{L}_{\beta}}(z) \|_{\mathfrak{L}\left(\mathfrak{H}^{k}(\BB^{d}_{R})\right)} \leq C
\end{equation*}
holds for all $z\in\overline{\mathbb{H}_{\omega_{0}}}\setminus \Big( \mathbb{D}_{\frac{|\omega_{0}|}{2}}(0) \cup \mathbb{D}_{\frac{|\omega_{0}|}{2}}(1) \Big)$ and all $\beta\in\overline{\BB^{d}_{R_{0}^{-1}}}$.
\end{proposition}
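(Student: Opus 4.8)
The plan is to reduce the statement to the spectral picture at $\beta=0$ supplied by \Cref{SpecL0} together with a soft Neumann-series perturbation argument for the resolvent, exploiting that the potentials $\mathbf{L}_\beta'$ are compact and depend on $\beta$ in a Lipschitz way by \Cref{PotentialProperties}. Set $\mathcal{R}\coloneqq\overline{\mathbb{H}_{\omega_0}}\setminus\big(\mathbb{D}_{|\omega_0|/2}(0)\cup\mathbb{D}_{|\omega_0|/2}(1)\big)$; since $\mathcal{R}\cap\sigma(\mathbf{L}_0)=\emptyset$ by \Cref{SpecL0}, the asserted implication is vacuously true, and the real content is the inclusion $\mathcal{R}\subset\varrho(\mathbf{L}_\beta)$ together with a resolvent bound that is uniform in both $\lambda\in\mathcal{R}$ and $\beta$.

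\textbf{Step 1: a uniform resolvent bound for $\mathbf{L}_0$ on $\mathcal{R}$.} First I would pick $\varepsilon\in(0,\tfrac12)$ small enough that $\omega_{d,p,k,\varepsilon}<\omega_0$; this is possible because $\omega_{d,p,k,\varepsilon}=\max\{\tfrac d2-s_p-k,-s_p+\varepsilon\}\to\max\{\tfrac d2-s_p-k,-s_p\}\le\omega_{d,p,k}<\omega_0$ as $\varepsilon\downarrow0$. With this $\varepsilon$ the free semigroup from \Cref{FreeSemigroupTHM} has growth bound strictly below $\omega_0$. Since $\mathbf{L}_0=\overline{\mathbf{L}_{d,p,k,R}}+\mathbf{L}_0'$ is a compact perturbation of its generator by \Cref{PotentialProperties}, the separation-of-the-point-spectrum result already invoked in \Cref{SpecL0} (\cite[Theorem B.1]{MR4469070}), combined with $\sigma(\mathbf{L}_0)\cap\overline{\mathbb{H}_{\omega_0}}=\{0,1\}$, gives $\mathcal{R}\subset\varrho(\mathbf{L}_0)$ and a constant $C_0>0$ with $\|\mathbf{R}_{\mathbf{L}_0}(\lambda)\|_{\mathfrak{L}(\mathfrak{H}^{k}(\BB^{d}_{R}))}\le C_0$ for all $\lambda\in\mathcal{R}$. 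Should the cited theorem only yield the spectral structure, the bound can be produced by hand: on the bounded part of $\mathcal{R}$ the resolvent is continuous hence bounded, while for $|\lambda|\to\infty$ one factors $\lambda\mathbf{I}-\mathbf{L}_0=\big(\mathbf{I}-\mathbf{L}_0'\mathbf{R}_{\overline{\mathbf{L}_{d,p,k,R}}}(\lambda)\big)(\lambda\mathbf{I}-\overline{\mathbf{L}_{d,p,k,R}})$ and uses that $\mathbf{R}_{\overline{\mathbf{L}_{d,p,k,R}}}(\lambda)\to 0$ strongly as $\Re\lambda\to\infty$ or $|\Im\lambda|\to\infty$ (Hille–Yosida bound, respectively Riemann–Lebesgue), so that $\mathbf{L}_0'\mathbf{R}_{\overline{\mathbf{L}_{d,p,k,R}}}(\lambda)\to 0$ in operator norm by compactness of $\mathbf{L}_0'$, making the first factor invertible with uniformly bounded inverse for $|\lambda|$ large.

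\textbf{Step 2: transfer to small Lorentz parameters.} Let $L_0>0$ be the Lipschitz constant of $\beta\mapsto\mathbf{L}_\beta'$ on the fixed ball $\overline{\BB^{d}_{(R+1)^{-1}}}$ supplied by \Cref{PotentialProperties}; it then also bounds the Lipschitz constant on every smaller ball. Choose $R_0>\max\{R+1,\,2L_0C_0\}$. Since $\dom(\mathbf{L}_\beta)=\dom(\overline{\mathbf{L}_{d,p,k,R}})=\dom(\mathbf{L}_0)$ and $\mathbf{L}_\beta-\mathbf{L}_0=\mathbf{L}_\beta'-\mathbf{L}_0'$, for $\lambda\in\mathcal{R}$ and $\beta\in\overline{\BB^{d}_{R_0^{-1}}}$ one has the factorization
\[
\lambda\mathbf{I}-\mathbf{L}_\beta=\big(\mathbf{I}-(\mathbf{L}_\beta'-\mathbf{L}_0')\mathbf{R}_{\mathbf{L}_0}(\lambda)\big)(\lambda\mathbf{I}-\mathbf{L}_0)\,,
\]
and $\|(\mathbf{L}_\beta'-\mathbf{L}_0')\mathbf{R}_{\mathbf{L}_0}(\lambda)\|\le L_0|\beta|\,C_0\le L_0C_0/R_0<\tfrac12$. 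Hence the first factor is invertible by a Neumann series with inverse of norm at most $2$; as $\lambda\mathbf{I}-\mathbf{L}_0\colon\dom(\mathbf{L}_0)\to\mathfrak{H}^{k}(\BB^{d}_{R})$ is bijective and $\mathbf{L}_\beta$ is closed, it follows that $\lambda\in\varrho(\mathbf{L}_\beta)$ with
\[
\mathbf{R}_{\mathbf{L}_\beta}(\lambda)=\mathbf{R}_{\mathbf{L}_0}(\lambda)\big(\mathbf{I}-(\mathbf{L}_\beta'-\mathbf{L}_0')\mathbf{R}_{\mathbf{L}_0}(\lambda)\big)^{-1}\,,\qquad\|\mathbf{R}_{\mathbf{L}_\beta}(\lambda)\|\le 2C_0\eqqcolon C\,,
\]
uniformly in $\lambda\in\mathcal{R}$ and $\beta\in\overline{\BB^{d}_{R_0^{-1}}}$. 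This is precisely $\mathcal{R}\subset\varrho(\mathbf{L}_\beta)$ with the claimed bound, and it makes the stated implication hold (vacuously).

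\textbf{Main obstacle.} The only genuinely analytic ingredient is the uniform resolvent bound for $\mathbf{L}_0$ on the \emph{unbounded} region $\mathcal{R}$, in particular its boundedness as $|\Im\lambda|\to\infty$; once that is in place — either directly from \cite[Theorem B.1]{MR4469070} (as used in \Cref{SpecL0}) or via the Riemann–Lebesgue plus compactness argument sketched above — everything else is a one-line Neumann-series estimate built on the Lipschitz bound of \Cref{PotentialProperties}.
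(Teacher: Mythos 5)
Your Step 2 is exactly the paper's argument: the factorization $\lambda\mathbf{I}-\mathbf{L}_{\beta}=\big(\mathbf{I}-(\mathbf{L}_{\beta}'-\mathbf{L}_{0}')\mathbf{R}_{\mathbf{L}_{0}}(\lambda)\big)(\lambda\mathbf{I}-\mathbf{L}_{0})$ is \Cref{Birman}, and the Neumann-series conclusion from $\|(\mathbf{L}_{\beta}'-\mathbf{L}_{0}')\mathbf{R}_{\mathbf{L}_{0}}(\lambda)\|\leq L C_{0}|\beta|$ with $R_{0}$ large is verbatim the paper's. The two proofs part ways only in Step 1, the uniform bound for $\mathbf{R}_{\mathbf{L}_{0}}$ on the unbounded set $\mathcal{R}$. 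The paper does not extract this directly from \cite[Theorem B.1]{MR4469070}; it splits $\mathbf{R}_{\mathbf{L}_{0}}(z)=\mathbf{R}_{\mathbf{L}_{0}}(z)(\mathbf{I}-\mathbf{P}_{0})+\mathbf{R}_{\mathbf{L}_{0}}(z)\mathbf{P}_{0}$ along the Riesz projections of \Cref{MAMG}, bounds the stable part by $M_{\omega}/(\Re(z)-\omega)$ via the decay of the subspace semigroup and \cite[p.~55, 1.10 Theorem]{MR1721989}, and writes the unstable part explicitly as a rank-$(d+1)$ operator whose only $z$-dependence is through $z^{-1}$ and $(z-1)^{-1}$, both bounded by $2/|\omega_{0}|$ on $\mathcal{R}$; this is what controls the resolvent both at infinity and on the boundaries of the excluded disks. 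Your primary claim that the cited theorem plus $\sigma(\mathbf{L}_{0})\cap\overline{\mathbb{H}_{\omega_{0}}}=\{0,1\}$ already yields the uniform constant is therefore an overstatement, and your fallback argument contains a slip: from $\mathbf{R}_{\overline{\mathbf{L}_{d,p,k,R}}}(\lambda)\to 0$ strongly and $\mathbf{L}_{0}'$ compact one obtains $\|\mathbf{R}_{\overline{\mathbf{L}_{d,p,k,R}}}(\lambda)\mathbf{L}_{0}'\|\to 0$, \emph{not} $\|\mathbf{L}_{0}'\mathbf{R}_{\overline{\mathbf{L}_{d,p,k,R}}}(\lambda)\|\to 0$; to repair it one must pass to adjoints, $\|\mathbf{L}_{0}'\mathbf{R}(\lambda)\|=\|\mathbf{R}(\lambda)^{*}(\mathbf{L}_{0}')^{*}\|$, and run the Laplace-transform/Riemann--Lebesgue argument for the adjoint semigroup, with uniformity in $\Re\lambda$ over the strip $[\omega_{0},A]$. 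Both routes work in the end, but the projection splitting is cleaner and gives the constant with no asymptotic analysis; I recommend adopting it (or at least fixing the adjoint issue) before relying on Step 1.
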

\begin{proof}
If $z\in\overline{\mathbb{H}_{\omega_{0}}}\setminus\{0,1\}$ then $z\in \varrho(\mathbf{L}_{0})$ and the identity
\begin{equation}
\label{Birman}
z\mathbf{I} - \mathbf{L}_{\beta} = \big( \mathbf{I} - ( \mathbf{L}_{\beta}' - \mathbf{L}_{0}' ) \mathbf{R}_{\mathbf{L}_{0}}(z) \big) \big( z\mathbf{I} - \mathbf{L}_{0} \big)
\end{equation}
holds. Thus $z\mathbf{I}-\mathbf{L}_{\beta}$ is boundedly invertible if and only if the bounded linear operator $\mathbf{I}-(\mathbf{L}_{\beta}'-\mathbf{L}_{0}')\mathbf{R}_{\mathbf{L}_{0}}(z)$ is invertible. The latter is true if the Neumann series of $(\mathbf{L}_{\beta}'-\mathbf{L}_{0}')\mathbf{R}_{\mathbf{L}_{0}}(z)$ converges. To achieve this, consider the Riesz projections $\mathbf{P}_{0,0}, \mathbf{P}_{0,1}$ from \Cref{MAMG} and split
\begin{equation}
\label{ResSplit}
\mathbf{R}_{\mathbf{L}_{0}}(z) \mathbf{f} = \mathbf{R}_{\mathbf{L}_{0}}(z) ( \mathbf{I} - \mathbf{P}_{0} ) \mathbf{f} + \mathbf{R}_{\mathbf{L}_{0}}(z) \mathbf{P}_{0} \mathbf{f}
\end{equation}
with $\mathbf{P}_{0} \coloneqq \mathbf{P}_{0,0} + \mathbf{P}_{0,1}$. If we combine \cite[Theorem B.1]{MR4469070} with \cite[p. 55, 1.10 Theorem]{MR1721989}, we get for each $\omega>\omega_{d,p,k}$ that $\mathbf{R}_{\mathbf{L}_{0}}(z)(\mathbf{I}-\mathbf{P}_{0})$ defines an analytic map in the whole right half-plane $\mathbb{H}_{\omega}$ and there is a constant $M_{\omega}\geq 1$ such that
\begin{equation}
\label{ResSplitStableBound}
\| \mathbf{R}_{\mathbf{L}_{0}}(z) (\mathbf{I}-\mathbf{P}_{0}) \|_{\mathfrak{L}\left(\mathfrak{H}^{k}(\BB^{d}_{R})\right)} \leq \frac{M_{\omega}}{\Re(z)-\omega} \,,
\end{equation}
for all $z\in \mathbb{H}_{\omega}$. On the other hand, using \Cref{SpecL0,MAMG}, there are unique $\mathbf{g}^{1}_{0},\ldots,\mathbf{g}^{d}_{0},\mathbf{g}_{1}\in\mathfrak{H}^{k}(\BB^{d}_{R})$ such that
\begin{equation*}
\mathbf{P}_{0}\mathbf{f} = \sum_{i=1}^{d} \Big( \mathbf{g}^{i}_{0} \,\Big|\, \mathbf{f} \Big)_{\mathfrak{H}^{k}(\BB^{d}_{R})} \mathbf{f}_{0,0,i} + \Big( \mathbf{g}_{1} \,\Big|\, \mathbf{f} \Big)_{\mathfrak{H}^{k}(\BB^{d}_{R})} \mathbf{f}_{1,0} \,.
\end{equation*}
Since this is an expansion in terms of eigenvectors of $\mathbf{L}_{0}$,
\begin{equation}
\label{ResSplitUnstable}
\mathbf{R}_{\mathbf{L}_{0}}(z) \mathbf{P}_{0} \mathbf{f} = \sum_{i=1}^{d} z^{-1} \Big( \mathbf{g}^{i}_{0} \,\Big|\, \mathbf{f} \Big)_{\mathfrak{H}^{k}(\BB^{d}_{R})} \mathbf{f}_{0,0,i} + (z-1)^{-1} \Big( \mathbf{g}_{1} \,\Big|\, \mathbf{f} \Big)_{\mathfrak{H}^{k}(\BB^{d}_{R})} \mathbf{f}_{1,0}
\end{equation}
follows. From \Cref{ResSplit,ResSplitStableBound,ResSplitUnstable} we infer that there is a constant $C_{0}>0$ such that the resolvent estimate
\begin{equation*}
\| \mathbf{R}_{\mathbf{L}_{0}}(z) \|_{\mathfrak{L}\left(\mathfrak{H}^{k}(\BB^{d}_{R})\right)} \leq C_{0}
\end{equation*}
holds for all $z\in\overline{\mathbb{H}_{\omega_{0}}}\setminus \Big( \mathbb{D}_{\frac{|\omega_{0}|}{2}}(0) \cup \mathbb{D}_{\frac{|\omega_{0}|}{2}}(1) \Big)$. This bound implies with \Cref{PotentialProperties}
\begin{align*}
\|(\mathbf{L}_{0}'-\mathbf{L}_{\beta}')\mathbf{R}_{\mathbf{L}_{0}}(z)\|_{\mathfrak{L}\left(\mathfrak{H}^{k}(\BB^{d}_{R})\right)} &\leq
\|(\mathbf{L}_{0}'-\mathbf{L}_{\beta}')\|_{\mathfrak{L}\left(\mathfrak{H}^{k}(\BB^{d}_{R})\right)}
\| \mathbf{R}_{\mathbf{L}_{0}}(z) \|_{\mathfrak{L}\left(\mathfrak{H}^{k}(\BB^{d}_{R})\right)} \\&\leq
L C_{0} |\beta|
\end{align*}
for all $z\in\overline{\mathbb{H}_{\omega_{0}}}\setminus \Big( \mathbb{D}_{\frac{|\omega_{0}|}{2}}(0) \cup \mathbb{D}_{\frac{|\omega_{0}|}{2}}(1) \Big)$ and all $\beta\in\overline{\BB^{d}_{R_{0}^{-1}}}$. Consequently, if we fix $R_{0}>R$ large enough, then we have for all $z\in\overline{\mathbb{H}_{\omega_{0}}}\setminus \Big( \mathbb{D}_{\frac{|\omega_{0}|}{2}}(0) \cup \mathbb{D}_{\frac{|\omega_{0}|}{2}}(1) \Big)$ and all $\beta\in\overline{\BB^{d}_{R_{0}^{-1}}}$ that $z\in\varrho(\mathbf{L}_{0})$ implies $z\in\varrho(\mathbf{L}_{\beta})$ with
\begin{equation*}
\mathbf{R}_{\mathbf{L}_{\beta}}(z) = \mathbf{R}_{\mathbf{L}_{0}}(z) \big( \mathbf{I} - (\mathbf{L}_{\beta}'-\mathbf{L}_{0}') \mathbf{R}_{\mathbf{L}_{0}}(z) \big)^{-1} \,.
\end{equation*}
From here, the uniform resolvent bound follows.
\end{proof}
Arrived at this point, the fact that the generator depends continuously on the Lorentz parameter lets us conclude this subsection with the anticipated spectral theorem for the generator.
\begin{theorem}
\label{SpecThmLbeta}
Let $(d,p,k,R)\in \NN\times\RR_{>1}\times\NN\times\RR_{\geq 1}$ such that $\frac{d}{2}-s_{p}-k < 0$. Put
\begin{equation*}
\omega_{d,p,k} \coloneqq \max\left\{-1,\frac{d}{2}-s_{p}-k,-s_{p} \right\}
\end{equation*}
and let $\omega_{d,p,k}<\omega_{0}<0$ be arbitrary but fixed. There is an $R_{0} > R$ such that for each $\beta\in\overline{\BB^{d}_{R_{0}^{-1}}}$
\begin{equation*}
\sigma(\mathbf{L}_{\beta}) \cap \overline{\mathbb{H}_{\omega_{0}}} = \{0,1\} \,.
\end{equation*}
Moreover, both points $0,1\in\sigma(\mathbf{L}_{\beta})$ belong to the point spectrum of $\mathbf{L}_{\beta}$ with eigenspaces
\begin{equation*}
\ker(\mathbf{L}_{\beta}) = \langle \mathbf{f}_{0,\beta,i} \rangle_{i=1}^{d}
\qquad\text{and}\qquad
\ker(\mathbf{I}-\mathbf{L}_{\beta}) = \langle \mathbf{f}_{1,\beta} \rangle
\end{equation*}
spanned by the symmetry modes $\mathbf{f}_{0,\beta,i},\mathbf{f}_{1,\beta}\in C^{\infty}(\overline{\BB^{d}_{R}})^{2}$ from \Cref{EV}. Furthermore, the algebraic multiplicity of both eigenvalues $\lambda\in\{0,1\}$ is finite and equal to the respective geometric multiplicity. In particular,
\begin{equation*}
\ran(\mathbf{P}_{\lambda,\beta}) = \ker(\lambda\mathbf{I}-\mathbf{L}_{\beta})
\end{equation*}
where $\mathbf{P}_{\lambda,\beta} \in \mathfrak{L}\left( \mathfrak{H}^{k}(\BB^{d}_{R})\right)$, defined by
\begin{equation*}
\mathbf{P}_{\lambda,\beta} \coloneqq \frac{1}{2\pi\ii}\int_{\pd\mathbb{D}_{r}(\lambda)} \mathbf{R}_{\mathbf{L}_{\beta}}(z) \dd z \,,
\end{equation*}
is the \emph{Riesz projection} associated to the isolated eigenvalue $\lambda\in\{0,1\}$ of $\mathbf{L}_{\beta}$, respectively.
\end{theorem}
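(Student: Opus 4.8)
The plan is to combine the spectral localization of \Cref{SpectralImplication} with a norm‑continuous perturbation of the Riesz projections based at the reference value $\beta=0$, where the complete picture is already available from \Cref{SpecL0} and \Cref{MAMG}. First I would fix $R_{0}>R$ as in \Cref{SpectralImplication} and note that, since $\omega_{d,p,k}\geq -1$ forces $|\omega_{0}|<1$, the closed disks $\overline{\mathbb{D}_{|\omega_{0}|/2}(0)}$ and $\overline{\mathbb{D}_{|\omega_{0}|/2}(1)}$ are disjoint and contained in $\overline{\mathbb{H}_{\omega_{0}}}$, while their boundary circles $\gamma_{\lambda}\coloneqq\pd\mathbb{D}_{|\omega_{0}|/2}(\lambda)$ for $\lambda\in\{0,1\}$ lie in $\varrho(\mathbf{L}_{\beta})$ for every $\beta\in\overline{\BB^{d}_{R_{0}^{-1}}}$ with a resolvent bound uniform in $\beta$. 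Hence $\sigma(\mathbf{L}_{\beta})\cap\overline{\mathbb{H}_{\omega_{0}}}\subset\mathbb{D}_{|\omega_{0}|/2}(0)\cup\mathbb{D}_{|\omega_{0}|/2}(1)$, the Riesz projections $\mathbf{P}_{\lambda,\beta}=\tfrac{1}{2\pi\ii}\int_{\gamma_{\lambda}}\mathbf{R}_{\mathbf{L}_{\beta}}(z)\dd z$ are well defined, and by Cauchy's theorem they coincide with the projections in the statement once $\lambda$ is known to be the only spectral point enclosed.

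Second, I would establish norm continuity of $\beta\mapsto\mathbf{P}_{\lambda,\beta}$ at $\beta=0$. On $\gamma_{\lambda}\subset\varrho(\mathbf{L}_{0})\cap\varrho(\mathbf{L}_{\beta})$ the second resolvent identity gives $\mathbf{R}_{\mathbf{L}_{\beta}}(z)=\mathbf{R}_{\mathbf{L}_{0}}(z)+\mathbf{R}_{\mathbf{L}_{\beta}}(z)(\mathbf{L}_{\beta}'-\mathbf{L}_{0}')\mathbf{R}_{\mathbf{L}_{0}}(z)$, and combining the uniform resolvent bound of \Cref{SpectralImplication} with the Lipschitz estimate $\|\mathbf{L}_{\beta}'-\mathbf{L}_{0}'\|_{\mathfrak{L}(\mathfrak{H}^{k}(\BB^{d}_{R}))}\leq L|\beta|$ from \Cref{PotentialProperties} yields $\|\mathbf{R}_{\mathbf{L}_{\beta}}(z)-\mathbf{R}_{\mathbf{L}_{0}}(z)\|\lesssim|\beta|$ uniformly along $\gamma_{\lambda}$, hence $\|\mathbf{P}_{\lambda,\beta}-\mathbf{P}_{\lambda,0}\|\lesssim|\beta|$. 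Enlarging $R_{0}$ if necessary so that this norm is strictly less than $1$ for all admissible $\beta$, the projections $\mathbf{P}_{\lambda,\beta}$ and $\mathbf{P}_{\lambda,0}$ have the same finite rank, which by \Cref{SpecL0} and \Cref{MAMG} equals $d$ for $\lambda=0$ and $1$ for $\lambda=1$; this rank is precisely the algebraic multiplicity of the part of $\sigma(\mathbf{L}_{\beta})$ enclosed by $\gamma_{\lambda}$.

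Third, I would identify the eigenspaces by a dimension count. \Cref{EV} supplies $\mathbf{f}_{0,\beta,i}\in\ker(\mathbf{L}_{\beta})$ for $i=1,\dots,d$ and $\mathbf{f}_{1,\beta}\in\ker(\mathbf{I}-\mathbf{L}_{\beta})$; their Gram matrix $\big((\mathbf{f}_{0,\beta,i}\mid\mathbf{f}_{0,\beta,j})_{\mathfrak{H}^{k}(\BB^{d}_{R})}\big)_{i,j}$ depends continuously on $\beta$ and is nonsingular at $\beta=0$ by \Cref{SpecL0}, and $\|\mathbf{f}_{1,\beta}\|$ is continuous and positive at $\beta=0$, so after shrinking $R_{0}^{-1}$ the $\mathbf{f}_{0,\beta,i}$ are linearly independent and $\mathbf{f}_{1,\beta}\neq\mathbf{0}$. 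Exactly as in the proof of \Cref{MAMG} — using the commutation $\mathbf{P}_{\lambda,\beta}\mathbf{L}_{\beta}=\mathbf{L}_{\beta}\mathbf{P}_{\lambda,\beta}$ and the fact that $\lambda\notin\sigma(\mathbf{L}_{\beta}\restriction_{\ran(\mathbf{I}-\mathbf{P}_{\lambda,\beta})})$ — one obtains $\ker(\lambda\mathbf{I}-\mathbf{L}_{\beta})\subseteq\ran(\mathbf{P}_{\lambda,\beta})$, and the chain $d=\dim\langle\mathbf{f}_{0,\beta,i}\rangle_{i=1}^{d}\leq\dim\ker(\mathbf{L}_{\beta})\leq\dim\ran(\mathbf{P}_{0,\beta})=d$ (with the analogous chain for $\lambda=1$) forces $\langle\mathbf{f}_{0,\beta,i}\rangle_{i=1}^{d}=\ker(\mathbf{L}_{\beta})=\ran(\mathbf{P}_{0,\beta})$ and $\langle\mathbf{f}_{1,\beta}\rangle=\ker(\mathbf{I}-\mathbf{L}_{\beta})=\ran(\mathbf{P}_{1,\beta})$. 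Since then $(\lambda\mathbf{I}-\mathbf{L}_{\beta})\restriction_{\ran(\mathbf{P}_{\lambda,\beta})}=\mathbf{0}$, the algebraic multiplicity of $\lambda$ equals its geometric multiplicity, and $\lambda$ is the unique point of $\sigma(\mathbf{L}_{\beta})$ inside $\gamma_{\lambda}$, as any other enclosed spectral point would produce a generalized eigenvector in $\ran(\mathbf{P}_{\lambda,\beta})$ not annihilated by $\lambda\mathbf{I}-\mathbf{L}_{\beta}$. Together with the first step this gives $\sigma(\mathbf{L}_{\beta})\cap\overline{\mathbb{H}_{\omega_{0}}}=\{0,1\}$.

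The substantive spectral input — the uniform resolvent bound away from $\{0,1\}$ — is already provided by \Cref{SpectralImplication}, so the remainder is essentially abstract perturbation theory; the only delicate points I expect are (i) arranging all the smallness requirements on $|\beta|$ with a single choice of $R_{0}$, and (ii) verifying, via the explicit modes of \Cref{EV} together with the dimension count, that those modes exhaust the entire generalized eigenspace rather than merely a proper subspace of it.
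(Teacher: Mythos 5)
Your proposal is correct and follows essentially the same route as the paper: localize the unstable spectrum via \Cref{SpectralImplication}, use norm continuity of the Riesz projections in $\beta$ together with rank constancy to transfer the $(d+1)$-dimensional count from \Cref{MAMG} at $\beta=0$, and then match it against the explicit symmetry modes of \Cref{EV} to exhaust the spectral subspace. The only cosmetic difference is that you integrate over two small circles around $0$ and $1$ separately (making the continuity explicit via the second resolvent identity), whereas the paper uses a single total projection over one contour enclosing both eigenvalues.
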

\begin{proof}
According to \Cref{SpectralImplication}, $\pd( \mathbb{D}_{1+|\omega_{0}|} \setminus \mathbb{H}_{\omega_{0}} ) \subset \varrho(\mathbf{L}_{\beta})$ and the projection
\begin{equation*}
\mathbf{P}_{\beta} = \int_{\pd( \mathbb{D}_{1+|\omega_{0}|} \setminus \mathbb{H}_{\omega_{0}} )} \mathbf{R}_{\mathbf{L}_{\beta}}(z) \dd z \in \mathfrak{L}\left( \mathfrak{H}^{k}(\BB^{d}_{R})\right)
\end{equation*}
is well-defined and depends continuously on the parameter. By \cite[p. 34, Lemma 4.10]{MR1335452}, the ranges of $\mathbf{P}_{\beta}$ define a family of isomorphic subspaces. By \Cref{MAMG}, the range of $\mathbf{P}_{0}=\mathbf{P}_{0,0}+\mathbf{P}_{0,1}$ is $(d+1)$-dimensional. Furthermore, as $\ker(\lambda\mathbf{I}-\mathbf{L}_{\beta}) \subseteq \ran(\mathbf{P}_{\lambda,\beta})$, the range of $\mathbf{P}_{\beta}$ contains the $(d+1)$-dimensional subspace spanned by the symmetry modes from \Cref{EV}. Consequently, there are no further spectral points of $\mathbf{L}_{\beta}$ contained in $\mathbb{D}_{1+|\omega_{0}|} \setminus \mathbb{H}_{\omega_{0}}$ other than $0,1$.
\end{proof}
\subsection{Linearized flow along the stable and unstable subspace}
Now, we use the spectral information from the previous subsection to determine the instabilities as well as a stable subspace in the linearized wave evolution. Under the assumptions of \Cref{SpecThmLbeta}, we set $\mathbf{P}_{\beta} \coloneqq \mathbf{P}_{0,\beta} + \mathbf{P}_{1,\beta} \in \mathfrak{L}\left( \mathfrak{H}^{k}(\BB^{d}_{R})\right)$ and get the decomposition 
\begin{equation*}
\mathfrak{H}^{k}(\BB^{d}_{R}) = \ran(\mathbf{P}_{0,\beta})\oplus\ran(\mathbf{P}_{1,\beta})\oplus\ran(\mathbf{I}-\mathbf{P}_{\beta})
\end{equation*}
of the Hilbert space along the Riesz projections. As an immediate consequence of the spectral analysis, we will see that $\ran(\mathbf{P}_{0,\beta}), \ran(\mathbf{P}_{1,\beta}) \subset \mathfrak{H}^{k}(\BB^{d}_{R})$ are unstable subspaces in the evolution of $\mathbf{S}_{\beta}(\tau)$. Proving that the complementary subspace $\ran(\mathbf{I} - \mathbf{P}_{\beta}) \subset \mathfrak{H}^{k}(\BB^{d}_{R})$ is stable uniformly with respect to the Lorentz parameter $\beta$ is nontrivial and requires preparation which is carried out in \Cref{AppendixA}. This gives a complete dynamical portrait of the linearized wave flow near the blowup as follows.
\begin{theorem}
\label{stableEvo}
Let $(d,p,k,R)\in \NN\times\RR_{>1}\times\NN\times\RR_{\geq 1}$ such that $\frac{d}{2}-s_{p}-k < 0$. The semigroup $\mathbf{S}_{\beta}(\tau)$, introduced in \Cref{SGbeta}, and the projections $\mathbf{P}_{0,\beta},\mathbf{P}_{1,\beta} \in \mathfrak{L}\left(\mathfrak{H}^{k}(\BB^{d}_{R})\right)$, introduced in \Cref{SpecThmLbeta}, satisfy
\begin{align*}
&&
\mathbf{P}_{0,\beta}\mathbf{S}_{\beta}(\tau) &= \mathbf{S}_{\beta}(\tau)\mathbf{P}_{0,\beta} = \mathbf{P}_{0,\beta} \,,
&
\mathbf{P}_{0,\beta}\mathbf{P}_{1,\beta} &= \mathbf{0} \,, & \\
&&
\mathbf{P}_{1,\beta}\mathbf{S}_{\beta}(\tau) &= \mathbf{S}_{\beta}(\tau)\mathbf{P}_{1,\beta} = \ee^{\tau}\mathbf{P}_{1,\beta} \,,
&
\mathbf{P}_{1,\beta}\mathbf{P}_{0,\beta} &= \mathbf{0} \,. &
\end{align*}
Moreover, for each arbitrary but fixed $\omega_{d,p,k}<\omega_{0}<0$ there is a constant $M_{\omega_{0}}\geq 1$ such that
\begin{equation*}
\| \mathbf{S}_{\beta}(\tau) ( \mathbf{I} - \mathbf{P}_{\beta} ) \|_{\mathfrak{L}\left(\mathfrak{H}^{k}(\BB^{d}_{R})\right)} \leq M_{\omega_{0}} \ee^{\omega_{0}\tau} \| \mathbf{I} - \mathbf{P}_{\beta} \|_{\mathfrak{L}\left(\mathfrak{H}^{k}(\BB^{d}_{R})\right)}
\end{equation*}
for all $\tau\geq 0$ and all $\beta\in\overline{\BB^{d}_{R_{0}^{-1}}}$.
\end{theorem}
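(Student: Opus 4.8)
The proof decomposes into two essentially independent parts: the algebraic identities relating the Riesz projections to the semigroup, which follow directly from the spectral picture of \Cref{SpecThmLbeta}, and the uniform exponential stability on the complementary subspace, which is the substantial point and will be deduced from the parameter-dependent Gearhart-Pr\"{u}ss-Greiner theorem of \Cref{AppendixA}.

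I would first settle the identities. Since $\mathbf{P}_{\lambda,\beta}$ is the Riesz projection associated to the isolated spectral value $\lambda\in\{0,1\}$ of $\mathbf{L}_{\beta}$, it commutes with $\mathbf{R}_{\mathbf{L}_{\beta}}(z)$ for $z\in\varrho(\mathbf{L}_{\beta})$, hence with $\mathbf{L}_{\beta}$ and therefore with $\mathbf{S}_{\beta}(\tau)$; in particular $\mathbf{I}-\mathbf{P}_{\beta}$ commutes with $\mathbf{S}_{\beta}(\tau)$ as well, so each of the subspaces $\ran(\mathbf{P}_{0,\beta})$, $\ran(\mathbf{P}_{1,\beta})$ and $\ran(\mathbf{I}-\mathbf{P}_{\beta})$ is $\mathbf{S}_{\beta}(\tau)$-invariant. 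Choosing $r>0$ small enough that $\overline{\mathbb{D}_{r}(0)}$ and $\overline{\mathbb{D}_{r}(1)}$ are disjoint, the resolvent identity together with Cauchy's theorem gives $\mathbf{P}_{0,\beta}\mathbf{P}_{1,\beta}=\mathbf{P}_{1,\beta}\mathbf{P}_{0,\beta}=\mathbf{0}$. By \Cref{SpecThmLbeta} we have $\ran(\mathbf{P}_{\lambda,\beta})=\ker(\lambda\mathbf{I}-\mathbf{L}_{\beta})$, so for $\mathbf{f}\in\ran(\mathbf{P}_{\lambda,\beta})$ the curve $\tau\mapsto\mathbf{S}_{\beta}(\tau)\mathbf{f}$ stays in $\ker(\lambda\mathbf{I}-\mathbf{L}_{\beta})$ and hence solves $\pd_{\tau}\mathbf{S}_{\beta}(\tau)\mathbf{f}=\lambda\mathbf{S}_{\beta}(\tau)\mathbf{f}$, whence $\mathbf{S}_{\beta}(\tau)\mathbf{P}_{\lambda,\beta}=\ee^{\lambda\tau}\mathbf{P}_{\lambda,\beta}$. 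Specializing to $\lambda=0$ and $\lambda=1$ yields exactly the stated commutation relations.

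For the decay estimate I would pass to the restriction $\widetilde{\mathbf{L}}_{\beta}\coloneqq\mathbf{L}_{\beta}\restriction_{\ran(\mathbf{I}-\mathbf{P}_{\beta})}$, which by the invariance just noted generates the subspace semigroup $\widetilde{\mathbf{S}}_{\beta}(\tau)=\mathbf{S}_{\beta}(\tau)(\mathbf{I}-\mathbf{P}_{\beta})\restriction_{\ran(\mathbf{I}-\mathbf{P}_{\beta})}$ and has spectrum $\sigma(\mathbf{L}_{\beta})\setminus\{0,1\}\subset\mathbb{C}\setminus\overline{\mathbb{H}_{\omega_{0}}}$ by \Cref{SpecThmLbeta}. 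Two uniform inputs are then assembled. First, \Cref{SGbeta} combined with the uniform bound $\sup_{\beta\in\overline{\BB^{d}_{R_{0}^{-1}}}}\|\mathbf{L}_{\beta}'\|_{\mathfrak{L}(\mathfrak{H}^{k}(\BB^{d}_{R}))}<\infty$ furnished by \Cref{PotentialProperties} shows that $\|\widetilde{\mathbf{S}}_{\beta}(\tau)\|\le M\ee^{a\tau}$ with $M,a$ independent of $\beta$. Second, and crucially, $\sup\{\|\mathbf{R}_{\widetilde{\mathbf{L}}_{\beta}}(z)\| : z\in\overline{\mathbb{H}_{\omega_{0}}},\ \beta\in\overline{\BB^{d}_{R_{0}^{-1}}}\}<\infty$: on $\overline{\mathbb{H}_{\omega_{0}}}\setminus\big(\mathbb{D}_{|\omega_{0}|/2}(0)\cup\mathbb{D}_{|\omega_{0}|/2}(1)\big)$ this is precisely \Cref{SpectralImplication}, while on each of the two disks the map $z\mapsto\mathbf{R}_{\mathbf{L}_{\beta}}(z)(\mathbf{I}-\mathbf{P}_{\beta})$ extends analytically across the removed eigenvalue, so by the maximum modulus principle its norm there is controlled by its norm on the bounding circle, where \Cref{SpectralImplication} applies and $\|\mathbf{I}-\mathbf{P}_{\beta}\|$ is uniformly bounded by continuity of $\beta\mapsto\mathbf{P}_{\beta}$ and compactness of $\overline{\BB^{d}_{R_{0}^{-1}}}$.

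The main obstacle — and the reason \Cref{AppendixA} is needed — is to convert this uniform resolvent bound on the closed half-plane $\overline{\mathbb{H}_{\omega_{0}}}$ into the uniform decay $\|\widetilde{\mathbf{S}}_{\beta}(\tau)\|\le M_{\omega_{0}}\ee^{\omega_{0}\tau}$. Here one must verify the hypotheses of the parameter-dependent Gearhart-Pr\"{u}ss-Greiner theorem uniformly in $\beta$ and address the fact that the spaces $\ran(\mathbf{I}-\mathbf{P}_{\beta})$ vary with $\beta$ — either by applying the theorem directly to this family of subspaces as formulated in \Cref{AppendixA}, or by transporting everything to the fixed reference space $\ran(\mathbf{I}-\mathbf{P}_{0})$ through the family of isomorphisms supplied in the proof of \Cref{SpecThmLbeta}. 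Once the uniform decay of $\widetilde{\mathbf{S}}_{\beta}(\tau)$ is in hand, writing $\mathbf{S}_{\beta}(\tau)(\mathbf{I}-\mathbf{P}_{\beta})=\widetilde{\mathbf{S}}_{\beta}(\tau)(\mathbf{I}-\mathbf{P}_{\beta})$ on $\mathfrak{H}^{k}(\BB^{d}_{R})$ and estimating $\|\mathbf{S}_{\beta}(\tau)(\mathbf{I}-\mathbf{P}_{\beta})\|\le\|\widetilde{\mathbf{S}}_{\beta}(\tau)\|\,\|\mathbf{I}-\mathbf{P}_{\beta}\|$ delivers the claimed bound.
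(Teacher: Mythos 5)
Your proposal is correct and follows essentially the same route as the paper: the commutation identities and the eigenvalue ODE $\pd_{\tau}\mathbf{S}_{\beta}(\tau)\mathbf{P}_{\lambda,\beta}\mathbf{f}=\lambda\mathbf{S}_{\beta}(\tau)\mathbf{P}_{\lambda,\beta}\mathbf{f}$ give the algebraic part, and the decay follows by combining the uniform resolvent bound of \Cref{SpectralImplication} with analyticity of $z\mapsto\mathbf{R}_{\mathbf{L}_{\beta}}(z)(\mathbf{I}-\mathbf{P}_{\beta})$ on the two excised disks and then invoking \Cref{GearhartPruessGreinerUniform}. You are in fact slightly more explicit than the paper about the maximum-modulus step and the $\beta$-dependence of the subspaces.
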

\begin{proof}
Let $R_{0} > R$ as in \Cref{SpecThmLbeta}. Any strongly continuous semigroup commutes with its generator and hence also with the resolvent of its generator. This implies that $\mathbf{S}_{\beta}(\tau)$ commutes with the spectral projections $\mathbf{P}_{\lambda,\beta}$. Using $\ker(\lambda\mathbf{I}-\mathbf{L}_{\beta}) = \ran(\mathbf{P}_{\lambda,\beta})$ from \Cref{SpecThmLbeta}, we derive the differential equation
\begin{equation*}
\pd_{\tau} \mathbf{S}_{\beta}(\tau) \mathbf{P}_{\lambda,\beta} \mathbf{f} = \lambda \mathbf{S}_{\beta}(\tau) \mathbf{P}_{\lambda,\beta} \mathbf{f}
\end{equation*}
and get the unique solution $\mathbf{S}_{\beta}(\tau) \mathbf{P}_{\lambda,\beta} \mathbf{f} = \ee^{\lambda \tau} \mathbf{P}_{\lambda,\beta} \mathbf{f}$, which yields the representation of the semigroup on the unstable subspace.\par
\medskip
Lastly, we verify a uniform resolvent bound. By \Cref{SpectralImplication} we have
\begin{equation*}
\| \mathbf{R}_{\mathbf{L}_{\beta}}(z) \|_{\mathfrak{L}\left(\mathfrak{H}^{k}(\BB^{d}_{R})\right)} \leq C
\end{equation*}
for all $z\in\overline{\mathbb{H}_{\omega_{0}}}\setminus \Big( \mathbb{D}_{\frac{|\omega_{0}|}{2}}(0) \cup \mathbb{D}_{\frac{|\omega_{0}|}{2}}(1) \Big)$ and all $\beta\in\overline{\BB^{d}_{R_{0}^{-1}}}$. Now, the map $\mathbf{R}_{\mathbf{L}_{\beta}}(z) (\mathbf{I}-\mathbf{P}_{\beta})$ is analytic in $\mathbb{H}_{\omega_{0}}$ and since
\begin{equation*}
\mathbf{R}_{\mathbf{L}_{\beta} \restriction_{\mathfrak{H}_{\beta}}} (z) = \mathbf{R}_{\mathbf{L}_{\beta}} (z) \restriction_{\mathfrak{H}_{\beta}} = \mathbf{R}_{\mathbf{L}_{\beta}}(z) (\mathbf{I}-\mathbf{P}_{\beta}) \restriction_{\mathfrak{H}_{\beta}}
\end{equation*}
we conclude that there is a $C>0$ such that
\begin{equation*}
\| \mathbf{R}_{\mathbf{L}_{\beta}}(z) (\mathbf{I}-\mathbf{P}_{\beta}) \|_{\mathfrak{L}\left(\mathfrak{H}^{k}(\BB^{d}_{R})\right)} \leq C
\end{equation*}
for all $z\in\overline{\mathbb{H}_{\omega_{0}}}$ and all $\beta\in\overline{\BB^{d}_{R_{0}^{-1}}}$ so that \Cref{GearhartPruessGreinerUniform} yields the uniform decay estimate.
\end{proof}
\section{Stability analysis of the nonlinear wave evolution}
To arrive at the full nonlinear problem in terms of the variables \eqref{CauchyVariable}, we derive from \Cref{LinWaveRel} the nonlinear relation
\begin{align}
\label{NonlinWaveRel}
\pd_{\tau} \mathbf{u}(\tau,\,.\,) &= \mathbf{L}_{\beta} \mathbf{u}(\tau,\,.\,)  + \mathbf{N}_{\beta}(\mathbf{u}(\tau,\,.\,)) \\\nonumber&\indent-
\begin{bmatrix}
0 \\
(T\ee^{-\tau})^{s_{p}+2}\Big(\big(\Box u + \mathcal{V}_{\beta,T} u + \mathcal{N}_{\beta,T}(u)\big) \circ {\chi\mathstrut}_{T}\Big)(\tau,\,.\,)
\end{bmatrix}
\,,
\end{align}
where
\begin{equation}
\label{IntroNonlin}
\mathbf{N}_{\beta}(\mathbf{u}(\tau,\,.\,))(\xi) =
\begin{bmatrix}
0\\
F\big(f_{\beta}^{*}(\xi)+u_{1}(\tau,\xi)\big) - F'\big(f_{\beta}^{*}(\xi)\big) u_{1}(\tau,\xi) - F\big(f_{\beta}^{*}(\xi)\big)
\end{bmatrix}
\end{equation}
with
\begin{equation*}
F(z) = z|z|^{p-1}
\qquad\text{and}\qquad
f_{\beta}^{*}(\xi) = c_{p} \gamma(\beta)^{-s_{p}} (1+\beta^{\top}\xi)^{-s_{p}} \,.
\end{equation*}
This shows that the Cauchy problem \eqref{CauchyProblem2} is formulated equivalently as an abstract Cauchy problem
\begin{equation}
\label{abstractCauchy}
\renewcommand{\arraystretch}{1.2}
\left\{
\begin{array}{rcl}
\pd_{\tau} \mathbf{u}(\tau,\,.\,) &=& \mathbf{L}_{\beta} \mathbf{u}(\tau,\,.\,) + \mathbf{N}_{\beta}(\mathbf{u}(\tau,\,.\,)) \,, \\
\mathbf{u}(0,\,.\,) &=&  \mathbf{f}^{T} + \mathbf{f}_{0}^{T} - \mathbf{f}_{\beta} \,,
\end{array}
\right.
\end{equation}
with smooth initial data given by
\begin{align*}
&&
\mathbf{f}^{T}(\xi) &=
\begin{bmatrix}
\hfill T^{s_{p}} f(T\xi) \\
T^{s_{p}+1} g(T\xi)
\end{bmatrix}
\,, &
\mathbf{f}_{0}^{T}(\xi) &=
\begin{bmatrix}
\hfill T^{s_{p}} c_{p} \\
T^{s_{p}+1} s_{p} c_{p}
\end{bmatrix}
\,, & \\
&&
\mathbf{f}_{\beta}(\xi) &=
\begin{bmatrix}
\hfill c_{p} \gamma(\beta)^{-s_{p}} (1+\beta^{\top}\xi)^{-s_{p}} \\
s_{p} c_{p} \gamma(\beta)^{-s_{p}} (1+\beta^{\top}\xi)^{-s_{p}-1}
\end{bmatrix}
\,,
&& &
\end{align*}
see \cref{SubSecOutline}. With the semigroup from \Cref{SGbeta} at hand, Duhamel's principle yields a strong formulation
\begin{equation}
\label{NonLinDuhamel}
\mathbf{u}(\tau,\,.\,) = \mathbf{S}_{\beta}(\tau) \mathbf{u}(0,\,.\,) + \int_{0}^{\tau} \mathbf{S}_{\beta}(\tau-\tau')\mathbf{N}_{\beta}(\mathbf{u}(\tau',\,.\,)) \dd \tau'
\end{equation}
of problem \eqref{abstractCauchy}. To solve the latter equation, we set up a fixed-point argument in a suitable Banach space and  suppress all linear instabilities \emph{at once} with a Lyapunov-Perron-type argument. This will carry the linear decay rate on the stable subspace over to the nonlinear flow.
\subsection{Properties of the nonlinearity}
Before we set up function spaces for the main fixed-point argument, we prove useful identities that allow us to fit the nonlinearity properly into our functional setting.
\begin{lemma}
\label{locLip}
Let $F\in C^{2}([a,b])$. Let $x_{0}\in(a,b)$ and define
\begin{equation*}
N\in C^{2}([a-x_{0},b-x_{0}])
\qquad\text{by}\qquad
N(x) = F(x_{0}+x) - F'(x_{0})x - F(x_{0}) \,.
\end{equation*}
Then
\begin{equation*}
N(x) = x^{2} \int_{0}^{1} F''(x_{0}+tx) (1-t) \dd t
\end{equation*}
and
\begin{equation*}
N(x) - N(y) = \int_{0}^{1} \int_{0}^{1} \big( x-y \big) \big( sx + (1-s) y \big)  F''\big( x_{0} + t(sx + (1-s) y) \big) \dd t \dd s \,.
\end{equation*}
\end{lemma}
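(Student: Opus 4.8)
The plan is to obtain both identities directly from Taylor's theorem with integral remainder, expanded about the base point $x_{0}$. Since $F\in C^{2}([a,b])$ and the interval $[a,b]$ is convex with $x_{0}\in(a,b)$ and $x_{0}+x\in[a,b]$ for every admissible $x$, all the intermediate points $x_{0}+rx$ with $r\in[0,1]$ that appear below remain in $[a,b]$, so no regularity or domain issue arises; no deeper ingredient is needed.

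For the first identity, I would start from the exact expansion
\[
F(x_{0}+x) = F(x_{0}) + F'(x_{0})x + \int_{0}^{x} F''(x_{0}+r)(x-r)\dd r \,,
\]
valid for all $x\in[a-x_{0},b-x_{0}]$ (with the integral read in the signed sense when $x<0$). Subtracting $F(x_{0})+F'(x_{0})x$ identifies the remainder integral with $N(x)$. Then the substitution $r=tx$, $t\in[0,1]$, replaces $\dd r$ by $x\dd t$ and $x-r$ by $x(1-t)$, which produces the factor $x^{2}$ and yields $N(x)=x^{2}\int_{0}^{1}F''(x_{0}+tx)(1-t)\dd t$; the degenerate case $x=0$ is trivial.

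For the difference formula, the first step is to record the analogous first-order expansion for $N'$. Differentiating gives $N'(z)=F'(x_{0}+z)-F'(x_{0})$, and Taylor's theorem (equivalently, the fundamental theorem of calculus followed by the same substitution) gives
\[
N'(z) = \int_{0}^{z} F''(x_{0}+r)\dd r = z\int_{0}^{1}F''(x_{0}+tz)\dd t \,.
\]
Next I would introduce $G(s)\coloneqq N\big(sx+(1-s)y\big)$ for $s\in[0,1]$, which is $C^{1}$ along the segment joining $y$ to $x$, and write $N(x)-N(y)=G(1)-G(0)=\int_{0}^{1}(x-y)\,N'\big(sx+(1-s)y\big)\dd s$. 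Substituting the formula for $N'$ with $z=sx+(1-s)y$ and interchanging the two integrations --- justified since $F''$ is continuous, hence bounded, on the compact interval $[a,b]$ --- gives precisely the asserted expression for $N(x)-N(y)$.

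The only point requiring attention, and essentially the sole obstacle, is the bookkeeping that every argument $x_{0}+tx$, $x_{0}+tz$, $x_{0}+t(sx+(1-s)y)$ of $F''$ lies in $[a,b]$; this is immediate from convexity of $[a,b]$ together with $x_{0}$, $x_{0}+x$, $x_{0}+y\in[a,b]$. Beyond that, the proof is a routine change of variables and an application of Fubini's theorem.
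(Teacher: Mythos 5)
Your proposal is correct and follows essentially the same route as the paper: Taylor's theorem with integral remainder for the first identity, and for the second the fundamental theorem of calculus along the segment from $y$ to $x$ followed by a first-order integral expansion of $F'(x_{0}+z)-F'(x_{0})$. The only cosmetic difference is your invocation of Fubini, which is not actually needed since the result is stated as an iterated integral.
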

\begin{proof}
Indeed, the first identity is the assertion of Taylor's theorem with integral remainder. The second identity follows from two applications of the fundamental theorem of calculus,
\begin{align*}
N(x) - N(y) &= F(x_{0}+x) - F(x_{0}+y) - F'(x_{0}) (x-y) \\&=
\int_{0}^{1} \pd_{s} F(x_{0} + y + s (x-y)) \dd s - F'(x_{0}) (x-y) \\&=
(x-y) \int_{0}^{1} \Big( F'(x_{0} + sx + (1-s) y) - F'(x_{0}) \Big) \dd s \\&=
(x-y) \int_{0}^{1} \int_{0}^{1} \big( sx + (1-s) y \big)  F''\big( x_{0} + t(sx + (1-s) y) \big) \dd t \dd s \,.
\qedhere
\end{align*}
\end{proof}
With this and expression \eqref{IntroNonlin} in mind, let us consider the following power-type nonlinearity.
\begin{definition}
\label{nonlinearity}
Let $(d,p,R)\in \NN\times\RR_{>1}\times\RR_{\geq 1}$. Let $R_{0} > R$ and $\beta\in\overline{\BB^{d}_{R_{0}^{-1}}}$. Consider
\begin{equation*}
F(z) = z|z|^{p-1} \,,
\qquad
f_{\beta}^{*}(\xi) = c_{p} \gamma(\beta)^{-s_{p}}(1+\beta^{\top}\xi)^{-s_{p}} \,,
\end{equation*}
and
\begin{equation*}
N_{\beta}(f)(\xi) = F\big(f_{\beta}^{*}(\xi)+f(\xi)\big) - F'\big(f_{\beta}^{*}(\xi)\big) f(\xi) - F\big(f_{\beta}^{*}(\xi)\big) \,,
\end{equation*}
for $\xi\in\overline{\BB^{d}_{R}}$ and $f\in C^{\infty}(\overline{\BB^{d}_{R}})$. We define
\begin{equation*}
\mathbf{N}_{\beta}(\mathbf{f})(\xi) \coloneqq
\begin{bmatrix}
0 \\
N_{\beta}(f_{1})(\xi)
\end{bmatrix}
\,.
\end{equation*}
for $\xi\in\overline{\BB^{d}_{R}}$ and $\mathbf{f}\in C^{\infty}(\overline{\BB^{d}_{R}})^{2}$.
\end{definition}
An application of \Cref{locLip} and an algebra property yields a locally Lipschitz continuous nonlinearity in the Sobolev spaces from \Cref{SobolevSpaces}.
\begin{lemma}
\label{NonLinLip}
Let $(d,p,k,R)\in \NN\times\RR_{>1}\times\NN\times\RR_{\geq 1}$ with $k>\frac{d}{2}$. Let $R_{0} > R$ and $\beta\in\overline{\BB^{d}_{R_{0}^{-1}}}$. There is a $\delta>0$ such that the map
\begin{equation*}
\mathbf{N}_{\beta}: \mathfrak{H}^{k}_{\delta}(\BB^{d}_{R}) \rightarrow \mathfrak{H}^{k}(\BB^{d}_{R}) \,, \qquad \mathbf{f} \mapsto \mathbf{N}_{\beta}(\mathbf{f}) \,,
\end{equation*}
is well-defined with
\begin{equation*}
\mathbf{N}_{\beta}(\mathbf{f}) \in \mathfrak{H}^{k+1}(\BB^{d}_{R}) \subset \mathfrak{H}^{k}(\BB^{d}_{R})
\end{equation*}
for all $\mathbf{f}\in \mathfrak{H}^{k}_{\delta}(\BB^{d}_{R})$. Moreover,
\begin{align*}
\| \mathbf{N}_{\beta} (\mathbf{f}) \|_{\mathfrak{H}^{k}(\BB^{d}_{R})} &\lesssim \delta^{2} \,, \\
\| \mathbf{N}_{\beta} (\mathbf{f}) - \mathbf{N}_{\beta} (\mathbf{g}) \|_{\mathfrak{H}^{k}(\BB^{d}_{R})} &\lesssim ( \| \mathbf{f} \|_{\mathfrak{H}^{k}(\BB^{d}_{R})} + \| \mathbf{g} \|_{\mathfrak{H}^{k}(\BB^{d}_{R})} ) \| \mathbf{f} - \mathbf{g} \|_{\mathfrak{H}^{k}(\BB^{d}_{R})} \,,
\end{align*}
for all $\mathbf{f},\mathbf{g}\in \mathfrak{H}^{k}_{\delta}(\BB^{d}_{R})$ and all $\beta\in\overline{\BB^{d}_{R_{0}^{-1}}}$.
\end{lemma}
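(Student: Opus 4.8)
The strategy is to exploit the pointwise factorization of the nonlinearity supplied by \Cref{locLip}, together with the Banach algebra property of $H^{k}(\BB^{d}_{R})$ and the embedding $H^{k}(\BB^{d}_{R})\hookrightarrow L^{\infty}(\BB^{d}_{R})$, both valid since $k>\frac{d}{2}$. First I would record that for $\beta\in\overline{\BB^{d}_{R_{0}^{-1}}}$ with $R_{0}>R$ the quantity $1+\beta^{\top}\xi$ stays in $[1-\tfrac{R}{R_{0}},1+\tfrac{R}{R_{0}}]\subset(0,\infty)$, so there are constants $0<a<b$, uniform in $\beta$, with $a\leq f_{\beta}^{*}(\xi)\leq b$ on $\overline{\BB^{d}_{R}}$; moreover $(\xi,\beta)\mapsto f_{\beta}^{*}(\xi)$ extends to a function in $C^{\infty}(\overline{\BB^{d}_{R}\times\BB^{d}_{R_{0}^{-1}}})$, exactly as for the potential in \Cref{PotentialProperties}, so all of its $\xi$-derivatives are bounded uniformly in $\beta$. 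Using $\|h\|_{L^{\infty}(\BB^{d}_{R})}\lesssim\|h\|_{H^{k}(\BB^{d}_{R})}$, I would then fix $\delta>0$ small enough that $\|f_{1}\|_{H^{k}(\BB^{d}_{R})}\leq\delta$ forces $\|f_{1}\|_{L^{\infty}(\BB^{d}_{R})}\leq\frac{a}{2}$. Consequently, for $\mathbf{f}\in\mathfrak{H}^{k}_{\delta}(\BB^{d}_{R})$ and $t,s\in[0,1]$ the arguments $f_{\beta}^{*}(\xi)+tf_{1}(\xi)$ and $f_{\beta}^{*}(\xi)+t\big(sf_{1}(\xi)+(1-s)g_{1}(\xi)\big)$ remain in the compact interval $[\tfrac{a}{2},b+\tfrac{a}{2}]\subset(0,\infty)$, on which $F(z)=z|z|^{p-1}=z^{p}$ — and hence $F''$ — is smooth, despite the possibility $1<p\leq 2$; this is the reason $\delta$ must be chosen small compared to $\min f_{\beta}^{*}$.

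Next I would apply \Cref{locLip} pointwise in $\xi$, with $x_{0}=f_{\beta}^{*}(\xi)$ and $F$ restricted to a fixed compact subinterval of $(0,\infty)$ containing all the relevant values, to obtain
\begin{align*}
N_{\beta}(f_{1})(\xi)&=f_{1}(\xi)^{2}\,\Phi_{\beta}\big(\xi,f_{1}(\xi)\big)\,,\\
N_{\beta}(f_{1})(\xi)-N_{\beta}(g_{1})(\xi)&=\big(f_{1}(\xi)-g_{1}(\xi)\big)\int_{0}^{1}\big(sf_{1}(\xi)+(1-s)g_{1}(\xi)\big)\,\Psi_{\beta}\big(\xi,sf_{1}(\xi)+(1-s)g_{1}(\xi)\big)\dd s\,,
\end{align*}
where $\Phi_{\beta}(\xi,y)=\int_{0}^{1}F''\big(f_{\beta}^{*}(\xi)+ty\big)(1-t)\dd t$ and $\Psi_{\beta}(\xi,y)=\int_{0}^{1}F''\big(f_{\beta}^{*}(\xi)+ty\big)\dd t$ are, by the previous paragraph, smooth on $\overline{\BB^{d}_{R}}\times[-\tfrac{a}{2},\tfrac{a}{2}]$, with all derivatives bounded uniformly in $\beta$. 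Since $0\in H^{k+1}(\BB^{d}_{R})$ and the second slot of $\mathbf{N}_{\beta}(\mathbf{f})$ is $N_{\beta}(f_{1})$, the inclusion $\mathbf{N}_{\beta}(\mathbf{f})\in\mathfrak{H}^{k+1}(\BB^{d}_{R})=H^{k+1}(\BB^{d}_{R})\times H^{k}(\BB^{d}_{R})\subset\mathfrak{H}^{k}(\BB^{d}_{R})$ reduces to showing $N_{\beta}(f_{1})\in H^{k}(\BB^{d}_{R})$, which follows from the first factorization once we know $\Phi_{\beta}(\cdot,f_{1})\in H^{k}(\BB^{d}_{R})$.

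The core estimate is then a Moser-type composition bound: if $\Phi$ is smooth on $\overline{\BB^{d}_{R}}\times I$ for a compact interval $I$, and $h\in H^{k}(\BB^{d}_{R})$ takes values in $I$, then $\Phi(\cdot,h(\cdot))\in H^{k}(\BB^{d}_{R})$ with $\|\Phi(\cdot,h)\|_{H^{k}(\BB^{d}_{R})}\leq C\big(1+\|h\|_{H^{k}(\BB^{d}_{R})}\big)$, where $C$ depends only on $\Phi$ and $\|h\|_{L^{\infty}(\BB^{d}_{R})}$. This is standard for $k>\frac{d}{2}$: differentiate, expand by the chain rule into a finite sum of products of derivatives of $\Phi$ evaluated at $h$ times derivatives of $h$, and close the estimate with the algebra property and the $L^{\infty}$-embedding. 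Applying this to $\Phi_{\beta}$ and $\Psi_{\beta}$ with $h$ equal to $f_{1}$, $g_{1}$ or $sf_{1}+(1-s)g_{1}$ — all of $L^{\infty}$-norm $\leq\tfrac{a}{2}$ and $H^{k}$-norm $\leq\delta$ — yields $\|\Phi_{\beta}(\cdot,h)\|_{H^{k}(\BB^{d}_{R})},\|\Psi_{\beta}(\cdot,h)\|_{H^{k}(\BB^{d}_{R})}\lesssim 1$, uniformly in $\beta$. Combining this with the algebra property of $H^{k}(\BB^{d}_{R})$ applied to the two factorizations, and bounding the $s$-integral by its supremum, gives
\begin{align*}
\|\mathbf{N}_{\beta}(\mathbf{f})\|_{\mathfrak{H}^{k}(\BB^{d}_{R})}&=\|N_{\beta}(f_{1})\|_{H^{k}(\BB^{d}_{R})}\lesssim\|f_{1}\|_{H^{k}(\BB^{d}_{R})}^{2}\,\|\Phi_{\beta}(\cdot,f_{1})\|_{H^{k}(\BB^{d}_{R})}\lesssim\delta^{2}\,,\\
\|\mathbf{N}_{\beta}(\mathbf{f})-\mathbf{N}_{\beta}(\mathbf{g})\|_{\mathfrak{H}^{k}(\BB^{d}_{R})}&\lesssim\|f_{1}-g_{1}\|_{H^{k}(\BB^{d}_{R})}\big(\|f_{1}\|_{H^{k}(\BB^{d}_{R})}+\|g_{1}\|_{H^{k}(\BB^{d}_{R})}\big)\,.
\end{align*}
The Lipschitz bound also extends $N_{\beta}$, defined pointwise via $f_{\beta}^{*}$, continuously from $C^{\infty}(\overline{\BB^{d}_{R}})$ to all of $\mathfrak{H}^{k}_{\delta}(\BB^{d}_{R})$, so the map is well-defined on the whole closed ball. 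I expect the only genuine subtlety to be the bookkeeping in the Moser estimate together with the verification that every implicit constant is independent of $\beta$; for the latter, the decisive inputs are the uniform two-sided bound $a\leq f_{\beta}^{*}\leq b$ and the joint smoothness of $(\xi,\beta)\mapsto f_{\beta}^{*}(\xi)$, precisely the features already used for the potential in \Cref{PotentialProperties}.
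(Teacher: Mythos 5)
Your proposal is correct and follows essentially the same route as the paper's proof: the uniform two-sided bound on $f_{\beta}^{*}$, the choice of $\delta$ via the embedding $H^{k}(\BB^{d}_{R})\hookrightarrow L^{\infty}(\BB^{d}_{R})$ so that the arguments of $F''$ stay in a compact subset of $(0,\infty)$, the two factorizations from \Cref{locLip}, and the algebra property of $H^{k}(\BB^{d}_{R})$. You merely spell out the Moser-type composition estimate that the paper invokes implicitly with the phrase ``invoking the algebra property of $H^{k}(\BB^{d}_{R})$ and \Cref{locLip}''.
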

\begin{proof}
Note that $|\beta^{\top}\xi| \leq R_{0}^{-1} R < 1$ for all $\beta\in\overline{\BB^{d}_{R_{0}^{-1}}}$ and all $\xi\in\overline{\BB^{d}_{R}}$. Thus, there is a $c>0$ such that
\begin{equation*}
f_{\beta}^{*}(\xi) = c_{p} \gamma(\beta)^{-s_{p}}(1+\beta^{\top}\xi)^{-s_{p}} \geq c
\end{equation*}
for all $\beta\in\overline{\BB^{d}_{R_{0}^{-1}}}$ and all $\xi\in\overline{\BB^{d}_{R}}$. As $k>\frac{d}{2}$, we can exploit the continuous embedding $H^{k}(\BB^{d}_{R})\hookrightarrow L^{\infty}(\BB^{d}_{R})$ to find a $\delta>0$ so that
\begin{equation*}
\| f \|_{H^{k}(\BB^{d}_{R})} \leq \delta
\qquad\text{implies}\qquad
|f(\xi)| \leq \frac{c}{2}
\text{ for } \xi\in\overline{\BB^{d}_{R}}
\end{equation*}
for all $f\in C^{\infty}(\overline{\BB^{d}_{R}})$. This shows that if $\mathbf{f}\in C^{\infty}(\overline{\BB^{d}_{R}})^{2}\cap \mathfrak{H}^{k}_{\delta}(\BB^{d}_{R})$ then $\mathbf{N}_{\beta}(\mathbf{f})\in C^{\infty}(\overline{\BB^{d}_{R}})^{2}$ for all $\beta\in\overline{\BB^{d}_{R_{0}^{-1}}}$. Invoking the algebra property of $H^{k}(\BB^{d}_{R})$ and \Cref{locLip}, we get the estimates
\begin{equation*}
\| \mathbf{N}_{\beta} (\mathbf{f}) \|_{\mathfrak{H}^{k}(\BB^{d}_{R})} = \| N_{\beta}(f_{1}) \|_{H^{k-1}(\BB^{d}_{R})} \lesssim \| N_{\beta}(f_{1}) \|_{H^{k}(\BB^{d}_{R})} \lesssim \delta^{2}
\end{equation*}
and
\begin{align*}
\| \mathbf{N}_{\beta} (\mathbf{f}) - \mathbf{N}_{\beta} (\mathbf{g}) \|_{\mathfrak{H}^{k}(\BB^{d}_{R})} &\leq \| \mathbf{N}_{\beta} (\mathbf{f}) - \mathbf{N}_{\beta} (\mathbf{g}) \|_{\mathfrak{H}^{k+1}(\BB^{d}_{R})} \\&=
\| N_{\beta}(f_{1}) - N_{\beta}(g_{1})) \|_{H^{k}(\BB^{d}_{R})} \\&\lesssim
( \| f_{1} \|_{H^{k}(\BB^{d}_{R})} + \| g_{1} \|_{H^{k}(\BB^{d}_{R})} ) \| f_{1}-g_{1} \|_{H^{k}(\BB^{d}_{R})} \\&\lesssim
( \| \mathbf{f} \|_{\mathfrak{H}^{k}(\BB^{d}_{R})} + \| \mathbf{g} \|_{\mathfrak{H}^{k}(\BB^{d}_{R})} ) \| \mathbf{f} - \mathbf{g} \|_{\mathfrak{H}^{k}(\BB^{d}_{R})} \,,
\end{align*}
for all $\mathbf{f},\mathbf{g}\in C^{\infty}(\overline{\BB^{d}_{R}})^{2}\cap \mathfrak{H}^{k}_{\delta}(\BB^{d}_{R})$ and all $\beta\in\overline{\BB^{d}_{R_{0}^{-1}}}$. With the latter estimate, we get a unique extension $\mathbf{N}_{\beta}: \mathfrak{H}^{k}_{\delta}(\BB^{d}_{R}) \rightarrow \mathfrak{H}^{k}(\BB^{d}_{R})$ whose image is contained in $\mathfrak{H}^{k+1}(\BB^{d}_{R})$ and for which the above estimates continue to hold.
\end{proof}
\subsection{Stabilized nonlinear evolution}
The delicate part is to prove existence of a solution to \Cref{NonLinDuhamel} that is globally defined on the unbounded interval. First, we define a Banach space that is suitable for a fixed-point argument and has the linear decay rate encoded.
\begin{definition}
Let $(d,p,k,R)\in\NN\times\RR_{>1}\times\NN\times\RR_{\geq 1}$ with $k>\frac{d}{2}$. Put
\begin{equation*}
\omega_{p} \coloneqq \min\left\{1,s_{p} \right\} > 0
\end{equation*}
and let $-\omega_{p}<\omega_{0}<0$ be arbitrary but fixed. We define a Banach space $(\mathfrak{X}^{k}(\BB^{d}_{R}),\|\,.\,\|_{\mathfrak{X}^{k}(\BB^{d}_{R})})$ by
\begin{align*}
\mathfrak{X}^{k}(\BB^{d}_{R}) &\coloneqq \Big\{ \mathbf{u} \in C\big( [0,\infty),\mathfrak{H}^{k}(\BB^{d}_{R}) \big) \,\Big|\, \| \mathbf{u}(\tau) \|_{\mathfrak{H}^{k}(\BB^{d}_{R})} \lesssim \ee^{\omega_{0}\tau} \text{ all } \tau\geq 0 \Big\} \,, \\
\|\mathbf{u} \|_{\mathfrak{X}^{k}(\BB^{d}_{R})} &\coloneqq \sup_{\tau\geq 0} \Big( \ee^{-\omega_{0}\tau}  \| \mathbf{u}(\tau) \|_{\mathfrak{H}^{k}(\BB^{d}_{R})} \Big) \,.
\end{align*}
We also define for $\delta>0$ a closed ball
\begin{equation*}
\mathfrak{X}^{k}_{\delta}(\BB^{d}_{R}) \coloneqq \{ \mathbf{u} \in \mathfrak{X}^{k}(\BB^{d}_{R}) \mid \| \mathbf{u} \|_{\mathfrak{X}^{k}(\BB^{d}_{R})} \leq \delta \} \,.
\end{equation*}
\end{definition}
The directions spanned by the symmetry modes induce instabilities in the nonlinear wave evolution, obstructing global existence on the unbounded interval $[0,\infty)$. By considering the projection of the Duhamel formula \eqref{NonLinDuhamel} onto the stable subspace, one is led to introduce a modification of the initial data into unstable directions.
\begin{definition}
\label{CorrTerm}
Let $(d,p,k,R)\in\NN\times\RR_{>1}\times\NN\times\RR_{\geq 1}$. Let $\mathbf{P}_{0,\beta},\mathbf{P}_{1,\beta} \in \mathfrak{L} \left( \mathfrak{H}^{k}(\BB^{d}_{R}) \right)$ be the spectral projections from  \Cref{SpecThmLbeta} and set $\mathbf{P}_{\beta} \coloneqq \mathbf{P}_{0,\beta} + \mathbf{P}_{1,\beta}$. We define a correction term
\begin{align*}
\mathbf{C}_{\beta} : \, &\mathfrak{H}^{k}(\BB^{d}_{R}) \times \mathfrak{X}^{k}_{\delta}(\BB^{d}_{R}) \rightarrow \mathfrak{H}^{k}(\BB^{d}_{R}) \\&
(\mathbf{f},\mathbf{u}) \mapsto \mathbf{P}_{\beta}\mathbf{f} + \mathbf{P}_{0,\beta} \int_{0}^{\infty} \mathbf{N}_{\beta}(\mathbf{u}(\tau')) \dd\tau' + \mathbf{P}_{1,\beta} \int_{0}^{\infty} \ee^{-\tau'} \mathbf{N}_{\beta}(\mathbf{u}(\tau')) \dd\tau' \,.
\end{align*}
\end{definition}
Note that the correction term belongs to the unstable subspace $\ran(\mathbf{P}_{\beta})\subset\mathfrak{H}^{k}(\BB^{d}_{R})$. Subtracting it from the initial data in the Duhamel formula stabilizes the nonlinear wave evolution.
\begin{proposition}
\label{NonLinEvoStableSub}
Let $(d,p,k,R)\in\NN\times\RR_{>1}\times\NN\times\RR_{\geq 1}$ with $k>\frac{d}{2}$. Let $-\omega_{p}<\omega_{0}<0$. There are $0<\delta_{0}<1$, $C_{0} > 1$, $R_{0}>R$ such that for all $0<\delta\leq\delta_{0}$, $C\geq C_{0}$, all $\beta\in\overline{\BB^{d}_{R_{0}^{-1}}}$ and all $\mathbf{f}\in\mathfrak{H}^{k}(\BB^{d}_{R})$ with $\| \mathbf{f} \|_{\mathfrak{H}^{k}(\BB^{d}_{R})} \leq \frac{\delta}{C}$ there is a unique $\mathbf{u}_{\beta}\in\mathfrak{X}^{k}(\BB^{d}_{R})$ such that $\| \mathbf{u}_{\beta} \|_{\mathfrak{X}^{k}(\BB^{d}_{R})} \leq \delta$ and
\begin{equation}
\label{StabilizedSolution}
\mathbf{u}_{\beta}(\tau) = \mathbf{S}_{\beta}(\tau) (\mathbf{f} - \mathbf{C}_{\beta}(\mathbf{f},\mathbf{u}_{\beta})) + \int_{0}^{\tau} \mathbf{S}_{\beta}(\tau-\tau')\mathbf{N}_{\beta}(\mathbf{u}_{\beta}(\tau')) \dd \tau'
\end{equation}
for all $\tau\geq 0$. Moreover, the data-to-solution map
\begin{equation*}
\mathfrak{H}^{k}_{\delta}(\BB^{d}_{R}) \rightarrow \mathfrak{X}^{k}(\BB^{d}_{R}) \,, \qquad \mathbf{f} \mapsto \mathbf{u}_{\beta} \,,
\end{equation*}
is Lipschitz continuous.
\end{proposition}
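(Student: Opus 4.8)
The plan is to solve \eqref{StabilizedSolution} by a contraction mapping argument on the closed ball $\mathfrak{X}^{k}_{\delta}(\BB^{d}_{R})$, with all constants uniform in $\beta\in\overline{\BB^{d}_{R_{0}^{-1}}}$ and in the data $\mathbf{f}$. For fixed $\beta$ and $\mathbf{f}$ I would define $\mathbf{K}_{\beta,\mathbf{f}}\colon\mathfrak{X}^{k}_{\delta}(\BB^{d}_{R})\to C([0,\infty),\mathfrak{H}^{k}(\BB^{d}_{R}))$ by letting $\mathbf{K}_{\beta,\mathbf{f}}(\mathbf{u})$ equal the right-hand side of \eqref{StabilizedSolution}; then a solution of \eqref{StabilizedSolution} is exactly a fixed point of $\mathbf{K}_{\beta,\mathbf{f}}$. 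It therefore suffices to show that, for $\delta$ small, $C$ large, and $R_{0}$ large, $\mathbf{K}_{\beta,\mathbf{f}}$ is a self-map of and a contraction on $\mathfrak{X}^{k}_{\delta}(\BB^{d}_{R})$, uniformly in $\beta$ and $\mathbf{f}$, and then to invoke the Banach fixed-point theorem.

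The heart of the argument is to see how the unstable projections act on $\mathbf{K}_{\beta,\mathbf{f}}(\mathbf{u})$. Since $\mathbf{P}_{\beta}\mathbf{C}_{\beta}(\mathbf{f},\mathbf{u})=\mathbf{C}_{\beta}(\mathbf{f},\mathbf{u})$ and $\mathbf{P}_{0,\beta}\mathbf{P}_{1,\beta}=\mathbf{P}_{1,\beta}\mathbf{P}_{0,\beta}=\mathbf{0}$, the relations $\mathbf{P}_{0,\beta}\mathbf{S}_{\beta}(\tau)=\mathbf{P}_{0,\beta}$ and $\mathbf{P}_{1,\beta}\mathbf{S}_{\beta}(\tau)=\ee^{\tau}\mathbf{P}_{1,\beta}$ from \Cref{stableEvo} collapse the $\int_{0}^{\tau}$ into $-\int_{\tau}^{\infty}$, giving
\[
\mathbf{P}_{0,\beta}\mathbf{K}_{\beta,\mathbf{f}}(\mathbf{u})(\tau) = -\mathbf{P}_{0,\beta}\int_{\tau}^{\infty}\mathbf{N}_{\beta}(\mathbf{u}(\tau'))\dd\tau' \,, \qquad \mathbf{P}_{1,\beta}\mathbf{K}_{\beta,\mathbf{f}}(\mathbf{u})(\tau) = -\mathbf{P}_{1,\beta}\int_{\tau}^{\infty}\ee^{\tau-\tau'}\mathbf{N}_{\beta}(\mathbf{u}(\tau'))\dd\tau' \,,
\]
while $(\mathbf{I}-\mathbf{P}_{\beta})\mathbf{K}_{\beta,\mathbf{f}}(\mathbf{u})(\tau)=\mathbf{S}_{\beta}(\tau)(\mathbf{I}-\mathbf{P}_{\beta})\mathbf{f}+\int_{0}^{\tau}\mathbf{S}_{\beta}(\tau-\tau')(\mathbf{I}-\mathbf{P}_{\beta})\mathbf{N}_{\beta}(\mathbf{u}(\tau'))\dd\tau'$. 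All integrals converge because \Cref{NonLinLip} yields $\|\mathbf{N}_{\beta}(\mathbf{u}(\tau'))\|_{\mathfrak{H}^{k}(\BB^{d}_{R})}\lesssim\|\mathbf{u}(\tau')\|_{\mathfrak{H}^{k}(\BB^{d}_{R})}^{2}\lesssim\delta^{2}\ee^{2\omega_{0}\tau'}$ with $2\omega_{0}<0$. Estimating the three pieces: on the unstable subspaces $\int_{\tau}^{\infty}\ee^{2\omega_{0}\tau'}\dd\tau'\lesssim\ee^{2\omega_{0}\tau}$ and, since $2\omega_{0}-1<0$, $\int_{\tau}^{\infty}\ee^{\tau-\tau'}\ee^{2\omega_{0}\tau'}\dd\tau'\lesssim\ee^{2\omega_{0}\tau}$, so the unstable parts are $\lesssim\delta^{2}\ee^{2\omega_{0}\tau}\leq\delta^{2}\ee^{\omega_{0}\tau}$; on the stable subspace the \emph{uniform} bound $\|\mathbf{S}_{\beta}(\tau)(\mathbf{I}-\mathbf{P}_{\beta})\|\leq M_{\omega_{0}}\ee^{\omega_{0}\tau}\|\mathbf{I}-\mathbf{P}_{\beta}\|$ of \Cref{stableEvo}, together with continuity of $\beta\mapsto\mathbf{P}_{\beta}$ on the compact ball $\overline{\BB^{d}_{R_{0}^{-1}}}$ (which bounds $\|\mathbf{I}-\mathbf{P}_{\beta}\|$ uniformly) and $\int_{0}^{\tau}\ee^{\omega_{0}(\tau-\tau')}\ee^{2\omega_{0}\tau'}\dd\tau'\leq|\omega_{0}|^{-1}\ee^{\omega_{0}\tau}$, gives $\|(\mathbf{I}-\mathbf{P}_{\beta})\mathbf{K}_{\beta,\mathbf{f}}(\mathbf{u})(\tau)\|_{\mathfrak{H}^{k}(\BB^{d}_{R})}\lesssim(\tfrac{\delta}{C}+\delta^{2})\ee^{\omega_{0}\tau}$. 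Continuity of $\tau\mapsto\mathbf{K}_{\beta,\mathbf{f}}(\mathbf{u})(\tau)$ is immediate from strong continuity of $\mathbf{S}_{\beta}$ and of $\tau'\mapsto\mathbf{N}_{\beta}(\mathbf{u}(\tau'))$, so summing the contributions yields $\|\mathbf{K}_{\beta,\mathbf{f}}(\mathbf{u})\|_{\mathfrak{X}^{k}(\BB^{d}_{R})}\leq c_{1}(\tfrac{\delta}{C}+\delta^{2})\leq\delta$ provided $C\geq C_{0}\coloneqq\max\{2c_{1},1\}$ and $\delta\leq\delta_{0}$ is chosen small; running the same estimates on the difference $\mathbf{K}_{\beta,\mathbf{f}}(\mathbf{u})-\mathbf{K}_{\beta,\mathbf{f}}(\mathbf{v})$ (where the linear term $\mathbf{S}_{\beta}(\tau)\mathbf{f}$ and the $\mathbf{P}_{\beta}\mathbf{f}$ part of the correction cancel) and using the Lipschitz bound $\|\mathbf{N}_{\beta}(\mathbf{u}(\tau'))-\mathbf{N}_{\beta}(\mathbf{v}(\tau'))\|_{\mathfrak{H}^{k}(\BB^{d}_{R})}\lesssim\delta\ee^{2\omega_{0}\tau'}\|\mathbf{u}-\mathbf{v}\|_{\mathfrak{X}^{k}(\BB^{d}_{R})}$ from \Cref{NonLinLip} gives $\|\mathbf{K}_{\beta,\mathbf{f}}(\mathbf{u})-\mathbf{K}_{\beta,\mathbf{f}}(\mathbf{v})\|_{\mathfrak{X}^{k}(\BB^{d}_{R})}\leq c_{2}\delta\|\mathbf{u}-\mathbf{v}\|_{\mathfrak{X}^{k}(\BB^{d}_{R})}\leq\tfrac12\|\mathbf{u}-\mathbf{v}\|_{\mathfrak{X}^{k}(\BB^{d}_{R})}$ after possibly shrinking $\delta_{0}$. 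This produces the unique $\mathbf{u}_{\beta}\in\mathfrak{X}^{k}_{\delta}(\BB^{d}_{R})$.

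For the Lipschitz dependence on the data I would subtract the fixed-point identities for $\mathbf{f}_{1},\mathbf{f}_{2}$ with $\|\mathbf{f}_{j}\|_{\mathfrak{H}^{k}(\BB^{d}_{R})}\leq\tfrac{\delta}{C}$: the only contribution not absorbed by the contraction estimate is $\mathbf{S}_{\beta}(\tau)(\mathbf{f}_{1}-\mathbf{f}_{2})-\mathbf{S}_{\beta}(\tau)\mathbf{P}_{\beta}(\mathbf{f}_{1}-\mathbf{f}_{2})=\mathbf{S}_{\beta}(\tau)(\mathbf{I}-\mathbf{P}_{\beta})(\mathbf{f}_{1}-\mathbf{f}_{2})$, bounded by $M_{\omega_{0}}\|\mathbf{I}-\mathbf{P}_{\beta}\|\,\ee^{\omega_{0}\tau}\|\mathbf{f}_{1}-\mathbf{f}_{2}\|_{\mathfrak{H}^{k}(\BB^{d}_{R})}$, so that $\|\mathbf{u}_{\beta,1}-\mathbf{u}_{\beta,2}\|_{\mathfrak{X}^{k}(\BB^{d}_{R})}\leq c_{3}\|\mathbf{f}_{1}-\mathbf{f}_{2}\|_{\mathfrak{H}^{k}(\BB^{d}_{R})}+c_{2}\delta\|\mathbf{u}_{\beta,1}-\mathbf{u}_{\beta,2}\|_{\mathfrak{X}^{k}(\BB^{d}_{R})}$, and absorbing the last term on the left gives the asserted Lipschitz bound. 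The main obstacle is precisely the middle step: one must check that the explicit choice of $\mathbf{C}_{\beta}$ in \Cref{CorrTerm} cancels the $\ee^{\tau}$-growth generated by $\mathbf{P}_{1,\beta}$ and the non-decay generated by $\mathbf{P}_{0,\beta}$ — converting $\int_{0}^{\tau}$ into $-\int_{\tau}^{\infty}$ so the unstable projections of $\mathbf{K}_{\beta,\mathbf{f}}(\mathbf{u})$ decay at rate $\ee^{\omega_{0}\tau}$ — and that every estimate is uniform over the compact parameter ball $\overline{\BB^{d}_{R_{0}^{-1}}}$, which is where the uniform decay bound of \Cref{stableEvo} and the $\beta$-uniform nonlinearity estimates of \Cref{NonLinLip} are essential.
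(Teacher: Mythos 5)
Your proposal is correct and follows essentially the same route as the paper: the same fixed-point map, the same decomposition of $\mathbf{K}_{\beta}(\mathbf{f},\mathbf{u})$ along $\ran(\mathbf{I}-\mathbf{P}_{\beta})$, $\ran(\mathbf{P}_{0,\beta})$, $\ran(\mathbf{P}_{1,\beta})$ in which the correction term converts $\int_{0}^{\tau}$ into $-\int_{\tau}^{\infty}$ on the unstable subspaces, the same self-map and contraction estimates via \Cref{stableEvo} and \Cref{NonLinLip}, and the same treatment of Lipschitz dependence on the data through the residual term $\mathbf{S}_{\beta}(\tau)(\mathbf{I}-\mathbf{P}_{\beta})(\mathbf{f}_{1}-\mathbf{f}_{2})$.
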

\begin{proof}
Fix $R_{0}>R$ from \Cref{stableEvo}. For now, let $\delta>0$. For $\mathbf{u}\in\mathfrak{X}^{k}_{\delta}(\BB^{d}_{R})$ and $\tau\geq 0$ we define
\begin{equation*}
\mathbf{K}_{\beta}(\mathbf{f},\mathbf{u})(\tau) \coloneqq
\mathbf{S}_{\beta}(\tau) (\mathbf{f} - \mathbf{C}_{\beta}(\mathbf{f},\mathbf{u})) + \int_{0}^{\tau} \mathbf{S}_{\beta}(\tau-\tau')\mathbf{N}_{\beta}(\mathbf{u}(\tau')) \dd \tau' \,.
\end{equation*}
Exploiting the semigroup identities provided in \Cref{stableEvo}, we note that the decomposition
\begin{align*}
\mathbf{K}_{\beta}(\mathbf{f},\mathbf{u})(\tau) = \Big(\mathbf{I}&-\mathbf{P}_{\beta}\Big)\Big( \mathbf{S}_{\beta}(\tau) \mathbf{f} + \int_{0}^{\tau} \mathbf{S}_{\beta}(\tau-\tau')\mathbf{N}_{\beta}(\mathbf{u}(\tau')) \dd \tau'\Big) \\&-\mathbf{P}_{0,\beta} \int_{\tau}^{\infty} \mathbf{N}_{\beta}(\mathbf{u}(\tau')) \dd\tau' - \mathbf{P}_{1,\beta} \int_{\tau}^{\infty} \ee^{\tau-\tau'} \mathbf{N}_{\beta}(\mathbf{u}(\tau')) \dd\tau'
\end{align*}
along the stable and unstable subspace holds. Using the uniform bounds on the semigroup and nonlinearity from \Cref{stableEvo,NonLinLip}, we get the estimate
\begin{align*}
\| \mathbf{K}_{\beta}(\mathbf{f},\mathbf{u})(\tau) \|_{\mathfrak{H}^{k}(\BB^{d}_{R})} &\lesssim \ee^{\omega_{0}\tau} \| \mathbf{f} \|_{\mathfrak{H}^{k}(\BB^{d}_{R})} + \int_{0}^{\tau} \ee^{\omega_{0}(\tau-\tau')} \| \mathbf{u}(\tau') \|_{\mathfrak{H}^{k}(\BB^{d}_{R})}^{2}  \dd\tau' \\&\indent+
\int_{\tau}^{\infty} \| \mathbf{u}(\tau') \|_{\mathfrak{H}^{k}(\BB^{d}_{R})}^{2} \dd\tau' + \int_{\tau}^{\infty} \ee^{\tau-\tau'} \| \mathbf{u}(\tau') \|_{\mathfrak{H}^{k}(\BB^{d}_{R})}^{2} \dd\tau' \\&\lesssim
\frac{\delta}{C} \ee^{\omega_{0}\tau} + \int_{0}^{\tau} \ee^{\omega_{0}(\tau+\tau')} \| \mathbf{u} \|_{\mathfrak{X}^{k}(\BB^{d}_{R})}^{2}  \dd\tau' \\&\indent+
\int_{\tau}^{\infty} \ee^{2\omega_{0}\tau'} \| \mathbf{u} \|_{\mathfrak{X}^{k}(\BB^{d}_{R})}^{2} \dd\tau' + \int_{\tau}^{\infty} \ee^{\tau-\tau'+2\omega_{0}\tau'} \| \mathbf{u} \|_{\mathfrak{X}^{k}(\BB^{d}_{R})}^{2} \dd\tau' \\&\lesssim
\frac{\delta}{C} \ee^{\omega_{0}\tau} + \delta^{2} \ee^{\omega_{0}\tau}
\end{align*}
for all $\mathbf{u}\in\mathfrak{X}^{k}_{\delta}(\BB^{d}_{R})$, all $\tau\geq 0$ and all $\beta\in\overline{\BB^{d}_{R_{0}^{-1}}}$. Thus, if $0<\delta\leq\delta_{0}$ with $\delta_{0} > 0$ small enough, for each $\mathbf{f}\in\mathfrak{H}^{k}_{\delta}(\BB^{d}_{R})$ the map
\begin{equation*}
\mathbf{K}_{\beta}(\mathbf{f},\,.\,): \mathfrak{X}^{k}_{\delta}(\BB^{d}_{R}) \rightarrow \mathfrak{X}^{k}_{\delta}(\BB^{d}_{R}) \,, \quad \mathbf{u} \mapsto \Big( \tau \mapsto \mathbf{K}_{\beta}(\mathbf{f},\mathbf{u})(\tau) \Big) \,,
\end{equation*}
is well-defined.\par
\medskip
In order to show that this map is a contraction, we also note the decomposition
\begin{align*}
&
\mathbf{K}_{\beta}(\mathbf{f},\mathbf{u})(\tau) - \mathbf{K}_{\beta}(\mathbf{f},\mathbf{v})(\tau) \\&\indent=
\Big(\mathbf{I}-\mathbf{P}_{\beta}\Big)\int_{0}^{\tau} \mathbf{S}_{\beta}(\tau-\tau')\Big( \mathbf{N}_{\beta}(\mathbf{u}(\tau')) - \mathbf{N}_{\beta}(\mathbf{v}(\tau')) \Big) \dd \tau'
\\&\indent\indent-
\mathbf{P}_{0,\beta} \int_{\tau}^{\infty} \Big( \mathbf{N}_{\beta}(\mathbf{u}(\tau')) - \mathbf{N}_{\beta}(\mathbf{v}(\tau')) \Big) \dd\tau'
\\&\indent\indent-
\mathbf{P}_{1,\beta} \int_{\tau}^{\infty} \ee^{\tau-\tau'} \Big( \mathbf{N}_{\beta}(\mathbf{u}(\tau')) - \mathbf{N}_{\beta}(\mathbf{v}(\tau')) \Big) \dd\tau'
\end{align*}
and estimate as before
\begin{align*}
&\| \mathbf{K}_{\beta}(\mathbf{f},\mathbf{u})(\tau) - \mathbf{K}_{\beta}(\mathbf{f},\mathbf{v})(\tau) \|_{\mathfrak{H}^{k}(\BB^{d}_{R})} \\&\indent\lesssim
\int_{0}^{\tau} \ee^{\omega_{0}(\tau-\tau')} \| \mathbf{N}_{\beta}(\mathbf{u}(\tau')) - \mathbf{N}_{\beta}(\mathbf{v}(\tau')) \|_{\mathfrak{H}^{k}(\BB^{d}_{R})} \dd\tau' \\&\indent\indent+
\int_{\tau}^{\infty} \ee^{2\omega_{0}\tau'} \| \mathbf{N}_{\beta}(\mathbf{u}(\tau')) - \mathbf{N}_{\beta}(\mathbf{v}(\tau')) \|_{\mathfrak{H}^{k}(\BB^{d}_{R})} \dd\tau' \\&\indent\indent+
\int_{\tau}^{\infty} \ee^{\tau-\tau'+2\omega_{0}\tau'} \| \mathbf{N}_{\beta}(\mathbf{u}(\tau')) - \mathbf{N}_{\beta}(\mathbf{v}(\tau')) \|_{\mathfrak{H}^{k}(\BB^{d}_{R})} \dd\tau' \\&\indent\lesssim
(\|\mathbf{u}\|_{\mathfrak{X}^{k}(\BB^{d}_{R})}+\|\mathbf{v}\|_{\mathfrak{X}^{k}(\BB^{d}_{R})}) \|\mathbf{u}-\mathbf{v}\|_{\mathfrak{X}^{k}(\BB^{d}_{R})} \int_{0}^{\tau} \ee^{\omega_{0}(\tau+\tau')} \dd\tau' \\&\indent\indent+
(\|\mathbf{u}\|_{\mathfrak{X}^{k}(\BB^{d}_{R})}+\|\mathbf{v}\|_{\mathfrak{X}^{k}(\BB^{d}_{R})}) \|\mathbf{u}-\mathbf{v}\|_{\mathfrak{X}^{k}(\BB^{d}_{R})} \int_{\tau}^{\infty} \ee^{-2\omega_{0}\tau'} \dd\tau' \\&\indent\indent+
(\|\mathbf{u}\|_{\mathfrak{X}^{k}(\BB^{d}_{R})}+\|\mathbf{v}\|_{\mathfrak{X}^{k}(\BB^{d}_{R})}) \|\mathbf{u}-\mathbf{v}\|_{\mathfrak{X}^{k}(\BB^{d}_{R})} \int_{\tau}^{\infty} \ee^{\tau-\tau'-2\omega_{0}\tau'} \dd\tau' \\&\indent\lesssim
\delta \ee^{\omega_{0}\tau} \|\mathbf{u}-\mathbf{v}\|_{\mathfrak{X}^{k}(\BB^{d}_{R})}
\end{align*}
for all $\mathbf{u},\mathbf{v}\in\mathfrak{X}^{k}_{\delta}(\BB^{d}_{R})$, all $\tau\geq 0$ and all $\beta\in\overline{\BB^{d}_{R_{0}^{-1}}}$. Upon possibly choosing $\delta_{0}>0$ smaller, we infer
\begin{equation*}
\| \mathbf{K}_{\beta}(\mathbf{f},\mathbf{u}) - \mathbf{K}_{\beta}(\mathbf{f},\mathbf{v}) \|_{\mathfrak{X}^{k}(\BB^{d}_{R})} \leq \frac{1}{2} \| \mathbf{u} - \mathbf{v} \|_{\mathfrak{X}^{k}(\BB^{d}_{R})}
\end{equation*}
for all $\mathbf{u},\mathbf{v}\in\mathfrak{X}^{k}_{\delta}(\BB^{d}_{R})$ and all $\beta\in\overline{\BB^{d}_{R_{0}^{-1}}}$. So $\mathbf{K}_{\beta}(\mathbf{f},\,.\,)$ is a contraction mapping and Banach's fixed-point theorem yields a unique fixed point $\mathbf{u}_{\beta}\in \mathfrak{X}^{k}_{\delta}(\BB^{d}_{R})$ of $\mathbf{K}_{\beta}(\mathbf{f},\,.\,)$.\par
\medskip
Lastly, let $\mathbf{f},\mathbf{g}\in\mathfrak{H}^{k}_{\delta}(\BB^{d}_{R})$ and let $\mathbf{u}_{\beta},\mathbf{v}_{\beta}\in\mathfrak{X}^{k}_{\delta}(\BB^{d}_{R})$ be the unique fixed point of $\mathbf{K}_{\beta}(\mathbf{f},\,.\,)$, $\mathbf{K}_{\beta}(\mathbf{g},\,.\,)$, respectively. Then
\begin{align*}
\mathbf{u}_{\beta}(\tau) - \mathbf{v}_{\beta}(\tau) &= \mathbf{K}_{\beta}(\mathbf{f},\mathbf{u}_{\beta})(\tau) - \mathbf{K}_{\beta}(\mathbf{g},\mathbf{v}_{\beta})(\tau) \\&=
\mathbf{K}_{\beta}(\mathbf{f},\mathbf{u}_{\beta})(\tau) - \mathbf{K}_{\beta}(\mathbf{f},\mathbf{v}_{\beta})(\tau) + \mathbf{S}_{\beta}(\tau)(\mathbf{I}-\mathbf{P}_{\beta})(\mathbf{f}-\mathbf{g}) \,.
\end{align*}
Applying previous estimates yields
\begin{align*}
\| \mathbf{K}_{\beta}(\mathbf{f},\mathbf{u}_{\beta}) - \mathbf{K}_{\beta}(\mathbf{f},\mathbf{v}_{\beta}) \|_{\mathfrak{X}^{k}(\BB^{d}_{R})} &\leq \frac{1}{2} \| \mathbf{u}_{\beta} - \mathbf{v}_{\beta} \|_{\mathfrak{X}^{k}(\BB^{d}_{R})} \,, \\
\| \mathbf{S}_{\beta}(\tau)(\mathbf{I}-\mathbf{P}_{\beta})(\mathbf{f}-\mathbf{g}) \|_{\mathfrak{H}^{k}(\BB^{d}_{R})} &\lesssim \ee^{\omega_{0}\tau} \|\mathbf{f}-\mathbf{g} \|_{\mathfrak{H}^{k}(\BB^{d}_{R})} \,,
\end{align*}
for all $\tau\geq 0$ and $\beta\in\overline{\BB^{d}_{R_{0}^{-1}}}$. This shows the Lipschitz continuous dependence on the initial data.
\end{proof}
\subsection{Stable flow near the blowup solution}
The initial data for our abstract Cauchy problem \eqref{abstractCauchy} are introduced as follows.
\begin{definition}
Let $(d,p,k,R)\in\NN\times\RR_{>1}\times\NN\times\RR_{\geq 1}$. Let $R_{0} > R$ and $\beta\in\overline{\BB^{d}_{R_{0}^{-1}}}$, $T>0$. We define the operator
\begin{equation*}
\mathbf{U}_{\beta,T} : C^{\infty}(\RR^{d})^{2} \rightarrow \mathfrak{H}^{k}(\BB^{d}_{R}) \,, \qquad \mathbf{f} \mapsto \mathbf{f}^{T} + \mathbf{f}_{0}^{T} - \mathbf{f}_{\beta} \,,
\end{equation*}
where $\mathbf{f}^{T},\mathbf{f}_{0}^{T},\mathbf{f}_{\beta}\in C^{\infty}(\overline{\BB^{d}_{R}})^{2}$ are defined by
\begin{align*}
&&
\mathbf{f}^{T}(\xi) &=
\begin{bmatrix}
\hfill T^{s_{p}} f_{1}(T\xi) \\
T^{s_{p}+1} f_{2}(T\xi)
\end{bmatrix}
\,, &
\mathbf{f}_{0}^{T}(\xi) &=
\begin{bmatrix}
\hfill T^{s_{p}} c_{p} \\
T^{s_{p}+1} s_{p} c_{p}
\end{bmatrix}
\,, & \\
&&
\mathbf{f}_{\beta}(\xi) &=
\begin{bmatrix}
\hfill c_{p} \gamma(\beta)^{-s_{p}} (1+\beta^{\top}\xi)^{-s_{p}} \\
s_{p} c_{p} \gamma(\beta)^{-s_{p}} (1+\beta^{\top}\xi)^{-s_{p}-1}
\end{bmatrix}
\,. && &
\end{align*}
\end{definition}
To guarantee a stabilized evolution for such data, we need a smallness property of the initial data operator. This is ensured by the following lemma.
\begin{lemma}
\label{InitDatOpSmall}
Let $(d,p,k,R)\in\NN\times\RR_{>1}\times\NN\times\RR_{\geq 1}$. Let $R_{0} > R$ and $\beta\in\overline{\BB^{d}_{R_{0}^{-1}}}$, $T\in\overline{\BB^{1}_{\frac{1}{2}}(1)}$. We have
\begin{equation*}
\mathbf{U}_{\beta,T}(\mathbf{f}) = \mathbf{f}^{T} + (T-1) \mathbf{f}_{1,\beta} + \beta^{i} \mathbf{f}_{0,\beta,i} + \mathbf{r}(\beta,T) \,,
\end{equation*}
for any $\mathbf{f}\in C^{\infty}(\RR^{d})^{2}$, where $\mathbf{f}_{1,\beta},\mathbf{f}_{0,\beta,i}$ are the symmetry modes from \Cref{EV}, and
\begin{equation*}
\| \mathbf{r}(\beta,T) \|_{\mathfrak{H}^{k}(\BB^{d}_{R})} \lesssim |T-1|^{2} + |\beta|^{2}
\end{equation*}
for all $\beta\in\overline{\BB^{d}_{R_{0}^{-1}}}$ and all $T\in\overline{\BB^{1}_{\frac{1}{2}}(1)}$.
\end{lemma}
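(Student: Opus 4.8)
The plan is to recognize the claimed expansion as a first-order Taylor expansion of the smooth map $(\beta,T)\mapsto\mathbf{f}_{0}^{T}-\mathbf{f}_{\beta}$ around the reference point $(\beta,T)=(0,1)$, carried out in the Banach space $\mathfrak{H}^{k}(\BB^{d}_{R})$. Concretely, I would set $\Psi(\beta,T)\coloneqq\mathbf{f}_{0}^{T}-\mathbf{f}_{\beta}$, so that $\mathbf{U}_{\beta,T}(\mathbf{f})=\mathbf{f}^{T}+\Psi(\beta,T)$ and it suffices to expand $\Psi$, which does not involve $\mathbf{f}$ at all. Since the entries of $\mathbf{f}_{0}^{T}$ and $\mathbf{f}_{\beta}$ are $C^{\infty}$ in $(\beta,T,\xi)$ jointly on the compact domain $\overline{\BB^{d}_{R_{0}^{-1}}}\times\overline{\BB^{1}_{1/2}(1)}\times\overline{\BB^{d}_{R}}$ — note that $1+\beta^{\top}\xi\geq 1-R_{0}^{-1}R>0$ and $1-\beta^{\top}\beta\geq 1-R_{0}^{-2}>0$ there — all $\xi$-derivatives of all $(\beta,T)$-derivatives of $\Psi$ are uniformly bounded, hence so are the $\mathfrak{H}^{k}(\BB^{d}_{R})$-norms of the $(\beta,T)$-derivatives of $\Psi$ viewed as a map into $\mathfrak{H}^{k}(\BB^{d}_{R})$.

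Next I would identify the zeroth- and first-order terms. A direct substitution gives $\mathbf{f}_{0}^{1}=\mathbf{f}_{\beta}|_{\beta=0}=[c_{p},s_{p}c_{p}]^{\top}$, so $\Psi(0,1)=\mathbf{0}$. Differentiating in $T$ at $(0,1)$ yields $\partial_{T}\mathbf{f}_{0}^{T}|_{T=1}=[s_{p}c_{p},s_{p}(s_{p}+1)c_{p}]^{\top}=\mathbf{f}_{1,0}$ and $\partial_{T}\mathbf{f}_{\beta}=\mathbf{0}$, so $\partial_{T}\Psi(0,1)=\mathbf{f}_{1,0}$. Differentiating in $\beta^{i}$ at $\beta=0$, and using $\partial_{\beta^{i}}\gamma(\beta)^{-s_{p}}|_{\beta=0}=0$ (since $\gamma(\beta)^{-s_{p}}=(1-\beta^{\top}\beta)^{s_{p}/2}$) together with $\partial_{\beta^{i}}(1+\beta^{\top}\xi)^{-m}|_{\beta=0}=-m\xi_{i}$, one finds $\partial_{\beta^{i}}\mathbf{f}_{\beta}|_{\beta=0}=[-s_{p}c_{p}\xi_{i},-s_{p}(s_{p}+1)c_{p}\xi_{i}]^{\top}$, hence $\partial_{\beta^{i}}\Psi(0,1)=-\partial_{\beta^{i}}\mathbf{f}_{\beta}|_{\beta=0}=\mathbf{f}_{0,0,i}$, the symmetry mode of \Cref{EV} at $\beta=0$. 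Taylor's theorem with integral remainder in $\mathfrak{H}^{k}(\BB^{d}_{R})$ then gives $\Psi(\beta,T)=(T-1)\mathbf{f}_{1,0}+\beta^{i}\mathbf{f}_{0,0,i}+\mathbf{r}_{1}(\beta,T)$ with $\|\mathbf{r}_{1}(\beta,T)\|_{\mathfrak{H}^{k}(\BB^{d}_{R})}\lesssim|T-1|^{2}+|\beta|^{2}$, the implicit constant coming from the uniform bound on second-order parameter derivatives established above.

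Finally I would pass from the $\beta=0$ modes to the $\beta$-dependent modes $\mathbf{f}_{1,\beta}$ and $\mathbf{f}_{0,\beta,i}$ appearing in the statement. Since $\beta\mapsto\mathbf{f}_{1,\beta}$ and $\beta\mapsto\mathbf{f}_{0,\beta,i}$ are $C^{\infty}$ on the compact set $\overline{\BB^{d}_{R_{0}^{-1}}}$ with values in $C^{\infty}(\overline{\BB^{d}_{R}})^{2}\subset\mathfrak{H}^{k}(\BB^{d}_{R})$ — again by smoothness of the explicit formulas in \Cref{EV} and positivity of $1+\beta^{\top}\xi$ — they are Lipschitz, so $\|\mathbf{f}_{1,0}-\mathbf{f}_{1,\beta}\|_{\mathfrak{H}^{k}(\BB^{d}_{R})}\lesssim|\beta|$ and $\|\mathbf{f}_{0,0,i}-\mathbf{f}_{0,\beta,i}\|_{\mathfrak{H}^{k}(\BB^{d}_{R})}\lesssim|\beta|$. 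Setting $\mathbf{r}(\beta,T)\coloneqq\mathbf{r}_{1}(\beta,T)+(T-1)(\mathbf{f}_{1,0}-\mathbf{f}_{1,\beta})+\beta^{i}(\mathbf{f}_{0,0,i}-\mathbf{f}_{0,\beta,i})$ and using $|T-1|\,|\beta|\leq\tfrac12(|T-1|^{2}+|\beta|^{2})$ yields $\|\mathbf{r}(\beta,T)\|_{\mathfrak{H}^{k}(\BB^{d}_{R})}\lesssim|T-1|^{2}+|\beta|^{2}$ together with $\mathbf{U}_{\beta,T}(\mathbf{f})=\mathbf{f}^{T}+(T-1)\mathbf{f}_{1,\beta}+\beta^{i}\mathbf{f}_{0,\beta,i}+\mathbf{r}(\beta,T)$, as claimed. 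There is no serious obstacle; the only point requiring care is that the implicit constants be uniform in $(\beta,T,\mathbf{f})$, which holds because $\mathbf{f}$ never enters $\Psi$ and every estimate reduces to boundedness of smooth functions on compact sets.
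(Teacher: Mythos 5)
Your proposal is correct and follows essentially the same route as the paper: a second-order Taylor expansion of $(\beta,T)\mapsto\mathbf{f}_{0}^{T}-\mathbf{f}_{\beta}$ about $(0,1)$, identification of the first-order terms with the symmetry modes $\mathbf{f}_{1,0}$, $\mathbf{f}_{0,0,i}$, and then swapping these for $\mathbf{f}_{1,\beta}$, $\mathbf{f}_{0,\beta,i}$ at the cost of terms of size $|T-1|\,|\beta|$ and $|\beta|^{2}$ absorbed into the remainder. The only cosmetic difference is that the paper performs the Taylor expansion pointwise in $\xi$ with an explicit integral remainder, whereas you phrase it as a Banach-space-valued expansion; both yield the same uniform bound.
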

\begin{proof}
For fixed $\xi\in\overline{\BB^{d}_{R}}$, Taylor's theorem applied to the components of the map
\begin{equation*}
\overline{\BB^{d}_{R_{0}^{-1}}}\times\overline{\BB^{1}_{\frac{1}{2}}(1)} \rightarrow\RR^{2} \,, \quad (\beta,T) \mapsto \mathbf{f}_{0}^{T}(\xi) - \mathbf{f}_{\beta}(\xi) \,,
\end{equation*}
yields
\begin{equation*}
\mathbf{f}_{0}^{T}(\xi) - \mathbf{f}_{\beta}(\xi) =
(T-1)\mathbf{f}_{1,0}(\xi) + \beta^{i}\mathbf{f}_{0,0,i}(\xi) + \widetilde{\mathbf{r}}_{\beta,T}(\xi)
\end{equation*}
with remainder
\begin{align*}
[\widetilde{\mathbf{r}}_{\beta,T}(\xi)]_{\ell} &= (T-1)^{2} \int_{0}^{1} \left.\pd_{T'}^{2} [\mathbf{f}_{0}^{T'}(\xi)]_{\ell}\right|_{T'=1+z(T-1)}(1-z)\dd z \\&\indent-
 \sum_{i,j=1}^{d} \beta^{i}\beta^{j} \int_{0}^{1} \left.\pd_{{\beta'}^{i}} \pd_{{\beta'}^{j}} [\mathbf{f}_{\beta'}(\xi)]_{\ell}\right|_{\beta'=z\beta}(1-z)\dd z \,, \qquad \ell = 1,2.
\end{align*}
Thus
\begin{equation*}
\mathbf{f}_{0}^{T} - \mathbf{f}_{\beta} =
(T-1)\mathbf{f}_{1,\beta} + \beta^{i}\mathbf{f}_{0,\beta,i} + \mathbf{r}_{\beta,T} 
\end{equation*}
with
\begin{equation*}
\mathbf{r}_{\beta,T} = \widetilde{\mathbf{r}}_{\beta,T} - (T-1) (\mathbf{f}_{1,\beta} - \mathbf{f}_{1,0}) - \beta^{i} (\mathbf{f}_{0,\beta,i}-\mathbf{f}_{0,0,i})
\end{equation*}
and the bound follows.
\end{proof}
It remains to remove the correction term in \Cref{StabilizedSolution} to obtain a global solution to \Cref{NonLinDuhamel}.
\begin{proposition}
\label{StableNonlinFlowMild}
Let $(d,p,k,R)\in\NN\times\RR_{>1}\times\NN\times\RR_{\geq 1}$ with $k>\frac{d}{2}$. Let $0 < \varepsilon < \omega_{p}$. There are constants $0<\delta_{\varepsilon}<1$, $C_{\varepsilon} > 1$ such that for all $0<\delta\leq\delta_{\varepsilon}$, $C\geq C_{\varepsilon}$ and for all real-valued $\mathbf{f}\in C^{\infty}(\RR^{d})^{2}$ with $\| \mathbf{f} \|_{\mathfrak{H}^{k}(\RR^{d})} \leq \frac{\delta}{C^{2}}$ there are parameters $\beta^{*}\in\overline{\BB^{d}_{\frac{\delta}{C}}}$, $T^{*}\in\overline{\BB^{1}_{\frac{\delta}{C}}(1)}$ and a unique $\mathbf{u}_{\beta^{*},T^{*}}\in C\big([0,\infty),\mathfrak{H}^{k}(\BB^{d}_{R})\big)$ such that $\| \mathbf{u}_{\beta^{*},T^{*}}(\tau) \|_{\mathfrak{H}^{k}(\BB^{d}_{R})} \leq \delta \ee^{(-\omega_{p}+\varepsilon)\tau}$ and
\begin{equation}
\label{CauchyMild}
\mathbf{u}_{\beta^{*},T^{*}}(\tau) = \mathbf{S}_{\beta^{*}}(\tau) \mathbf{U}_{\beta^{*},T^{*}}(\mathbf{f}) + \int_{0}^{\tau} \mathbf{S}_{\beta^{*}}(\tau-\tau')\mathbf{N}_{\beta^{*}}(\mathbf{u}_{\beta^{*},T^{*}}(\tau')) \dd \tau'
\end{equation}
for all $\tau\geq 0$.
\end{proposition}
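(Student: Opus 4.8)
The strategy is a Lyapunov--Perron argument. For parameters $(\beta,T)$ in a small neighbourhood of $(0,1)$ one applies \Cref{NonLinEvoStableSub} to the initial data $\mathbf{U}_{\beta,T}(\mathbf{f})$ and obtains the stabilised solution $\mathbf{u}_{\beta,T}$ of \eqref{StabilizedSolution}; one then chooses $(\beta^{*},T^{*})$ so that the correction term $\mathbf{C}_{\beta^{*}}(\mathbf{U}_{\beta^{*},T^{*}}(\mathbf{f}),\mathbf{u}_{\beta^{*},T^{*}})$ vanishes, since in that case \eqref{StabilizedSolution} is literally \eqref{CauchyMild}. First I would fix $\omega_{0}\coloneqq-\omega_{p}+\varepsilon$, admissible because $0<\varepsilon<\omega_{p}$, so that the Banach space $\mathfrak{X}^{k}(\BB^{d}_{R})$ built on $\omega_{0}$ carries exactly the claimed decay rate. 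By \Cref{InitDatOpSmall} together with $\|\mathbf{f}^{T}\|_{\mathfrak{H}^{k}(\BB^{d}_{R})}\lesssim\|\mathbf{f}\|_{\mathfrak{H}^{k}(\RR^{d})}\le\frac{\delta}{C^{2}}$ and $|\beta|,|T-1|\le\frac{\delta}{C}$, one gets $\|\mathbf{U}_{\beta,T}(\mathbf{f})\|_{\mathfrak{H}^{k}(\BB^{d}_{R})}\lesssim\frac{\delta}{C}$, so that, after adjusting $\delta$ and $C$ by fixed factors, \Cref{NonLinEvoStableSub} applies for every $(\beta,T)\in\overline{\BB^{d}_{\delta/C}}\times\overline{\BB^{1}_{\delta/C}(1)}$ and yields a unique $\mathbf{u}_{\beta,T}\in\mathfrak{X}^{k}_{\delta}(\BB^{d}_{R})$ solving \eqref{StabilizedSolution} with $\mathbf{f}$ replaced by $\mathbf{U}_{\beta,T}(\mathbf{f})$; moreover, from the fixed-point identity and the quadratic structure of $\mathbf{N}_{\beta}$ (\Cref{NonLinLip}) one absorbs the nonlinear term to improve this to $\|\mathbf{u}_{\beta,T}\|_{\mathfrak{X}^{k}(\BB^{d}_{R})}\lesssim\|\mathbf{U}_{\beta,T}(\mathbf{f})\|_{\mathfrak{H}^{k}(\BB^{d}_{R})}\lesssim\frac{\delta}{C}$. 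Finally, combining the Lipschitz-in-data statement of \Cref{NonLinEvoStableSub} with the Lipschitz dependence on $\beta$ of $\mathbf{L}_{\beta}'$ (\Cref{PotentialProperties}) and hence, by perturbation theory, of $\mathbf{S}_{\beta}$, $\mathbf{P}_{0,\beta}$, $\mathbf{P}_{1,\beta}$ and $\mathbf{N}_{\beta}$, the uniform contraction principle shows that $(\beta,T)\mapsto\mathbf{u}_{\beta,T}$ is continuous.

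Next I would reduce the problem to finite dimensions. Set
\[
\mathcal{F}(\beta,T)\coloneqq\mathbf{C}_{\beta}\big(\mathbf{U}_{\beta,T}(\mathbf{f}),\mathbf{u}_{\beta,T}\big)\in\ran(\mathbf{P}_{\beta})\,.
\]
Since $\mathbf{f}_{0,\beta,i}\in\ran(\mathbf{P}_{0,\beta})$ and $\mathbf{f}_{1,\beta}\in\ran(\mathbf{P}_{1,\beta})$, \Cref{InitDatOpSmall} and \Cref{CorrTerm} give
\[
\mathcal{F}(\beta,T)=(T-1)\mathbf{f}_{1,\beta}+\beta^{i}\mathbf{f}_{0,\beta,i}+\mathbf{E}(\beta,T)\,,
\]
where $\mathbf{E}(\beta,T)$ collects the genuinely lower-order contributions $\mathbf{P}_{\beta}\mathbf{f}^{T}$, $\mathbf{P}_{\beta}\mathbf{r}(\beta,T)$, and the two weighted time-integrals of $\mathbf{N}_{\beta}(\mathbf{u}_{\beta,T})$ from \Cref{CorrTerm}. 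Using $\|\mathbf{r}(\beta,T)\|_{\mathfrak{H}^{k}(\BB^{d}_{R})}\lesssim|T-1|^{2}+|\beta|^{2}$, the bound on $\mathbf{f}^{T}$, the quadratic estimate of \Cref{NonLinLip}, the improved a priori bound $\|\mathbf{u}_{\beta,T}\|_{\mathfrak{X}^{k}(\BB^{d}_{R})}\lesssim\frac{\delta}{C}$, and $\omega_{0}<0$, one obtains $\|\mathbf{E}(\beta,T)\|_{\mathfrak{H}^{k}(\BB^{d}_{R})}\lesssim\frac{\delta}{C^{2}}$, which is the crucial gain of a second power of $C$ over the size $\frac{\delta}{C}$ of the two symmetry-mode terms.

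Now I would close the argument with Brouwer's fixed-point theorem in $\RR^{d+1}$. Because $\mathbf{P}_{\beta}$ and the symmetry modes depend continuously on $\beta$ and reduce at $\beta=0$ to $\mathbf{P}_{0}$ and $\mathbf{f}_{1,0},\mathbf{f}_{0,0,i}$, for $|\beta|$ small the composition of $\mathbf{P}_{0}$ with the coordinate functionals dual to $\mathbf{f}_{1,0},\mathbf{f}_{0,0,1},\dots,\mathbf{f}_{0,0,d}$ restricts to an isomorphism $\Lambda\colon\ran(\mathbf{P}_{\beta})\to\RR^{d+1}$; hence $\mathcal{F}(\beta,T)=\mathbf{0}$ is equivalent to $A(\beta)\binom{T-1}{\beta}+\Lambda(\mathbf{E}(\beta,T))=0$, where the matrix $A(\beta)$ with columns $\Lambda(\mathbf{f}_{1,\beta}),\Lambda(\mathbf{f}_{0,\beta,1}),\dots$ satisfies $A(0)=\mathbf{I}$ and is therefore invertible with continuous inverse for small $\beta$. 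Thus $(\beta,T)$ solves the problem precisely when $\binom{T-1}{\beta}=G(\beta,T)\coloneqq-A(\beta)^{-1}\Lambda(\mathbf{E}(\beta,T))$, and $G$ is continuous with $\|G(\beta,T)\|\lesssim\|\mathbf{E}(\beta,T)\|_{\mathfrak{H}^{k}(\BB^{d}_{R})}\lesssim\frac{\delta}{C^{2}}\le\frac{\delta}{C}$ for $C\ge C_{\varepsilon}$ large. Hence $G$ maps the convex compact set $\overline{\BB^{d}_{\delta/C}}\times\overline{\BB^{1}_{\delta/C}(1)}$ into itself and Brouwer provides a fixed point $(\beta^{*},T^{*})$ with $\mathcal{F}(\beta^{*},T^{*})=\mathbf{0}$, so that $\mathbf{u}_{\beta^{*},T^{*}}$ solves \eqref{CauchyMild}; the bound $\|\mathbf{u}_{\beta^{*},T^{*}}(\tau)\|_{\mathfrak{H}^{k}(\BB^{d}_{R})}\le\delta\ee^{(-\omega_{p}+\varepsilon)\tau}$ is just membership in $\mathfrak{X}^{k}_{\delta}(\BB^{d}_{R})$. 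Uniqueness of $\mathbf{u}_{\beta^{*},T^{*}}$ in this class follows by projecting \eqref{CauchyMild} with $\mathbf{P}_{0,\beta^{*}}$ and $\mathbf{P}_{1,\beta^{*}}$, using the semigroup identities of \Cref{stableEvo} and letting $\tau\to\infty$, which forces the correction term to vanish and hence reduces \eqref{CauchyMild} to \eqref{StabilizedSolution}, whose solution in $\mathfrak{X}^{k}_{\delta}(\BB^{d}_{R})$ is unique by \Cref{NonLinEvoStableSub}.

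The main obstacle is this last, topological step: one must tune the $d+1$ modulation parameters $(\beta,T)$ simultaneously to annihilate the $(d+1)$-dimensional obstruction carried by the correction term. The argument hinges on \Cref{InitDatOpSmall}, which exhibits the unstable part of $\mathbf{U}_{\beta,T}(\mathbf{f})$ as $(T-1)\mathbf{f}_{1,\beta}+\beta^{i}\mathbf{f}_{0,\beta,i}$ up to a quadratically small remainder, so that $\mathcal{F}$ is, to leading order, an invertible linear function of $(\beta,T-1)$; the extra power of $C$ in the hypothesis $\|\mathbf{f}\|_{\mathfrak{H}^{k}(\RR^{d})}\le\frac{\delta}{C^{2}}$ is exactly what relegates $\mathbf{P}_{\beta}\mathbf{f}^{T}$ to the error term rather than the leading part. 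The remaining points that require care — continuity in $\beta$ of the spectral projections and the semigroup, the improved bound $\|\mathbf{u}_{\beta,T}\|_{\mathfrak{X}^{k}}\lesssim\|\mathbf{U}_{\beta,T}(\mathbf{f})\|_{\mathfrak{H}^{k}}$, and the invertibility of $A(\beta)$ — are of a routine, perturbative nature.
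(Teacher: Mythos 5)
Your proposal is correct and follows essentially the same route as the paper: invoke Proposition \ref{NonLinEvoStableSub} with data $\mathbf{U}_{\beta,T}(\mathbf{f})$, use Lemma \ref{InitDatOpSmall} to isolate the leading linear part $(T-1)\mathbf{f}_{1,\beta}+\beta^{i}\mathbf{f}_{0,\beta,i}$ of the correction term, and annihilate it by a Brouwer fixed-point argument on $\overline{\BB^{d}_{\delta/C}}\times\overline{\BB^{1}_{\delta/C}(1)}$. The only deviations are cosmetic — you trivialize $\ran(\mathbf{P}_{\beta})$ via the $\beta=0$ dual functionals and a near-identity matrix $A(\beta)$ where the paper uses the $\beta$-dependent Gram dual basis, and you additionally spell out the improved a priori bound $\|\mathbf{u}_{\beta,T}\|_{\mathfrak{X}^{k}(\BB^{d}_{R})}\lesssim\delta/C$ and the uniqueness argument (projecting \eqref{CauchyMild} onto the unstable modes and letting $\tau\to\infty$), both of which the paper leaves implicit.
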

\begin{proof}
Let $\varepsilon>0$ so that $\omega_{0} \coloneqq -\omega_{p} + \varepsilon < 0$. For such $\omega_{0}<0$, pick $\delta_{0}>0$, $C_{0}>1$ from \Cref{NonLinEvoStableSub} and let $0<\delta'\leq\delta_{0}$ and $C'\geq C_{0}$. Let
\begin{equation*}
0<\delta\leq\delta_{\varepsilon} \coloneqq \frac{\delta'}{M_{\omega}}
\qquad\text{and}\qquad
C\geq C_{\varepsilon} \coloneqq M_{\omega} C'
\end{equation*}
for $M_{\omega} \geq 1$. Let $\mathbf{f}\in C^{\infty}(\RR^{d})^{2}$ with $\| \mathbf{f} \|_{\mathfrak{H}^{k}(\RR^{d})} \leq \frac{\delta}{C^{2}}$. If $M_{\omega}\geq 1$ is large enough, we get from \Cref{InitDatOpSmall}
\begin{align*}
\| \mathbf{U}_{\beta,T}(\mathbf{f}) \|_{\mathfrak{H}^{k}(\BB^{d}_{R})} &\leq \| \mathbf{f}^{T} \|_{\mathfrak{H}^{k}(\BB^{d}_{R})} + |T-1| \| \mathbf{f}_{1,\beta} \|_{\mathfrak{H}^{k}(\BB^{d}_{R})} \\&\indent+ \sum_{i=1}^{d} |\beta^{i}| \| \mathbf{f}_{0,\beta,i} \|_{\mathfrak{H}^{k}(\BB^{d}_{R})} + \| \mathbf{r}_{\beta,T} \|_{\mathfrak{H}^{k}(\BB^{d}_{R})} \\&\leq \frac{\delta}{C'}
\end{align*}
for all $\beta\in\overline{\BB^{d}_{\frac{\delta}{C}}}$ and all $T\in\overline{\BB^{1}_{\frac{\delta}{C}}(1)}$. Hence, we can fix $\delta'>0$ and $C'>1$ for now so that $\mathbf{U}_{\beta,T}(\mathbf{f})\in\mathfrak{H}^{k}(\BB^{d}_{R})$ satisfies the assumptions for the initial data in \Cref{NonLinEvoStableSub} and we conclude the existence of a unique solution $\mathbf{u}_{\beta,T}\in\mathfrak{X}^{k}(\BB^{d}_{R})$ with $\| \mathbf{u}_{\beta,T} \|_{\mathfrak{X}^{k}(\BB^{d}_{R})} \leq \delta$ to
\begin{equation*}
\mathbf{u}_{\beta,T}(\tau) = \mathbf{S}_{\beta}(\tau) (\mathbf{U}_{\beta,T}(\mathbf{f}) - \mathbf{C}_{\beta}(\mathbf{U}_{\beta,T}(\mathbf{f}),\mathbf{u}_{\beta,T}) + \int_{0}^{\tau} \mathbf{S}_{\beta}(\tau-\tau')\mathbf{N}_{\beta}(\mathbf{u}_{\beta,T}(\tau')) \dd \tau' \,,
\end{equation*}
for each $\beta\in\overline{\BB^{d}_{\frac{\delta}{C}}}$, $T\in\overline{\BB^{1}_{\frac{\delta}{C}}(1)}$. Now, the task is to determine parameters such that the correction term $\mathbf{C}_{\beta}(\mathbf{U}_{\beta,T}(\mathbf{f}),\mathbf{u}_{\beta,T}) \in \ran(\mathbf{P}_{\beta})$ from \Cref{CorrTerm} is equal to $\mathbf{0}$. By \Cref{SpecThmLbeta} we have that $\ran(\mathbf{P}_{\beta}) \subset \mathfrak{H}^{k}(\BB^{d}_{R})$ is a finite-dimensional sub-Hilbert space spanned by the linearly independent set $\{\mathbf{f}_{0,\beta,1},\ldots,\mathbf{f}_{0,\beta,d},\mathbf{f}_{1,\beta}\}$ of symmetry modes. Therefore, it is sufficient to prove that there are parameters for which the linear functional
\begin{equation*}
\ell_{\beta,T}: \ran(\mathbf{P}_{\beta}) \rightarrow \RR \,, \quad
\mathbf{g} \mapsto \Big( \mathbf{C}_{\beta}(\mathbf{U}_{\beta,T}(\mathbf{f}),\mathbf{u}_{\beta,T}) \,\Big|\, \mathbf{g} \Big)_{\mathfrak{H}^{k}(\BB^{d}_{R})}
\end{equation*}
is identically zero on a basis of $\ran(\mathbf{P}_{\beta})$. Note with \Cref{InitDatOpSmall} that
\begin{align*}
&
\ell_{\beta,T}(\mathbf{g}) =
\Big( \mathbf{P}_{\beta} \mathbf{f}^{T} \,\Big|\, \mathbf{g} \Big)_{\mathfrak{H}^{k}(\BB^{d}_{R})} +
(T-1) \Big( \mathbf{f}_{1,\beta} \,\Big|\, \mathbf{g} \Big)_{\mathfrak{H}^{k}(\BB^{d}_{R})} +
\beta^{i} \Big( \mathbf{f}_{0,\beta,i} \,\Big|\, \mathbf{g} \Big)_{\mathfrak{H}^{k}(\BB^{d}_{R})} \\&\indent+
\Big( \mathbf{P}_{\beta} \mathbf{r}(\beta,T) \,\Big|\, \mathbf{g} \Big)_{\mathfrak{H}^{k}(\BB^{d}_{R})} \\&\indent+
\Big( \mathbf{P}_{0,\beta} \int_{0}^{\infty} \mathbf{N}_{\beta}(\mathbf{u}_{\beta,T}(\tau')) \dd\tau' \,\Big|\, \mathbf{g} \Big)_{\mathfrak{H}^{k}(\BB^{d}_{R})} \\&\indent+
\Big( \mathbf{P}_{1,\beta} \int_{0}^{\infty} \ee^{-\tau'} \mathbf{N}_{\beta}(\mathbf{u}_{\beta,T}(\tau')) \dd\tau' \,\Big|\, \mathbf{g} \Big)_{\mathfrak{H}^{k}(\BB^{d}_{R})} \,.
\end{align*}
\sloppy To achieve vanishing of $\ell_{\beta,T}$, choose the dual basis $\{\mathbf{g}^{1}_{\beta},\ldots,\mathbf{g}^{d}_{\beta},\mathbf{g}^{d+1}_{\beta}\}$ for the basis $\{\mathbf{f}_{0,\beta,1},\ldots,\mathbf{f}_{0,\beta,d},\mathbf{f}_{1,\beta}\}$, which is obtained by letting $\Gamma(\beta)^{mn}$ for $m,n=1,\ldots,d,d+1$ be the components of the inverse matrix of the real-valued Gram matrix composed of entries
\begin{align*}
&&
\Gamma(\beta)_{ij} &= \Big( \mathbf{f}_{0,\beta,i} \,\Big|\, \mathbf{f}_{0,\beta,j} \Big)_{\mathfrak{H}^{k}(\BB^{d}_{R})} \,,
&
\Gamma(\beta)_{i,d+1} &= \Big( \mathbf{f}_{0,\beta,i} \,\Big|\, \mathbf{f}_{1,\beta} \Big)_{\mathfrak{H}^{k}(\BB^{d}_{R})} \,,
&\\
&&
\Gamma(\beta)_{d+1,j} &= \Big( \mathbf{f}_{1,\beta} \,\Big|\, \mathbf{f}_{0,\beta,j} \Big)_{\mathfrak{H}^{k}(\BB^{d}_{R})} \,,
&
\Gamma(\beta)_{d+1,d+1} &= \Big( \mathbf{f}_{1,\beta} \,\Big|\, \mathbf{f}_{1,\beta} \Big)_{\mathfrak{H}^{k}(\BB^{d}_{R})} \,,
&
\end{align*}
for $i,j=1,\ldots,d$, and putting
\begin{equation*}
\mathbf{g}^{n}_{\beta} \coloneqq \sum_{m=1}^{d} \Gamma(\beta)^{mn} \mathbf{f}_{0,\beta,m} + \Gamma(\beta)^{d+1,n} \mathbf{f}_{1,\beta} \,,
\qquad n=1,\ldots,d,d+1.
\end{equation*}
Then
\begin{align*}
&&
\Big( \mathbf{f}_{0,\beta,i} \,\Big|\, \mathbf{g}^{j}_{\beta} \Big)_{\mathfrak{H}^{k}(\BB^{d}_{R})} &= \delta^{j}_{i} \,,
&
\Big( \mathbf{f}_{0,\beta,i} \,\Big|\, \mathbf{g}^{d+1}_{\beta} \Big)_{\mathfrak{H}^{k}(\BB^{d}_{R})} &= 0 \,,
&\\
&&
\Big( \mathbf{f}_{1,\beta} \,\Big|\, \mathbf{g}^{j}_{\beta} \Big)_{\mathfrak{H}^{k}(\BB^{d}_{R})} &= 0  \,,
&
\Big( \mathbf{f}_{1,\beta} \,\Big|\, \mathbf{g}^{d+1}_{\beta} \Big)_{\mathfrak{H}^{k}(\BB^{d}_{R})} &= 1 \,,
&
\end{align*}
for $i,j=1,\ldots,d$ and the components of each element in $\{\mathbf{g}^{1}_{\beta},\ldots,\mathbf{g}^{d}_{\beta},\mathbf{g}^{d+1}_{\beta}\} \subset \ran(\mathbf{P}_{\beta})$ are smooth functions which depend smoothly on the Lorentz parameter by Cramer's rule. Next, define the continuous map $F = (F_{1},\ldots,F_{d},1+F_{d+1}): \overline{\BB^{d}_{\frac{\delta}{C}}} \times \overline{\BB^{1}_{\frac{\delta}{C}}(1)} \rightarrow\RR^{d+1}$ by
\begin{align*}
F_{n}(\beta,T) &=
-\Big( \mathbf{P}_{\beta} \mathbf{f}^{T} \,\Big|\, \mathbf{g}^{n}_{\beta} \Big)_{\mathfrak{H}^{k}(\BB^{d}_{R})}
-
\Big( \mathbf{P}_{\beta} \mathbf{r}_{\beta,T} \,\Big|\, \mathbf{g}^{n}_{\beta} \Big)_{\mathfrak{H}^{k}(\BB^{d}_{R})}
\\&\indent-
\Big( \mathbf{P}_{0,\beta} \int_{0}^{\infty} \mathbf{N}_{\beta}(\mathbf{u}_{\beta,T}(\tau')) \dd\tau' \,\Big|\, \mathbf{g}^{n}_{\beta} \Big)_{\mathfrak{H}^{k}(\BB^{d}_{R})}
\\&\indent-
\Big( \mathbf{P}_{1,\beta} \int_{0}^{\infty} \ee^{-\tau'} \mathbf{N}_{\beta}(\mathbf{u}_{\beta,T}(\tau')) \dd\tau' \,\Big|\, \mathbf{g}^{n}_{\beta} \Big)_{\mathfrak{H}^{k}(\BB^{d}_{R})} \,, \quad n=1,\ldots,d,d+1.
\end{align*}
We use Cauchy-Schwarz and \Cref{InitDatOpSmall,NonLinLip} to get the estimate
\begin{align*}
|F_{n}(\beta,T)| &\lesssim \| \mathbf{f}^{T} \|_{\mathfrak{H}^{k}(\BB^{d}_{R})} + \| \mathbf{r}_{\beta,T} \|_{\mathfrak{H}^{k}(\BB^{d}_{R})} + \| \mathbf{u}_{\beta,T} \|_{\mathfrak{X}^{k}(\BB^{d}_{R})}^{2} \\&\lesssim
\frac{\delta}{C^{2}} + \frac{\delta^{2}}{C^{2}} + \delta^{2} \\&\lesssim
\frac{\delta}{C} \frac{1}{M_{\omega}C'} + \frac{\delta}{C} \frac{\delta'}{M_{\omega}^{2}C'} + \frac{\delta}{C} C'\delta'
\end{align*}
for all $\beta\in\overline{\BB^{d}_{\frac{\delta}{C}}}$, all $T\in\overline{\BB^{1}_{\frac{\delta}{C}}(1)}$ and all $n=1,\ldots,d,d+1$. Now, upon choosing $\delta'>0$ smaller and $C' > 1$, $M_{\omega}\geq 1$ larger, we can fix the above values of $\delta_{\varepsilon},C_{\varepsilon}$ so that $F$ becomes a continuous self-map on $\overline{\BB^{d}_{\frac{\delta}{C}}} \times \overline{\BB^{1}_{\frac{\delta}{C}}(1)}$ for any $0<\delta\leq\delta_{\varepsilon}$ and $C\geq C_{\varepsilon}$. According to Brouwer's fixed-point theorem, $F$ has a fixed point $(\beta^{*},T^{*})\in \overline{\BB^{d}_{\frac{\delta}{C}}} \times \overline{\BB^{1}_{\frac{\delta}{C}}(1)}$ which, by construction, satisfies
\begin{equation*}
{\beta^{*}}^{i} = F_{i}(\beta^{*},T^{*}) = {\beta^{*}}^{i} - \ell_{\beta^{*},T^{*}}(\mathbf{g}^{i}_{\beta^{*}}) \,, \quad
T^{*} = 1 + F_{d+1}(\beta^{*},T^{*}) = T^{*} - \ell_{\beta^{*},T^{*}}(\mathbf{g}^{d+1}_{\beta^{*}}) \,,
\end{equation*}
for $i = 1,\ldots,d$. Thus $\ell_{\beta^{*},T^{*}} \equiv 0$, as desired.
\end{proof}
The just obtained mild solution is in fact a jointly smooth classical solution.
\begin{proposition}
\label{StableNonlinFlowClassic}
Let $(d,p,k,R)\in\NN\times\RR_{>1}\times\NN\times\RR_{\geq 1}$ with $k>\frac{d}{2}$. Let $0 < \varepsilon < \omega_{p}$. There are constants $0<\delta_{\varepsilon}<1$, $C_{\varepsilon} > 1$ such that for all $0<\delta\leq\delta_{\varepsilon}$, $C\geq C_{\varepsilon}$ and all real-valued $\mathbf{f}\in C^{\infty}(\RR^{d})^{2}$ with $\| \mathbf{f} \|_{\mathfrak{H}^{k}(\RR^{d})} \leq \frac{\delta}{C^{2}}$ there are parameters $\beta^{*}\in\overline{\BB^{d}_{\frac{\delta}{C}}}$, $T^{*}\in\overline{\BB^{1}_{\frac{\delta}{C}}(1)}$ and a unique $\mathbf{u}_{\beta^{*},T^{*}} \in C^{\infty}\big(\overline{(0,\infty)\times\BB^{d}_{R}}\big)^{2}$ such that $\| \mathbf{u}_{\beta^{*},T^{*}}(\tau,\,.\,) \|_{\mathfrak{H}^{k}(\BB^{d}_{R})} \leq \delta \ee^{(-\omega_{p}+\varepsilon)\tau}$ and
\begin{equation}
\label{CauchyClassical}
\renewcommand{\arraystretch}{1.2}
\left\{
\begin{array}{rcl}
\pd_{\tau} \mathbf{u}_{\beta^{*},T^{*}}(\tau,\,.\,) &=& \mathbf{L}_{\beta^{*}} \mathbf{u}_{\beta^{*},T^{*}}(\tau,\,.\,) + \mathbf{N}_{\beta^{*}}(\mathbf{u}_{\beta^{*},T^{*}}(\tau,\,.\,)) \,, \\
\mathbf{u}_{\beta^{*},T^{*}}(0,\,.\,) &=&  \mathbf{f}^{T^{*}} + \mathbf{f}_{0}^{T^{*}} - \mathbf{f}_{\beta^{*}} \,,
\end{array}
\right.
\end{equation}
for all $\tau\geq 0$.
\end{proposition}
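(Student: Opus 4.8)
The plan is to start from the mild solution $\mathbf{u}_{\beta^{*},T^{*}}$ furnished by \Cref{StableNonlinFlowMild} — write $\mathbf{u}\coloneqq\mathbf{u}_{\beta^{*},T^{*}}$ and $\mathbf{f}_{0}\coloneqq\mathbf{U}_{\beta^{*},T^{*}}(\mathbf{f})=\mathbf{f}^{T^{*}}+\mathbf{f}_{0}^{T^{*}}-\mathbf{f}_{\beta^{*}}\in C^{\infty}(\overline{\BB^{d}_{R}})^{2}$ — and to promote it to the claimed classical solution by two successive bootstraps, first in the spatial regularity and then in the time regularity, and finally to read off \eqref{CauchyClassical} from the transition relation to the classical formulation.

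First I would record the restriction property of the semigroups. The operators $\mathbf{L}_{d,p,m,R}$ of \Cref{FreeSemigroupTHM}, hence (via the bounded perturbation $\mathbf{L}_{\beta}'$) the generators $\mathbf{L}_{\beta}$ of \Cref{SGbeta}, are realized at every integer regularity $m\geq k$ as the closure of the \emph{same} operation $\mathbf{L}_{d,p}+\mathbf{L}_{\beta}'$ on the common core $C^{\infty}(\overline{\BB^{d}_{R}})^{2}$. Since for $m'\geq m$ the $\mathfrak{H}^{m'}(\BB^{d}_{R})$-graph norm of $\mathbf{L}_{d,p}$ dominates the $\mathfrak{H}^{m}(\BB^{d}_{R})$-graph norm, the domain of the closure of $\mathbf{L}_{d,p}+\mathbf{L}_{\beta}'$ in $\mathfrak{H}^{m'}(\BB^{d}_{R})$ is contained in its domain in $\mathfrak{H}^{m}(\BB^{d}_{R})$, the two closures agreeing there; comparing the unique classical solution of the abstract Cauchy problem at the two levels then shows that $\mathbf{S}_{\beta}(\tau)$ on $\mathfrak{H}^{m}(\BB^{d}_{R})$ leaves $\mathfrak{H}^{m'}(\BB^{d}_{R})$ invariant and restricts there to $\mathbf{S}_{\beta}(\tau)$ on $\mathfrak{H}^{m'}(\BB^{d}_{R})$. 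Consequently the Duhamel identity \eqref{CauchyMild} is valid verbatim in $\mathfrak{H}^{m}(\BB^{d}_{R})$ for every $m\geq k$. Now I bootstrap: assume $\mathbf{u}\in C([0,\infty),\mathfrak{H}^{m}(\BB^{d}_{R}))$. Repeating the argument of \Cref{NonLinLip} at regularity $m$ (legitimate because $m\geq k>\tfrac{d}{2}$, so $H^{m}(\BB^{d}_{R})$ is an algebra) gives that $\tau\mapsto\mathbf{N}_{\beta^{*}}(\mathbf{u}(\tau))$ is continuous into $\mathfrak{H}^{m+1}(\BB^{d}_{R})$ with $\|\mathbf{N}_{\beta^{*}}(\mathbf{u}(\tau))\|_{\mathfrak{H}^{m+1}(\BB^{d}_{R})}\lesssim\|\mathbf{u}(\tau)\|_{\mathfrak{H}^{m}(\BB^{d}_{R})}^{2}$; since also $\mathbf{f}_{0}\in\mathfrak{H}^{m+1}(\BB^{d}_{R})$, the right-hand side of \eqref{CauchyMild} read in $\mathfrak{H}^{m+1}(\BB^{d}_{R})$ is well-defined and continuous in $\tau$, hence $\mathbf{u}\in C([0,\infty),\mathfrak{H}^{m+1}(\BB^{d}_{R}))$ with norm bounded on every bounded $\tau$-interval. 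Iterating, $\mathbf{u}\in C([0,\infty),\mathfrak{H}^{m}(\BB^{d}_{R}))$ for all $m\geq k$, and by Sobolev embedding $\mathbf{u}(\tau,\,.\,)\in C^{\infty}(\overline{\BB^{d}_{R}})^{2}$ for every $\tau\geq 0$.

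For the time regularity, note $\mathbf{f}_{0}\in\mathfrak{H}^{k+1}(\BB^{d}_{R})\subseteq\dom(\overline{\mathbf{L}_{\beta^{*}}})$ and that $\tau\mapsto\mathbf{N}_{\beta^{*}}(\mathbf{u}(\tau))$ is continuous into $\mathfrak{H}^{k+1}(\BB^{d}_{R})\subset\dom(\overline{\mathbf{L}_{\beta^{*}}})$, so the standard regularity theory for mild solutions of inhomogeneous abstract Cauchy problems (see \cite{MR1721989}) promotes \eqref{CauchyMild} to $\mathbf{u}\in C^{1}([0,\infty),\mathfrak{H}^{k}(\BB^{d}_{R}))$ with $\pd_{\tau}\mathbf{u}=\overline{\mathbf{L}_{\beta^{*}}}\mathbf{u}+\mathbf{N}_{\beta^{*}}(\mathbf{u})$; by the previous paragraph the same reasoning applies at every level $m\geq k$, giving $\mathbf{u}\in C^{1}([0,\infty),\mathfrak{H}^{m}(\BB^{d}_{R}))$. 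Differentiating this identity repeatedly in $\tau$ and using that the derivative of the superposition operator $\mathbf{N}_{\beta^{*}}$ also maps $\mathfrak{H}^{m}_{\delta}(\BB^{d}_{R})$ into $\mathfrak{H}^{m+1}(\BB^{d}_{R})$ (again by the structure of the remainder in \Cref{locLip}, together with $u_{1}$ staying in the range where $F$ is smooth, as in the proof of \Cref{NonLinLip}), an induction on $j$ yields $\mathbf{u}\in C^{j}([0,\infty),\mathfrak{H}^{m}(\BB^{d}_{R}))$ for all $j$ and all $m\geq k$. Sobolev embedding $H^{m}(\BB^{d}_{R})\hookrightarrow C^{\ell}(\overline{\BB^{d}_{R}})$ then shows that all mixed derivatives $\pd_{\tau}^{j}\pd^{\alpha}\mathbf{u}$ exist and are continuous on $[0,\infty)\times\overline{\BB^{d}_{R}}$, that is, $\mathbf{u}\in C^{\infty}(\overline{(0,\infty)\times\BB^{d}_{R}})^{2}$. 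Since now $\mathbf{u}(\tau,\,.\,)\in C^{\infty}(\overline{\BB^{d}_{R}})^{2}$, the closure $\overline{\mathbf{L}_{\beta^{*}}}$ acts on it as the classical operation $\mathbf{L}_{\beta^{*}}=\mathbf{L}_{d,p}+\mathbf{L}_{\beta^{*}}'$ of \Cref{WaveFlowOperation,LinearWaveEvolutionOperator}, and with $\mathbf{u}(0,\,.\,)=\mathbf{f}_{0}$ this is precisely \eqref{CauchyClassical}; the bound on $\|\mathbf{u}(\tau,\,.\,)\|_{\mathfrak{H}^{k}(\BB^{d}_{R})}$ is inherited from \Cref{StableNonlinFlowMild}, and uniqueness follows since any $C^{\infty}$ solution of \eqref{CauchyClassical} obeying that bound is a mild solution of \eqref{CauchyMild} with $\mathfrak{X}^{k}(\BB^{d}_{R})$-norm at most $\delta$, which is unique.

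The hard part is the spatial bootstrap, and more precisely the recognition that it must be run \emph{locally in $\tau$}: the perturbation $\mathbf{f}$ is only controlled in $\mathfrak{H}^{k}(\RR^{d})$, so $\mathbf{f}_{0}$ is not small — not even uniformly bounded — in any higher norm, and the global fixed-point scheme of \Cref{NonLinEvoStableSub} cannot be re-run at regularity $k+1$. All higher-order information therefore has to be extracted from the already constructed solution through the Duhamel formula and the restriction property of the semigroup, with the one-derivative smoothing of the power nonlinearity doing the work at every step; the exponential decay is retained only at the base regularity $k$, which is exactly what the theorem asserts. The remaining ingredients — the mild-to-classical upgrade and the passage from Sobolev bounds to honest joint smoothness — are routine.
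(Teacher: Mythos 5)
Your proposal is correct and follows essentially the same route as the paper: a spatial bootstrap through the Duhamel formula using the one-derivative gain of $\mathbf{N}_{\beta}$ (\Cref{NonLinLip}) together with the restriction property of the semigroup, then the mild-to-classical upgrade, an induction in the order of time derivatives, Schwarz's theorem for joint smoothness, and a Gr\"{o}nwall argument for uniqueness. The only difference is that you prove the semigroup restriction property directly from the common core and uniqueness of classical solutions, whereas the paper outsources it to \cite[Lemma C.1]{2022arXiv220706952G}; your argument for it is sound.
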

\begin{proof}
Pick $0<\delta_{\varepsilon}<1$ and $C_{\varepsilon}\geq 1$ from \Cref{StableNonlinFlowMild} and let $0<\delta\leq\delta_{\varepsilon}$ and $C\geq C_{\varepsilon}$. Let $\mathbf{u}_{\beta^{*},T^{*}} \in C([0,\infty),\mathfrak{H}^{k}(\BB^{d}_{R}))$ with $\| \mathbf{u}_{\beta^{*},T^{*}}(\tau,\,.\,) \|_{\mathfrak{H}^{k}(\BB^{d}_{R})} \leq \delta \ee^{(-\omega_{p}+\varepsilon)\tau}$ be the mild solution to
\begin{equation*}
\mathbf{u}_{\beta^{*},T^{*}}(\tau) = \mathbf{S}_{\beta^{*}}(\tau) \mathbf{U}_{\beta^{*},T^{*}}(\mathbf{f}) + \int_{0}^{\tau} \mathbf{S}_{\beta^{*}}(\tau-\tau')\mathbf{N}_{\beta^{*}}(\mathbf{u}_{\beta^{*},T^{*}}(\tau')) \dd \tau' \,, \quad \tau \geq 0 \,,
\end{equation*}
accordingly. We notice with \Cref{NonLinLip} that $\mathbf{N}_{\beta}(\mathbf{u}_{\beta^{*},T^{*}}(\tau)) \in \mathfrak{H}^{k+1}(\BB^{d}_{R}) \subset \mathfrak{H}^{k}(\BB^{d}_{R})$. Using this with \cite[Lemma C.1]{2022arXiv220706952G} and the above fixed-point equation, we conclude $\mathbf{u}_{\beta^{*},T^{*}}(\tau) \in \mathfrak{H}^{k+1}(\BB^{d}_{R})$. For all $\tau\geq 0$, induction and Sobolev embedding implies $\mathbf{u}_{\beta^{*},T^{*}}(\tau)\in C^{\infty}(\overline{\BB^{d}_{R}})^{2}$. Quoting \cite[p. 189, Theorem 1.6]{MR710486}, we have that our mild solution is a classical solution to the abstract Cauchy problem, hence
\begin{align*}
&
\pd_{\tau} \mathbf{u}_{\beta^{*},T^{*}}(\tau) \\&\indent=
\mathbf{S}_{\beta}(\tau) \mathbf{L}_{\beta} \mathbf{U}_{\beta^{*},T^{*}}(\mathbf{f}) + \int_{0}^{\tau} \mathbf{S}_{\beta}(\tau - \tau') \mathbf{L}_{\beta} \mathbf{N}_{\beta} ( \mathbf{u}_{\beta^{*},T^{*}}(\tau') ) \dd\tau' + \mathbf{N}_{\beta} ( \mathbf{u}_{\beta^{*},T^{*}}(\tau) )
\end{align*}
is satisfied for all $\tau \geq 0$. So $\mathbf{u}_{\beta^{*},T^{*}}\in C^{1}\big([0,\infty),\mathfrak{H}^{k}(\BB^{d}_{R})\big)$ and the above equation also implies for each $\tau\geq 0$ that $\pd_{\tau}\mathbf{u}_{\beta^{*},T^{*}}(\tau)\in C^{\infty}(\overline{\BB^{d}_{R}})^{2}$. Inductively, for all $m\in\NN$ we have $\mathbf{u}_{\beta^{*},T^{*}} \in C^{m}\big([0,\infty),\mathfrak{H}^{k}(\BB^{d}_{R})\big)$ and $\pd_{\tau}^{m}\mathbf{u}_{\beta^{*},T^{*}}(\tau)\in C^{\infty}(\overline{\BB^{d}_{R}})^{2}$, for each $\tau \geq 0$. By Sobolev embedding, all derivatives hold pointwise and Schwarz's theorem \cite[p. 235, Theorem 9.41]{MR0385023} yields $(\tau,\xi) \mapsto \mathbf{u}_{\beta^{*},T^{*}}(\tau)(\xi)$ is a jointly smooth map on $[0,\infty)\times \overline{\BB^{d}_{R}}$.\par
\medskip
To see uniqueness for smooth solutions, suppose that $\widetilde{\mathbf{u}}_{\beta^{*},T^{*}} \in C^{\infty}\big(\overline{(0,\infty)\times\BB^{d}_{R}}\big)^{2}$ with $\| \widetilde{\mathbf{u}}_{\beta^{*},T^{*}}(\tau,\,.\,)  \|_{\mathfrak{H}^{k}(\BB^{d}_{R})} \leq \delta \ee^{(-\omega_{p}+\varepsilon)\tau}$ for all $\tau \geq 0$ is another solution to the abstract Cauchy problem with the same initial data. Then $\widetilde{\mathbf{u}}_{\beta^{*},T^{*}}$ is also a mild solution and
\begin{align*}
&
\widetilde{\mathbf{u}}_{\beta^{*},T^{*}}(\tau,\,.\,) - \mathbf{u}_{\beta^{*},T^{*}}(\tau,\,.\,) \\&\indent=
\int_{0}^{\tau} \mathbf{S}_{\beta^{*}}(\tau-\tau') \Big( \mathbf{N}_{\beta^{*}}(\widetilde{\mathbf{u}}_{\beta^{*},T^{*}}(\tau',\,.\,)) - \mathbf{N}_{\beta^{*}}(\mathbf{u}_{\beta^{*},T^{*}}(\tau',\,.\,)) \Big) \dd \tau' \,.
\end{align*}
\Cref{stableEvo,NonLinLip} yield the estimate
\begin{align*}
&
\ee^{-\tau}\| \widetilde{\mathbf{u}}_{\beta^{*},T^{*}}(\tau,\,.\,) - \mathbf{u}_{\beta^{*},T^{*}}(\tau,\,.\,) \|_{\mathfrak{H}^{k}(\BB^{d}_{R})} \\&\indent\lesssim
\int_{0}^{\tau} \ee^{-\tau'} \| \widetilde{\mathbf{u}}_{\beta^{*},T^{*}}(\tau',\,.\,) - \mathbf{u}_{\beta^{*},T^{*}}(\tau',\,.\,) \|_{\mathfrak{H}^{k}(\BB^{d}_{R})} \dd \tau'
\end{align*}
for all $\tau \geq 0$, so Gr\"{o}nwall's lemma implies uniqueness in the class of smooth functions.
\end{proof}
\subsection{Proof of stability of the ODE blowup}
All that remains is to unwind \Cref{StableNonlinFlowClassic}.
\begin{proof}[Proof of \Cref{MainTHM}]
Fix $0<\varepsilon<\omega_{p}$. From \Cref{StableNonlinFlowClassic}, we pick $0<\delta_{0}<1$ and $C_{0}\geq 1$. Then, for any $0<\delta\leq\delta_{0}$, $C\geq C_{0}$ and real-valued $(f,g)\in C^{\infty}(\RR^{d})^{2}$ with $\| (f,g) \|_{H^{k}(\RR^{d})\times H^{k-1}(\RR^{d})} \leq \frac{\delta}{C^{2}}$ there exists a unique $( u_{\beta^{*},T^{*},1},u_{\beta^{*},T^{*},2} ) \in C^{\infty}\big(\overline{(0,\infty)\times\BB^{d}_{R}}\big)^{2}$ that is a jointly smooth classical solution to the abstract Cauchy problem \eqref{CauchyClassical}. The nonlinear relation \eqref{NonlinWaveRel} implies that $u\in C^{\infty}\big(\Omega^{1,d}_{R}(T^{*})\big)$ given by
\begin{equation*}
u(t,x) \coloneqq (T^{*}-t)^{-s_{p}} u_{\beta^{*},T^{*},1} \Big( \log \tfrac{T^{*}}{T^{*}-t}, \tfrac{x}{T^{*}-t} \Big)
\end{equation*}
is related to $u_{\beta^{*},T^{*},2}$ via
\begin{equation*}
\pd_{t} u(t,x) = (T^{*}-t)^{-s_{p}-1} u_{\beta^{*},T^{*},2} \Big( \log \tfrac{T^{*}}{T^{*}-t}, \tfrac{x}{T^{*}-t} \Big)
\end{equation*}
and is precisely the solution to the Cauchy problem \eqref{CauchyProblem2}. Thus,
\begin{equation*}
\psi \coloneqq \psi_{\beta^{*},T^{*}}^{*} + u \in C^{\infty}\big(\Omega^{1,d}_{R}(T^{*})\big)
\end{equation*}
is the unique solution to the Cauchy problem posed in \Cref{MainTHM}. Moreover, by scaling of the homogeneous seminorms, we infer from \Cref{StableNonlinFlowClassic} the bounds
\begin{align*}
(T^{*}-t)^{ - \frac{d}{2} + s_{p} + s }\| \psi(t,\,.\,) - \psi_{\beta^{*},T^{*}}^{*}(t,\,.\,) \|_{\dot{H}^{s}(\BB^{d}_{R(T^{*}-t)})} &=
\| u_{1} \Big( \log \tfrac{T^{*}}{T^{*}-t}, \,.\, \Big) \|_{\dot{H}^{s}(\BB^{d}_{R})} \\&\leq
\delta \left( \tfrac{T^{*}-t}{T^{*}} \right)^{\omega_{p}-\varepsilon}
\end{align*}
for $s=0,1,\ldots,k$, and
\begin{align*}
&
(T^{*}-t)^{ - \frac{d}{2} + s_{p} + s } \| \pd_{t}\psi(t,\,.\,) - \pd_{t}\psi_{\beta^{*},T^{*}}^{*}(t,\,.\,) \|_{\dot{H}^{s-1}(\BB^{d}_{R(T^{*}-t)})} \\&\indent=
\| u_{2} \Big( \log \tfrac{T^{*}}{T^{*}-t}, \,.\, \Big) \|_{\dot{H}^{s-1}(\BB^{d}_{R})} \\&\indent\leq
\delta \left( \tfrac{T^{*}-t}{T^{*}} \right)^{\omega_{p}-\varepsilon}
\end{align*}
for $s=1,\ldots,k$.
\end{proof}
\appendix
\section{Uniform growth bounds for semigroups}
\label{AppendixA}
In order to keep the line of reasoning self-contained, we study uniform growth bounds for semigroups that are generated by a family of compact perturbations of a given generator of a semigroup. Our conclusion in \Cref{GearhartPruessGreinerUniform} below is an adaptation of the Gearhart-Pr\"{u}ss-Greiner Theorem to this situation. Throughout this section, we assume that
\begin{enumerate}[itemsep=1em,topsep=1em,label=(a\arabic*)]
\item\label{a1} $\mathfrak{H}$ is an infinite-dimensional Hilbert space with $\mathfrak{L}(\mathfrak{H})$ the set of all bounded linear operators on $\mathfrak{H}$ equipped with the operator norm,
\item\label{a2} $\mathbf{L}: \dom(\mathbf{L}) \subseteq \mathfrak{H} \rightarrow \mathfrak{H}$ is the generator of a strongly continuous semigroup $\mathbf{S}: \RR_{\geq 0} \rightarrow \mathfrak{L}(\mathfrak{H})$,
\item\label{a3} $\mathbf{L}_{\beta}' \in \mathfrak{L}(\mathfrak{H})$ is a compact linear operator for each parameter $\beta\in B$ in some non-empty compact set $B\subset \RR^{n}$ and the map $B\rightarrow \mathfrak{L}(\mathfrak{H})$, $\beta\mapsto \mathbf{L}_{\beta}'$, is continuous.
\end{enumerate}
To begin with, let us briefly recall some facts about strongly continuous semigroups and perturbations thereof. Every strongly continuous semigroup $\mathbf{S}:\RR_{\geq 0} \rightarrow \mathfrak{L}(\mathfrak{H})$ is automatically exponentially bounded on $\RR_{\geq 0}$, i.e., there exist $\omega\in\RR$ and $M\geq 1$ such that
\begin{equation*}
\| \mathbf{S}(\tau) \|_{\mathfrak{L}(\mathfrak{H})} \leq M \ee^{\omega\tau}
\end{equation*}
for all $\tau \geq 0$, see \cite[p. 39, Proposition 5.5]{MR1721989}. Thus, the quantity
\begin{equation}
\label{GrowthBound}
\omega(\mathbf{S}) \coloneqq \inf\left\{ \omega\in\RR \mid \exists\, M_{\omega} \geq 1 : \| \mathbf{S}(\tau) \|_{\mathfrak{L}(\mathfrak{H})} \leq M_{\omega} \ee^{\omega\tau} \text{ for all } \tau\geq 0 \right\}
\end{equation}
is well-defined and called the \emph{growth bound} of the semigroup. Moreover, the Bounded Perturbation Theorem \cite[p. 158]{MR1721989} asserts that for each $\beta\in B$ the operator
\begin{equation*}
\mathbf{L}_{\beta} \coloneqq \mathbf{L} + \mathbf{L}_{\beta}' \,, \qquad \dom(\mathbf{L}_{\beta}) = \dom(\mathbf{L}) \,,
\end{equation*}
is the generator of a strongly continuous semigroup $\mathbf{S}_{\beta} : \RR_{\geq 0} \rightarrow \mathfrak{L}(\mathfrak{H})$ which satisfies for each fixed $\omega > \omega(\mathbf{S})$ the estimate
\begin{equation}
\label{UniformGrowthEstimate}
\| \mathbf{S}_{\beta}(\tau) \|_{\mathfrak{L}(\mathfrak{H})} \leq M_{\omega} \ee^{\left(\omega + M_{\omega} \|\mathbf{L}_{\beta}'\|_{\mathfrak{L}(\mathfrak{H})} \right) \tau}
\end{equation}
for all $\tau\geq 0$ and all $\beta\in B$. In practice, however, perturbative arguments usually require improved versions of \eqref{UniformGrowthEstimate} that are uniform with respect to the parameter $\beta\in B$. This can be achieved by a separation of the spectra of the generators $\mathbf{L}_{\beta}$. In this respect, let $\omega_{0}>\omega(\mathbf{S})$ be arbitrary but fixed. It follows from \cite[Theorem B.1]{MR4469070} that for each $\beta\in B$, the sets
\begin{equation*}
\Sigma_{\beta} \coloneqq \sigma(\mathbf{L}_{\beta}) \cap \overline{\mathbb{H}_{\omega_{0}}}
\end{equation*}
are finite and each $\lambda\in\Sigma_{\beta}$ is an isolated eigenvalue of $\mathbf{L}_{\beta}$ with finite algebraic multiplicity. Consequently, the spectral projections $\mathbf{P}_{\lambda,\beta} \in \mathfrak{L}(\mathfrak{H})$ given by
\begin{equation*}
\mathbf{P}_{\lambda,\beta} = \frac{1}{2\pi\ii} \int_{\pd\mathbb{D}_{r_{\beta}}(\lambda)} \mathbf{R}_{\mathbf{L}_{\beta}}(z) \dd z \,, \qquad \lambda \in \Sigma_{\beta} \,,
\end{equation*}
with $r_{\beta} > 0$ chosen such that $\overline{\mathbb{D}_{r_{\beta}}(\lambda)} \subset \varrho(\mathbf{L}_{\beta})$ are disjoint for different $\lambda$, have finite rank. We remark that since $\omega_{0}$ is fixed and does not vary, we can omit it in our notation. With this, we define the total projection.
\begin{definition}
\label{TotalProjection}
Fix $\omega_{0}>\omega(\mathbf{S})$. For $\beta\in B$ we define
\begin{equation*}
\mathbf{P}_{\beta} \coloneqq \sum_{\lambda\in \Sigma_{\beta}} \mathbf{P}_{\lambda,\beta} \in \mathfrak{L}(\mathfrak{H}) \,.
\end{equation*}
\end{definition}
Since $\mathbf{S}_{\beta}(\tau)$ commutes with its generator $\mathbf{L}_{\beta}$, the linear subspace $\mathfrak{H}_{\beta} \coloneqq \ran(\mathbf{I} - \mathbf{P}_{\beta}) \subset \mathfrak{H}$ is $\mathbf{S}_{\beta}(\tau)$-invariant. Hence the subspace semigroups
\begin{equation}
\label{SubspaceSemigroup}
\RR_{\geq 0} \rightarrow \mathfrak{L}(\mathfrak{H}_{\beta}) \,, \quad \tau \mapsto \mathbf{S}_{\beta}(\tau)\restriction_{\mathfrak{H}_{\beta}} = \mathbf{S}_{\beta}(\tau)( \mathbf{I} - \mathbf{P}_{\beta} )\restriction_{\mathfrak{H}_{\beta}} \,,
\end{equation}
with generator $\mathbf{L}_{\beta}\restriction_{\mathfrak{H}_\beta}$, are well-defined. We recall from \cite[p. 178]{MR1335452} the property $\sigma(\mathbf{L}_{\beta}\restriction_{\mathfrak{H}_\beta}) = \sigma(\mathbf{L}_{\beta}) \setminus \Sigma_{\beta}$ and that the map
\begin{equation}
\label{SubspaceResolvent}
\varrho(\mathbf{L}_{\beta}\restriction_{\mathfrak{H}_\beta}) \rightarrow \mathfrak{L}(\mathfrak{H}) \,, \quad z \mapsto
\mathbf{R}_{\mathbf{L}_{\beta}\restriction_{\mathfrak{H}_\beta}}(z) =
\mathbf{R}_{\mathbf{L}_{\beta}}(z)\restriction_{\mathfrak{H}_\beta} =
\mathbf{R}_{\mathbf{L}_{\beta}}(z) ( \mathbf{I} - \mathbf{P}_{\beta} ) \restriction_{\mathfrak{H}_\beta} \,,
\end{equation}
is analytic. In particular, $\overline{\mathbb{H}_{\omega_{0}}} \subseteq \varrho(\mathbf{L}_{\beta}\restriction_{\mathfrak{H}_\beta})$. Our aim is to prove a criterion for an exponential growth bound in $\tau\geq 0$ uniformly with respect to the parameter $\beta\in B$ for the subspace semigroups \eqref{SubspaceSemigroup} in terms of a uniform bound for the subspace resolvents \eqref{SubspaceResolvent} in a right half-plane. We quantify this with a uniform analogue of \eqref{GrowthBound}.
\begin{definition}
\label{UnifExpStab}
Fix $\omega_{0}>\omega(\mathbf{S})$. For $\beta\in B$ let
\begin{equation*}
\mathbf{S}\restriction_{\beta}: \RR_{\geq 0} \rightarrow \mathfrak{L}(\mathfrak{H}_{\beta}) \,, \qquad
\mathbf{S}\restriction_{\beta}(\tau) \coloneqq \ee^{-\omega_{0}\tau} \mathbf{S}_{\beta}(\tau) \restriction_{\mathfrak{H}_{\beta}} \,,
\end{equation*}
be a family of rescaled subspace semigroups with generator $\mathbf{L}\restriction_{\beta} \coloneqq \mathbf{L}_{\beta}\restriction_{\mathfrak{H}_\beta} - \omega_{0} \mathbf{I}$. We define the \emph{uniform growth bound} by
\begin{align*}
\omega(\mathbf{S}\restriction) \coloneqq \inf\Big\{ \omega\in\RR \mid \exists\, M_{\omega} \geq 1 : \| \mathbf{S}\restriction_{\beta}(\tau) \|_{\mathfrak{L}(\mathfrak{H}_{\beta})} & \leq M_{\omega} \ee^{\omega\tau} \\&
\text{ for all } \beta\in B \text{ and all } \tau\geq 0 \Big\} \,.
\end{align*}
\end{definition}
Note with \eqref{UniformGrowthEstimate} and assumption \ref{a3} that $\omega(\mathbf{S}\restriction)<\infty$. The uniform growth bound is characterized by the following formula.
\begin{lemma}
\label{UniformGrowthBoundFormula}
We have
\begin{equation*}
\omega(\mathbf{S}\restriction) =
\inf_{\tau>0} \tau^{-1} \log \sup_{\beta\in B} \| \mathbf{S}\restriction_{\beta}(\tau) \|_{\mathfrak{L}(\mathfrak{H}_{\beta})} =
\lim_{\tau\to\infty} \tau^{-1} \log \sup_{\beta\in B} \| \mathbf{S}\restriction_{\beta}(\tau) \|_{\mathfrak{L}(\mathfrak{H}_{\beta})} \,.
\end{equation*}
\end{lemma}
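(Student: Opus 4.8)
The plan is to reduce \Cref{UniformGrowthBoundFormula} to the classical growth-bound identity for a single strongly continuous semigroup (cf.\ \cite{MR1721989}), carried out uniformly in the parameter. Set
\begin{equation*}
\phi(\tau) \coloneqq \sup_{\beta\in B} \| \mathbf{S}\restriction_{\beta}(\tau) \|_{\mathfrak{L}(\mathfrak{H}_{\beta})} \,, \qquad \tau\geq 0 \,,
\end{equation*}
so that, by \Cref{UnifExpStab}, $\omega(\mathbf{S}\restriction) = \inf\{ \omega\in\RR \mid \exists\, M_{\omega}\geq 1 : \phi(\tau) \leq M_{\omega}\ee^{\omega\tau} \text{ for all } \tau\geq 0 \}$. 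The claim will then follow from three facts: $\phi$ is submultiplicative; $\phi$ is finite-valued and bounded on every compact interval; and Fekete's subadditive lemma identifies $\inf_{\tau>0}\tau^{-1}\log\phi(\tau)$ with $\lim_{\tau\to\infty}\tau^{-1}\log\phi(\tau)$, which in turn coincides with $\omega(\mathbf{S}\restriction)$.

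First I would check submultiplicativity and local boundedness. Because $\mathfrak{H}_{\beta}$ is $\mathbf{S}_{\beta}(\tau)$-invariant and $\mathbf{S}_{\beta}$ is a semigroup, each rescaled restriction $\mathbf{S}\restriction_{\beta}$ is itself a strongly continuous semigroup on $\mathfrak{H}_{\beta}$, whence $\mathbf{S}\restriction_{\beta}(\tau+\sigma) = \mathbf{S}\restriction_{\beta}(\tau)\,\mathbf{S}\restriction_{\beta}(\sigma)$ and $\| \mathbf{S}\restriction_{\beta}(\tau+\sigma) \|_{\mathfrak{L}(\mathfrak{H}_{\beta})} \leq \phi(\tau)\phi(\sigma)$ for every $\beta\in B$; taking the supremum over $\beta$ gives $\phi(\tau+\sigma)\leq\phi(\tau)\phi(\sigma)$. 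For local boundedness one invokes the remark following \Cref{UnifExpStab}: estimate \eqref{UniformGrowthEstimate} together with assumption \ref{a3} (compactness of $B$ and continuity of $\beta\mapsto\mathbf{L}_{\beta}'$, so that $\sup_{\beta\in B}\|\mathbf{L}_{\beta}'\|_{\mathfrak{L}(\mathfrak{H})}<\infty$) shows $\omega(\mathbf{S}\restriction)<\infty$, and hence there are $\omega\in\RR$, $M_{\omega}\geq 1$ with $\phi(\tau)\leq M_{\omega}\ee^{\omega\tau}$ for all $\tau\geq 0$. Consequently $g(\tau)\coloneqq\log\phi(\tau)$ is subadditive and bounded above on compact intervals (with the convention $\log 0 = -\infty$, which only occurs in the degenerate case where all three quantities are $-\infty$).

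Next comes Fekete's lemma and the identification. For fixed $\tau_{0}>0$ and $\tau\geq\tau_{0}$ I would write $\tau = n\tau_{0}+r$ with $n=\lfloor\tau/\tau_{0}\rfloor\geq 1$ and $r\in[0,\tau_{0})$; subadditivity yields $g(\tau)\leq n\,g(\tau_{0}) + \sup_{[0,\tau_{0}]} g$, so dividing by $\tau$, letting $\tau\to\infty$ (where $n\tau_{0}/\tau\to 1$), and distinguishing the cases $g(\tau_{0})\geq 0$ and $g(\tau_{0})<0$ gives $\limsup_{\tau\to\infty}\tau^{-1}g(\tau)\leq\tau_{0}^{-1}g(\tau_{0})$. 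Taking the infimum over $\tau_{0}>0$ and combining with the trivial inequality $\inf_{\tau>0}\tau^{-1}g(\tau)\leq\liminf_{\tau\to\infty}\tau^{-1}g(\tau)$ shows that $\lim_{\tau\to\infty}\tau^{-1}g(\tau)$ exists and equals $\omega_{\ast}\coloneqq\inf_{\tau>0}\tau^{-1}g(\tau)$. Finally, if $\omega>\omega(\mathbf{S}\restriction)$ then $\phi(\tau)\leq M_{\omega}\ee^{\omega\tau}$ forces $\tau^{-1}g(\tau)\leq\tau^{-1}\log M_{\omega}+\omega\to\omega$, so $\omega_{\ast}\leq\omega(\mathbf{S}\restriction)$; conversely, if $\omega>\omega_{\ast}$ then $\phi(\tau)\leq\ee^{\omega\tau}$ for all large $\tau$ while $\phi$ is bounded on the remaining compact interval, so $\phi(\tau)\leq M_{\omega}\ee^{\omega\tau}$ for all $\tau\geq 0$ with some $M_{\omega}\geq 1$, i.e.\ $\| \mathbf{S}\restriction_{\beta}(\tau) \|_{\mathfrak{L}(\mathfrak{H}_{\beta})}\leq M_{\omega}\ee^{\omega\tau}$ uniformly in $\beta\in B$, giving $\omega(\mathbf{S}\restriction)\leq\omega$ and hence $\omega(\mathbf{S}\restriction)\leq\omega_{\ast}$. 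Together these inequalities yield $\omega(\mathbf{S}\restriction)=\omega_{\ast}=\inf_{\tau>0}\tau^{-1}\log\phi(\tau)=\lim_{\tau\to\infty}\tau^{-1}\log\phi(\tau)$.

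I expect no genuine obstacle here: the whole argument is the classical semigroup computation with a supremum over $\beta\in B$ threaded through it, which is harmless precisely because submultiplicativity and the exponential bound are both preserved under taking suprema. The only point that deserves care is that the Fekete step requires $\phi$ (equivalently $g$) to be finite-valued and bounded on finite intervals, and this is supplied once and for all by the already-established finiteness $\omega(\mathbf{S}\restriction)<\infty$ — itself a consequence of \eqref{UniformGrowthEstimate} and the compactness hypothesis \ref{a3}.
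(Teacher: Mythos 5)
Your proposal is correct and follows essentially the same route as the paper: both reduce the claim to subadditivity and local boundedness of $\tau\mapsto\log\sup_{\beta\in B}\|\mathbf{S}\restriction_{\beta}(\tau)\|_{\mathfrak{L}(\mathfrak{H}_{\beta})}$, apply the subadditive-limit (Fekete) lemma, and then run the two-sided comparison with the uniform growth bound. The only cosmetic difference is that you prove the Fekete step by hand where the paper cites \cite[p.~251, Lemma 2.3]{MR1721989}.
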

\begin{proof}
We adapt the proof of \cite[p. 251, Proposition 2.2]{MR1721989} to our situation. First, note that
\begin{equation}
\label{UniformExponentialGrowthBound}
\omega(\mathbf{S}\restriction) =
\inf\left\{ \omega\in\RR \mid \exists\, M_{\omega} \geq 1 : \sup_{\beta\in B} \| \mathbf{S}\restriction_{\beta}(\tau) \|_{\mathfrak{L}(\mathfrak{H}_{\beta})} \leq M_{\omega} \ee^{\omega\tau} \text{ for all } \tau\geq 0 \right\} \,.
\end{equation}
The map $\tau\mapsto\log\sup_{\beta\in B} \| \mathbf{S}\restriction_{\beta}(\tau) \|_{\mathfrak{L}(\mathfrak{H}_{\beta})}$ is well-defined on $[0,\infty)$, bounded on compact intervals and sub-additive. Thus \cite[p. 251, Lemma 2.3]{MR1721989} asserts that
\begin{equation}
\label{omegaInfty}
\omega_{\infty} \coloneqq \inf_{\tau>0} \tau^{-1} \log \sup_{\beta\in B} \| \mathbf{S}\restriction_{\beta}(\tau) \|_{\mathfrak{L}(\mathfrak{H}_{\beta})} = \lim_{\tau\to\infty} \tau^{-1} \log \sup_{\beta\in B} \| \mathbf{S}\restriction_{\beta}(\tau) \|_{\mathfrak{L}(\mathfrak{H}_{\beta})}
\end{equation}
exists. From this we get for all $\tau > 0$
\begin{equation*}
\omega_{\infty} \leq \tau^{-1} \log \sup_{\beta\in B} \| \mathbf{S}\restriction_{\beta}(\tau) \|_{\mathfrak{L}(\mathfrak{H}_{\beta})} \,,
\qquad\text{i.e.,}\qquad
\sup_{\beta\in B} \| \mathbf{S}\restriction_{\beta}(\tau) \|_{\mathfrak{L}(\mathfrak{H}_{\beta})} \geq \ee^{\omega_{\infty} \tau} \,.
\end{equation*}
It follows from \Cref{UniformExponentialGrowthBound} that $\omega_{\infty} \leq \omega(\mathbf{S}\restriction)$.\par
\medskip
Conversely, let $\omega>\omega_{\infty}$. From \Cref{omegaInfty} it follows that there is a $\tau_{0} > 0$ such that for all $\tau\geq\tau_{0}$
\begin{equation*}
\tau^{-1} \log \sup_{\beta\in B} \| \mathbf{S}\restriction_{\beta}(\tau) \|_{\mathfrak{L}(\mathfrak{H}_{\beta})} \leq \omega \,,
\qquad\text{i.e.,}\qquad
\sup_{\beta\in B} \| \mathbf{S}\restriction_{\beta}(\tau) \|_{\mathfrak{L}(\mathfrak{H}_{\beta})} \leq \ee^{\omega\tau} \,.
\end{equation*}
By \Cref{SGbeta}, $(\tau,\beta)\mapsto\| \mathbf{S}\restriction_{\beta}(\tau) \|_{\mathfrak{L}(\mathfrak{H}_{\beta})}$ is bounded on $[0,\tau_{0}]\times B$. From this we conclude that there is an $M\geq 1$ such that
\begin{equation*}
\sup_{\beta\in B} \| \mathbf{S}\restriction_{\beta}(\tau) \|_{\mathfrak{L}(\mathfrak{H}_{\beta})} \leq M \ee^{\omega\tau}
\end{equation*}
for all $\tau\geq 0$. So $\omega\geq \omega(\mathbf{S}\restriction)$ and since $\omega>\omega_{\infty}$ was arbitrary, also $\omega_{\infty} \geq  \omega(\mathbf{S}\restriction)$.
\end{proof}
Adapting the steps from the proof of the Gearhart-Pr\"{u}ss-Greiner Theorem \cite[p. 302]{MR1721989} to the present situation leads to a practical criterion for the existence of an improved exponential growth estimate for the subspace semigroup \eqref{SubspaceSemigroup} in $\tau \geq 0$ that is uniform with respect to the parameter $\beta\in B$.
\begin{theorem}
\label{GearhartPruessGreinerUniform}
Assume \ref{a1}-\ref{a3}. For fixed $\omega_{0} > \omega(\mathbf{S})$ and for $\beta\in B$ let $\mathbf{P}_{\beta}\in\mathfrak{L}(\mathfrak{H})$ be the spectral projection as in \Cref{TotalProjection}. Then, there exist $\omega<\omega_{0}$ and a constant $M_{\omega}\geq 1$ such that
\begin{equation*}
\| \mathbf{S}_{\beta}(\tau) ( \mathbf{I} - \mathbf{P}_{\beta} ) \|_{\mathfrak{L}(\mathfrak{H})} \leq M_{\omega} \ee^{\omega \tau} \| \mathbf{I} - \mathbf{P}_{\beta} \|_{\mathfrak{L}(\mathfrak{H})}
\end{equation*}
for all $\tau\geq 0$ and all $\beta\in B$, if and only if there exists a constant $C_{\omega_{0}} > 0$ such that the analytic maps $\varrho(\mathbf{L}_{\beta}) \setminus\Sigma_{\beta} \rightarrow \mathfrak{L}(\mathfrak{H})$, $z \mapsto \mathbf{R}_{\mathbf{L}_{\beta}}(z) ( \mathbf{I} - \mathbf{P}_{\beta} )$, satisfy
\begin{equation*}
\| \mathbf{R}_{\mathbf{L}_{\beta}}(z) ( \mathbf{I} - \mathbf{P}_{\beta} ) \|_{\mathfrak{L}(\mathfrak{H})} \leq C_{\omega_{0}}
\end{equation*}
for all $z\in\overline{\mathbb{H}_{\omega_{0}}}$ and all $\beta\in B$.
\end{theorem}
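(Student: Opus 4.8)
The plan is to reformulate the statement in terms of the rescaled subspace semigroups $\mathbf{S}\restriction_{\beta}$ from \Cref{UnifExpStab} and then run, with careful bookkeeping of the dependence on $\beta$, the two implications of the classical Gearhart--Pr\"{u}ss--Greiner theorem \cite[p. 302]{MR1721989}. A preliminary observation that I would record first is that $\sup_{\beta\in B}\|\mathbf{I}-\mathbf{P}_{\beta}\|_{\mathfrak{L}(\mathfrak{H})}<\infty$: by \cite[Theorem B.1]{MR4469070}, the Riemann--Lebesgue lemma applied to $\mathbf{R}_{\mathbf{L}}(\lambda)x=\int_{0}^{\infty}\ee^{-\lambda\tau}\mathbf{S}(\tau)x\dd\tau$, and the \emph{collective} compactness of $\{\mathbf{L}_{\beta}'\}_{\beta\in B}$ (which follows from compactness of $B$ and continuity of $\beta\mapsto\mathbf{L}_{\beta}'$), the sets $\Sigma_{\beta}$ lie in one fixed compact subset of $\overline{\mathbb{H}_{\omega_{0}}}$ independent of $\beta$, so a single contour $\Gamma$ represents all the $\mathbf{P}_{\beta}$, and continuity of $\beta\mapsto\mathbf{L}_{\beta}'$ together with the resolvent identity then gives continuity of $\beta\mapsto\mathbf{P}_{\beta}$ on the compact set $B$.

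For the easy direction, assume $\|\mathbf{S}_{\beta}(\tau)(\mathbf{I}-\mathbf{P}_{\beta})\|\leq M_{\omega}\ee^{\omega\tau}\|\mathbf{I}-\mathbf{P}_{\beta}\|$ with $\omega<\omega_{0}$. Then the decay bound forces $\sigma(\mathbf{L}_{\beta}\restriction_{\mathfrak{H}_{\beta}})\subseteq\{\Re z\leq\omega\}$, so for $\Re z>\omega$ the absolutely convergent integral $\int_{0}^{\infty}\ee^{-z\tau}\mathbf{S}_{\beta}(\tau)(\mathbf{I}-\mathbf{P}_{\beta})\dd\tau$ defines an analytic map which, agreeing with $\mathbf{R}_{\mathbf{L}_{\beta}}(z)(\mathbf{I}-\mathbf{P}_{\beta})$ (via \eqref{SubspaceResolvent}) for $\Re z$ large, equals it throughout by the identity theorem; estimating the integral on $\overline{\mathbb{H}_{\omega_{0}}}$ gives $\|\mathbf{R}_{\mathbf{L}_{\beta}}(z)(\mathbf{I}-\mathbf{P}_{\beta})\|\leq M_{\omega}(\omega_{0}-\omega)^{-1}\sup_{\beta\in B}\|\mathbf{I}-\mathbf{P}_{\beta}\|$, which is the uniform resolvent bound.

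For the hard direction, assume the uniform bound $\|\mathbf{R}_{\mathbf{L}_{\beta}}(z)(\mathbf{I}-\mathbf{P}_{\beta})\|\leq C_{\omega_{0}}$ on $\overline{\mathbb{H}_{\omega_{0}}}$; restricting to $\mathfrak{H}_{\beta}$ this says $\overline{\mathbb{H}_{\omega_{0}}}\subseteq\varrho(\mathbf{L}_{\beta}\restriction_{\mathfrak{H}_{\beta}})$ with resolvent norm $\leq C_{\omega_{0}}$ there. First I would use a Neumann series around points of $\overline{\mathbb{H}_{\omega_{0}}}$ to push this bound to the slightly larger half-plane $\overline{\mathbb{H}_{\omega}}$, $\omega\coloneqq\omega_{0}-(2C_{\omega_{0}})^{-1}$, with resolvent norm $\leq 2C_{\omega_{0}}$, uniformly in $\beta$ (this uses only the resolvent identity and the $\beta$-independence of $C_{\omega_{0}}$). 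Next I would pass to the semigroups $\widehat{\mathbf{S}}_{\beta}(\tau)\coloneqq\ee^{-\omega\tau}\mathbf{S}_{\beta}(\tau)\restriction_{\mathfrak{H}_{\beta}}$, whose generators have resolvent bounded by $2C_{\omega_{0}}$ on all of $\overline{\mathbb{H}_{0}}$ and which, by the Bounded Perturbation Theorem \cite[p. 158]{MR1721989} applied to the free semigroup $\mathbf{S}$, obey an a priori bound $\|\widehat{\mathbf{S}}_{\beta}(\tau)\|\leq M_{1}\ee^{\omega_{1}\tau}$ with $M_{1}$ independent of $\beta$ and $\omega_{1}<\infty$ uniform in $\beta$ (here $\sup_{\beta\in B}\|\mathbf{L}_{\beta}'\|<\infty$ enters). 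Then I would re-run the Hilbert-space Gearhart--Pr\"{u}ss argument --- Laplace representation of the resolvent, Plancherel's theorem, and a Cauchy--Schwarz step applied to $\langle\widehat{\mathbf{S}}_{\beta}(\tau)x,y\rangle$ --- keeping track of constants, to obtain $\sup_{\tau\geq 0}\|\widehat{\mathbf{S}}_{\beta}(\tau)\|\leq\Phi(C_{\omega_{0}},M_{1},\omega_{1})$ with $\Phi$ monotone in its arguments and otherwise $\beta$-independent; since all three inputs are $\beta$-uniform, $M_{\omega}\coloneqq\sup_{\beta\in B}\sup_{\tau\geq 0}\|\widehat{\mathbf{S}}_{\beta}(\tau)\|<\infty$. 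Unwinding, $\|\mathbf{S}_{\beta}(\tau)(\mathbf{I}-\mathbf{P}_{\beta})\|\leq\|\mathbf{S}_{\beta}(\tau)\restriction_{\mathfrak{H}_{\beta}}\|\,\|\mathbf{I}-\mathbf{P}_{\beta}\|=\ee^{\omega\tau}\|\widehat{\mathbf{S}}_{\beta}(\tau)\|\,\|\mathbf{I}-\mathbf{P}_{\beta}\|\leq M_{\omega}\ee^{\omega\tau}\|\mathbf{I}-\mathbf{P}_{\beta}\|$ with $\omega<\omega_{0}$, as claimed. (\Cref{UniformGrowthBoundFormula} and the data from \Cref{SGbeta} may also be invoked to phrase the uniformity more abstractly, but the route above is more direct.)

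The main obstacle will be the Gearhart--Pr\"{u}ss core step: one must reproduce the estimate while quantifying constants and, in particular, check that the bound on $\sup_{\tau}\|\widehat{\mathbf{S}}_{\beta}(\tau)\|$ can be taken to depend on $\beta$ only through the uniform resolvent bound and the $\beta$-uniform a priori growth constants --- once this is in hand, uniformity in $\beta$ is automatic and the rest is routine. A secondary technical point is the existence of the $\beta$-independent spectral contour in the preliminary step, which rests on collective compactness of $\{\mathbf{L}_{\beta}'\}_{\beta\in B}$ and is implicit in the rest of \Cref{AppendixA} anyway.
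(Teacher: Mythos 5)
Your proposal is correct and follows essentially the same route as the paper: both directions are obtained by re-running the classical Gearhart--Pr\"{u}ss--Greiner argument (Laplace representation of the resolvent, Plancherel, and a Cauchy--Schwarz step on $( \mathbf{S}\restriction_{\beta}(\tau)\mathbf{f} \mid \mathbf{g} )_{\mathfrak{H}}$) while checking that every constant depends on $\beta$ only through the uniform resolvent bound and the $\beta$-uniform a priori growth estimate \eqref{UniformGrowthEstimate}. The only cosmetic differences are that the paper deduces $\omega(\mathbf{S}\restriction)<0$ from the resulting $\tau^{-1}$-decay via the subadditivity formula of \Cref{UniformGrowthBoundFormula} instead of your preliminary Neumann-series shift of the half-plane, and it leaves the uniform control of $\|\mathbf{I}-\mathbf{P}_{\beta}\|_{\mathfrak{L}(\mathfrak{H})}$ implicit rather than recording it as a separate step.
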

\begin{proof}
\begin{enumerate}[wide,itemsep=1em,topsep=1em]
\item[``$\Rightarrow$'':] Put $\varepsilon_{0} \coloneqq \omega_{0} - \omega>0$. In terms of the rescaled subspace semigroups from \Cref{UnifExpStab}, we get from the assumption that there is a constant  $M_{\omega}\geq 1$ such that $\| \mathbf{S}\restriction_{\beta}(\tau) \|_{\mathfrak{L}(\mathfrak{H}_{\beta})} \leq M_{\omega} \ee^{-\varepsilon_{0}\tau}$ for all $\beta\in B$ and all $\tau\geq 0$. Then \cite[p. 55, Theorem 1.10]{MR1721989} yields the bound
\begin{equation*}
\| \mathbf{R}_{\mathbf{L} \restriction_{\beta}} (z) \|_{\mathfrak{L}(\mathfrak{H}_{\beta})} \leq \frac{M_{\omega}}{\Re(z) + \varepsilon_{0}}
\end{equation*}
for all $z\in \mathbb{H}_{-\varepsilon_{0}}$ and all $\beta\in B$. This implies with $\mathbf{R}_{\mathbf{L}_{\beta} \restriction_{\mathfrak{H}_{\beta}}} (z) = \mathbf{R}_{\mathbf{L} \restriction_{\beta}} (-\omega_{0} + z)$ the uniform resolvent bound
\begin{equation*}
\| \mathbf{R}_{\mathbf{L}_{\beta} \restriction_{\mathfrak{H}_{\beta}}} (z) \|_{\mathfrak{L}(\mathfrak{H}_{\beta})} \leq \frac{M_{\omega}}{\Re(z)-\omega_{0}+\varepsilon_{0}} \leq \frac{M_{\omega}}{\varepsilon_{0}} \eqqcolon C_{\omega_{0}}
\end{equation*}
for all $z\in\overline{\mathbb{H}_{\omega_{0}}}$ and $\beta\in B$.
\item[``$\Leftarrow$'':] Fix $\omega' > |\omega(\mathbf{S}\restriction)| + \frac{1}{2}$ and define a rescaled semigroup by
\begin{equation*}
\mathbf{T}_{\beta}(\tau) \coloneqq
\ee^{-\omega'\tau} \mathbf{S}\restriction_{\beta}(\tau) \,.
\end{equation*}
As $\omega'>\omega(\mathbf{S}\restriction) + \frac{1}{2}$, by \Cref{UniformExponentialGrowthBound} there is an $M\geq 1$ such that
\begin{equation*}
\sup_{\beta\in B} \| \mathbf{T}_{\beta}(\tau) \|_{\mathfrak{L}(\mathfrak{H}_{\beta})} \leq M \ee^{-\frac{\tau}{2}}
\end{equation*}
for all $\tau \geq 0$. Again by \cite[p. 55, Theorem 1.10]{MR1721989}, this rescaled semigroup is related to the resolvent via the formula
\begin{equation*}
\mathbf{R}_{\mathbf{L}\restriction_{\beta}} (\omega'+\ii\lambda) \mathbf{f} = \mathbf{R}_{\mathbf{L}\restriction_{\beta}-\omega'\mathbf{I}} (\ii\lambda) \mathbf{f} = \int_{0}^{\infty} \ee^{-\ii\lambda\tau} \mathbf{T}_{\beta}(\tau) \mathbf{f} \dd\tau
\end{equation*}
for all $\mathbf{f}\in \mathfrak{H}_{\beta}$. Now, Plancherel's theorem yields
\begin{equation*}
\int_{\RR} \| \mathbf{R}_{\mathbf{L}\restriction_{\beta}} (\omega'+\ii\lambda) \mathbf{f} \|_{\mathfrak{H}}^{2} \dd \lambda = 2\pi \int_{0}^{\infty} \| \mathbf{T}_{\beta}(\tau) \mathbf{f} \|_{\mathfrak{H}}^{2} \dd\tau \leq 2\pi M^{2} \| \mathbf{f} \|_{\mathfrak{H}}^{2}
\end{equation*}
for all $\mathbf{f}\in \mathfrak{H}_{\beta}$. Using the first resolvent identity
\begin{equation*}
\mathbf{R}_{\mathbf{L}\restriction_{\beta}} (\ii\lambda) = \mathbf{R}_{\mathbf{L}\restriction_{\beta}} (\omega'+\ii\lambda) + \omega' \mathbf{R}_{\mathbf{L}\restriction_{\beta}} (\ii\lambda) \mathbf{R}_{\mathbf{L}\restriction_{\beta}} (\omega' + \ii\lambda)
\end{equation*}
and the assumption that
\begin{equation*}
\| \mathbf{R}_{\mathbf{L}\restriction_{\beta}} (\ii\lambda) \|_{\mathfrak{L}(\mathfrak{H}_{\beta})} = \| \mathbf{R}_{\mathbf{L}_{\beta}\restriction_{\mathfrak{H}_{\beta}}} (\omega_{0} + \ii\lambda) \|_{\mathfrak{L}(\mathfrak{H}_{\beta})} \leq C_{\omega_{0}}
\end{equation*}
for all $\lambda\in\RR$ and all $\beta\in B$, we get
\begin{equation*}
\| \mathbf{R}_{\mathbf{L}\restriction_{\beta}} (\ii\lambda) \mathbf{f} \|_{\mathfrak{H}} \leq (1+\omega' C_{\omega_{0}}) \| \mathbf{R}_{\mathbf{L}\restriction_{\beta}} (\omega' + \ii\lambda) \mathbf{f} \|_{\mathfrak{H}}
\end{equation*}
for all $\mathbf{f}\in \mathfrak{H}_{\beta}$ and all $\lambda\in\RR$, $\beta\in B$. These estimates imply
\begin{align}
\begin{split}
\label{ResL2}
\int_{\RR} \| \mathbf{R}_{\mathbf{L}\restriction_{\beta}} (\ii\lambda) \mathbf{f} \|_{\mathfrak{H}}^{2} \dd \lambda &\leq (1+\omega' C_{\omega_{0}})^{2} \int_{\RR} \| \mathbf{R}_{\mathbf{L}\restriction_{\beta}} (\omega'+\ii\lambda) \mathbf{f} \|_{\mathfrak{H}}^{2} \dd \lambda \\&\leq (1+\omega' C_{\omega_{0}})^{2} 2\pi M^{2} \| \mathbf{f} \|_{\mathfrak{H}}^{2}
\end{split}
\end{align}
for all $\mathbf{f}\in \mathfrak{H}_{\beta}$ and all $\beta\in B$. The same applies for the adjoint semigroup $\mathbf{T}_{\beta}(\tau)^{*}$ with generator $\mathbf{L}\restriction_{\beta}^{*}$, i.e.,
\begin{equation}
\label{AdjResL2}
\int_{\RR} \| \mathbf{R}_{\mathbf{L}\restriction_{\beta}^{*}} (\ii\lambda) \mathbf{g} \|_{\mathfrak{H}}^{2} \dd \lambda \leq (1+\omega' C_{\omega_{0}})^{2} 2\pi M^{2} \| \mathbf{g} \|_{\mathfrak{H}}^{2}
\end{equation}
for all $\mathbf{g}\in \mathfrak{H}_{\beta}$ and all $\beta\in B$. Now, the inversion formula \cite[p. 234, Corollary 5.16]{MR1721989} applies and asserts
\begin{equation*}
\mathbf{S}\restriction_{\beta}(\tau) \mathbf{f} =
\frac{1}{\tau} \frac{1}{2\pi\ii} \lim_{n\to\infty} \int_{\omega'-\ii n}^{\omega'+\ii n} \ee^{z\tau} \mathbf{R}_{\mathbf{L}\restriction_{\beta}} (z)^{2} \mathbf{f} \dd z
\end{equation*}
for all $\mathbf{f}\in\dom(\mathbf{L}\restriction_{\beta}^{2})$ and all $\tau > 0$. This implies with Cauchy's integral theorem
\begin{align*}
\Big( \mathbf{S}\restriction_{\beta}(\tau) \mathbf{f} \,\Big|\, \mathbf{g} \Big)_{\mathfrak{H}} &=
\frac{1}{\tau} \frac{1}{2\pi} \lim_{n\to\infty}
\int_{-n}^{n} \ee^{(\omega'+\ii \lambda)\tau}
\Big(
\mathbf{R}_{\mathbf{L}\restriction_{\beta}} (\omega'+\ii\lambda) \mathbf{f}
\,\Big|\,
\mathbf{R}_{\mathbf{L}\restriction_{\beta}^{*}} (\omega'-\ii\lambda) \mathbf{g}
\Big)_{\mathfrak{H}}
\dd\lambda \\&=
\frac{1}{\tau} \frac{1}{2\pi}
\int_{\RR} \ee^{\ii \lambda\tau}
\Big(
\mathbf{R}_{\mathbf{L}\restriction_{\beta}} (\ii\lambda) \mathbf{f}
\,\Big|\,
\mathbf{R}_{\mathbf{L}\restriction_{\beta}^{*}} (-\ii\lambda) \mathbf{g}
\Big)_{\mathfrak{H}}
\dd\lambda
\end{align*}
for all $\mathbf{f}\in\dom(\mathbf{L}\restriction_{\beta}^{2})$ and all $\mathbf{g}\in\mathfrak{H}$. Applying the Cauchy-Schwarz inequality, using the estimates \eqref{ResL2}, \eqref{AdjResL2} and density of $\dom(\mathbf{L}\restriction_{\beta}^{2}) \subset \mathfrak{H}$ yields
\begin{equation*}
\Big| \Big( \mathbf{S}\restriction_{\beta}(\tau) \mathbf{f} \,\Big|\, \mathbf{g} \Big)_{\mathfrak{H}} \Big| \leq
\frac{1}{\tau} \frac{1}{2\pi} (1+\omega' C_{\omega_{0}})^{2} 2\pi M^{2} \|\mathbf{f}\|_{\mathfrak{H}} \|\mathbf{g}\|_{\mathfrak{H}}
\end{equation*}
for all $\mathbf{f},\mathbf{g}\in \mathfrak{H}_{\beta}$ and all $\tau >0$, $\beta\in B$. Consequently,
\begin{equation*}
\|\mathbf{S}\restriction_{\beta}(\tau) \|_{\mathfrak{L}(\mathfrak{H}_{\beta})} \leq \frac{1}{\tau} (1+\omega' C_{\omega_{0}})^{2} M^{2}
\end{equation*}
for all $\tau >0$, $\beta\in B$. It follows that for large enough $\tau>0$,
\begin{equation*}
\sup_{\beta\in B} \|\mathbf{S}\restriction_{\beta}(\tau) \|_{\mathfrak{L}(\mathfrak{H}_{\beta})} < 1 \,,
\end{equation*}
so \Cref{UniformGrowthBoundFormula} yields $\omega(\mathbf{S}\restriction)<0$.
\qedhere
\end{enumerate}
\end{proof}
\begin{acknowledgment}
The author thanks Roland Donninger and Irfan Glogi\'{c} for useful comments that improved a first draft of this paper.
\end{acknowledgment}


\begin{thebibliography}{10}

\bibitem{MR2934227}
Kendall Atkinson and Weimin Han.
\newblock {\em Spherical harmonics and approximations on the unit sphere: an
  introduction}, volume 2044 of {\em Lecture Notes in Mathematics}.
\newblock Springer, Heidelberg, 2012.

\bibitem{MR3872322}
Marius Beceanu and Avy Soffer.
\newblock Large outgoing solutions to supercritical wave equations.
\newblock {\em Int. Math. Res. Not. IMRN}, 2018(20):6201--6253, 2018.

\bibitem{MR4338226}
Pawe{\l} Biernat, Roland Donninger, and Birgit Sch\"{o}rkhuber.
\newblock Hyperboloidal similarity coordinates and a globally stable blowup
  profile for supercritical wave maps.
\newblock {\em Int. Math. Res. Not. IMRN}, 2021(21):16530--16591, 2021.

\bibitem{MR2578477}
P.~Bizo\'{n}, P.~Breitenlohner, D.~Maison, and A.~Wasserman.
\newblock Self-similar solutions of the cubic wave equation.
\newblock {\em Nonlinearity}, 23(2):225--236, 2010.

\bibitem{MR2097671}
Piotr Bizo\'{n}, Tadeusz Chmaj, and Zbis{\l}aw Tabor.
\newblock On blowup for semilinear wave equations with a focusing nonlinearity.
\newblock {\em Nonlinearity}, 17(6):2187--2201, 2004.

\bibitem{MR2351023}
Piotr Bizo\'{n}, Dieter Maison, and Arthur Wasserman.
\newblock Self-similar solutions of semilinear wave equations with a focusing
  nonlinearity.
\newblock {\em Nonlinearity}, 20(9):2061--2074, 2007.

\bibitem{MR3926085}
Athanasios Chatzikaleas and Roland Donninger.
\newblock Stable blowup for the cubic wave equation in higher dimensions.
\newblock {\em J. Differential Equations}, 266(10):6809--6865, 2019.

\bibitem{MR4700297}
Po-Ning Chen, Roland Donninger, Irfan Glogi\'{c}, Michael McNulty, and Birgit
  Sch\"{o}rkhuber.
\newblock Co-dimension one stable blowup for the quadratic wave equation beyond
  the light cone.
\newblock {\em Comm. Math. Phys.}, 405(2):Paper No. 34, 46, 2024.

\bibitem{MR3778126}
Charles Collot.
\newblock Type ii blow up manifolds for the energy supercritical semilinear
  wave equation.
\newblock {\em Mem. Amer. Math. Soc.}, 252(1205):v+163, 2018.

\bibitem{2022arXiv220101848C}
Charles {Collot}, Thomas {Duyckaerts}, Carlos {Kenig}, and Frank {Merle}.
\newblock {Soliton resolution for the radial quadratic wave equation in six
  space dimensions}.
\newblock {\em arXiv e-prints}, page arXiv:2201.01848, January 2022.

\bibitem{MR4713110}
Elek Csobo, Irfan Glogi\'{c}, and Birgit Sch\"{o}rkhuber.
\newblock On blowup for the supercritical quadratic wave equation.
\newblock {\em Anal. PDE}, 17(2):617--680, 2024.

\bibitem{MR4350584}
Wei Dai and Thomas Duyckaerts.
\newblock Self-similar solutions of focusing semi-linear wave equations in
  $\mathbb{R}^{N}$.
\newblock {\em J. Evol. Equ.}, 21(4):4703--4750, 2021.

\bibitem{MR3401013}
Benjamin Dodson and Andrew Lawrie.
\newblock Scattering for radial, semi-linear, super-critical wave equations
  with bounded critical norm.
\newblock {\em Arch. Ration. Mech. Anal.}, 218(3):1459--1529, 2015.

\bibitem{MR2753616}
Roland Donninger.
\newblock Nonlinear stability of self-similar solutions for semilinear wave
  equations.
\newblock {\em Comm. Partial Differential Equations}, 35(4):669--684, 2010.

\bibitem{MR3662440}
Roland Donninger.
\newblock Strichartz estimates in similarity coordinates and stable blowup for
  the critical wave equation.
\newblock {\em Duke Math. J.}, 166(9):1627--1683, 2017.

\bibitem{MR4661000}
Roland Donninger and Matthias Ostermann.
\newblock A globally stable self-similar blowup profile in energy supercritical
  {Y}ang-{M}ills theory.
\newblock {\em Comm. Partial Differential Equations}, 48(9):1148--1213, 2023.

\bibitem{MR4101905}
Roland Donninger and Ziping Rao.
\newblock Blowup stability at optimal regularity for the critical wave
  equation.
\newblock {\em Adv. Math.}, 370:107219, 81, 2020.

\bibitem{MR2909934}
Roland Donninger and Birgit Sch\"{o}rkhuber.
\newblock Stable self-similar blow up for energy subcritical wave equations.
\newblock {\em Dyn. Partial Differ. Equ.}, 9(1):63--87, 2012.

\bibitem{MR3152726}
Roland Donninger and Birgit Sch\"{o}rkhuber.
\newblock Stable blow up dynamics for energy supercritical wave equations.
\newblock {\em Trans. Amer. Math. Soc.}, 366(4):2167--2189, 2014.

\bibitem{MR3537340}
Roland Donninger and Birgit Sch\"{o}rkhuber.
\newblock On blowup in supercritical wave equations.
\newblock {\em Comm. Math. Phys.}, 346(3):907--943, 2016.

\bibitem{MR3742520}
Roland Donninger and Birgit Sch\"{o}rkhuber.
\newblock Stable blowup for wave equations in odd space dimensions.
\newblock {\em Ann. Inst. H. Poincar\'{e} C Anal. Non Lin\'{e}aire},
  34(5):1181--1213, 2017.

\bibitem{MR3218816}
Roland Donninger and An{\i}l Zengino\u{g}lu.
\newblock Nondispersive decay for the cubic wave equation.
\newblock {\em Anal. PDE}, 7(2):461--495, 2014.

\bibitem{MR3678502}
Thomas Duyckaerts, Hao Jia, Carlos Kenig, and Frank Merle.
\newblock Soliton resolution along a sequence of times for the focusing energy
  critical wave equation.
\newblock {\em Geom. Funct. Anal.}, 27(4):798--862, 2017.

\bibitem{MR4397184}
Thomas Duyckaerts, Carlos Kenig, Yvan Martel, and Frank Merle.
\newblock Soliton resolution for critical co-rotational wave maps and radial
  cubic wave equation.
\newblock {\em Comm. Math. Phys.}, 391(2):779--871, 2022.

\bibitem{MR2966655}
Thomas Duyckaerts, Carlos Kenig, and Frank Merle.
\newblock Universality of the blow-up profile for small type {II} blow-up
  solutions of the energy-critical wave equation: the nonradial case.
\newblock {\em J. Eur. Math. Soc. (JEMS)}, 14(5):1389--1454, 2012.

\bibitem{MR3158532}
Thomas Duyckaerts, Carlos Kenig, and Frank Merle.
\newblock Scattering for radial, bounded solutions of focusing supercritical
  wave equations.
\newblock {\em Int. Math. Res. Not. IMRN}, 2014(1):224--258, 2014.

\bibitem{MR3959860}
Thomas Duyckaerts, Carlos Kenig, and Frank Merle.
\newblock Scattering profile for global solutions of the energy-critical wave
  equation.
\newblock {\em J. Eur. Math. Soc. (JEMS)}, 21(7):2117--2162, 2019.

\bibitem{MR4163362}
Thomas Duyckaerts, Carlos Kenig, and Frank Merle.
\newblock Exterior energy bounds for the critical wave equation close to the
  ground state.
\newblock {\em Comm. Math. Phys.}, 379(3):1113--1175, 2020.

\bibitem{MR4289254}
Thomas Duyckaerts, Carlos Kenig, and Frank Merle.
\newblock Decay estimates for nonradiative solutions of the energy-critical
  focusing wave equation.
\newblock {\em J. Geom. Anal.}, 31(7):7036--7074, 2021.

\bibitem{MR4567713}
Thomas Duyckaerts, Carlos Kenig, and Frank Merle.
\newblock Soliton resolution for the radial critical wave equation in all odd
  space dimensions.
\newblock {\em Acta Math.}, 230(1):1--92, 2023.

\bibitem{MR3359522}
Thomas Duyckaerts, Carlos~E. Kenig, and Frank Merle.
\newblock Profiles for bounded solutions of dispersive equations, with
  applications to energy-critical wave and {S}chr\"{o}dinger equations.
\newblock {\em Commun. Pure Appl. Anal.}, 14(4):1275--1326, 2015.

\bibitem{MR2470571}
Thomas Duyckaerts and Frank Merle.
\newblock Dynamics of threshold solutions for energy-critical wave equation.
\newblock {\em Int. Math. Res. Pap. IMRP}, 2008:Art ID rpn002, 67, 2008.

\bibitem{MR3766119}
Thomas Duyckaerts and Tristan Roy.
\newblock Blow-up of the critical {S}obolev norm for nonscattering radial
  solutions of supercritical wave equations on {$\mathbb{R}^3$}.
\newblock {\em Bull. Soc. Math. France}, 145(3):503--573, 2017.

\bibitem{MR1721989}
Klaus-Jochen Engel and Rainer Nagel.
\newblock {\em One-parameter semigroups for linear evolution equations}, volume
  194 of {\em Graduate Texts in Mathematics}.
\newblock Springer-Verlag, New York, 2000.
\newblock With contributions by S. Brendle, M. Campiti, T. Hahn, G. Metafune,
  G. Nickel, D. Pallara, C. Perazzoli, A. Rhandi, S. Romanelli and R.
  Schnaubelt.

\bibitem{MR2597943}
Lawrence~C. Evans.
\newblock {\em Partial differential equations}, volume~19 of {\em Graduate
  Studies in Mathematics}.
\newblock American Mathematical Society, Providence, RI, second edition, 2010.

\bibitem{2022arXiv220706952G}
Irfan {Glogi{\'c}}.
\newblock {Globally stable blowup profile for supercritical wave maps in all
  dimensions}.
\newblock {\em arXiv e-prints}, page arXiv:2207.06952, July 2022.

\bibitem{MR4469070}
Irfan Glogi\'{c}.
\newblock Stable blowup for the supercritical hyperbolic {Y}ang-{M}ills
  equations.
\newblock {\em Adv. Math.}, 408:Paper No. 108633, 2022.

\bibitem{MR4105355}
Irfan Glogi\'{c}, Maciej Maliborski, and Birgit Sch\"{o}rkhuber.
\newblock Threshold for blowup for the supercritical cubic wave equation.
\newblock {\em Nonlinearity}, 33(5):2143--2158, 2020.

\bibitem{MR4292964}
Irfan Glogi\'{c} and Birgit Sch\"{o}rkhuber.
\newblock Co-dimension one stable blowup for the supercritical cubic wave
  equation.
\newblock {\em Adv. Math.}, 390:Paper No. 107930, 79, 2021.

\bibitem{MR3006642}
Matthieu Hillairet and Pierre Rapha\"{e}l.
\newblock Smooth type {II} blow-up solutions to the four-dimensional
  energy-critical wave equation.
\newblock {\em Anal. PDE}, 5(4):777--829, 2012.

\bibitem{MR3579128}
Jacek Jendrej.
\newblock Construction of type {II} blow-up solutions for the energy-critical
  wave equation in dimension 5.
\newblock {\em J. Funct. Anal.}, 272(3):866--917, 2017.

\bibitem{2022arXiv220309614J}
Jacek {Jendrej} and Andrew {Lawrie}.
\newblock {Soliton resolution for the energy-critical nonlinear wave equation
  in the radial case}.
\newblock {\em arXiv e-prints}, page arXiv:2203.09614, March 2022.

\bibitem{MR1335452}
Tosio Kato.
\newblock {\em Perturbation theory for linear operators}.
\newblock Classics in Mathematics. Springer-Verlag, Berlin, 1995.
\newblock Reprint of the 1980 edition.

\bibitem{MR1077471}
Otared Kavian and Fred~B. Weissler.
\newblock Finite energy self-similar solutions of a nonlinear wave equation.
\newblock {\em Comm. Partial Differential Equations}, 15(10):1381--1420, 1990.

\bibitem{MR2461508}
Carlos~E. Kenig and Frank Merle.
\newblock Global well-posedness, scattering and blow-up for the energy-critical
  focusing non-linear wave equation.
\newblock {\em Acta Math.}, 201(2):147--212, 2008.

\bibitem{2022arXiv221113699K}
Jihoi {Kim}.
\newblock {Self-similar blow up for energy supercritical semilinear wave
  equation}.
\newblock {\em arXiv e-prints}, page arXiv:2211.13699, November 2022.

\bibitem{MR4194893}
Joachim Krieger.
\newblock On stability of type {II} blow up for the critical nonlinear wave
  equation on {$\mathbb{R}^{3+1}$}.
\newblock {\em Mem. Amer. Math. Soc.}, 267(1301):v + 129, 2020.

\bibitem{MR3351183}
Joachim Krieger and Joules Nahas.
\newblock Instability of type {II} blow up for the quintic nonlinear wave
  equation on {$\mathbb{R}^{3+1}$}.
\newblock {\em Bull. Soc. Math. France}, 143(2):339--355, 2015.

\bibitem{MR3086065}
Joachim Krieger, Kenji Nakanishi, and Wilhelm Schlag.
\newblock Global dynamics away from the ground state for the energy-critical
  nonlinear wave equation.
\newblock {\em Amer. J. Math.}, 135(4):935--965, 2013.

\bibitem{MR3007693}
Joachim Krieger, Kenji Nakanishi, and Wilhelm Schlag.
\newblock Global dynamics of the nonradial energy-critical wave equation above
  the ground state energy.
\newblock {\em Discrete Contin. Dyn. Syst.}, 33(6):2423--2450, 2013.

\bibitem{MR3177940}
Joachim Krieger, Kenji Nakanishi, and Wilhelm Schlag.
\newblock Threshold phenomenon for the quintic wave equation in three
  dimensions.
\newblock {\em Comm. Math. Phys.}, 327(1):309--332, 2014.

\bibitem{MR3302610}
Joachim Krieger, Kenji Nakanishi, and Wilhelm Schlag.
\newblock Center-stable manifold of the ground state in the energy space for
  the critical wave equation.
\newblock {\em Math. Ann.}, 361(1-2):1--50, 2015.

\bibitem{MR3205646}
Joachim Krieger and Wilhelm Schlag.
\newblock Full range of blow up exponents for the quintic wave equation in
  three dimensions.
\newblock {\em J. Math. Pures Appl. (9)}, 101(6):873--900, 2014.

\bibitem{MR3736488}
Joachim Krieger and Wilhelm Schlag.
\newblock Large global solutions for energy supercritical nonlinear wave
  equations on $\mathbb{R}^{3+1}$.
\newblock {\em J. Anal. Math.}, 133:91--131, 2017.

\bibitem{MR2494455}
Joachim Krieger, Wilhelm Schlag, and Daniel Tataru.
\newblock Slow blow-up solutions for the {$H^{1}(\mathbb{R}^{3})$} critical
  focusing semilinear wave equation.
\newblock {\em Duke Math. J.}, 147(1):1--53, 2009.

\bibitem{MR2804023}
Rados{\l}aw Kycia.
\newblock On self-similar solutions of semilinear wave equations in higher
  space dimensions.
\newblock {\em Appl. Math. Comput.}, 217(22):9451--9466, 2011.

\bibitem{MR344697}
Howard~A. Levine.
\newblock Instability and nonexistence of global solutions to nonlinear wave
  equations of the form {$Pu_{tt}=-Au+\mathcal{F}(u)$}.
\newblock {\em Trans. Amer. Math. Soc.}, 192:1--21, 1974.

\bibitem{MR1335386}
Hans Lindblad and Christopher~D. Sogge.
\newblock On existence and scattering with minimal regularity for semilinear
  wave equations.
\newblock {\em J. Funct. Anal.}, 130(2):357--426, 1995.

\bibitem{MR2004432}
Frank Merle and Hatem Zaag.
\newblock Determination of the blow-up rate for the semilinear wave equation.
\newblock {\em Amer. J. Math.}, 125(5):1147--1164, 2003.

\bibitem{MR2115461}
Frank Merle and Hatem Zaag.
\newblock Determination of the blow-up rate for a critical semilinear wave
  equation.
\newblock {\em Math. Ann.}, 331(2):395--416, 2005.

\bibitem{MR2362418}
Frank Merle and Hatem Zaag.
\newblock Existence and universality of the blow-up profile for the semilinear
  wave equation in one space dimension.
\newblock {\em J. Funct. Anal.}, 253(1):43--121, 2007.

\bibitem{MR3302641}
Frank Merle and Hatem Zaag.
\newblock On the stability of the notion of non-characteristic point and
  blow-up profile for semilinear wave equations.
\newblock {\em Comm. Math. Phys.}, 333(3):1529--1562, 2015.

\bibitem{MR3413856}
Frank Merle and Hatem Zaag.
\newblock Dynamics near explicit stationary solutions in similarity variables
  for solutions of a semilinear wave equation in higher dimensions.
\newblock {\em Trans. Amer. Math. Soc.}, 368(1):27--87, 2016.

\bibitem{MR2723248}
Frank W.~J. Olver, Daniel~W. Lozier, Ronald~F. Boisvert, and Charles~W. Clark,
  editors.
\newblock {\em N{IST} handbook of mathematical functions}.
\newblock U.S. Department of Commerce, National Institute of Standards and
  Technology, Washington, DC; Cambridge University Press, Cambridge, 2010.
\newblock With 1 CD-ROM (Windows, Macintosh and UNIX).

\bibitem{MR710486}
A.~Pazy.
\newblock {\em Semigroups of linear operators and applications to partial
  differential equations}, volume~44 of {\em Applied Mathematical Sciences}.
\newblock Springer-Verlag, New York, 1983.

\bibitem{MR0385023}
Walter Rudin.
\newblock {\em Principles of mathematical analysis}.
\newblock International Series in Pure and Applied Mathematics. McGraw-Hill
  Book Co., New York-Auckland-D\"{u}sseldorf, third edition, 1976.

\bibitem{MR2233925}
Terence Tao.
\newblock {\em Nonlinear dispersive equations}, volume 106 of {\em CBMS
  Regional Conference Series in Mathematics}.
\newblock Published for the Conference Board of the Mathematical Sciences,
  Washington, DC; by the American Mathematical Society, Providence, RI, 2006.
\newblock Local and global analysis.

\bibitem{MR4586813}
David Wallauch.
\newblock Strichartz estimates and blowup stability for energy critical
  nonlinear wave equations.
\newblock {\em Trans. Amer. Math. Soc.}, 376(6):4321--4360, 2023.

\end{thebibliography}
\end{document}